\newcommand{\sm}[1]{[ \begin{smallmatrix} #1 \end{smallmatrix} ]}
\newcommand{\subgrp}[1]{\langle #1 \rangle}
\newcommand{\Set}[1]{\left\{ #1 \right\}}
\newcommand{\set}[1]{\left\{ #1 \right\}}
\newcommand{\smallset}[1]{\{ #1 \}}
\newcommand{\abs}[1]{| #1 |}
\newcommand{\wt}[1]{\widetilde{ #1}}
\newcommand{\ol}[1]{\overline{#1}}
\newcommand{\dbl}[1]{\llbracket #1 \rrbracket}
\DeclareMathOperator{\ad}{ad}
\DeclareMathOperator{\Aut}{Aut}
\DeclareMathOperator{\cont}{res}
\DeclareMathOperator{\End}{End}
\DeclareMathOperator{\Grp}{Grp}
\DeclareMathOperator{\Hom}{Hom}
\DeclareMathOperator{\id}{id}
\DeclareMathOperator{\im}{im}
\DeclareMathOperator{\Inf}{Inf}
\DeclareMathOperator{\Irr}{Irr}
\DeclareMathOperator{\Lie}{Lie}
\DeclareMathOperator{\otr}{otr}
\DeclareMathOperator{\res}{res}
\DeclareMathOperator{\Res}{Res}
\DeclareMathOperator{\rk}{rk}
\DeclareMathOperator{\Span}{span}
\DeclareMathOperator{\tr}{tr}
\newcommand{\zero}{\ol{0}}
\newcommand{\one}{\ol{1}}
\newcommand{\ve}{\varepsilon}
\newcommand{\velammu}{\ve_{(\lambda,\mu)}}
\newcommand{\velamlam}{\ve_{(\lambda,\lambda)}}
\newcommand{\C}{\mathbb{C}}
\newcommand{\N}{\mathbb{N}}
\newcommand{\T}{\mathbb{T}}
\newcommand{\Z}{\mathbb{Z}}
\newcommand{\CA}{\C A}
\newcommand{\CB}{\C \calB}
\newcommand{\CD}{\C \calD}
\newcommand{\CG}{\C G}
\newcommand{\CH}{\C H}
\newcommand{\CS}{\C \fS}
\newcommand{\calB}{\mathcal{B}}
\newcommand{\calC}{\mathcal{C}}
\newcommand{\calD}{\mathcal{D}}
\newcommand{\calP}{\mathcal{P}}
\newcommand{\calT}{\mathcal{T}}
\newcommand{\calX}{\mathcal{X}}
\newcommand{\calBP}{\mathcal{BP}}
\newcommand{\BPn}{\calBP(n)}
\newcommand{\BPsetn}{\calBP\{n\}}
\newcommand{\BPboxn}{\calBP[n]}
\newcommand{\BPdbln}{\calBP\dbl{n}}
\newcommand{\Eboxn}{E[n]}
\newcommand{\Fboxn}{F[n]}
\newcommand{\Edbln}{E\dbl{n}}
\newcommand{\Fdbln}{F\dbl{n}}
\newcommand{\fA}{\mathfrak{A}}
\newcommand{\fb}{\mathfrak{b}}
\newcommand{\fD}{\mathfrak{D}}
\newcommand{\fd}{\mathfrak{d}}
\newcommand{\g}{\mathfrak{g}}
\newcommand{\gl}{\mathfrak{gl}}
\newcommand{\fh}{\mathfrak{h}}
\newcommand{\fs}{\mathfrak{s}}
\newcommand{\fS}{\mathfrak{S}}
\newcommand{\fsl}{\mathfrak{sl}}
\newcommand{\fso}{\mathfrak{so}}
\newcommand{\fsp}{\mathfrak{sp}}
\newcommand{\fq}{\mathfrak{q}}
\newcommand{\sq}{\mathfrak{sq}}
\newcommand{\glmn}{\gl(m|n)}
\newcommand{\Bzero}{\calB_{\zero}}
\newcommand{\Dzero}{\calD_{\zero}}
\newcommand{\gone}{\g_{\one}}
\newcommand{\gzero}{\g_{\zero}}
\newcommand{\Gone}{G_{\one}}
\newcommand{\Gzero}{G_{\zero}}
\newcommand{\Hzero}{H_{\zero}}
\newcommand{\hzero}{\fh_{\zero}}
\newcommand{\Vone}{V_{\one}}
\newcommand{\Vzero}{V_{\zero}}
\newcommand{\Wone}{W_{\one}}
\newcommand{\Wzero}{W_{\zero}}
\newcommand{\fbone}{\fb_{\one}}
\newcommand{\fbzero}{\fb_{\zero}}
\newcommand{\fdzero}{\fd_{\zero}}
\newcommand{\sone}{\fs_{\one}}
\newcommand{\szero}{\fs_{\zero}}
\newcommand{\CGzero}{\CG_{\zero}}
\newcommand{\CHzero}{\CH_{\zero}}
\newcommand{\CBzero}{\CB_{\zero}}
\newcommand{\CDzero}{\CD_{\zero}}
\numberwithin{equation}{subsection}
\newtheorem{theorem}{Theorem}[subsection]
\newtheorem{proposition}[theorem]{Proposition}
\newtheorem{corollary}[theorem]{Corollary}
\newtheorem{lemma}[theorem]{Lemma}
\theoremstyle{definition}
\newtheorem{definition}[theorem]{Definition}
\newtheorem{example}[theorem]{Example}
\newtheorem{remark}[theorem]{Remark}
\newtheorem{convention-construction}[theorem]{Convention/Construction}
\newtheorem{remark-convention}[theorem]{Remark/Convention}
\title[Lie superalgebras generated by reflections in classical Weyl groups]{Lie superalgebras generated by reflections in Weyl groups of classical type}
\author{Christopher M.\ Drupieski}
\address{Department of Mathematical Sciences,
		DePaul University,
		Chicago, IL 60614, USA}
\email{c.drupieski@depaul.edu}
\thanks{CMD was supported by a Summer Research Grant from the DePaul University College of Science and Health.}
\author{Jonathan R.\  Kujawa}
\address{Department of Mathematics \\
		Oregon State University \\
		Corvallis, OR 97331, USA}
\email{kujawaj@oregonstate.edu}
\thanks{JRK was supported in part by Simons Collaboration Grant for Mathematicians No.\ 525043 and No.\ 963912.}
\date{\today}
\subjclass{Primary 17B10. Secondary 20B30.}
\begin{document}

\begin{abstract}
We consider the finite Weyl groups of classical type --- $W(A_r)$ for $r \geq 1$, $W(B_r) = W(C_r)$ for $r \geq 2$, and $W(D_r)$ for $r \geq 4$ --- as supergroups in which the reflections are of odd superdegree. Viewing the corresponding complex group algebras as Lie super\-algebras via the graded commutator bracket, we determine the structure of the Lie sub-super\-algebras generated by the sets of reflections. In each case, this Lie superalgebra is equal to the full derived subalgebra of the group algebra plus the span of the class sums of the reflections.
\end{abstract}

\keywords{Lie superalgebras, supergroups, Weyl groups, semisimple superalgebras}


\maketitle

\section{Introduction}

\subsection{Overview}
Let $G$ be a finite reflection group. By declaring the reflections in $G$ to be of odd super\-degree, one endows $G$ with the structure of a super\-group, i.e., a group equipped with a multiplicative grading by $\Z_{2} = \Z /2\Z$. In turn, the complex group algebra $\CG$ is endowed with the structure of an associative super\-algebra, and then the graded commutator bracket
	\[
	[x,y] = xy - (-1)^{\ol{x} \cdot \ol{y}}yx
	\]
defines a Lie super\-algebra structure on $\CG$, which we denote $\Lie(\CG)$. It is now natural to ask for an explicit description of the Lie superalgebra structure on $\Lie(\CG)$, and for the structure of the (in general, proper) Lie sub-superalgebra $\g$ generated by the set of reflections in $G$.

The ungraded (non-super) analogues of the preceding questions were considered by Marin \cite{Marin:2007} for the case $G = \fS_n$ of the symmetric group on $n$ letters. In that paper and its sequels, Marin and others used the structure of the Lie algebra generated by the set of transpositions to obtain new results on the representation theory of braid groups and Iwahori--Hecke algebras in type A. More recently, the authors of this paper determined the structure of the Lie sub-superalgebra of $\Lie(\CS_n)$ generated by the set of transpositions \cite{DK:2025}. When the authors have given talks on that work, the first question from the audience has invariably been, ``What about Weyl groups of other types?'' This paper and its companion \cite{DK:2025-preprint} answer that question for Weyl groups of types BC and D, in the super and non-super settings, respectively. In particular, the current paper provides a new, vastly streamlined, more conceptual approach to the Lie superalgebra calculations carried out in \cite{DK:2025}: while some modest case-by-case verifications are still required for each Lie type, the overall approach is uniform across types, and many tedious calculations are now avoided.

\subsection{Main results}

Let $G$ be a finite reflection group as above. The simple $\CG$-super\-modules exist in two flavors: a simple $\CG$-super\-module $W$ is of type M if $W$ is simple as an ungraded $\CG$-module, and is of type Q if $W$ is reducible as an ungraded $\CG$-module. Let $\Irr_M(G)$ and $\Irr_Q(G)$ be complete sets (up to homogeneous isomorphism) of the simple $\CG$-supermodules of types M and Q, respectively. Then the graded Artin--Wedderburn isomorphism (see Corollary \ref{cor:super-artin-wedderburn-CG}) restricts to a Lie superalgebra isomorphism
	\begin{equation} \label{E:IntroEquation}
	\Lie(\CG) \cong 
	\Big[ \bigoplus_{W \in \Irr_Q(G)} \fq(W) \Big]
	\oplus 
	\Big[ \bigoplus_{W \in \Irr_M(G)} \gl(W) \Big];
	\end{equation}
see Section \ref{subsec:lie-superalgebras-M-Q} for an explanation of the notation $\fq(W)$ and $\gl(W)$. The problem of describing the Lie superalgebra $\Lie(\CG)$ thus reduces to the problem of describing the simple $\CG$-supermodules of types M and Q. If $G$ is a Weyl group of classical type, the simple $\CG$-supermodules can all be described in terms of (bi-)tableaux combinatorics. This is summarized for type BC in Section \ref{subsec:simple-supermodules-B}, for type A in Section~\ref{sec:type-A}, and for type D in Section \ref{subsec:simple-supermodules-D}.

The problem of describing the Lie superalgebra $\g \subseteq \Lie(\CG)$ generated by the set of reflections in $G$ is rather more subtle, though for Weyl groups of classical type, the overall description is uniform. Let $\fD(\CG) = \fD(\Lie(\CG))$ denote the derived Lie sub-superalgebra of $\Lie(\CG)$. Say that a reflection $s$ in $G$ is \emph{long} (resp.\ \emph{short}) according to the length of the corresponding root (i.e., the root, unique up to the scalar $\pm 1$, determining the hyperplane through which $s$ is a reflection). If $G$ is a Weyl group of simply-laced type (ADE), then all reflections are long. Let $C_\ell$ and $C_s$ be the sets of all long and short reflections in $G$, respectively. Then $C_\ell$ is a conjugacy class in $G$, as is $C_s$ if $C_s$ is nonempty. Let $\calC_\ell$ and $\calC_s$ be the corresponding class sums in $\CG$, where by definition $\calC_s$ is taken to be zero if $C_s$ is empty. We prove in Theorems \ref{theorem:main-theorem-B}, \ref{theorem:main-theorem-Sn}, and \ref{theorem:main-theorem-D} that 
	\begin{equation} \label{eq:main-equality}
	\g = \fD(\CG) + \C \cdot \calC_\ell + \C \cdot \calC_s.
	\end{equation}
Coupled with \eqref{E:IntroEquation} and the description of derived subalgebras in Section~\ref{subsec:lie-superalgebras-M-Q}, this gives a complete description of $\g$. In particular, it shows that the Lie superalgebra $\g$ is equal to most of $\Lie(\CG)$. This is in contrast to the non-graded (ordinary Lie algebra) situation investigated in \cite{Marin:2007,DK:2025-preprint}, where the analogous Lie algebras are roughly half the dimension of $\Lie(\CG)$, illustrating the divergence between the graded and non-graded settings.

The overall strategy for proving \eqref{eq:main-equality} is by induction on the rank $n$ of the Weyl group $G = G_n$. An important role in the argument is played by the even subgroup $\Gzero \subseteq G$, defined to be the kernel of the linear character $\kappa : G \to \set{\pm 1}$ that evaluates to $-1$ on all reflections in $G$, and by the under\-lying even Lie algebra $\gzero = \g \cap \CGzero$. Under some mild general hypotheses, the Lie algebra $\gzero$ is seen to be reductive (Proposition \ref{prop:g0-reductive}), and hence its derived subalgebra $\fD(\gzero)$ is a semisimple complex Lie algebra. Further, if $\gzero$ contains an appropriate set of group generators for $\Gzero$, then $\fD(\gzero)$ is able to detect isomorphisms among (most) simple $\CGzero$-modules (Proposition \ref{prop:simples-equivalent-G0-g0}) and among (most) simple $\CG$-supermodules (Corollary \ref{cor:module-iso-equivalent-G-Gzero}). The key step in the induction argument is then carried out through an analysis of the branching rule for the restriction of simple modules from $(G_n)_{\zero}$ to $(G_{n-1})_{\zero}$. Using the dimensional rigidity of the simple complex Lie algebras, one shows for each simple $\CGzero$-module $V$ that the image of $\gzero$ in $\End(V)$ is all of $\fsl(V)$. From this one deduces for each simple $\CG$-supermodule $W$ that, modulo the operators $\calC_\ell$ and $\calC_s$, the image of $\g$ in $\End(W)$ is the full derived algebra of either $\fq(W)$ or $\gl(W)$ (Lemma \ref{lemma:W(gn)}). Finally, one applies the graded Artin--Wedderburn isomorphism and the semisimplicity of $\fD(\gzero)$ to deduce that $\g$ contains all of $\fD(\CG)$.

\subsection{Outline of the paper}

In Section~\ref{sec:preliminaries} we establish necessary results on the representation theory of finite supergroups and their associated Lie superalgebras. In particular, in Sections \ref{subsec:lie-superalgebras-odd-elements} and \ref{subsec:further-lie-superalgebra-structure} we identify an axiomatic framework under which the inductive strategy described above can be carried out. Section \ref{subsec:dihedral} provides a brief digression into the example of the dihedral groups, where the structure of the Lie superalgebra $\g$ can most easily be made explicit.

Sections \ref{sec:Type-B-classical}, \ref{sec:type-A}, and \ref{sec:Type-D-classical} are devoted to verifying the general axiomatic framework for the Weyl groups of classical types BC, A, and D, respectively. In those sections we record the classification of the simple (super)modules, the branching rules for the even subgroups, and other useful facts that may also be of independent interest for the study of the (graded) representation theory of the classical Weyl groups. Readers who are more familiar with the representation theory of the symmetric groups may find it enlightening to read Section \ref{sec:type-A} before delving into the more complicated bookkeeping required for types BC and D in Sections \ref{sec:Type-B-classical} and \ref{sec:Type-D-classical}. (Since type A is not the primary focus of this paper---as we already treated it by different methods in \cite{DK:2025}---we have opted to present it after the more-thoroughly explained type BC, with some proofs in type A referring back to arguments discussed at more length in the context of type BC.)

The last section of the paper, Section \ref{subsec:A5-for-D}, is devoted to verifying one piece of the axiomatic framework---axiom \ref{item:D(gzero)-to-sl(V)}---for the Weyl group $\calD_n$ of type D. This is the most difficult of the classical types to handle, on account of the fact that the branching rule from $(\calD_n)_{\zero}$ to $(\calD_{n-1})_{\zero}$ is not in general multiplicity free. Interestingly, the case in this paper that requires the most special treatment (handled in Section \ref{subsubsec:4b-D}) arises from the same simple module that required special treatment in our companion Lie algebra paper (see \cite[\S5.7]{DK:2025-preprint}), though the methods applied to resolve that case in each paper are quite different.

\subsection{Further directions}

For the sake of completeness, it would be interesting to determine if \eqref{eq:main-equality} can be extended to Weyl groups of all exceptional types. The groups $W(E_6)$, $W(E_7)$, and $W(E_8)$ are currently out of reach by direct calculation (at least to the authors) and some theory will be needed, but the conceptual approach developed in this paper should be helpful in resolving those cases. Likewise, the questions and some of the techniques considered here make sense for the affine Weyl groups. Going forward, it would be especially interesting to follow in the footsteps of Marin and apply the results of this paper and its prequels to the representation theory of the associated Iwahori--Hecke algebras and Artin braid groups. In particular, the graded (super) representation theory of these objects warrants investigation.

\subsection{Acknowledgements}

The first author thanks Javid Validashti for the suggestion to consider the finite subgroups of $PGL(2,\C)$.  The second author thanks Al Heine for early encouragement.

\section{Preliminaries}\label{sec:preliminaries}

We adopt the general notational and terminological conventions for vector super\-spaces and semisimple super\-algebras as laid out in \cite[\S2]{DK:2025}. If $A$ is a super\-algebra, then we may write $\abs{A}$ for the algebra obtained by forgetting its $\Z_2$-grading.

\subsection{Supergroups} \label{subsec:supergroups}

Recall that a finite group $G$ is a \emph{supergroup} if it admits a multiplicative grading $G = \Gzero \cup \Gone$ by $\Z_2$. This is equivalent to the existence of a group homomorphism $\kappa: G \to \set{\pm 1}$ such that $\Gzero = \ker(\kappa)$ and $\Gone = G \backslash \Gzero$. We will assume that $\Gzero \neq G$, and hence that $\Gzero$ is an index-$2$ subgroup of $G$. The $\Z_2$-grading on $G$ extends by linearity to a $\Z_2$-grading on the group algebra $\CG$, making $\CG$ into a superalgebra. Since $\CG$ is semisimple as an ordinary algebra, it is also semisimple as a superalgebra; see \cite[\S2]{Brundan:2002} or \cite[Lemma 2.2.11]{DK:2025}.


Let $\{ S^{\lambda}: \lambda \in \Lambda \}$ be a complete set of simple $\CG$-modules up to isomorphism. By abuse of notation, let $\kappa$ also denote the one-dimensional $\CG$-module afforded by $\kappa$. Then for each $\lambda \in \Lambda$, one has $S^{\lambda} \otimes \kappa \cong S^{\lambda'}$ for a unique $\lambda' \in \Lambda$, and the map $\lambda \mapsto \lambda'$ defines an involution on $\Lambda$. Schur's Lemma implies for each $\lambda \in \Lambda$ that, up to mutual rescaling, there exist unique linear maps $\phi_\kappa^\lambda: S^\lambda \to S^{\lambda'}$ and $\phi_\kappa^{\lambda'}: S^{\lambda'} \to S^\lambda$ (corresponding to $\CG$-module isomorphisms $S^\lambda \otimes \kappa \cong S^{\lambda'}$ and $S^{\lambda'} \otimes \kappa \cong S^{\lambda}$) such that for all $g \in G$, $v \in S^\lambda$, and $w \in S^{\lambda'}$,
	\begin{align}
	\phi_\kappa^\lambda(g \cdot v) &= \kappa(g) g \cdot \phi_\kappa^\lambda(v), &
	\phi_\kappa^{\lambda'}(g \cdot w) &= \kappa(g) g \cdot \phi_\kappa^{\lambda'}(w), \label{eq:associator} \\
	\phi_\kappa^{\lambda'} \circ \phi_\kappa^\lambda &= \id_{S^\lambda}, & 
	\phi_\kappa^{\lambda} \circ \phi_\kappa^{\lambda'} &= \id_{S^{\lambda'}}. \label{eq:inverse-associator}
	\end{align}
In the case $\lambda = \lambda'$, this specifies the map $\phi_\kappa^\lambda : S^\lambda \to S^\lambda$ up to scalar multiplication by $\pm 1$. In the literature, the map $\phi_\kappa^\lambda$ is called an \emph{associator} for $S^\lambda$; cf.\ \cite{Headley:1996,Geetha:2018}.

\begin{example}[The symmetric group] \label{example:symmetric-group}
Let $n \geq 2$. The symmetric group $\fS_n$ is a supergroup via the sign character $\ve'': \fS_n \to \set{\pm 1}$. The even subgroup $(\fS_n)_{\zero}$ of $\fS_n$ is then the alternating group $\fA_n$. The simple $\CS_n$-modules are labeled by integer partitions $\lambda \vdash n$. Given $\lambda \vdash n$, let $\lambda^* \vdash n$ be its conjugate (or transpose) partition, and let $S^\lambda$ be the simple Specht module for $\CS_n$ labeled by $\lambda$. Then the trivial $\CS_n$-module is $S^{[n]}$, and the sign module (i.e., the module afforded by $\ve''$) is $S^{[1^n]}$. By abuse of notation, we may also denote $S^{[1^n]}$ by $\ve''$. Then for each $\lambda \vdash n$, one has $S^\lambda \otimes \ve'' \cong S^{\lambda^*}$. An explicit associator $\phi^\lambda: S^\lambda \to S^\lambda$ for $\lambda = \lambda^*$ is given in \cite[Theorem 2.2]{Headley:1996}; see also \cite[Eqn 9]{Geetha:2018}.
\end{example}

Given a $\CGzero$-module $W$ and an element $t \in G_{\one}$, let ${}^t W = \smallset{ {}^t w : w \in W}$ be the \emph{conjugate} $\CGzero$-module, in which the action of an element $h \in \Gzero$ is defined by $h.{}^tw = {}^t [(tht^{-1}).w]$. Up to isomorphism, the conjugate module does not depend on the choice of element $t \in G_{\one}$. We say that two $\CGzero$-modules $W$ and $W'$ are conjugate if $W' \cong {}^t W$ for some $t \in G_{\one}$.

If $V$ is a $\CG$-module, then we write $\Res_{\Gzero}^G(V)$ for the $\CGzero$-module obtained by restriction. The simple $\CGzero$-modules are now obtained as follows:

\begin{lemma}[{cf.\ \cite[Proposition 5.1]{Fulton:1991}}] \label{lemma:classical-Clifford}
With assumptions and notation as above, each simple $\CGzero$-module arises uniquely (up to isomorphism) in one of the following two ways:
	\begin{enumerate}
	\item \label{item:type-E-simple} If $\lambda \neq \lambda'$, then $\Res_{\Gzero}^G(S^\lambda) \cong \Res_{\Gzero}^G(S^{\lambda'})$ is simple and isomorphic to its conjugate.
	\item If $\lambda = \lambda'$, then $\Res_{\Gzero}^G(S^{\lambda})$ is (canonically, by uniqueness of isotypical components) the direct sum of two simple, conjugate, non-isomorphic $\CGzero$-sub\-modules $S^{\lambda^+}$ and $S^{\lambda^-}$, equal to the $+1$ and $-1$ eigenspaces of (a fixed choice of) the map $\phi_\kappa^{\lambda}: S^\lambda \to S^\lambda$.
	\end{enumerate}
\end{lemma}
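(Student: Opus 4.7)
The plan is to recast this as classical Clifford theory for the normal index-$2$ subgroup $\Gzero \trianglelefteq G$, with the full dichotomy flowing from a single endomorphism-algebra computation. Using the projection formula and the fact that the induction of the trivial character from an index-$2$ subgroup is the regular representation of $G/\Gzero \cong \Z_2$, I would first establish
\[
\Ind_{\Gzero}^G \Res_{\Gzero}^G S^\lambda \;\cong\; S^\lambda \otimes \Ind_{\Gzero}^G \C \;\cong\; S^\lambda \oplus (S^\lambda \otimes \kappa) \;\cong\; S^\lambda \oplus S^{\lambda'}.
\]
Frobenius reciprocity then identifies $\End_{\CGzero}(\Res_{\Gzero}^G S^\lambda)$ with $\Hom_{\CG}(S^\lambda, S^\lambda \oplus S^{\lambda'})$, a space of dimension $1$ if $\lambda \neq \lambda'$ and of dimension $2$ if $\lambda = \lambda'$. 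Everything else will be driven by this single computation.

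In the case $\lambda \neq \lambda'$, the one-dimensional endomorphism algebra forces $\Res_{\Gzero}^G S^\lambda$ to be simple, since any nontrivial decomposition would produce additional commuting projectors. The isomorphism $\Res_{\Gzero}^G S^\lambda \cong \Res_{\Gzero}^G S^{\lambda'}$ comes from restricting the given $\CG$-isomorphism $S^\lambda \otimes \kappa \cong S^{\lambda'}$ and observing that $\kappa|_{\Gzero}$ is trivial. For self-conjugacy I would fix $t \in \Gone$ and check that left multiplication $\psi_t : v \mapsto t.v$ on $S^\lambda$ satisfies $\psi_t(h.v) = (tht^{-1}).\psi_t(v)$ for all $h \in \Gzero$, which is precisely a $\CGzero$-isomorphism $\Res_{\Gzero}^G S^\lambda \simrightarrow {}^t(\Res_{\Gzero}^G S^\lambda)$.

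In the case $\lambda = \lambda'$, the axioms \eqref{eq:associator} and \eqref{eq:inverse-associator} say that $\phi_\kappa^\lambda$ commutes with the $\Gzero$-action (so lies in $\End_{\CGzero}(\Res_{\Gzero}^G S^\lambda)$) and squares to the identity, while the dimension count forces it to be non-scalar. Its $\pm 1$-eigenspaces $S^{\lambda^{\pm}}$ are therefore $\CGzero$-stable and yield the asserted decomposition, with the sign freedom in $\phi_\kappa^\lambda$ only swapping the two summands. Conjugacy will follow from the identity $\phi_\kappa^\lambda(t.v) = -t.\phi_\kappa^\lambda(v)$ for $t \in \Gone$ (another application of \eqref{eq:associator}), which shows that left multiplication by $t$ exchanges the eigenspaces. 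And because $\End_{\CGzero}(\Res_{\Gzero}^G S^\lambda)$ is exactly two-dimensional and is spanned by the commuting projectors onto $S^{\lambda^{\pm}}$, each summand must be simple and the two must be non-isomorphic.

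To wrap up I would invoke Frobenius reciprocity once more: every simple $\CGzero$-module appears as a summand of $\Res_{\Gzero}^G S^\lambda$ for some $\lambda$, and the two cases of the lemma are manifestly disjoint (case (1) gives an irreducible, self-conjugate restriction, whereas case (2) gives a reducible restriction into mutually conjugate, non-self-conjugate pieces), so this also yields uniqueness. The step I expect to need the most care is the rigidity argument in case (2) that rules out the alternative $\Res_{\Gzero}^G S^\lambda \cong W \oplus W$ for a single simple $W$ (which would carry a four-dimensional endomorphism algebra): this alternative is killed off directly by the initial dimension count, so the Frobenius-reciprocity calculation ends up doing essentially all of the work.
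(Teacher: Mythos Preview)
Your proof is correct and complete; the paper itself does not give a proof of this lemma but simply cites \cite[Proposition 5.1]{Fulton:1991}, so there is nothing to compare against beyond noting that your argument is the standard Clifford-theoretic one that citation points to.

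One small point of phrasing: the claim that ``the dimension count forces $\phi_\kappa^\lambda$ to be non-scalar'' is not quite right as stated, since knowing $\End_{\CGzero}(\Res_{\Gzero}^G S^\lambda)$ is two-dimensional does not by itself prevent $\phi_\kappa^\lambda$ from being $\pm\id$. The correct reason is the one you invoke in the very next paragraph: for $t \in \Gone$ the relation $\phi_\kappa^\lambda(t.v) = -t.\phi_\kappa^\lambda(v)$ from \eqref{eq:associator} shows $\phi_\kappa^\lambda \notin \End_{\CG}(S^\lambda) = \C \cdot \id$. You should invoke that anti-commutation first, and then use the two-dimensionality of the endomorphism algebra (together with the fact that it is now spanned by the two eigenprojectors) to conclude simplicity and non-isomorphism of the summands, exactly as you do.
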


Then applying \cite[Lemma 2.3]{Brundan:2002} and \cite[Corollary 2.8]{Brundan:2002} as in \cite[\S3.1]{DK:2025}, one deduces:

\begin{proposition} \label{prop:simple-supermodules}
With assumptions and notation as above, each simple $\CG$-supermodule occurs uniquely, up to homogeneous isomorphism, in one of the following two ways:
	\begin{enumerate}
	\item \label{item:self-associate-G-supermodule} If $\lambda \neq \lambda'$, then there exists a Type Q simple $\CG$-supermodule $W^\lambda = W^{\lambda'}$ such that $W^\lambda = S^\lambda \oplus S^{\lambda'}$ as a $\abs{\CG}$-module, with
		\[
		W_{\zero}^\lambda = \{u + \phi_\kappa^\lambda(u) : u \in S^\lambda \} 
		\qquad \text{and} \qquad 
		W_{\one}^\lambda = \{ u - \phi_\kappa^\lambda(u): u \in S^\lambda \},
		\]
and with odd involution $J^\lambda: W^\lambda \to W^\lambda$ defined by $J^\lambda(u \pm \phi_\kappa^\lambda(u)) = u \mp \phi_\kappa^\lambda(u)$.

	\item If $\lambda = \lambda'$, then there exists a Type $M$ simple $\CG$-supermodule $W^\lambda$ such that $W^\lambda = S^\lambda$ as a $\abs{\CG}$-module, with $W_{\zero}^\lambda = S^{\lambda^+}$ and $W_{\one}^\lambda = S^{\lambda^-}$.
	\end{enumerate}
Each simple $\CG$-supermodule is uniquely determined up to an even isomorphism by its summands as a $\CGzero$-module and by the superdegrees in which those summands are concentrated.
\end{proposition}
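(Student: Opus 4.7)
The plan is to derive the proposition directly from the general Clifford theory for finite supergroups---namely \cite[Lemma 2.3]{Brundan:2002} and \cite[Corollary 2.8]{Brundan:2002}---by matching the abstract classification to the explicit simple $\CGzero$-modules produced by Lemma \ref{lemma:classical-Clifford}. Brundan's results attach to each orbit of simple $\CGzero$-modules (under conjugation by $\Gone$) a unique simple $\CG$-super\-module, whose type is M or Q according to whether the orbit consists of a single self-conjugate simple or of a pair of non-isomorphic conjugate simples. Lemma \ref{lemma:classical-Clifford} identifies these two cases with $\lambda = \lambda'$ (giving a conjugate pair $S^{\lambda^+}, S^{\lambda^-}$ of non-isomorphic $\CGzero$-simples) and $\lambda \neq \lambda'$ (giving a single self-conjugate $\CGzero$-simple), respectively. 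So it remains to write down the resulting supermodules in the explicit coordinates provided by the associator $\phi_\kappa^\lambda$.

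For case (1) with $\lambda \neq \lambda'$, I would take $W^\lambda := S^\lambda \oplus S^{\lambda'}$ as an ungraded $\CG$-module and equip it with the $\Z_2$-grading stated in the proposition. Compatibility with the grading on $\CG$ follows from \eqref{eq:associator}: when $\kappa(g) = 1$ the associator $\phi_\kappa^\lambda$ commutes with the action of $g$, so $\Gzero$ preserves both $W_{\zero}^\lambda$ and $W_{\one}^\lambda$; when $\kappa(g) = -1$ one gets $g \cdot (u \pm \phi_\kappa^\lambda(u)) = gu \mp \phi_\kappa^\lambda(gu)$, so $\Gone$ swaps them. Simplicity of $W^\lambda$ as a super\-module follows because its only proper ungraded $\CG$-sub\-modules are $S^\lambda$ and $S^{\lambda'}$, neither of which is homogeneous under the prescribed grading. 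To confirm that $W^\lambda$ is of Type Q, I would verify that the formula for $J^\lambda$ defines a strict $\CG$-linear endomorphism of odd degree with $(J^\lambda)^2 = \id$; this is an immediate consequence of the same sign calculations together with \eqref{eq:inverse-associator}.

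For case (2) with $\lambda = \lambda'$, I would take $W^\lambda := S^\lambda$ graded by the $\pm 1$ eigenspaces of $\phi_\kappa^\lambda$; relation \eqref{eq:associator} again shows that $\Gzero$-elements preserve, and $\Gone$-elements swap, these eigenspaces. Since the underlying ungraded module is the simple module $S^\lambda$, this $W^\lambda$ is a simple $\CG$-super\-module of Type M. Completeness and uniqueness then follow from Brundan's classification: the constructions above exhaust the full list of $\Gone$-orbits on the simple $\CGzero$-modules, and the normalization \eqref{eq:inverse-associator} eliminates the rescaling ambiguity in $\phi_\kappa^\lambda$. The closing assertion---that a simple supermodule is determined up to even isomorphism by its $\CGzero$-summands and the super\-degrees in which they lie---is transparent from the explicit descriptions. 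The only delicate point in the argument is the careful tracking of the signs in \eqref{eq:associator}, but this is otherwise a routine unpacking.
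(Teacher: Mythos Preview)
Your proposal is correct and follows the same approach as the paper, which simply cites \cite[Lemma 2.3]{Brundan:2002} and \cite[Corollary 2.8]{Brundan:2002} (together with \cite[\S3.1]{DK:2025}) without writing out any details; you have supplied the explicit verifications that the paper omits. One slip to fix: in your opening paragraph the M/Q correspondence is stated backwards---a single self-conjugate $\CGzero$-simple yields a Type~Q supermodule, while a conjugate pair of non-isomorphic $\CGzero$-simples yields Type~M---though your subsequent case-by-case constructions get this right.
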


\begin{remark} \label{remark:even-odd-isomorphic} \ 
	\begin{enumerate}
	\item Each simple $\CG$-supermodule is even-dimensional.
	

	\item For $\lambda \neq \lambda'$ and $c \neq 0$, the $\CG$-supermodule structure on $S^\lambda \oplus S^{\lambda'}$ defined by the pair $(\phi_\kappa^\lambda,\phi_\kappa^{\lambda'})$ is isomorphic to that defined by $(c \cdot \phi_\kappa^\lambda,\frac{1}{c} \cdot \phi_\kappa^{\lambda'})$ via the map $(u,v) \mapsto (u,c \cdot v)$.

	\item \label{item:W-lam-homogeneous-basis} For $\lambda \neq \lambda'$, if $u_1,\ldots,u_n$ is a basis for $S^\lambda$, then $\{u_i + \phi_\kappa^\lambda(u) : 1 \leq i \leq n \}$ and $\{u_i - \phi_\kappa^\lambda(u) : 1 \leq i \leq n \}$ are bases for $\Wzero^\lambda$ and $\Wone^\lambda$, respectively.

	\item For $\lambda \neq \lambda'$, 
	the map $J^{\lambda}$ can be interpreted as defining an even $\CG$-super\-module isomorphism from $W^{\lambda}$ to the parity shifted module $\Pi(W^\lambda)$.

	\item For $\lambda = \lambda'$, the $\CG$-supermodule $W^{\lambda}$ is odd-isomorphic to $\Pi(W^\lambda)$ but not even-isomorphic because the $\CGzero$-module summands of $S^\lambda$ are not isomorphic. Fixing $W^\lambda$ up to an even isomorphism depends in our notation on a fixed choice for the map $\phi_\kappa^\lambda$. If one replaces $\phi_\kappa^\lambda$ with $-\phi_\kappa^\lambda$, then $W^\lambda$ is replaced by $\Pi(W^\lambda)$.
	\end{enumerate}
\end{remark}

Given a finite supergroup $G$, let $\Irr(G)$ be the set of isomorphism classes of the simple $\CG$-modules, and let $\Irr_s(G)$ be the set of isomorphism classes (up to homogeneous iso\-mor\-phism) of the simple $\CG$-supermodules. Then $\Irr_s(G) = \Irr_M(G) \cup \Irr_Q(G)$, where $\Irr_M(G)$ (resp.\ $\Irr_Q(G)$) is the set of isomorphism classes of the Type M (resp.\ Type Q) simple $\CG$-supermodules. Given $W \in \Irr(G)$ (resp.\ $W \in \Irr_s(G)$), we may abuse notation and write $W: \CG \to \End(W)$ to denote the corresponding module structure map. Thus if $W \in \Irr(G)$ and $g \in G$, then $W(g) \in \End(W)$ denotes the map $u \mapsto g.u$.

Recall that if $V$ is a vector superspace equipped with an odd involution $J$, then
	\begin{equation} \label{eq:Q(V)}
	Q(V) = Q(V,J) = \set{ \theta \in \End(V) : J \circ \theta = \theta \circ J}
	\end{equation}
is a simple subsuperalgebra of $\End(V)$. Now \cite[Theorem 2.2.11]{DK:2025} gives:

\begin{corollary} \label{cor:super-artin-wedderburn-CG}
Let $G$ be a finite supergroup. Then the sum of the simple supermodule structure maps $W: \CG \to \End(W)$ induces a superalgebra isomorphism
	\begin{equation} \label{eq:super-artin-wedderburn-CG}
	\CG \cong 
	\Big[ \bigoplus_{W \in \Irr_Q(G)} Q(W) \Big]
	\oplus 
	\Big[ \bigoplus_{W \in \Irr_M(G)} \End(W) \Big] 
	\end{equation}
In particular, for $W \in \Irr_Q(G)$, the structure map $W: \CG \to \End(W)$ has image in $Q(W)$.
\end{corollary}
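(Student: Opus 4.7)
The plan is to apply the super Artin-Wedderburn theorem \cite[Theorem 2.2.11]{DK:2025} to the superalgebra $\CG$, using Proposition~\ref{prop:simple-supermodules} to identify the simple blocks. Since $\CG$ is semisimple as a superalgebra (as recorded at the start of Section~\ref{subsec:supergroups}), the cited theorem decomposes $\CG$ as a direct sum of simple superalgebra blocks, one for each isomorphism class of simple $\CG$-supermodule, with Type M simples $W$ contributing blocks of the form $\End(W)$ and Type Q simples $W$ contributing blocks of the form $Q(W)$. The asserted isomorphism \eqref{eq:super-artin-wedderburn-CG} is then induced by the direct sum of the structure maps $W : \CG \to \End(W)$, since these are exactly the projections onto the corresponding blocks.

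What remains is to check that the dichotomy in Proposition~\ref{prop:simple-supermodules} lines up with this classification. For $\lambda = \lambda'$ the supermodule $W^\lambda$ coincides with the simple ungraded module $S^\lambda$, so ordinary Artin-Wedderburn ensures that the image of $\CG$ in $\End(W^\lambda)$ is all of $\End(W^\lambda)$, and the action preserves the $\Z_2$-grading on $W^\lambda$ by construction; this block is therefore $\End(W^\lambda)$. For $\lambda \neq \lambda'$ the supermodule $W^\lambda$ carries the odd involution $J^\lambda$ constructed in Proposition~\ref{prop:simple-supermodules}, and the task is to verify that $J^\lambda$ supercommutes with $W^\lambda(g)$ for every $g \in G$, so that the image lands inside $Q(W^\lambda)$. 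A short direct calculation accomplishes this: for $u \in S^\lambda$ and a homogeneous element $w = u \pm \phi_\kappa^\lambda(u)$ of $W^\lambda$, the identity \eqref{eq:associator} gives $g \cdot w = g \cdot u \pm \kappa(g) \, \phi_\kappa^\lambda(g \cdot u)$, after which applying $J^\lambda$ before or after the $g$-action yields the same element. Once the image of $\CG$ is contained in $Q(W^\lambda)$, the uniqueness of the super Artin-Wedderburn decomposition forces the full block at $W^\lambda$ to be $Q(W^\lambda)$.

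The only genuine calculation, and the main potential pitfall, is the supercommutation check for $J^\lambda$ in the Type Q case; everything else is formal once \cite[Theorem 2.2.11]{DK:2025} and Proposition~\ref{prop:simple-supermodules} are in hand.
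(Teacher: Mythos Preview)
Your approach is correct and matches the paper's, which simply records the corollary as an immediate consequence of \cite[Theorem 2.2.11]{DK:2025} without further elaboration; your unpacking of how the Type M/Q dichotomy in Proposition~\ref{prop:simple-supermodules} lines up with the blocks is a helpful expansion. One terminological slip to fix: the defining condition for $Q(W)$ in \eqref{eq:Q(V)} is \emph{ordinary} commutation $J \circ \theta = \theta \circ J$, not supercommutation; your calculation (``applying $J^\lambda$ before or after the $g$-action yields the same element'') correctly verifies ordinary commutation, so only the wording needs adjustment.
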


\begin{remark} \label{remark:artin-wedderburn-CGzero}
Applying \cite[Lemma 2.2.14]{DK:2025}, one sees that, on restriction to the even subspaces, \eqref{eq:super-artin-wedderburn-CG} restricts to the usual Artin--Wedderburn isomorphism for $\CGzero$:
	\[
	\CGzero \cong \bigoplus_{V \in \Irr(\Gzero)} \End(V).
	\]
More precisely, for $W \in \Irr_M(G)$, the supermodule structure map $W: \CG \to \End(W)$ restricts to the surjective $\CGzero$-module structure map $W: \CGzero \to \End(W)$, while for $W \in \Irr_Q(G)$ with $W = S^\lambda \oplus S^{\lambda'}$ as $\abs{\CG}$-modules and $S^\lambda \cong S^{\lambda'}$ as $\CGzero$-modules, the structure map $W: \CG \to \End(W)$ restricts to map $\CGzero \to \End(S^\lambda) \oplus \End(S^{\lambda'})$, $x \mapsto (S^\lambda(x),S^{\lambda'}(x))$, whose image is a diagonally embedded copy of $\End(S^\lambda) \cong \End(S^{\lambda'})$.
\end{remark}

\subsection{Lie superalgebras arising from simple superalgebras of types M and Q} \label{subsec:lie-superalgebras-M-Q}

Given a vector superspace $V$, write $\gl(V)$ for the set $\End(V)$ considered as a Lie super\-algebra via the super commutator $[x,y] = xy - (-1)^{\ol{x} \cdot \ol{y}}yx$. Fixing a homogeneous basis for $V$, and making the identification $V \cong \C^{m|n} \coloneq \C^m \oplus \Pi(\C^n)$ for some $m,n \in \N$ via this choice of basis, $\gl(V)$ identifies with the matrix Lie superalgebra
	\[
	\gl(m|n) := \Set{ \left[	\begin{array}{c|c}
								A & B \\
								\hline
								C & D
								\end{array} \right] : A \in M_m(\C), B \in M_{m \times n}(\C), C \in M_{n \times m}(\C), D \in M_n(\C)}.
	\]
If $V$ is a vector superspace equipped with an odd involution $J : V \to V$, write $\fq(V)$ for the set $Q(V)$ of \eqref{eq:Q(V)} considered as a Lie super\-algebra via the super commutator. Fixing a basis $\set{v_1,\ldots,v_n}$ for $\Vzero$, and setting $v_i' = J(v_i)$ for $1 \leq i \leq n$, one then has $V \cong \C^{n|n}$, and $\fq(V)$ identifies with
	\[
	\fq(n) := \Set{ \left[	\begin{array}{c|c}
							A & B \\
							\hline
							B & A
							\end{array} \right] : A \in M_n(\C), B \in M_n(\C) }.
	\]

For an arbitrary Lie super\-algebra $\g$, we denote its derived subalgebra $[\g,\g]$ by $\fD(\g)$. Then $\fD(\gl(V))$ is the special linear Lie superalgebra $\fsl(V)$, and
	\[
	\fD(\glmn) = \fsl(m|n) := \Set{ \left[	\begin{array}{c|c}
								A & B \\
								\hline
								C & D
								\end{array} \right] \in \glmn : \tr(A) - \tr(D) = 0}.
	\]
Now let $V$ be a vector superspace equipped with an odd involution $J : V \to V$, let $\theta \in \fq(V)$, and let $\theta = \theta_{\zero} + \theta_{\one}$ be the decomposition of $\theta$ into its homogeneous components. Then $J \circ \theta_{\one} = \theta_{\one} \circ J$ restricts to an even linear map $(J \circ \theta_{\one})|_{\Vzero} : \Vzero \to \Vzero$. Identifying $\Vzero$ and $\Vone$ via $J$, this is equal to the even linear map $(\theta_{\one} \circ J)|_{\Vone} : \Vone \to \Vone$. Now define the \emph{odd trace} of $\theta$, denoted $\otr(\theta)$, by
	\[
	\otr(\theta) = \tr\big( (J \circ \theta_{\one})|_{\Vzero} \big) = \tr\big( (\theta_{\one} \circ J)|_{\Vone} \big),
	\]
i.e., the trace of the linear maps $(J \circ \theta_{\one})|_{\Vzero}$ or $(\theta_{\one} \circ J)|_{\Vone}$, and define $\sq(V) \subseteq \fq(V)$ by
	\[
	\sq(V) = \Set{ \theta \in \fq(V) : \otr(\theta) = 0 }.
	\]
Then one can show that $\fD(\fq(V)) = \sq(V)$, and
	\[
	\fD(\fq(n)) = \sq(n) := \Set{ \left[	\begin{array}{c|c}
								A & B \\
								\hline
								B & A
								\end{array} \right] \in \fq(n) : \tr(B) = 0 }.
	\]
	
\begin{lemma} \label{lem:generated-by-odd} \ 
	\begin{enumerate}
	\item If $m \geq 2$, then $\fsl(m|m)$ is generated as a Lie superalgebra by $\fsl(m|m)_{\one}$.
	\item If $m \geq 3$, then $\sq(m)$ is generated as a Lie superalgebra by $\sq(m)_{\one}$.
	\end{enumerate}
\end{lemma}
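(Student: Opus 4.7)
The plan is to reduce each statement to a concrete linear-algebra problem. If $\g$ denotes the Lie subsuperalgebra generated by $\g_{\one}$ inside $\fsl(m|m)$ or $\sq(m)$, then $\g_{\one}$ already equals the full odd part, so $\g_{\zero}$ is spanned by the supercommutators $[\g_{\one},\g_{\one}]$ together with iterated brackets. It therefore suffices to show that the linear span of those length-two brackets already fills out the full even part of the ambient superalgebra; once this is done, the generated subsuperalgebra equals the entire target.

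For part (1), I would work with the odd matrix units
\[
e_{ij}^{+} = \begin{pmatrix} 0 & E_{ij} \\ 0 & 0 \end{pmatrix}, \qquad e_{ij}^{-} = \begin{pmatrix} 0 & 0 \\ E_{ij} & 0 \end{pmatrix},
\]
which span $\fsl(m|m)_{\one}$, and compute directly that
\[
[e_{ij}^{+}, e_{kl}^{-}] = \begin{pmatrix} \delta_{jk} E_{il} & 0 \\ 0 & \delta_{li} E_{kj} \end{pmatrix}.
\]
Specializing: $j = k$ with $l \neq i$ yields $\mathrm{diag}(E_{il}, 0)$, giving every off-diagonal matrix unit in the upper block; symmetrically $l = i$ with $j \neq k$ gives every off-diagonal unit in the lower block; and $j = k$, $l = i$ produces $\mathrm{diag}(E_{ii}, E_{jj})$, linear combinations of which recover every diagonal matrix $\mathrm{diag}(A, D)$ with $\tr(A) = \tr(D)$. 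The index choices $l \neq i$ and $j \neq k$ require exactly $m \geq 2$.

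For part (2), write a typical odd element as $q_{B} = \sm{0 & B \\ B & 0}$ with $\tr(B) = 0$; a direct calculation gives
\[
[q_{B_1}, q_{B_2}] = \begin{pmatrix} B_1 B_2 + B_2 B_1 & 0 \\ 0 & B_1 B_2 + B_2 B_1 \end{pmatrix}.
\]
Because the even part $\sq(m)_{\zero}$ coincides with $\fq(m)_{\zero} = \{\mathrm{diag}(A, A) : A \in M_m(\C)\}$, the problem reduces to showing that anticommutators $B_1 B_2 + B_2 B_1$ of traceless $m \times m$ matrices span $M_m(\C)$. For pairwise distinct $i, j, l$ one has $E_{ij} E_{jl} + E_{jl} E_{ij} = E_{il}$, giving every off-diagonal unit; and for $i \neq j$, $E_{ij} E_{ji} + E_{ji} E_{ij} = E_{ii} + E_{jj}$, while the combination $(E_{ii}+E_{jj}) + (E_{ii}+E_{kk}) - (E_{jj}+E_{kk}) = 2 E_{ii}$ (with $i, j, k$ pairwise distinct) produces each diagonal unit. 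Both steps require $m \geq 3$.

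The main obstacle is the sharpness of the bound in part (2): for $m = 2$, Cayley--Hamilton forces the anticommutator of two traceless $2 \times 2$ matrices to be a scalar multiple of the identity, so any span of such anticommutators collapses to $\C \cdot I \neq \fq(2)_{\zero}$. Any proof must therefore genuinely exploit $m \geq 3$; the explicit constructions above do so naturally, and the rest of the argument is routine matrix bookkeeping.
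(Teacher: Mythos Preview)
Your proof is correct and follows exactly the approach the paper indicates: the paper's proof is the single sentence ``It is an exercise to show that various matrix units (or sums of two matrix units) spanning the even parts of the Lie superalgebras can be obtained as Lie brackets of odd elements,'' and your computations with the odd matrix units $e_{ij}^{\pm}$ and $q_B$ carry out precisely this exercise, including the observation via Cayley--Hamilton that the bound $m \geq 3$ in part (2) is sharp.
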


\begin{proof}
It is an exercise to show that various matrix units (or sums of two matrix units) spanning the even parts of the Lie super\-algebras can be obtained as Lie brackets of odd elements.
\end{proof}

\subsection{Lie superalgebras generated by odd elements} \label{subsec:lie-superalgebras-odd-elements}

Let $G$ be a finite supergroup, and write $\Lie(\CG)$ for the superalgebra $\CG$ considered as a Lie superalgebra via the super commutator
	\[
	[x,y] = xy - (-1)^{\ol{x} \cdot \ol{y}}yx.
	\]
Then \eqref{eq:super-artin-wedderburn-CG} restricts to a Lie superalgebra isomorphism
	\begin{equation} \label{eq:LieCG}
	\Lie(\CG) \cong 
	\Big[ \bigoplus_{W \in \Irr_Q(G)} \fq(W) \Big]
	\oplus 
	\Big[ \bigoplus_{W \in \Irr_M(G)} \gl(W) \Big].
	\end{equation}
Set $\fD(\CG) = \fD(\Lie(\CG))$. Then making the identification \eqref{eq:LieCG}, one has
	\begin{equation} \label{eq:D(LieCG)}
	\fD(\CG) = 
	\Big[ \bigoplus_{W \in \Irr_Q(G)} \sq(W) \Big] 
	\oplus 
	\Big[ \bigoplus_{W \in \Irr_M(G)} \fsl(W) \Big]
	\end{equation}

Let $S \subseteq \Gone$ be a subset of the odd elements of $G$, and let $\g$ be the Lie subsuperalgebra of $\CG$ generated by the elements of the set $S$. In this section we will prove results under various combinations of the following assumptions:
	\begin{enumerate}[label={(A\arabic*)}]
	\item \label{item:S-union-conj-classes} The set $S$ is a union of conjugacy classes in $G$, say $S = C_1 \cup \cdots \cup C_t$.

	\item \label{item:gzero-generates-CGzero} There exists $S' \subseteq \gzero$ such that $\Gzero = \subgrp{S'}$. In particular, $S'$ generates $\CGzero$ as an algebra.

	\item \label{item:S'-one-class-gen-G0} The set $S'$ consists of a single conjugacy class of order-two elements in $\Gzero$.
	

	\end{enumerate}
	
\begin{lemma} \label{lemma:g-in-D(g)+span}
Suppose \ref{item:S-union-conj-classes} holds. For $1 \leq i \leq t$, let $T_i = \sum_{c \in C_i} c \in \g$. Then
	\begin{equation} \label{eq:g-in-D(g)+span}
	\g \subseteq \fD(\CG) + \Span\{T_1,\ldots,T_t\}.
	\end{equation}
\end{lemma}

\begin{proof}
Since any Lie bracket between elements of $\CG$ is by definition equal to an element of $\fD(\CG)$, it suffices to show that $S \subseteq \fD(\CG) + \Span\{T_1,\ldots,T_t\}$. Let $\tau \in S$ with say $\tau \in C_1$. Set $C = C_1$ and $T = T_1$. For each $V \in \Irr(G)$, the trace of the operator $V(c): V \to V$ is constant across elements $c \in C$, because all elements of $C$ are conjugate. Then $\tr(V(T)) = \abs{C} \cdot \tr(V(\tau))$, and hence $\wt{\tau} \coloneq \tau - \frac{1}{\abs{C}} \cdot T$ is a traceless operator on $V$.

Now let $W \in \Irr_s(G)$. Then $W(\wt{\tau}) \in \gl(W)_{\one} = \fD(\gl(W))_{\one} \subset \fD(\gl(W))$. If $W$ is of Type Q, with say $W = S^\lambda \oplus S^{\lambda'}$ as in Proposition \ref{prop:simple-supermodules}, and if one fixes a homogeneous basis for $W$ as in Remark \ref{remark:even-odd-isomorphic}\eqref{item:W-lam-homogeneous-basis} by fixing a basis for $S^\lambda$, then one sees that the matrix of $W(\wt{\tau})$ is
	\[
	W^\lambda(\wt{\tau}) = 
	\left[ 
	\begin{array}{c|c} 0 & S^\lambda(\wt{\tau}) \\ 
	\hline 
	S^\lambda(\wt{\tau}) & 0 \end{array} 
	\right],
	\]
in which the off-diagonal blocks are the matrix of $S^\lambda(\wt{\tau})$ with respect to the chosen basis for $S^\lambda$. In particular, $S^\lambda(\wt{\tau})$ is traceless by the previous paragraph, so $W^\lambda(\wt{\tau}) \in \sq(W) = \fD(\fq(W))$.

Finally, make the identifications \eqref{eq:LieCG} and \eqref{eq:D(LieCG)}. The observations of the previous paragraph show for each $W \in \Irr_s(G)$ that the projection $W(\wt{\tau})$ is an element of $\fD(\CG)$. Then $\wt{\tau} \in \fD(\CG)$, and hence $\tau = \wt{\tau} + \frac{1}{\abs{C}} \cdot T \in \fD(\CG) + \Span\{T_1,\ldots,T_t\}$.
\end{proof}

\begin{lemma} \label{lemma:lie-algebra-center}
Suppose \ref{item:S-union-conj-classes} and \ref{item:gzero-generates-CGzero} hold. Let $Z(\gzero)$ be the center of the Lie algebra $\gzero$. Then $Z(\gzero) = \gzero \cap Z(\CGzero)$, and the projection map $p: \CGzero \to Z(\CGzero)$, defined by
		\[
		p(z) = \frac{1}{\abs{\Gzero}}  \sum_{g \in \Gzero} g z g^{-1},
		\]
	restricts to a projection map $p: \gzero \to Z(\gzero)$. For this map, one has $p(\fD(\gzero)) \subseteq \fD(\gzero)$.
\end{lemma}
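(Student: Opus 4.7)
The plan is to verify the three asserted properties in order: the identification of $Z(\gzero)$, the fact that $p$ restricts to a projection onto $Z(\gzero)$, and the preservation of $\fD(\gzero)$ under $p$.

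First, for the identity $Z(\gzero) = \gzero \cap Z(\CGzero)$, the inclusion $\supseteq$ is immediate because any element of $\gzero$ commuting with all of $\CGzero$ certainly commutes with $\gzero \subseteq \CGzero$. For $\subseteq$, suppose $z \in Z(\gzero)$. Since $z$ and every $x \in \gzero$ are both even, the super bracket $[z,x]$ coincides with the ordinary commutator $zx - xz$. Thus $z$ commutes (in the associative sense) with every element of $\gzero$, in particular with the set $S'$ provided by \ref{item:gzero-generates-CGzero}. Since $\Gzero = \subgrp{S'}$, the set $S'$ generates $\CGzero$ as an algebra, so $z$ must lie in $Z(\CGzero)$.

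The second assertion rests on the fact that conjugation by any group element preserves $\g$. Concretely, for any $g \in G$, the map $\ad_g : x \mapsto gxg^{-1}$ is a superalgebra automorphism of $\CG$, hence a Lie superalgebra automorphism of $\Lie(\CG)$. By assumption \ref{item:S-union-conj-classes}, $\ad_g$ permutes the elements of $S$, and therefore preserves the Lie sub-super\-algebra $\g$ that they generate. Restricting to $g \in \Gzero$, this shows that $\ad_g$ preserves $\gzero = \g \cap \CGzero$. Averaging over $\Gzero$ then yields $p(\gzero) \subseteq \gzero$. Combined with the standard property $p(\CGzero) \subseteq Z(\CGzero)$, we conclude $p(\gzero) \subseteq \gzero \cap Z(\CGzero) = Z(\gzero)$ by the first step. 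Finally, for $z \in Z(\gzero) \subseteq Z(\CGzero)$ we have $gzg^{-1} = z$ for all $g \in \Gzero$, so $p(z) = z$; this confirms that $p$ restricted to $\gzero$ is a projection onto $Z(\gzero)$.

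For the last claim, I would take a bracket $[a,b] \in \fD(\gzero)$ with $a, b \in \gzero$ and note that
	\[
	g[a,b]g^{-1} = [gag^{-1}, gbg^{-1}]
	\]
for each $g \in \Gzero$. Because $\ad_g$ preserves $\gzero$, the right-hand side is again a bracket of elements of $\gzero$, hence lies in $\fD(\gzero)$. Summing over $g \in \Gzero$ and dividing by $|\Gzero|$ shows $p([a,b]) \in \fD(\gzero)$, and extending by linearity gives $p(\fD(\gzero)) \subseteq \fD(\gzero)$.

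The only nontrivial input is the stability of $\g$ under conjugation by arbitrary elements of $G$, and that is precisely the content of axiom \ref{item:S-union-conj-classes}; everything else is a direct consequence of this and the standard averaging argument, so I do not anticipate any serious obstacle.
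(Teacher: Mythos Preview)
Your proof is correct and follows essentially the same approach as the paper: both use (A2) to pass from commutation with $S'$ to commutation with all of $\CGzero$, use (A1) to show $\g$ (and hence $\gzero$ and $\fD(\gzero)$) is closed under conjugation by $G$, and then apply the averaging map. Your version is slightly more explicit in verifying that $p$ is the identity on $Z(\gzero)$, but otherwise the arguments are the same.
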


\begin{proof}
Let $z \in \gzero$. The adjoint map $\ad_z: x \mapsto [z,x]$ is a derivation on $\CGzero$. If $z \in Z(\gzero)$, then $\ad_z(s) =0$ for each $s \in S' \subseteq \gzero$. Since $\CGzero$ is generated by the set $S'$ by \ref{item:gzero-generates-CGzero}, this implies that $\ad_z(x) = 0$ for all $x \in \CGzero$, and hence $z \in Z(\CGzero)$. Then $z \in Z(\gzero)$ if and only if $z \in Z(\CGzero)$.

Next, since the set $S$ is a union of conjugacy classes by \ref{item:S-union-conj-classes}, it follows that $\g$ is closed under conjugation by $G$. Conjugation is an even linear map, so $\gzero$ is also closed under conjugation. Then $p$ sends elements of $\gzero$ to elements of $\gzero \cap Z(\CGzero) = Z(\gzero)$. Now since $\gzero$ is closed under conjugation, it follows that $\fD(\gzero) = [\gzero,\gzero]$ is also closed under conjugation, and hence $p(\fD(\gzero)) \subseteq \fD(\gzero)$.
\end{proof}

\begin{remark}
In general, $\fD(\gzero) \subset \fD(\g)_{\zero}$ but $\fD(\gzero) \neq \fD(\g)_{\zero}$.
\end{remark}

\begin{lemma} \label{lemma:G0-g0-submodule-equivalent}
Suppose \ref{item:gzero-generates-CGzero} holds. Let $W$ be a $\CGzero$-module. Then the following are equivalent for a subspace $V \subseteq W$:
	\begin{enumerate}
	\item $V$ is a $\CGzero$-submodule.
	\item $V$ is a submodule for the action of the Lie subalgebra $\gzero \subseteq \CGzero$.
	\end{enumerate}
In particular, if $W$ is a simple $\CGzero$-module, then $W$ is simple as a $\gzero$-module.
\end{lemma}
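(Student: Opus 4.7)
The plan is to exploit hypothesis \ref{item:gzero-generates-CGzero} in a very direct way: the set $S'$ sits inside $\gzero$ and simultaneously generates $\CGzero$ as an associative algebra, so stability under $\gzero$ already forces stability under $\CGzero$.

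The forward implication $(1) \Rightarrow (2)$ is immediate: since $\gzero \subseteq \CGzero$ by definition, any $\CGzero$-submodule $V \subseteq W$ is a fortiori closed under the action of $\gzero$.

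For the reverse implication $(2) \Rightarrow (1)$, suppose $V \subseteq W$ is stable under the action of $\gzero$. Because $S' \subseteq \gzero$, the subspace $V$ is stable under left multiplication by each $s \in S'$. By induction on word length, $V$ is then stable under left multiplication by every finite product of elements of $S'$. By \ref{item:gzero-generates-CGzero}, such products span $\CGzero$ as a complex vector space (every element of $\CGzero$ is a $\C$-linear combination of elements of $\Gzero = \subgrp{S'}$, and each element of $\subgrp{S'}$ is a product of elements of $S'$ together with their inverses; but the inverses lie in the subalgebra generated by $S'$ already, since $\CGzero$ is finite-dimensional and hence $\C\subgrp{S'} = \C[S']$). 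Taking linear combinations, $V$ is stable under the action of $\CGzero$, i.e., is a $\CGzero$-submodule.

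The ``in particular'' statement is then immediate: under the equivalence of (1) and (2), the lattices of $\CGzero$-submodules and $\gzero$-submodules of $W$ coincide, so simplicity in one sense implies simplicity in the other. The only point requiring any care in the proof is the closure-under-inverses remark above, where one uses finite-dimensionality of $\CGzero$ (equivalently, finiteness of $\Gzero$) to see that the subalgebra of $\CGzero$ generated by $S'$ already contains $s^{-1}$ for every $s \in S'$; this is the sole ``obstacle,'' and it is routine.
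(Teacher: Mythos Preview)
Your proof is correct and follows the same approach as the paper, which dispatches the lemma in one line (``This is immediate from \ref{item:gzero-generates-CGzero}''). Your discussion of inverses is unnecessary, since \ref{item:gzero-generates-CGzero} already asserts that $S'$ generates $\CGzero$ as an algebra; alternatively, each $s \in S' \subseteq \Gzero$ has finite order, so $s^{-1}$ is a positive power of $s$.
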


\begin{proof}
This is immediate from \ref{item:gzero-generates-CGzero}.
\end{proof}

\begin{proposition} \label{prop:g0-reductive}
Suppose \ref{item:S-union-conj-classes} and \ref{item:gzero-generates-CGzero} hold. Then:
	\begin{enumerate}
	\item Each simple $\CG$-supermodule is semisimple as a $\gzero$-module.
	\item The Lie algebra $\gzero$ is reductive. In particular, $\fD(\gzero)$ is semisimple, $\gzero = Z(\gzero) \oplus \fD(\gzero)$, and the projection map $p: \gzero \to Z(\gzero)$ of Lemma \ref{lemma:lie-algebra-center} has $\ker(p) = \fD(\gzero)$.
	\end{enumerate}
\end{proposition}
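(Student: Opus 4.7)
The plan is to reduce both parts to standard semisimplicity/Artin--Wedderburn facts together with Lemma \ref{lemma:G0-g0-submodule-equivalent} and Lemma \ref{lemma:lie-algebra-center}.

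For part (1), let $W$ be a simple $\CG$-supermodule. By Proposition \ref{prop:simple-supermodules}, the restriction of $W$ to $\CGzero$ decomposes as a direct sum of simple $\CGzero$-modules: either two conjugate non-isomorphic simples $S^{\lambda^{+}} \oplus S^{\lambda^{-}}$ in the Type M case, or $S^{\lambda} \oplus S^{\lambda'}$ in the Type Q case. Under assumption \ref{item:gzero-generates-CGzero}, Lemma \ref{lemma:G0-g0-submodule-equivalent} shows that each such simple $\CGzero$-summand remains simple when viewed as a $\gzero$-module. Hence $W$ is a direct sum of simple $\gzero$-modules, i.e., semisimple over $\gzero$.

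For part (2), the plan is to exhibit a faithful completely reducible $\gzero$-representation and then invoke the classical theorem (valid over fields of characteristic zero) that a finite-dimensional Lie algebra admitting such a representation is reductive. Let $M = \bigoplus_{V \in \Irr(\Gzero)} V$. Since $\CGzero$ is semisimple, the structure map $\CGzero \to \End(M)$ is injective, so its restriction to $\gzero \subseteq \CGzero$ makes $M$ a faithful $\gzero$-module. By Lemma \ref{lemma:G0-g0-submodule-equivalent}, each summand $V$ of $M$ is simple as a $\gzero$-module, so $M$ is completely reducible over $\gzero$. This proves that $\gzero$ is reductive, which in turn yields the Levi-type decomposition $\gzero = Z(\gzero) \oplus \fD(\gzero)$ with $\fD(\gzero)$ semisimple (and in particular having trivial center).

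It remains to identify $\ker(p)$ with $\fD(\gzero)$. Since the averaging operator $p$ fixes central elements, $p|_{Z(\gzero)} = \id_{Z(\gzero)}$. On the other hand, Lemma \ref{lemma:lie-algebra-center} gives $p(\fD(\gzero)) \subseteq \fD(\gzero)$, and since its image also lies in $Z(\gzero)$, we have $p(\fD(\gzero)) \subseteq Z(\gzero) \cap \fD(\gzero)$; but the semisimple Lie algebra $\fD(\gzero)$ has trivial center, so $Z(\gzero) \cap \fD(\gzero) = 0$ and thus $\fD(\gzero) \subseteq \ker(p)$. Conversely, writing any $z \in \ker(p)$ as $z = z_0 + z_1$ with $z_0 \in Z(\gzero)$ and $z_1 \in \fD(\gzero)$, we obtain $0 = p(z) = z_0$, so $z = z_1 \in \fD(\gzero)$. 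The main obstacle is really just invoking the reductivity criterion cleanly; everything else is a direct assembly of the preceding lemmas.
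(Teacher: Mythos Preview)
Your proof is correct and follows essentially the same approach as the paper: decompose each simple supermodule into simple $\CGzero$-summands via Proposition~\ref{prop:simple-supermodules}, transfer simplicity to $\gzero$ via Lemma~\ref{lemma:G0-g0-submodule-equivalent}, then invoke the standard criterion (a faithful finite-dimensional completely reducible representation forces reductivity) and use Lemma~\ref{lemma:lie-algebra-center} to pin down $\ker(p)$. The only cosmetic difference is that the paper takes $\bigoplus_{W \in \Irr_s(G)} W$ as the faithful $\gzero$-module (faithfulness coming from Corollary~\ref{cor:super-artin-wedderburn-CG}), whereas you take $\bigoplus_{V \in \Irr(\Gzero)} V$; either choice works.
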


\begin{proof}
Let $W$ be a simple $\CG$-supermodule. Then by the classification of the simple $\CG$-super\-modules, the subspaces $\Wzero$ and $\Wone$ are simple $\CGzero$-modules, and hence are simple $\gzero$-modules by Lemma \ref{lemma:G0-g0-submodule-equivalent}. Now the sum $\bigoplus_{W \in \Irr_s(G)} W$ is a faithful (by Corollary \ref{cor:super-artin-wedderburn-CG}), finite-dimensional, completely reducible $\gzero$-module. This implies by Proposition~5 of \cite[Chapter 1, \S6, no.\ 4]{Bourbaki:1998} that $\gzero$ is reductive, $\fD(\gzero)$ is semisimple, and $\gzero = Z(\gzero) \oplus \fD(\gzero)$. Finally, the projection map $p$ sends $\fD(\gzero)$ into $\fD(\gzero) \cap Z(\gzero) = \set{0}$, implying that $\ker(p) = \fD(\gzero)$.
\end{proof}

\begin{lemma} \label{lemma:Gzero-simple-implies-D(g0)-simple}
Suppose \ref{item:S-union-conj-classes} and \ref{item:gzero-generates-CGzero} hold. Let $W$ be a simple $\CGzero$-module. Then $W$ is simple as a $\fD(\gzero)$-module.
\end{lemma}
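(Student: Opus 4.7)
The plan is to combine three facts: (i) the previous lemma says $W$ is simple over $\g_{\zero}$, (ii) Proposition \ref{prop:g0-reductive} gives the decomposition $\g_{\zero} = Z(\g_{\zero}) \oplus \fD(\g_{\zero})$, and (iii) Schur's lemma forces the center to act by scalars. Once the center acts by scalars, any $\fD(\g_{\zero})$-submodule is automatically $\g_{\zero}$-stable, so simplicity transfers from $\g_{\zero}$ to $\fD(\g_{\zero})$.

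More precisely, by Lemma \ref{lemma:G0-g0-submodule-equivalent} (whose hypothesis is \ref{item:gzero-generates-CGzero}), the simple $\CG_{\zero}$-module $W$ is simple as a module for the Lie subalgebra $\g_{\zero} \subseteq \CG_{\zero}$. Since $W$ is finite dimensional over the algebraically closed field $\C$, Schur's lemma implies that $\End_{\g_{\zero}}(W) = \C \cdot \id_W$. For every $z \in Z(\g_{\zero})$, the action of $z$ on $W$ commutes with the $\g_{\zero}$-action and is therefore an element of $\End_{\g_{\zero}}(W)$; hence $z$ acts on $W$ by a scalar.

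By Proposition \ref{prop:g0-reductive}, both \ref{item:S-union-conj-classes} and \ref{item:gzero-generates-CGzero} give that $\g_{\zero}$ is reductive, with $\g_{\zero} = Z(\g_{\zero}) \oplus \fD(\g_{\zero})$. Now let $V \subseteq W$ be a nonzero $\fD(\g_{\zero})$-submodule. By the preceding paragraph, $V$ is stable under the scalar action of $Z(\g_{\zero})$ as well, so $V$ is stable under $\g_{\zero} = Z(\g_{\zero}) + \fD(\g_{\zero})$. Since $W$ is simple as a $\g_{\zero}$-module, $V = W$. This shows that $W$ has no proper nonzero $\fD(\g_{\zero})$-submodule, i.e., $W$ is simple as a $\fD(\g_{\zero})$-module.

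There is no real obstacle here; the argument is essentially a bookkeeping consequence of reductivity of $\g_{\zero}$ together with Schur's lemma. The only subtlety worth flagging is that one does genuinely need both axioms: \ref{item:gzero-generates-CGzero} to promote $\CG_{\zero}$-simplicity to $\g_{\zero}$-simplicity, and the conjunction of \ref{item:S-union-conj-classes} with \ref{item:gzero-generates-CGzero} (via Proposition \ref{prop:g0-reductive}) to guarantee that $\g_{\zero}$ is reductive so that the center/derived-subalgebra splitting is available.
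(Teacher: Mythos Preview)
Your proof is correct and follows essentially the same approach as the paper: establish that $Z(\gzero)$ acts on $W$ by scalars (via Schur's lemma), invoke the reductive decomposition $\gzero = Z(\gzero) \oplus \fD(\gzero)$ from Proposition~\ref{prop:g0-reductive}, and conclude that any $\fD(\gzero)$-submodule is automatically a $\gzero$-submodule. The only cosmetic difference is that the paper applies Schur's lemma at the level of the simple $\CGzero$-module (using $Z(\gzero)\subseteq Z(\CGzero)$ from Lemma~\ref{lemma:lie-algebra-center}), whereas you first pass to simplicity over $\gzero$ via Lemma~\ref{lemma:G0-g0-submodule-equivalent} and apply Schur's lemma there; both routes are equally valid.
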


\begin{proof}
Let $V \subseteq W$ be a nonzero $\fD(\gzero)$-submodule of $W$. Since $Z(\gzero) \subseteq Z(\CGzero)$ by Lemma \ref{lemma:lie-algebra-center}, each element $z \in Z(\gzero)$ acts on $W$ as a scalar multiple of the identity, by Schur's Lemma. Then $V$ is closed under the action of both $Z(\gzero)$ and $\fD(\gzero)$, and hence closed under the action of $\gzero$, by Proposition \ref{prop:g0-reductive}. This implies by Lemma \ref{lemma:G0-g0-submodule-equivalent} that $V$ is a nonzero $\CGzero$-submodule of $W$. Then $V = W$, by the simplicity of $W$ as a $\CGzero$-module, so $W$ is simple as a $\fD(\gzero)$-module.
\end{proof}

\begin{proposition} \label{prop:simples-equivalent-G0-g0}
Suppose \ref{item:S-union-conj-classes}, \ref{item:gzero-generates-CGzero}, and \ref{item:S'-one-class-gen-G0} hold. Let $V_1$ and $V_2$ be two simple $\CGzero$-modules of dimensions greater than $1$. Then the following are equivalent:
	\begin{enumerate}
	\item \label{item:Gzero-iso} $V_1$ and $V_2$ are isomorphic as $\CGzero$-modules.
	\item \label{item:gzero-iso} $V_1$ and $V_2$ are isomorphic as modules over the Lie subalgebra $\gzero \subseteq \CGzero$.
	\item \label{item:D(gzero)-iso} $V_1$ and $V_2$ are isomorphic as modules over the Lie subalgebra $\fD(\gzero) \subseteq \CGzero$.
	\end{enumerate}
\end{proposition}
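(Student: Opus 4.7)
The implications \eqref{item:Gzero-iso}$\Rightarrow$\eqref{item:gzero-iso}$\Rightarrow$\eqref{item:D(gzero)-iso} are immediate from the inclusions $\fD(\gzero) \subseteq \gzero \subseteq \CGzero$, so the plan is to establish the converse \eqref{item:D(gzero)-iso}$\Rightarrow$\eqref{item:Gzero-iso}. Given a $\fD(\gzero)$-module isomorphism $\phi \colon V_1 \to V_2$, the strategy is to show that $\phi$ automatically intertwines the action of every generator $s \in S'$. Since $S'$ generates $\Gzero$ by \ref{item:gzero-generates-CGzero}, $\phi$ will then be a $\CGzero$-module isomorphism.

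The setup rests on the reductive decomposition $\gzero = Z(\gzero) \oplus \fD(\gzero)$ from Proposition~\ref{prop:g0-reductive}. Each $s \in S'$ lies in $\gzero$, so decomposes uniquely as $s = p(s) + (s - p(s))$ with $p(s) \in Z(\gzero) \subseteq Z(\CGzero)$ and $s - p(s) \in \fD(\gzero)$. Invoking the first half of \ref{item:S'-one-class-gen-G0}, namely that $S'$ is a single $\Gzero$-conjugacy class, a direct computation with the averaging map of Lemma~\ref{lemma:lie-algebra-center} shows that $p(s) = \tfrac{1}{|S'|} \sum_{s' \in S'} s'$ is the same central element for every $s \in S'$. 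By Schur's Lemma this element acts as a single scalar $c_i \in \C$ on each simple $\CGzero$-module $V_i$, $i \in \{1,2\}$.

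The heart of the argument is to show that $c_1 = c_2$. From the $\fD(\gzero)$-equivariance of $\phi$ applied to $s - p(s)$ one extracts the identity
\[
\phi(s \cdot v) = s \cdot \phi(v) + (c_1 - c_2)\, \phi(v), \qquad v \in V_1.
\]
The crucial use of the second half of \ref{item:S'-one-class-gen-G0} then enters: $s$ has order two. Iterating the displayed identity and using $s^2 \cdot v = v$ gives, after simplification, $(c_1 - c_2)\bigl[\, 2\, s \cdot \phi(v) + (c_1 - c_2)\, \phi(v)\,\bigr] = 0$ for all $v \in V_1$. If $c_1 \neq c_2$, then by the surjectivity of $\phi$ the element $s$ acts as a single fixed scalar on all of $V_2$; by conjugacy every element of $S'$ then acts as the same scalar on $V_2$, and because $S'$ generates $\Gzero$ as a group, any nonzero vector of $V_2$ spans a one-dimensional $\CGzero$-submodule. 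This contradicts simplicity of $V_2$ together with $\dim V_2 > 1$.

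The main subtlety to be handled carefully is precisely this final contradiction, which requires all three axioms \ref{item:S-union-conj-classes}--\ref{item:S'-one-class-gen-G0} to be deployed in concert with the dimension hypothesis. The restriction $\dim V_i > 1$ is essential rather than technical: two distinct one-dimensional characters of $\Gzero$ necessarily coincide as trivial $\fD(\gzero)$-modules, so the statement is sharp. Everything else reduces to formal consequences of the reductive structure furnished by Proposition~\ref{prop:g0-reductive}.
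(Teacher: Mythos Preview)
Your proposal is correct and follows essentially the same route as the paper's proof: decompose each $s \in S'$ as a central part $p(s)$ plus a piece in $\fD(\gzero)$, use Schur's Lemma to get scalars $c_1, c_2$, derive the relation $\phi(s \cdot v) = s \cdot \phi(v) + (c_1 - c_2)\phi(v)$ (the paper writes the equivalent form $\rho_2(s) = \phi \circ \rho_1(s) \circ \phi^{-1} + (c_2 - c_1)\cdot\id$), then square using $s^2 = 1$ to force the scalar difference to vanish under the dimension hypothesis. The only cosmetic difference is that you reach the contradiction via $V_2$ while the paper uses $V_1$.
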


\begin{proof}
Our argument is an adaptation of the proof of \cite[Proposition 2]{Marin:2007}. The fact that \eqref{item:Gzero-iso} implies \eqref{item:gzero-iso}, and that \eqref{item:gzero-iso} implies \eqref{item:D(gzero)-iso}, is evident. We will show that \eqref{item:D(gzero)-iso} implies \eqref{item:Gzero-iso}.

Suppose $\phi: V_1 \to V_2$ is an isomorphism of $\fD(\gzero)$-modules, and let $\rho_1: \CGzero \to \End(V_1)$ and $\rho_2: \CGzero \to \End(V_2)$ be the structure maps for $V_1$ and $V_2$, respectively. Then for all $x \in \fD(\gzero)$, one has $\phi \circ \rho_1(x) = \rho_2(x) \circ \phi$, or equivalently, $\rho_2(x) = \phi \circ \rho_1(x) \circ \phi^{-1}$. Let $s \in S'$, and let $T = p(s)$, where $p: \gzero \to Z(\gzero) \subseteq Z(\CGzero)$ is the projection map of Lemma \ref{lemma:lie-algebra-center}. The elements of the set $S'$ form a single conjugacy class in $\Gzero$ by \ref{item:S'-one-class-gen-G0}, so $T$ is independent of the choice of $s$. Since $T \in Z(\CGzero)$, Schur's Lemma implies that $\rho_1(T) = c_1 \cdot \id_{V_1}$ and $\rho_2(T) = c_2 \cdot \id_{V_2}$ for some scalars $c_1,c_2 \in \C$, which also do not depend on the choice of $s$. One has $p(s-T) = p(s)-p(T) = T - T = 0$, so $s-T \in \fD(\gzero)$ by Proposition \ref{prop:g0-reductive}. Then
	\begin{align*}
	\rho_2(s) - c_2 \cdot \id_{V_2} = \rho_2(s-T) &= \phi \circ \rho_1(s-T) \circ \phi^{-1} \\
	&= \phi \circ \rho_1(s) \circ \phi^{-1} - \phi \circ (c_1 \cdot \id_{V_1}) \circ \phi^{-1} \\
	&= \phi \circ \rho_1(s) \circ \phi^{-1} - c_1 \cdot \id_{V_2},
	\end{align*}
or equivalently,
	\begin{equation} \label{eq:rho2=rho1+omega}
	\rho_2(s) = \phi \circ \rho_1(s) \circ \phi^{-1} + \omega \cdot \id_{V_2},
	\end{equation}
where $\omega = c_2 - c_1$. Squaring both sides of \eqref{eq:rho2=rho1+omega}, and applying the assumption \ref{item:S'-one-class-gen-G0} that $s^2 = 1$ for each $s \in S'$, it follows that 
$-2 \omega \cdot \phi \circ \rho_1(s) \circ \phi^{-1} = \omega^2 \cdot \id_{V_2}$. The scalar $\omega$ does not depend on the choice of $s$, so if $\omega \neq 0$, we would deduce first for all $s \in S'$, and then for all $s \in \Gzero$ by multiplicativity---using \ref{item:gzero-generates-CGzero}, that $\rho_1(s)$ is a nonzero scalar multiple of $\id_{V_1}$. Since $\dim(V_1) \geq 2$ by assumption, this would contradict the simplicity of $V_1$ as a $\CGzero$-module. Then $\omega = 0$, and \eqref{eq:rho2=rho1+omega} implies first for all $s \in S'$, and then for all $s \in \Gzero$ by multiplicativity, that $\phi \circ \rho_1(s) = \rho_2(s) \circ \phi$, and hence $\phi$ is a $\CGzero$-module isomorphism.
\end{proof}

\begin{corollary} \label{cor:module-iso-equivalent-G-Gzero}
Suppose \ref{item:S-union-conj-classes}, \ref{item:gzero-generates-CGzero}, and \ref{item:S'-one-class-gen-G0} hold. Let $V_1$ and $V_2$ be two simple $\CG$-super\-modules of dimensions greater than $2$. Then the following statements (in which `isomorphic' is taken to mean `isomorphic via a homogeneous isomorphism') are equivalent:
	\begin{enumerate}
	\item \label{item:Sn-iso-general} $V_1$ and $V_2$ are isomorphic as $\CG$-supermodules.
	\item \label{item:An-iso-general} $V_1$ and $V_2$ are isomorphic as $\CGzero$-supermodules.
	\item \label{item:gzero-iso-general} $V_1$ and $V_2$ are isomorphic as supermodules over the Lie subalgebra $\gzero \subseteq \CGzero$.
	\item \label{item:D(gzero)-iso-general} $V_1$ and $V_2$ are isomorphic as supermodules over the Lie subalgebra $\fD(\gzero) \subseteq \CGzero$.
	\end{enumerate}
\end{corollary}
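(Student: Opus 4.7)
The forward cycle $(1) \Rightarrow (2) \Rightarrow (3) \Rightarrow (4)$ is immediate: each step passes from a larger algebra of operators to a smaller one along the inclusions $\fD(\gzero) \subseteq \gzero \subseteq \CGzero \subseteq \CG$, and a homogeneous isomorphism survives restriction. The content of the corollary lies in closing the cycle with $(4) \Rightarrow (1)$.

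My plan for $(4) \Rightarrow (1)$ is to reduce to Proposition \ref{prop:simples-equivalent-G0-g0} one graded component at a time. Let $\phi: V_1 \to V_2$ be a homogeneous $\fD(\gzero)$-supermodule isomorphism of parity $\ve \in \Z_2$. Because $\phi$ is homogeneous, it restricts for each $\eta \in \Z_2$ to a $\fD(\gzero)$-module isomorphism $\phi|_{(V_1)_{\eta}} : (V_1)_{\eta} \simrightarrow (V_2)_{\eta + \ve}$. By Proposition \ref{prop:simple-supermodules}, each graded component $(V_i)_{\eta}$ is a simple $\CGzero$-module, and the explicit descriptions there give $\dim (V_i)_{\zero} = \dim (V_i)_{\one}$ (for Type Q both summands are in bijection with $S^{\lambda}$, and for Type M the two summands $S^{\lambda^{\pm}}$ are conjugate and hence equidimensional). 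Combined with the fact that simple $\CG$-supermodules are even-dimensional, the hypothesis $\dim V_i > 2$ forces $\dim (V_i)_{\eta} \geq 2$ for each $\eta$. The hypotheses of Proposition \ref{prop:simples-equivalent-G0-g0} are thus met on both graded components, and each component isomorphism $\phi|_{(V_1)_{\eta}}$ can be replaced (possibly by a different linear map) with a $\CGzero$-module isomorphism $(V_1)_{\eta} \cong (V_2)_{\eta + \ve}$.

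To finish, I would invoke the uniqueness clause of Proposition \ref{prop:simple-supermodules}: a simple $\CG$-supermodule is determined up to an even $\CG$-supermodule isomorphism by its $\CGzero$-module summands together with the superdegrees in which they appear. If $\ve = \zero$, this immediately gives an even $\CG$-supermodule isomorphism $V_1 \cong V_2$; if $\ve = \one$, it gives $V_1 \cong \Pi(V_2)$ evenly, so $V_1$ and $V_2$ are odd-isomorphic as $\CG$-supermodules. In either case $V_1$ and $V_2$ are homogeneously $\CG$-supermodule isomorphic. The main point to watch is not really an obstacle but a bookkeeping step: one must verify the dimension condition on the graded components and track parities correctly through the reduction. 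The substantive content is already packaged in Proposition \ref{prop:simples-equivalent-G0-g0}, of which the present corollary is essentially the supermodule-level consequence.
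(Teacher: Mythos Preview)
Your proof is correct and follows essentially the same approach as the paper: reduce to the graded components, apply Proposition~\ref{prop:simples-equivalent-G0-g0} on each (after checking the dimension hypothesis), and then invoke the uniqueness clause of Proposition~\ref{prop:simple-supermodules} to recover the $\CG$-supermodule isomorphism. The paper's version is terser but the substance is identical.
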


\begin{proof}
By Proposition \ref{prop:simple-supermodules}, each simple $\CG$-supermodule is uniquely determined by its summands as a $\CGzero$-module, and by the homogeneous degrees in which those summands are concentrated. Then passing to the homogeneous subspaces of $V_1$ and $V_2$, the result follows by Proposition \ref{prop:simples-equivalent-G0-g0}.
\end{proof}

\subsection{Example: Dihedral groups} \label{subsec:dihedral}

In this section let $G = D_m$ be the dihedral group of order $2m$:
	\begin{align*}
	D_m = \subgrp{r,s : r^m = s^2 = (sr)^2 = 1} = \smallset{ 1, r, r^2, \ldots, r^{m-1}, s, sr, sr^2, \ldots, sr^{m-1}}.
	\end{align*}
The dihedral group is a supergroup in which $\Gzero = \subgrp{r}$ is the cyclic subgroup generated by $r$, and $\Gone = S = \smallset{s,sr,\ldots,sr^{m-1}}$ is the set of reflections in $D_m$. If $m$ is odd then $S$ consists of a single conjugacy class, which we also denote $C_1$, while if $m$ is even then $S$ consists of the two conjugacy classes $C_1 = \smallset{s,sr^2,\ldots,sr^{m-2}}$ and $C_2 = \smallset{sr,sr^3,\ldots,sr^{m-1}}$.

\begin{proposition} \label{prop:D(s(Dm)-LSA}
Let $G$ be the dihedral group $D_m$ of order $2m$ ($m \geq 3$), and let $\g$ be the Lie sub-superalgebra of $\Lie(\CG)$ generated by the set $S$ of reflections in $G$. Then the inclusion \eqref{eq:g-in-D(g)+span} is an equality, i.e., $\g = \fD(\CD_m) + \Span\smallset{T_1,T_2}$, where $T_2$ is taken to be zero if $m$ is odd.
\end{proposition}

\begin{proof}
The argument is by dimension comparison. First suppose $m = 2k$ is even. Then by \cite[\S2.4]{DK:2025}, there are $k-1$ simple $\CG$-supermodules of type M and two simple $\CG$-supermodules of type Q. This implies by \eqref{eq:g-in-D(g)+span} and \eqref{eq:D(LieCG)} that $\dim(\g) \leq [2m-(k-1)-2] + 2 = 3k+1$. On the other hand, $\g$ contains the $m$ odd reflections of the set $S$, as well as the even elements $[s,sr^i] = r^i + r^{m-i}$ for $0 \leq i \leq k$. Then $\dim(\g) \geq m + k+1 = 3k+1$, so the inclusion \eqref{eq:g-in-D(g)+span} must be an equality.

Now suppose $m = 2k+1$ is odd. Then by \cite[\S2.4]{DK:2025}, there are $k$ simple $\CG$-supermodules of type M, and one simple $\CG$-supermodule of type Q. Then $\dim(\g) \leq [(2m-k-1] + 1 = 3k+2$. On the other hand, $\g$ contains the $m$ odd reflections of the set $S$, as well as the even elements $[s,sr^i] = r^i + r^{m-i}$ for $0 \leq i \leq k$. Then $\dim(\g) \geq m + (k+1) = 3k+2$, so again by dimension comparison, the inclusion \eqref{eq:g-in-D(g)+span} must be an equality.
\end{proof}

The proof shows that for $G = D_m$ ($m \geq 3$), one has $\gone = \Span\smallset{s,sr,\ldots,sr^{m-1}}$, and
	\[
	\gzero = \begin{cases}
	\Span\smallset{e,r+r^{m-1},r^2+r^{m-2},\ldots,r^{k-1}+r^{k+1},r^k} & \text{if $m = 2k$ is even,} \\
	\Span\smallset{e,r+r^{m-1},r^2+r^{m-2},\ldots,r^k+r^{k+1}} & \text{if $m = 2k+1$ is odd.} 
	\end{cases}
	\]
In particular, $\gzero = Z(\gzero)$ and $\fD(\gzero) = 0$. Assumptions \ref{item:gzero-generates-CGzero} and \ref{item:S'-one-class-gen-G0} are not satisfied, since for $m \geq 3$ the Lie algebra $\gzero$ does not contain any generators for $\Gzero$. One can check that $\fD(\CG)$ is spanned by $\gzero$ together with all odd elements of the form $[sr^j,r^i] = sr^{j+i} - sr^{j-i}$. (Note that $j+i$ and $j-i$ are of the same parity.) If $m = 2k+1$ is odd, then also $s - sr = [sr^{k+1},r^k] \in \fD(\CG)$. It follows that
	\[
	\fD(\CG)_{\one} = \begin{cases}
	\Span\{ sr^{2i}-s, sr^{2i+1}-sr : 1 \leq i \leq k-1 \} & \text{if $m = 2k$ is even,} \\
	\Span\{ sr-s, sr^2-s, sr^3-s,\ldots,sr^{m-1}-s \} & \text{if $m = 2k+1$ is odd.}
	\end{cases}
	\]





\subsection{Further Lie superalgebra structure} \label{subsec:further-lie-superalgebra-structure}

As in Section \ref{subsec:lie-superalgebras-odd-elements}, let $S \subseteq \Gone$ be a subset of the odd elements of $G$, and let $\g$ be the Lie sub\-super\-algebra of $\Lie(\CG)$ generated by $S$. In addition to assumptions \ref{item:S-union-conj-classes}, \ref{item:gzero-generates-CGzero}, and \ref{item:S'-one-class-gen-G0} from Section \ref{subsec:lie-superalgebras-odd-elements}, we will impose combinations of the assumptions \ref{item:inductive-subgroup}, \ref{item:D(gzero)-to-sl(V)}, and \ref{item:no-G-simple-dim-2} stated below. In practice, if $G$ is a Weyl group of rank $n$, then the subgroup $H$ in assumption \ref{item:inductive-subgroup} will be a Weyl group of the same Coxeter type of rank $n-1$ and for which, by induction, we will already know that \eqref{eq:g-in-D(g)+span} is an equality. Assumption \ref{item:no-G-simple-dim-2} helps to simplify some arguments, and will be satisfied as long as the rank of $G$ is not too small. The one assumption that requires some true case-by-case work to verify is \ref{item:D(gzero)-to-sl(V)}.
	\begin{enumerate}[label={(A\arabic*)}]
	\setcounter{enumi}{3}
	\item \label{item:inductive-subgroup} There exists a subsupergroup $H \subseteq G$ and a subset $S_H \subseteq S$ such that:
		\begin{itemize}
		\item \ref{item:S-union-conj-classes} and \ref{item:gzero-generates-CGzero} hold for $H$ and the Lie subsuperalgebra $\fh \subseteq \Lie(\CH)$ generated by $S_H$;

		\item $\fh_{\zero} = \fD(\CH)_{\zero}$, and hence (as a consequence of Remark \ref{remark:artin-wedderburn-CGzero}), the analogue of \eqref{eq:D(LieCG)} for the group $H$ restricts to an isomorphism
			\begin{equation} \label{eq:D(hzero)}
			\fD(\hzero) \cong \bigoplus_{V \in \Irr(\Hzero)} \fsl(V),
			\end{equation}
induced by the sum of the simple $\CHzero$-module structure maps $V: \CHzero \to \End(V)$;

		\item there are no simple $\CHzero$-modules of dimension $2$; and

		\item if $V$ is a simple $\CGzero$-module with $\dim(V) > 1$, then $\Res^{\Gzero}_{\Hzero}(V)$ contains some simple $\CHzero$-module $W$ with $\dim(W) > 1$.
		\end{itemize}

	\item \label{item:D(gzero)-to-sl(V)} For each $V \in \Irr(\Gzero)$, the module structure map $V: \CGzero \to \End(V)$ restricts to a surjective Lie algebra map $\rho_V: \fD(\gzero) \twoheadrightarrow \fsl(V)$.
	
	\item \label{item:no-G-simple-dim-2} There are no simple $\abs{\CG}$-modules of dimension $2$.
	\end{enumerate}

Let $V$ be a $\CGzero$-module with structure map $\rho: \CGzero \to \End(V)$. There are two evident ways to define an action of the Lie algebra $\fD(\gzero) \subseteq \CGzero$ on the dual space $V^* = \Hom(V,\C)$. The first is the restriction from $\CGzero$ to $\fD(\gzero)$ of the usual $\CGzero$-module structure, defined for $g \in \Gzero$, $\phi \in V^*$, and $v \in V$ by $(g.\phi)(v) = \phi(g^{-1}.v)$. We denote this $\fD(\gzero)$-module structure on $V^*$ by $V^{*,\Grp}$. The second structure, which we denote $V^{*,\Lie}$, is via the contragredient action of a Lie algebra, defined for $x \in \fD(\gzero)$, $\phi \in V^*$, and $v \in V$ by $(x.\phi)(v) = -\phi(x.v)$. Fixing a basis for $V$ and the corresponding dual basis for $V^*$, the structure maps $\rho_{\Grp}$ and $\rho_{\Lie}$ for $V^{*,\Grp}$ and $V^{*,\Lie}$ are related to $\rho$ by $\rho_{\Grp}(x) = \rho(\imath(x))^T$ and $\rho_{\Lie}(x) = - \rho(x)^T$, where $\imath: \CGzero \to \CGzero$ is linear extension of the group inversion map $\sigma \mapsto \sigma^{-1}$, and $u^T$ is the transpose of a matrix $u$.

\begin{lemma} \label{lemma:Gzero-modules-not-dual}
Suppose \ref{item:S-union-conj-classes}--\ref{item:inductive-subgroup} hold. Let $V$ and $W$ be simple $\CGzero$-modules of dimensions greater than $1$. Then $W \not\cong V^{*,\Lie}$.
\end{lemma}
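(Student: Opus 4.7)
The plan is to argue by contradiction via restriction to the inductive subgroup $H$ from \ref{item:inductive-subgroup}, exploiting the structural fact that the standard representation of $\fsl(n)$ is non-isomorphic to its dual when $n \geq 3$. Suppose that $W \cong V^{*,\Lie}$ as $\fD(\gzero)$-modules. Since $\fh \subseteq \g$ forces $\fD(\hzero) \subseteq \fD(\gzero)$, the isomorphism persists after restriction to $\fD(\hzero)$. The Lie-dual construction commutes with restriction, so if one decomposes $V|_{\CHzero} = \bigoplus_i U_i$ and $W|_{\CHzero} = \bigoplus_j U_j'$ into simple $\CHzero$-modules, then
\[
V^{*,\Lie}|_{\fD(\hzero)} \cong \bigoplus_i U_i^{*,\Lie}
\qquad\text{and}\qquad
W|_{\fD(\hzero)} \cong \bigoplus_j U_j'.
\]
By Lemma~\ref{lemma:Gzero-simple-implies-D(g0)-simple} applied to $H$ (whose hypotheses hold by the first bullet of \ref{item:inductive-subgroup}), each summand on either side is simple as a $\fD(\hzero)$-module.

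Next, I would use the Artin--Wedderburn isomorphism \eqref{eq:D(hzero)} to pin down how each summand factors. Via \eqref{eq:D(hzero)}, the $\fD(\hzero)$-action on $U_j'$ factors through the projection $\fD(\hzero) \twoheadrightarrow \fsl(U_j')$ as the standard representation, and the action on $U_i^{*,\Lie}$ factors through $\fD(\hzero) \twoheadrightarrow \fsl(U_i)$ as the dual of the standard representation. By the last bullet of \ref{item:inductive-subgroup}, since $\dim V > 1$, there exists some $U = U_i$ with $\dim U > 1$. Matching summands in the isomorphism $W \cong V^{*,\Lie}$ of $\fD(\hzero)$-modules yields some $j$ with $U_j' \cong U^{*,\Lie}$ as $\fD(\hzero)$-modules. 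Since the simple summands of \eqref{eq:D(hzero)} indexed by distinct elements of $\Irr(\Hzero)$ pairwise annihilate one another's simple modules, the common annihilator of $U_j'$ and $U^{*,\Lie}$ in $\fD(\hzero)$ forces $U_j' = U$ as labels; hence the standard and dual standard $\fsl(U)$-representations are isomorphic.

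The main obstacle, and the reason assumption \ref{item:inductive-subgroup} was engineered the way it was, is to rule out the exceptional case $\dim U = 2$ in which the standard $\fsl(2)$-module really is self-dual. By the third bullet of \ref{item:inductive-subgroup}, no simple $\CHzero$-module has dimension~$2$, and since $\dim U > 1$ we conclude $\dim U \geq 3$. For $\fsl(n)$ with $n \geq 3$, the standard and its dual are non-isomorphic (e.g., they have distinct highest weights $\omega_1$ and $\omega_{n-1}$), yielding the required contradiction. Thus $W \not\cong V^{*,\Lie}$.
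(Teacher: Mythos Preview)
Your proof is correct and follows essentially the same approach as the paper: restrict the putative isomorphism $W \cong V^{*,\Lie}$ to $\fD(\hzero)$, decompose into simple $\CHzero$-modules, and exploit the Artin--Wedderburn description \eqref{eq:D(hzero)} together with the fact that the natural $\fsl(n)$-module is not self-dual for $n \geq 3$. The only minor difference is packaging: where the paper first establishes the $H$-level statement via a two-case analysis ($V \cong W$ versus $V \not\cong W$, the latter handled by comparing dimensions of images), your annihilator argument folds both cases into one step by directly forcing $U_j' \cong U$ as $\CHzero$-modules.
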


\begin{proof}
First we will show that if $V$ and $W$ are simple $\CHzero$-modules of dimension greater than $1$, then $W \not\cong V^{*,\Lie}$ as $\fD(\hzero)$-modules. There are two cases: $V \cong W$ or $V \not\cong W$. In the first case we may assume that $V = W$. The structure maps $\rho: \fD(\hzero) \to \End(V)$ and $\rho_{\Lie}: \fD(\hzero) \to \End(V^{*,\Lie})$ have the same kernel, and hence both factor, via the identification \eqref{eq:D(hzero)}, through the projection $\fD(\hzero) \twoheadrightarrow \fsl(V)$. Then $V \not\cong V^{*,\Lie}$, because $\dim(V) \geq 3$ by \ref{item:inductive-subgroup}, and the natural module for $\fsl(V)$ is self-dual only if $\dim(V) \leq 2$. In the case $V \not\cong W$, we see from \eqref{eq:D(hzero)} that the image of the structure map $\fD(\hzero) \to \End(V) \oplus \End(W)$ is of dimension $\dim(\fsl(V)) + \dim(\fsl(W)) > \dim(\fsl(V))$. On the other hand, if $W \cong V^{*,\Lie}$, then up to a change of basis for $W$, the structure map is of the form $x \mapsto (\rho(x),-\rho(x)^T)$, and hence its image is of dimension $\dim(\fsl(V))$. Thus $W \not\cong V^{*,\Lie}$.

Now let $V$ and $W$ be simple $\CGzero$-modules of dimension greater than $1$, and suppose $W \cong V^{*,\Lie}$ as $\fD(\gzero)$-modules. Let $V = \bigoplus_{i=1}^r V_i$ and $W = \bigoplus_{j=1}^s W_j$ be decompositions of $V$ and $W$ into simple $\CHzero$-modules. By \ref{item:inductive-subgroup} and Lemma \ref{lemma:Gzero-simple-implies-D(g0)-simple}, these are also decompositions of $V$ and $W$ into simple $\fD(\hzero)$-modules. Then $V^{*,\Lie} = \bigoplus_{i=1}^r (V_i)^{*,\Lie}$ is a decomposition of $V^{*,\Lie}$ into simple $\fD(\hzero)$-modules. Now since $W \cong V^{*,\Lie}$ as $\fD(\gzero)$-modules, it follows for each $j$ that $W_j \cong (V_i)^{*,\Lie}$ as $\fD(\hzero)$-modules for some $i$ (depending on $j$). But by \ref{item:inductive-subgroup}, at least one of the $W_j$ is of dimension greater than $1$, so this isomorphism contradicts the previous paragraph. Thus, $W \not\cong V^{*,\Lie}$.
\end{proof}

\begin{lemma} \label{lemma:D(gzero)}
Suppose \ref{item:S-union-conj-classes}--\ref{item:D(gzero)-to-sl(V)} hold. Then
	\[
	\fD(\gzero) = \fD(\fD(\CG)_{\zero}).
	\]
In particular, the sum of the structure maps of the simple $\CGzero$-modules induces an isomorphism
	\begin{equation} \label{eq:D(D(CG)0)-AW-iso}
	\fD(\gzero) \cong \bigoplus_{V \in \Irr(\Gzero)} \fsl(V).
	\end{equation}
\end{lemma}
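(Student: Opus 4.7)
The plan is to show that both $\fD(\gzero)$ and $\fD(\fD(\CG)_{\zero})$ coincide with $\bigoplus_{V \in \Irr(\Gzero)} \fsl(V)$ inside the Artin--Wedderburn decomposition $\CGzero \cong \bigoplus_V \End(V)$ of Remark~\ref{remark:artin-wedderburn-CGzero}. The identification $\fD(\fD(\CG)_{\zero}) = \bigoplus_V \fsl(V)$ is a summand-by-summand computation on~\eqref{eq:D(LieCG)}: for $W \in \Irr_Q(G)$, the even part of $\sq(W)$ equals $\fq(W)_{\zero} \cong \End(V)$ (the odd trace automatically vanishes on even elements), whose derived algebra is $\fsl(V)$; for $W \in \Irr_M(G)$, the even part of $\fsl(W)$ is a central extension of $\fsl(\Wzero) \oplus \fsl(\Wone)$ by a one-dimensional center that dies upon taking brackets. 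Lemma~\ref{lemma:classical-Clifford} shows this list exhausts $\Irr(\Gzero)$. The inclusion $\fD(\gzero) \subseteq \fD(\fD(\CG)_{\zero})$ is immediate: because $\g$ is generated as a Lie super\-algebra by the odd set $S$, every element of $\gzero$ is already a bracket of odd generators, so $\gzero \subseteq \fD(\g) \subseteq \fD(\CG)_{\zero}$, and taking brackets of both sides finishes the inclusion.

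For the reverse inclusion, Proposition~\ref{prop:g0-reductive} gives a decomposition $\fD(\gzero) = \bigoplus_j \g_j$ into simple ideals, and Lemma~\ref{lemma:Gzero-simple-implies-D(g0)-simple} shows every $V \in \Irr(\Gzero)$ is $\fD(\gzero)$-simple, hence factors as an external tensor product $V = \bigotimes_j V_j$ of simple $\g_j$-modules. Axiom~\ref{item:D(gzero)-to-sl(V)} requires the image of $\fD(\gzero)$ inside $\End(V)$ to be the \emph{simple} Lie algebra $\fsl(V)$; but this image decomposes as a direct sum of the contributions of the $\g_j$, so dimension-counting forces all but one $V_j$ to be one-dimensional. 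For each $V$ of dimension at least $2$, this picks out a unique index $j = j_V$ with $\g_{j_V}$ acting nontrivially, and the induced surjection $\g_{j_V} \twoheadrightarrow \fsl(V)$ is then an isomorphism of simple Lie algebras.

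The main obstacle is to prove that $V \mapsto j_V$ is injective on the set $\mathcal{I}$ of simple $\CGzero$-modules of dimension at least $2$. Suppose $j_V = j_{V'} = j$ for $V, V' \in \mathcal{I}$. Then $V$ and $V'$ are both $n$-dimensional simple representations of $\g_j \cong \fsl(n)$ with $n = \dim V = \dim V'$, and the only $n$-dimensional irreducibles of $\fsl(n)$ are the natural representation and (for $n \geq 3$) its dual. Since all other $\g_k$ act trivially on $V$ and $V'$, any $\g_j$-isomorphism extends to a $\fD(\gzero)$-isomorphism, so either $V \cong V'$ or $V' \cong V^{*,\Lie}$ as $\fD(\gzero)$-modules; the first contradicts Proposition~\ref{prop:simples-equivalent-G0-g0}, and the second is ruled out by Lemma~\ref{lemma:Gzero-modules-not-dual}. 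Injectivity therefore holds, giving $\dim \fD(\gzero) \geq \sum_{V \in \mathcal{I}} \dim \fsl(V) = \dim \fD(\fD(\CG)_{\zero})$, which together with the first paragraph forces the equality $\fD(\gzero) = \fD(\fD(\CG)_{\zero})$ and yields the isomorphism~\eqref{eq:D(D(CG)0)-AW-iso}.
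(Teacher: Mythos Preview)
Your proof is correct and follows essentially the same route as the paper: both arguments identify, for each $V \in \Irr(\Gzero)$ with $\dim V > 1$, a unique simple ideal of $\fD(\gzero)$ isomorphic to $\fsl(V)$ (you via the external tensor product decomposition, the paper via orthogonal complements of kernels), and then show these ideals are pairwise distinct by invoking the classification of $n$-dimensional irreducible $\fsl(n)$-modules---equivalently, of automorphisms of $\fsl(n)$---together with Proposition~\ref{prop:simples-equivalent-G0-g0} and Lemma~\ref{lemma:Gzero-modules-not-dual}. One small phrasing slip: when $V \cong V'$ as $\fD(\gzero)$-modules, Proposition~\ref{prop:simples-equivalent-G0-g0} does not yield a contradiction but rather the desired conclusion $V = V'$; your argument is implicitly by contradiction on the hypothesis $V \neq V'$.
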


\begin{proof}
First, $\gzero \subseteq \fD(\CG)_{\zero}$ by Lemma \ref{lemma:g-in-D(g)+span}, and hence $\fD(\gzero) \subseteq \fD(\fD(\CG)_{\zero})$. Next, it follows from Remark \ref{remark:artin-wedderburn-CGzero} that \eqref{eq:D(LieCG)} restricts to an isomorphism between $\fD(\fD(\CG)_{\zero})$ and the right-hand side of \eqref{eq:D(D(CG)0)-AW-iso}. In particular,
	\[
	\dim(\fD(\fD(\CG)_{\zero})) = \sum_{V \in \Irr(\Gzero)} \dim(\fsl(V)).
	\]
We will show by dimension comparison that $\fD(\gzero) = \fD(\fD(\CG)_{\zero})$.

Recall from Proposition \ref{prop:g0-reductive} that $\fD(\gzero)$ is a semisimple Lie algebra. Given $V \in \Irr(\Gzero)$, let $\rho_V : \fD(\gzero) \to \End(V)$ be the module structure map for the action of $\fD(\gzero)$ on $V$. By \ref{item:D(gzero)-to-sl(V)}, the image of $\rho_V$ is $\fsl(V)$. Let $\g_V = \ker(\rho_V)$, and let $\g^V$ be the orthogonal complement of $\g_V$ with respect to the Killing form on $\fD(\gzero)$. Then $\g_V$ and $\g^V$ are ideals in $\fD(\gzero)$, $\fD(\gzero) = \g_V \oplus \g^V$, and $\rho_V$ induces a Lie algebra isomorphism $\g^V \cong \im(\rho_V) = \fsl(V)$. In particular, if $\dim(V) = 1$, then $\g^V = 0$. We want to show that the (nonzero) simple ideals $\g^V$ for $V \in \Irr(\Gzero)$ are distinct. Equivalently, we want to show that if $\g^V = \g^W \neq 0$, then $V = W$. This will imply that the sum of the $\g^V$ is a direct sum, and will then imply by dimension comparison that $\fD(\fD(\CG)_{\zero}) = \fD(\gzero) = \bigoplus_{V \in \Irr(\Gzero)} \g^V$.

Let $V,W \in \Irr(\Gzero)$ such that $\g^V = \g^W \neq 0$. Since $\fsl(V) \cong \g^V = \g^W \cong \fsl(W)$, then $\dim(V) = \dim(W) > 1$. Fixing bases, we may write $V = \C^m = W$ for some $m$. Then the composite map 
	\[
	\fsl(\C^m) \xrightarrow{\rho_V^{-1}} \g^V = \g^W \xrightarrow{\rho_W} \fsl(C^m)
	\]
is a Lie algebra automorphism. Lie algebra automorphisms of $\fsl(\C^m)$ come in two forms:
	\begin{enumerate}
	\item $X \mapsto gXg^{-1}$ for some $g \in GL(\C^m)$, or
	\item $X \mapsto -(gX^t g^{-1})$ for some $g \in GL(\C^m)$, where $X^t$ denotes the transpose of $X$;
	\end{enumerate}
see \cite[IX.5]{Jacobson:1979}. If $\rho_W \circ \rho_V^{-1}$ is of the second form, then the $\fD(\gzero)$-module structure on $\C^m$ afforded by $\rho_W$ is isomorphic to the dual of the $\fD(\gzero)$-module structure afforded by $\rho_V$, i.e., $W \cong V^{*,\Lie}$ as $\fD(\gzero)$-modules, contradicting Lemma \ref{lemma:Gzero-modules-not-dual}. Then $\rho_W \circ \rho_V^{-1}$ must be of the first form, meaning the $\fD(\gzero)$-module structures on $\C^m$ afforded by $\rho_V$ and $\rho_W$ are isomorphic, and hence $V = W$ by Proposition \ref{prop:simples-equivalent-G0-g0}.
\end{proof}

\begin{lemma} \label{lemma:W(gn)}
Suppose \ref{item:S-union-conj-classes}--\ref{item:no-G-simple-dim-2} hold. Then for each $W \in \Irr_s(G)$, the image of $\g$ under the supermodule structure map $W: \CG \to \End(W)$ is
	\begin{equation} \label{eq:Wlambda(gn)}
	W(\g) = \begin{cases} 
	\sq(W) + \Span\{W(T_1),\ldots,W(T_t)\} & \text{if $W$ is of Type Q,} \\
	\fsl(W) & \text{if $W$ is of Type M,}
	\end{cases}
	\end{equation}
where $T_1,\ldots,T_t \in \gone$ are the class sums defined in Lemma \ref{lemma:g-in-D(g)+span}.
\end{lemma}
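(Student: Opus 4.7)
The plan is to establish the two containments in \eqref{eq:Wlambda(gn)} separately. For the upper bound, apply the structure map $W$ to the inclusion from Lemma \ref{lemma:g-in-D(g)+span} and use Corollary \ref{cor:super-artin-wedderburn-CG} together with \eqref{eq:D(LieCG)} to identify $W(\fD(\CG))$ with $\sq(W)$ (Type Q) or $\fsl(W)$ (Type M); in the latter case the $W(T_i)$ are odd operators and therefore already lie in $\fsl(W)_{\one}$, absorbing the second summand.

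For the lower bound, set $\mathfrak{n} = W(\g)$. Since $T_i \in \g$, automatically $\mathfrak{n} \supseteq \Span\{W(T_1), \ldots, W(T_t)\}$, so it remains to show $\mathfrak{n} \supseteq \sq(W)$ (Type Q) or $\mathfrak{n} \supseteq \fsl(W)$ (Type M). The strategy is to verify first that $\mathfrak{n}_{\one}$ contains the full odd subspace $\sq(W)_{\one}$ or $\fsl(W)_{\one}$; Lemma \ref{lem:generated-by-odd} will then close the argument, since $\fsl(m|m)$ is generated as a Lie superalgebra by its odd part for $m \geq 2$, and $\sq(m)$ for $m \geq 3$. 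By axiom \ref{item:D(gzero)-to-sl(V)} and Lemma \ref{lemma:D(gzero)}, $W(\fD(\gzero)) \subseteq \mathfrak{n}_{\zero}$ is identified explicitly as $\fsl(\Wzero) \oplus \fsl(\Wone)$ acting block-diagonally (Type M, using $\Wzero \not\cong \Wone$ as $\CGzero$-modules) or as the diagonal copy of $\fsl(V) \subseteq \sq(W)_{\zero}$ (Type Q, where $V$ denotes the common $\CGzero$-restriction of $S^\lambda$ and $S^{\lambda'}$). With respect to this even subalgebra, $\fsl(W)_{\one} = \Hom(\Wone, \Wzero) \oplus \Hom(\Wzero, \Wone)$ splits into two irreducible pieces, and $\sq(W)_{\one}$ is isomorphic to the irreducible adjoint representation of $\fsl(V)$ (for $\dim V \geq 2$).

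In the Type M case, every $W(\tau)$ for $\tau \in S$ is an invertible odd operator, so both of its off-diagonal blocks are nonzero; thus $W(\tau)$ has nonzero projection onto each irreducible component of $\fsl(W)_{\one}$, and the $W(\fD(\gzero))$-invariance of $\mathfrak{n}_{\one}$ forces $\mathfrak{n}_{\one} \supseteq \fsl(W)_{\one}$. In the Type Q case with $m = \dim V \geq 3$ (the cases $m = 1, 2$ are either trivial or excluded by \ref{item:no-G-simple-dim-2}), the element $\wt{\tau} = \tau - \frac{1}{|C_i|} T_i$ lies in $\fD(\CG) \cap \g$ for each $\tau \in C_i \subseteq S$, so $W(\wt{\tau}) \in \mathfrak{n}_{\one} \cap \sq(W)_{\one}$; if some such $W(\wt{\tau})$ is nonzero, irreducibility gives $\mathfrak{n}_{\one} \supseteq \sq(W)_{\one}$.

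The main obstacle is ruling out the possibility that $W(\wt{\tau}) = 0$ for every $\tau \in S$. In that case each $W(\tau)$ is a scalar odd operator, and iterated brackets of such operators produce only scalar even operators, so $W(\gzero) \subseteq \C \cdot \id_W$. Since $S' \subseteq \gzero$ consists of order-$2$ elements by axiom \ref{item:S'-one-class-gen-G0}, every $W(s')$ acts as $\pm \id_W$, and hence $\Gzero = \langle S' \rangle$ (axiom \ref{item:gzero-generates-CGzero}) acts on $\Wzero$ by scalars; this contradicts the simplicity of $\Wzero$ as a $\CGzero$-module of dimension $m \geq 3$, via Jacobson density. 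This dichotomy is where the combined force of axioms \ref{item:D(gzero)-to-sl(V)}, \ref{item:gzero-generates-CGzero}, \ref{item:S'-one-class-gen-G0}, and \ref{item:no-G-simple-dim-2} is essential; once it is resolved, Lemma \ref{lem:generated-by-odd} generates the full derived superalgebra from its odd part and completes the proof.
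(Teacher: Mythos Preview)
Your proof is correct and follows essentially the same approach as the paper: the upper bound via Lemma~\ref{lemma:g-in-D(g)+span}, the identification of $W(\fD(\gzero))$ via Lemma~\ref{lemma:D(gzero)}, the irreducibility of $\sq(W)_{\one}$ (resp.\ the two pieces of $\fsl(W)_{\one}$) under that even subalgebra, the production of a nonzero odd element $W(\wt{\tau})$, and the closing appeal to Lemma~\ref{lem:generated-by-odd} all match the paper's argument. The only substantive difference is cosmetic: in the Type~Q contradiction the paper works directly with the ungraded module $S^\lambda$ and invokes Lemma~\ref{lemma:G0-g0-submodule-equivalent}, whereas you phrase the same obstruction via the generating set $S'$ and axioms \ref{item:gzero-generates-CGzero}--\ref{item:S'-one-class-gen-G0}; and the paper spells out the two-dimensional Type~Q case (your $m=1$) by exhibiting $W(\tau^2)=\frac{1}{2}W([\tau,\tau])$ as a nonzero multiple of $\id_W$, which you dismiss as ``trivial.''
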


\begin{proof}
The ``$\subseteq$'' direction of \eqref{eq:Wlambda(gn)} follows from Lemma \ref{lemma:g-in-D(g)+span}. For the reverse inclusion, first recall that each $W \in \Irr_s(G)$ is even-dimensional. Condition \ref{item:no-G-simple-dim-2} implies that if $W \in \Irr_M(G)$, then $\dim(W) \geq 4$, and implies that if $W \in \Irr_Q(G)$, then either $\dim(W) = 2$ or $\dim(W) \geq 6$.

Suppose for the moment that $W \in \Irr_Q(G)$ and $\dim(W) = 2$. Then $\sq(W) = \C \cdot \id_W$. The homogeneous subspaces of $W$ are isomorphic one-dimensional $\CGzero$-modules, on which $\Gzero$ must act by a common linear character, say $\kappa$. Let $\tau \in S$. Then $\tau^2 = \frac{1}{2} \cdot [\tau,\tau] \in \g$, and $W(\tau^2) = \kappa(\tau^2) \cdot \id_W$ because $\tau^2 \in \Gzero$. Since $\kappa(\tau^2) \neq 0$, this implies that \eqref{eq:Wlambda(gn)} is true if $W \in \Irr_Q(G)$ and $\dim(W) = 2$.

Now suppose that $W \in \Irr_s(G)$ and $\dim(W) > 2$. Then either $W \in \Irr_M(G)$ with $\dim(W) \geq 4$, or $W \in \Irr_Q(G)$ with $\dim(W) \geq 6$. Since $W(\g)$ is a Lie subsuperalgebra of $\End(W)$, to finish the proof it then suffices by Lemma \ref{lem:generated-by-odd} to show that
	\begin{equation} \label{eq:sufficient-inclusion}
	W(\g) \supseteq \begin{cases}
	\sq(W)_{\one} & \text{if $W \in \Irr_Q(G)$,} \\
	\fsl(W)_{\one} & \text{if $W \in \Irr_M(G)$.}
	\end{cases}
	\end{equation}

First suppose $W \in \Irr_Q(G)$, with $W = S^\lambda \oplus S^{\lambda'}$ as a $\abs{\CG}$-module, $S^\lambda \cong S^{\lambda'}$ as $\CGzero$-modules, and $\dim(S^\lambda) \geq 3$. Lemma \ref{lemma:D(gzero)} implies that $W(\g)$ contains the Lie subalgebra $\fD(\sq(W)_{\zero}) \cong \fsl(S^\lambda)$, over which $\sq(W)_{\one}$ identifies with the adjoint representation of $\fsl(S^\lambda)$. In particular, $\fD(\sq(W)_{\zero})$ acts irreducibly on $\sq(W)_{\one}$. Now choose some $\tau \in S$ such that $\tau$ does not act as a scalar multiple of the identity on $S^\lambda$; this is possible because otherwise it would follow that $\gzero$ acts trivially on $S^\lambda$, contradicting Lemma \ref{lemma:G0-g0-submodule-equivalent} and the assumption $\dim(S^\lambda) \geq 3$. Then with notation and reasoning as in the proof of Lemma \ref{lemma:g-in-D(g)+span}, the operator $W(\tau - \frac{1}{\abs{C}} \cdot T) \in W(\g)$ is a nonzero element of $\sq(W)_{\one}$. Then by the simplicity of $\sq(W)_{\one}$ as a $\fD(\sq(W)_{\one})$-module, we must have $\sq(W)_{\one} \subseteq W(\g)$, and hence $\sq(W) \subseteq W(\g)$.

Now suppose $W \in \Irr_M(G)$, with $W = S^{\lambda^+} \oplus S^{\lambda^-}$ as a $\CGzero$-module. In this case, Lemma \ref{lemma:D(gzero)} implies that $W(\g)$ contains the Lie subalgebra $\fD(\fsl(W)_{\zero}) \cong \fsl(S^{\lambda^+}) \oplus \fsl(S^{\lambda^-})$, over which
	\begin{equation} \label{eq:sl(W-lam)-odd-decomp-Fn}
	\fsl(W)_{\one} \cong \Hom(S^{\lambda^+},S^{\lambda^-}) \oplus \Hom(S^{\lambda^-},S^{\lambda^+})
	\end{equation}
is the sum of two simple, non-isomorphic modules. If $\tau \in S$, then $W(\tau) \in W(\g)$ is an element of $\fsl(W)_{\one}$ having nonzero components in both summands of \eqref{eq:sl(W-lam)-odd-decomp-Fn}. Then it follows from the uniqueness of isotypical components that the $\fD(\fsl(W)_{\zero})$-submodule of $W(\g)$ generated by $W(\tau)$ must contain all of $\fsl(W)_{\one}$, and hence $\fsl(W)_{\one} \subseteq W(\g)$.
\end{proof}

\section{Type BC representation theory} \label{sec:Type-B-classical}

In Sections \ref{sec:Type-B-classical} and \ref{sec:Type-D-classical} we recall some classical results for Weyl groups of types BC and D, and then interpret those results in the context of supergroups. A somewhat more detailed (but still brief) recollection of the classical results can also be found in \cite[\S\S2--3]{DK:2025-preprint}; see also \cite{Geck:2000}. To avoid trivialities, assume throughout this section that $n \geq 2$.

\subsection{Weyl groups of type B}

Let $\calB_n = W(B_n)$ be the Weyl group of type $B_n$. It is the wreath product $\Z_2 \wr \fS_n$, i.e., a semidirect product $\Z_2^n \rtimes \fS_n$ in which $\fS_n$ acts on $\Z_2^n$ by place permutation. For $1 \leq i \leq n$ let $t_i$ be a multiplicative generator for the $i$-th factor in $\Z_2^n$. Each element of $\calB_n$ can then be uniquely written in the form $t_1^{i_1} \cdots t_n^{i_n} \sigma$ with $\sigma \in \fS_n$ and $i_j \in \set{0,1}$ for each $j$.

For $1 \leq i \leq n-1$, let $s_i = (i,i+1) \in \fS_n$. Then for $n \geq 2$, the pair $(\calB_n,\set{s_1,\ldots,s_{n-1},t_n})$ is a Coxeter system of type $B_n$, and the set of reflections in $\calB_n$ is
	\begin{equation} \label{eq:B-reflections}
	\set{ (i,j), \, t_i t_j (i,j) : 1 \leq i < j \leq n} \cup \set{ t_i : 1 \leq i \leq n}.
	\end{equation}

Let $\ve$, $\ve'$, and $\ve''$ be the group homomorphisms $\calB_n \to \set{\pm 1}$ that are defined on generators by
	\begin{equation} \label{eq:Bn-linear-characters}
	\begin{aligned}
	\ve(t_i) &= -1, \qquad & \ve'(t_i) &= -1, \qquad & \ve''(t_i) &= +1, \\
	\ve(s_j) &= -1, \qquad & \ve'(s_j) &= +1, \qquad & \ve''(s_j) &= -1.
	\end{aligned}
	\end{equation}
The map $\ve$ is the sign character of $\calB_n$.

\subsection{Simple modules for Weyl groups of type B} \label{subsec:-type-B-simples}

The simple $\CB_n$-modules are labeled by bi\-par\-ti\-tions of $n$, i.e., by ordered pairs of integer partitions $(\lambda,\mu)$ such that $\abs{\lambda}+\abs{\mu} = n$. Let $\BPn$ be the set of all bipartitions of $n$, and write $S^{(\lambda,\mu)}$ for the simple $\CB_n$-module labeled by $(\lambda,\mu)$. The trivial character and the characters  $\ve$, $\ve'$, and $\ve''$ afford the one-dimensional $\CB_n$-modules labeled by $([n],\emptyset)$, $(\emptyset,[1^n])$, $(\emptyset,[n])$, and $([1^n],\emptyset)$, respectively. More generally, if $\lambda \vdash n$, then
	\[
	S^{(\lambda,\emptyset)} = \Inf_{\fS_n}^{\calB_n}(S^\lambda) \qquad \text{and} \qquad S^{(\emptyset,\lambda)} = \Inf_{\fS_n}^{\calB_n}(S^\lambda) \otimes \ve',
	\]
where $\Inf_{\fS_n}^{\calB_n}(V)$ denotes the inflation of a $\CS_n$-module to $\CB_n$ along the canonical quotient map $\calB_n \twoheadrightarrow \fS_n$. By \cite[Theorem 5.5.6(c)]{Geck:2000}, one has
	\begin{equation} \label{eq:B-tensor-with-character}
	\begin{aligned}
	S^{(\lambda,\mu)} \otimes \ve &\cong S^{(\mu^*,\lambda^*)}, \qquad &
	S^{(\lambda,\mu)} \otimes \ve' &\cong S^{(\mu,\lambda)}, \qquad &
	S^{(\lambda,\mu)} \otimes \ve'' &\cong S^{(\lambda^*,\mu^*)}.
	\end{aligned}
	\end{equation}

\begin{lemma}[{cf.\ \cite[Lemma 2.2.1]{DK:2025-preprint}}] \label{lemma:Bn-subminimal-dimensions}
Let $(\lambda,\mu) \in \BPn$.
\begin{enumerate}
\item \label{item:dim-S(lam,mu)} $\dim(S^{(\lambda,\mu)}) = \binom{n}{\abs{\lambda}} \cdot \dim(S^\lambda) \cdot \dim(S^\mu)$, where $S^\lambda$ is the simple $\CS_n$-module labeled by $\lambda$.

\item The only one-dimensional $\CB_n$-modules are those labeled by $([n],\emptyset)$, $(\emptyset,[1^n])$, $(\emptyset,[n])$, and $([1^n],\emptyset)$, i.e., the trivial module and the modules afforded by $\ve$, $\ve'$, and $\ve''$, respectively.

\item For $n \geq 5$, if $\dim(S^{(\lambda,\mu)}) \neq 1$, then $\dim(S^{(\lambda,\mu)}) \geq n-1$.

\item If $n$ is even and $\lambda \vdash n/2$, then $\dim(S^{(\lambda,\lambda^*)}) = \dim(S^{(\lambda,\lambda)}) = \binom{n}{n/2} \cdot \dim(S^\lambda)^2$.
\end{enumerate}
\end{lemma}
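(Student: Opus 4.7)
The plan for part (1) is to realize $S^{(\lambda,\mu)}$ as an induced module from a parabolic-type subgroup. Setting $k = \abs{\lambda}$, the subgroup $\calB_k \times \calB_{n-k} \subseteq \calB_n$ (in which $\calB_k$ acts on the first $k$ coordinates and $\calB_{n-k}$ on the remaining $n-k$) has index $\binom{n}{k}$, and one has
\[
S^{(\lambda,\mu)} \cong \Ind_{\calB_k \times \calB_{n-k}}^{\calB_n}\!\bigl( S^{(\lambda,\emptyset)} \boxtimes S^{(\emptyset,\mu)} \bigr).
\]
Since $S^{(\lambda,\emptyset)} = \Inf_{\fS_k}^{\calB_k}(S^\lambda)$ and $S^{(\emptyset,\mu)}$ have dimensions $\dim(S^\lambda)$ and $\dim(S^\mu)$ respectively, part (1) follows.

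Granting (1), parts (2) and (4) are immediate. For (2), one-dimensionality forces $\binom{n}{\abs{\lambda}}=1$ and both Specht factors to be one-dimensional; the first condition forces $\abs{\lambda} \in \set{0,n}$, and the second forces the nonempty component to be $[n]$ or $[1^n]$. This yields exactly the four bipartitions listed, and matching each one to its character (trivial, $\ve$, $\ve'$, or $\ve''$) is read off from \eqref{eq:Bn-linear-characters}, the inflation description in the text preceding \eqref{eq:B-tensor-with-character}, and the values of the four characters on $s_j$ and $t_i$. For (4), part (1) gives $\dim(S^{(\lambda,\lambda)}) = \binom{n}{n/2} \dim(S^\lambda)^2$; since tensoring a simple $\CS_{n/2}$-module with the sign character gives the module labeled by the conjugate partition without changing dimension, one has $\dim(S^{\lambda^*}) = \dim(S^\lambda)$, and the same formula applied to $(\lambda,\lambda^*)$ yields the matching value.

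Part (3) is a case split on $\abs{\lambda}$. If $0 < \abs{\lambda} < n$, then $\binom{n}{\abs{\lambda}} \geq n$, so $\dim(S^{(\lambda,\mu)}) \geq n > n-1$ with no input needed from the Specht factors. If $\abs{\lambda} \in \set{0,n}$, then part (1) reduces the dimension to that of a single simple $\CS_n$-module, and the classical fact that for $n \geq 5$ every non-one-dimensional simple $\CS_n$-module has dimension at least $n-1$ (with equality achieved by the standard representation $S^{[n-1,1]}$, as one verifies via the hook length formula) finishes the argument. Nothing in this proof is genuinely subtle; the only input from outside the paper is this classical $\CS_n$ dimension bound.
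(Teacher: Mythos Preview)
Your proof is correct. The paper itself does not supply a proof but simply cites the companion preprint \cite[Lemma 2.2.1]{DK:2025-preprint}, so there is no in-paper argument to compare against; your approach---part (1) via the standard induced-module description of the simple $\CB_n$-modules, parts (2) and (4) as immediate corollaries, and part (3) by reduction to the classical $\CS_n$ bound (which the paper elsewhere cites as \cite[Theorem 2.4.10]{James:1981})---is the expected one. One minor remark: strictly speaking the induction formula $S^{(\lambda,\mu)} \cong \Ind_{\calB_k \times \calB_{n-k}}^{\calB_n}\bigl( S^{(\lambda,\emptyset)} \boxtimes S^{(\emptyset,\mu)} \bigr)$ is itself an external input (it is not established in this paper), so your closing sentence slightly undersells what is being imported; alternatively, part (1) can be read off internally from Theorem~\ref{thm:type-B-normal-form-action}, whose basis $B^{(\lambda,\mu)}$ is indexed by standard bitableaux of shape $(\lambda,\mu)$, of which there are exactly $\binom{n}{\abs{\lambda}} \cdot \dim(S^\lambda) \cdot \dim(S^\mu)$.
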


For $(\lambda,\mu) \in \BPn$, let $\T(\lambda,\mu)$ be the set of all standard bitableaux of shape $(\lambda,\mu)$, i.e., the set of all ordered pairs $T = (T_{+1},T_{-1})$ such that $T_{+1}$ and $T_{-1}$ are standard tableaux of shapes $\lambda$ and $\mu$, respectively, whose boxes have collectively been filled by the integers $1,2,\ldots,n$. Given $T \in \T(\lambda,\mu)$, let $\rho_T(i) \in \set{\pm 1}$ be the subscript of the tableau ($T_{+1}$ or $T_{-1}$) in which the integer $i$ is located, and let $\res_T(i)$ be the residue (or \emph{content}, in the terminology of \cite{Mishra:2016}) of the box (in either $T_{+1}$ or $T_{-1}$) in which the integer $i$ is located. Given $T \in \T(\lambda,\mu)$ and $\sigma \in \fS_n$, the (not necessarily standard) bitableau $\sigma \cdot T$ of shape $(\lambda,\mu)$ is defined by applying $\sigma$ to the entries of $T$. The next theorem is a consequence of \cite[Theorem 6.12]{Mishra:2016}.

\begin{theorem} \label{thm:type-B-normal-form-action}
Let $(\lambda,\mu) \in \BPn$. Then there exists an orthonormal basis
	\begin{equation} \label{eq:B(lam,mu)}
	B^{(\lambda,\mu)} = \set{ c_T : T \in \T(\lambda,\mu)}
	\end{equation}
for $S^{(\lambda,\mu)}$ such that the action of the generators  $s_i,t_j \in \calB_n$ for $1 \leq i < n$ and $1 \leq j \leq n$ is as follows: Given $T \in \T(\lambda,\mu)$, set $r_i = \res_T(i+1) - \res_T(i)$, and let $S_T^{(\lambda,\mu)}$ be the span in $S^{(\lambda,\mu)}$ of $c_T$.
	\begin{enumerate}
	\item $t_j \cdot c_T = \rho_T(j) \cdot c_T$.
	
	\item Suppose $i$ and $i+1$ are not in the same tableau in $T$. Then $S \coloneq s_i \cdot T$ is standard, $s_i$ leaves $S_T^{(\lambda,\mu)} \oplus S_S^{(\lambda,\mu)}$ invariant, and the matrix of $s_i$ with respect to the basis $\{c_T,c_S \}$ is $\sm{0 & 1 \\ 1 & 0}$.
	
	\item If $i$ and $i+1$ are the same tableau in $T$, in the same row ($r_i = +1$) or in the same column $(r_i = -1$), then $s_i \cdot c_T = r_i \cdot c_T$.
	
	\item Suppose $i$ and $i+1$ are in the same tableau in $T$, but not in the same row or the same column. Then $\abs{r_i} \geq 2$, the bitableau $S \coloneq s_i \cdot T$ is standard, $s_i$ leaves $S_T^{(\lambda,\mu)} \oplus S_S^{(\lambda,\mu)}$ invariant, and the matrix of $s_i$ with respect to the basis $\set{c_T,c_S}$ is
		\[
		\begin{bmatrix}
		r_i^{-1} & \sqrt{1-r_i^{-2}} \\
		\sqrt{1-r_i^{-2}} & -r_i^{-1}
		\end{bmatrix}.
		\]
	\end{enumerate}
\end{theorem}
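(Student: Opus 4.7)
The theorem is the hyperoctahedral analogue of Young's seminormal (orthogonal) form, and the natural strategy is the Okounkov--Vershik Gelfand--Tsetlin construction for the tower $\calB_1 \subset \calB_2 \subset \cdots \subset \calB_n$; as formulated, it is the $q=1$ specialization of Hoefsmit's Hecke-algebra seminormal form carried out in detail by Mishra in \cite[Theorem 6.12]{Mishra:2016}. The first step is to establish the Pieri branching rule
\[
\Res^{\calB_n}_{\calB_{n-1}} S^{(\lambda,\mu)} \;\cong\; \bigoplus_{\lambda' \to \lambda} S^{(\lambda',\mu)} \;\oplus\; \bigoplus_{\mu' \to \mu} S^{(\lambda,\mu')},
\]
where the sums range over partitions obtained by removing a single removable box. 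Since this restriction is multiplicity free, iterating down the tower decomposes $S^{(\lambda,\mu)}$ into one-dimensional joint eigenspaces of the associated Gelfand--Tsetlin subalgebra, and these lines are naturally in bijection with standard bitableaux $T \in \T(\lambda,\mu)$: the $i$-th step down the tower records both which tableau loses a box (giving $\rho_T(i)$) and which removable box is used (giving $\res_T(i)$). Choosing a unit vector $c_T$ in each line produces the basis $B^{(\lambda,\mu)}$, determined up to signs.

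For part (a), I would use the explicit realization $S^{(\lambda,\mu)} = \Ind^{\calB_n}_{\calB_k \times \calB_{n-k}}\bigl(S^{(\lambda,\emptyset)} \boxtimes S^{(\emptyset,\mu)}\bigr)$ with $k = \abs{\lambda}$. Each $t_j$ acts as $+1$ on $S^{(\lambda,\emptyset)} = \Inf_{\fS_k}^{\calB_k}(S^\lambda)$ and as $-1$ on $S^{(\emptyset,\mu)} = \Inf_{\fS_{n-k}}^{\calB_{n-k}}(S^\mu) \otimes \ve'$ by definition, so tracking $t_i$ through the induction and branching chain identifies its eigenvalue on $c_T$ with $\rho_T(i)$.

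For the action of $s_i$, I would split into cases. If $i$ and $i+1$ lie in different tableaux of $T$, conjugation by $s_i$ swaps $t_i \leftrightarrow t_{i+1}$, so $s_i$ permutes the distinct eigenlines $\C c_T$ and $\C c_{s_i T}$; unitarity of the representation (in the inner product making $B^{(\lambda,\mu)}$ orthonormal) together with $s_i^2 = 1$ then forces the $2 \times 2$ block to be $\sm{0 & 1 \\ 1 & 0}$ after fixing signs, which is case (b). If $i,i+1$ lie in the same row or column of the same tableau, the local calculation factors through the canonical quotient $\calB_n \twoheadrightarrow \fS_n$ and reduces to Young's orthogonal form for $\fS_n$, yielding the eigenvalue $r_i = \pm 1$ of case (c). In case (d) the bitableau $s_i T$ is again standard and $\abs{r_i} \geq 2$; the matrix of $s_i$ on $\Span\{c_T, c_{s_i T}\}$ is then determined by imposing $s_i^2 = 1$ together with the commutation relation between $s_i$ and a type-B Jucys--Murphy element $J_k = \sum_{j<k}\bigl((j,k) + t_j t_k (j,k)\bigr)$, whose eigenvalue on $c_T$ is $\res_T(k)$. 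Solving those constraints pins the diagonal entries to $\pm r_i^{-1}$ and the off-diagonal entries to $\pm\sqrt{1 - r_i^{-2}}$.

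The delicate part is this last case: one must identify the correct type-B Jucys--Murphy subalgebra whose spectrum simultaneously encodes $\rho_T$ and $\res_T$, and then choose the signs of the $c_T$ coherently along the branching chain so that the off-diagonal entries come out uniformly as $+\sqrt{1 - r_i^{-2}}$ rather than ambiguous in sign. This coherent sign convention, which is nontrivial precisely because the type-B generators $t_i$ interact with the Jucys--Murphy elements in a way they do not in type A, is exactly the content of \cite[\S6]{Mishra:2016}, and at the present paper's level of rigor it suffices to cite that reference.
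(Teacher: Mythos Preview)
Your proposal is correct and matches the paper's approach: the paper does not give a proof of this theorem at all, but simply states that it is a consequence of \cite[Theorem 6.12]{Mishra:2016}, which is exactly the reference you invoke. Your sketch of the Okounkov--Vershik argument is a welcome elaboration, though note that with the paper's normalization the generalized YJM element $X_i = \sum_{k<i}[(k,i)+t_kt_i(k,i)]$ has eigenvalue $2\cdot\res_T(i)$ on $c_T$, not $\res_T(i)$ (cf.\ Remark~\ref{remark:YJM-eigenvectors}).
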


\begin{remark} \label{remark:YJM-eigenvectors}
Let $(\lambda,\mu) \in \BPn$ and $T \in \T(\lambda,\mu)$. The vector $c_T$ is a simultaneous eigenvector for the action of the generalized Young--Jucys--Murphy (YJM) elements $X_1,\ldots,X_n \in \CB_n$, which are defined by $X_i = \sum_{k=1}^{i-1} [(k,i) + t_k t_i(k,i)]$. The element $X_i$ acts on $c_T$ with eigenvalue $2 \cdot \res_T(i)$; see \cite[Theorem 6.5]{Mishra:2016}. Then $\calX_n \coloneq \sum_{i=1}^n X_i$ acts on $S^{(\lambda,\mu)}$ as scalar multiplication by $2 \cdot \res(\lambda,\mu)$, where $\res(\lambda,\mu) \coloneq \res(\lambda)+\res(\mu)$ is the sum of the residues of the boxes in the Young diagrams of $\lambda$ and $\mu$.
\end{remark}

Given $\lambda \vdash n$ and $\nu \vdash (n-1)$, write $\nu \prec \lambda$ if the Young diagram of $\nu$ can be obtained by removing a box from the Young diagram of $\lambda$. In this case, let $\res(\lambda/\nu)$ denote the residue of the box that is removed from $\lambda$ to obtain $\nu$. For $(\lambda,\mu) \in \BPn$ and $(\nu,\tau) \in \calBP(n-1)$, write $(\nu,\tau) \prec (\lambda,\mu)$ if either $\nu \prec \lambda$ and $\tau = \mu$, or $\nu = \lambda$ and $\tau \prec \mu$. For $(\lambda,\mu) \in \BPn$ and $T \in \T(\lambda,\mu)$, let $b_T(n)$ denote the box in $T$ in which the integer $n$ is located, and let $T/b_T(n)$, or simply $T/n$, denote the standard bitableau obtained from $T$ by removing the box $b_T(n)$. Identify $\calB_{n-1}$ with the subgroup of $\calB_n$ generated by the set $\set{s_1,\ldots,s_{n-2},t_{n-1}}$. Then for $(\lambda,\mu) \in \BPn$, one has
	\begin{equation} \label{eq:restriction-B-simple}
	\Res^{\calB_n}_{\calB_{n-1}}(S^{(\lambda,\mu)}) = \bigoplus_{(\nu,\tau) \prec (\lambda,\mu)} \Big[ \bigoplus_{\substack{T \in \T(\lambda,\mu) \\ T/n \in \T(\nu,\tau)}} S_T^{(\lambda,\mu)} \Big] \cong \bigoplus_{(\nu,\tau) \prec (\lambda,\mu)} S^{(\nu,\tau)}.
	\end{equation}
The fact that the summand indexed by $(\nu,\tau)$ is isomorphic as a $\CB_{n-1}$-module to $S^{(\nu,\tau)}$ can be seen from Theorem \ref{thm:type-B-normal-form-action}.
	
Given $(\lambda,\mu) \in \BPn$, let $R(\lambda,\mu) = (R_{+1},R_{-1})$ be the row major bitableau of shape $(\lambda,\mu)$, i.e., the standard bitableau of shape $(\lambda,\mu)$ in which $R_{+1}$ (resp.\ $R_{-1}$) is filled with the integers $1,\ldots,\abs{\lambda}$ (resp.\ $\abs{\lambda}+1,\ldots,n$) in row major order. For $T \in \T(\lambda,\mu)$, let $\sigma_T \in \fS_n$ be the permutation that maps $R(\lambda,\mu)$ to $T$. The length of $T$ is defined by $\ell(T) = \ell(\sigma_T)$, the length of $\sigma_T$ as an element of the Coxeter group $\fS_n$. If $\sigma \in \fS_n$ and $\sigma \cdot T \in \T(\lambda,\mu)$, then
	\[
	(-1)^{\ell(\sigma \cdot T)} = (-1)^{\ell(\sigma)} \cdot (-1)^{\ell(T)} = \ve''(\sigma) \cdot \ve''(\sigma_T).
	\]
Given a standard $\lambda$-tableau $T$, let $T^*$ be its transpose, which is then a standard $\lambda^*$-tableau. For $T = (T_{+1},T_{-1}) \in \T(\lambda,\mu)$, set $T^* = (T_{+1}^*,T_{-1}^*) \in \T(\lambda^*,\mu^*)$ and $T^\natural = (T_{-1},T_{+1}) \in \T(\mu,\lambda)$.

\begin{lemma} \label{lemma:sigma-R-relations}
Let $(\lambda,\mu) \in \BPn$, and let $\pi \in \fS_n$ be the $n$-cycle $(1,2,\ldots,n)$. Then
	\begin{gather}
	\sigma_{R(\lambda,\mu)^{\natural}} = \pi^{\abs{\lambda}}, \label{eq:sigma-R(lam,mu)nat} \\
	\sigma_{R(\mu^*,\lambda^*)^\natural} = [\sigma_{R(\lambda,\mu)^\natural}]^{-1} = \sigma_{R(\mu,\lambda)^\natural}, \label{eq:sigma-R(mu*,lam*)nat} \\
	\sigma_{R(\lambda^*,\mu^*)^*} = [\sigma_{R(\lambda,\mu)^*}]^{-1} = \pi^{\abs{\lambda}} \circ \sigma_{R(\mu^*,\lambda^*)^*} \circ \pi^{-\abs{\lambda}}. \label{eq:sigma-R(lam,mu)*}
	\end{gather}
\end{lemma}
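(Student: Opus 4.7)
The plan is to read each identity off directly from the definition $\sigma_T \cdot R(\text{shape of } T) = T$. For \eqref{eq:sigma-R(lam,mu)nat}, I would compare $R(\mu,\lambda)$ and $R(\lambda,\mu)^\natural$ box by box, both of shape $(\mu,\lambda)$. In the $\mu$-component, both fillings are row-major but the entries shift from $1,\ldots,|\mu|$ in $R(\mu,\lambda)$ to $|\lambda|+1,\ldots,n$ in $R(\lambda,\mu)^\natural$, so the required permutation sends $k \mapsto k+|\lambda|$ on $\{1,\ldots,|\mu|\}$. In the $\lambda$-component, the entries shift from $|\mu|+1,\ldots,n$ down to $1,\ldots,|\lambda|$, so the permutation sends $k \mapsto k-|\mu|$ on $\{|\mu|+1,\ldots,n\}$. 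Together these are the cyclic shift $\pi^{|\lambda|}$. Equation \eqref{eq:sigma-R(mu*,lam*)nat} is then an immediate corollary: applying \eqref{eq:sigma-R(lam,mu)nat} to the bipartitions $(\mu^*,\lambda^*)$ and $(\mu,\lambda)$ yields $\pi^{|\mu|}$ in both cases, and $\pi^{|\mu|} = \pi^{n-|\lambda|} = [\pi^{|\lambda|}]^{-1}$.

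For the first equality in \eqref{eq:sigma-R(lam,mu)*}, I would use the fact that the entry-permutation action of $\fS_n$ commutes with the transpose operation $(-)^*$: for any $\sigma \in \fS_n$ and any bitableau $T$, both $(\sigma \cdot T)^*$ and $\sigma \cdot T^*$ place the entry $\sigma(k)$ at position $(j,i)$ whenever $T$ has entry $k$ at position $(i,j)$. Applying $(-)^*$ to both sides of $\sigma_{R(\lambda,\mu)^*} \cdot R(\lambda^*,\mu^*) = R(\lambda,\mu)^*$ therefore gives $\sigma_{R(\lambda,\mu)^*} \cdot R(\lambda^*,\mu^*)^* = R(\lambda,\mu)$, whence $R(\lambda^*,\mu^*)^* = [\sigma_{R(\lambda,\mu)^*}]^{-1} \cdot R(\lambda,\mu)$. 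Since the $\fS_n$-action on standard bitableaux is free, this yields $\sigma_{R(\lambda^*,\mu^*)^*} = [\sigma_{R(\lambda,\mu)^*}]^{-1}$.

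The second equality in \eqref{eq:sigma-R(lam,mu)*} is the main point. Using that the transpose of the row-major $\nu$-tableau is the column-major $\nu^*$-tableau, I would factor $\sigma_{R(\lambda,\mu)^*}$ as a commuting product $\tau_{\lambda^*} \cdot (\tau_{\mu^*})_{[|\lambda|]}$, where $\tau_\nu$ denotes the permutation of $\{1,\ldots,|\nu|\}$ that carries the row-major $\nu^*$-tableau to the column-major $\nu^*$-tableau, and the bracket subscript $(-)_{[|\lambda|]}$ denotes the shift $k \mapsto k + |\lambda|$ so that the second factor acts on $\{|\lambda|+1,\ldots,n\}$. Analogously, $\sigma_{R(\mu,\lambda)^*} = \tau_{\mu^*} \cdot (\tau_{\lambda^*})_{[|\mu|]}$. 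By the block-swap description of $\pi^{|\lambda|}$ already established, conjugation by $\pi^{|\lambda|}$ exchanges a permutation of $\{1,\ldots,|\mu|\}$ with its shift into $\{|\lambda|+1,\ldots,n\}$, and exchanges a shifted permutation of $\{|\mu|+1,\ldots,n\}$ with its un-shifted version on $\{1,\ldots,|\lambda|\}$. A short verification then gives $\pi^{|\lambda|} \sigma_{R(\mu,\lambda)^*} \pi^{-|\lambda|} = (\tau_{\mu^*})_{[|\lambda|]} \cdot \tau_{\lambda^*} = \sigma_{R(\lambda,\mu)^*}$; taking inverses and combining with the first equality completes the proof. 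The principal obstacle is purely notational bookkeeping for the block-conjugations, but once the row-to-column-major factorization of $\sigma_{R(\lambda,\mu)^*}$ is in hand, everything reduces to matching permutations on two disjoint blocks of $\{1,\ldots,n\}$.
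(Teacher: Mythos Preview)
Your argument is correct. For \eqref{eq:sigma-R(lam,mu)nat}, \eqref{eq:sigma-R(mu*,lam*)nat}, and the first equality in \eqref{eq:sigma-R(lam,mu)*}, your approach coincides with the paper's: both read these off directly from the definitions, using that the entrywise $\fS_n$-action commutes with $(-)^*$ (and with $(-)^\natural$). One small notational slip: as you define $\tau_\nu$, the factorization of $\sigma_{R(\lambda,\mu)^*}$ should read $\tau_\lambda \cdot (\tau_\mu)_{[|\lambda|]}$ rather than $\tau_{\lambda^*} \cdot (\tau_{\mu^*})_{[|\lambda|]}$; alternatively, redefine $\tau_\nu$ to carry the row-major $\nu$-tableau to the column-major $\nu$-tableau. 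The argument is unaffected since you use the symbol consistently.

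For the second equality in \eqref{eq:sigma-R(lam,mu)*} your route genuinely differs from the paper's. You factor $\sigma_{R(\lambda,\mu)^*}$ explicitly as a commuting product of block permutations on $\{1,\ldots,|\lambda|\}$ and $\{|\lambda|+1,\ldots,n\}$, and then observe that conjugation by $\pi^{|\lambda|}$ swaps these blocks, directly yielding $\pi^{|\lambda|}\,\sigma_{R(\mu,\lambda)^*}\,\pi^{-|\lambda|} = \sigma_{R(\lambda,\mu)^*}$; the desired identity then follows by inversion and the first equality. The paper instead argues more abstractly: starting from $\sigma = \sigma_{R(\mu^*,\lambda^*)^*}$, it precomposes with $\pi^{-|\lambda|}$ (using \eqref{eq:sigma-R(lam,mu)nat}) and then applies the combined operation $(-)^{\natural*}$ to both source and target bitableaux, exploiting that this operation commutes with the $\fS_n$-action, to conclude that $\pi^{|\lambda|} \circ \sigma \circ \pi^{-|\lambda|}$ carries $R(\lambda,\mu)^*$ to $R(\lambda^*,\mu^*)$. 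Your block-decomposition is more hands-on and makes the role of $\pi^{|\lambda|}$ as a block swap completely transparent; the paper's version is slicker but relies on the slightly opaque identity $[R(\lambda,\mu)^\natural]^{\natural*} = R(\lambda,\mu)^*$.
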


\begin{proof}
The equality \eqref{eq:sigma-R(lam,mu)nat} is straightforward to check. Then $\sigma_{R(\mu^*,\lambda^*)^\natural} = \pi^{\abs{\mu^*}} = \pi^{\abs{\mu}} = \sigma_{R(\mu,\lambda)^\natural}$, giving \eqref{eq:sigma-R(mu*,lam*)nat}. Next, if $\sigma = \sigma_{R(\lambda,\mu)^*}$ maps $R(\lambda^*,\mu^*)$ to $R(\lambda,\mu)^*$, then $\sigma$ also maps $R(\lambda^*,\mu^*)^*$ to $[R(\lambda,\mu)^*]^* = R(\lambda,\mu)$, and hence $\sigma^{-1}$ maps $R(\lambda,\mu)$ to $R(\lambda^*,\mu^*)^*$, giving the first equality in \eqref{eq:sigma-R(lam,mu)*}. Now let $\sigma = \sigma_{R(\mu^*,\lambda^*)^*}$ be the permutation that maps $R(\mu,\lambda)$ to $R(\mu^*,\lambda^*)^*$. Then $\sigma \circ \pi^{-\abs{\lambda}}$ maps $R(\lambda,\mu)^{\natural}$ to $R(\mu^*,\lambda^*)^*$, and so maps $R(\lambda,\mu)^* = [R(\lambda,\mu)^\natural]^{\natural*}$ to $[R(\mu^*,\lambda^*)^*]^{\natural*} = R(\mu^*,\lambda^*)^\natural$. Then $\pi^{\abs{\lambda}} \circ \sigma \circ \pi^{-\abs{\lambda}}$ maps $R(\lambda,\mu)^*$ to $R(\lambda^*,\mu^*)$, implying the second equality in \eqref{eq:sigma-R(lam,mu)*}.
\end{proof}

Let $\approx$ be the equivalence relation on $\BPn$ generated by $(\lambda,\mu) \approx (\mu,\lambda)$ and $(\lambda,\mu) \approx (\lambda^*,\mu^*)$. Lemma \ref{lemma:sigma-R-relations} implies that the cycle types of $\sigma_{R(\lambda,\mu)^\natural}$ and $\sigma_{R(\lambda,\mu)^*}$ are constant across equivalence classes. Say $\sigma_{R(\lambda,\mu)^\natural}$ is of cycle type $(a_1, \ldots, a_r)$ and set $\velammu' = \prod_{j=1}^r i^{a_j-1} \in \set{\pm 1, \pm i}$. Similarly, define $\velammu''$ in terms of the cycle type of $\sigma_{R(\lambda,\mu)^*}$, and set $\velammu = \velammu' \cdot \velammu''$. Then
	\begin{gather*}
	[\velammu']^2 = (-1)^{\ell(R(\lambda,\mu)^\natural)}, \qquad [\velammu'']^2 = (-1)^{\ell(R(\lambda,\mu)^*)}, \\
	\text{and} \quad [\velammu]^2 = (-1)^{\ell(R(\lambda,\mu)^\natural) + \ell(R(\lambda,\mu)^*)}.
	\end{gather*}

\begin{remark} \label{remark:ve''-(lam,mu)-(lam*,mu*)}
Let $(\lambda,\mu) \in \BPn$ such that $(\lambda,\mu) = (\lambda^*,\mu^*)$, and let $d(\lambda)$ and $d(\mu)$ be the lengths of the main diagonals of the Young diagrams of $\lambda$ and $\mu$, respectively.
	\begin{enumerate}
	\item \label{item:sigma-R(lam,mu)*-sign} One sees that $\sigma_{R(\lambda,\mu)^*} = \sigma_1 \cdot \sigma_2$, where $\sigma_1$ is a product of $(\abs{\lambda}-d(\lambda))/2$ disjoint transpositions of the set $\set{1,\ldots,\abs{\lambda}}$, and $\sigma_2$ is a product of $(\abs{\mu}-d(\mu))/2$ disjoint transpositions of the set $\set{\abs{\lambda}+1,\ldots,n}$. Then $\velammu'' = i^{(n-d(\lambda)-d(\mu))/2}$.
	
	\item If $\nu$ is a partition such that $\nu = \nu^*$, then it is possible to add or remove (but not both) a box of residue zero from the Young diagram of $\nu$ to obtain the diagram of a partition $\tau$ such that $\tau = \tau^*$. It follows that $(\lambda,\mu)$ is uniquely an element of a family of bipartitions $(\lambda_0,\mu_0)$, $(\lambda_1,\mu_0)$, $(\lambda_0,\mu_1)$, and $(\lambda_1,\mu_1)$ such that $\lambda_1 \prec \lambda_0$, $\mu_1 \prec \mu_0$, and $\lambda_i = \lambda_i^*$ and $\mu_i = \mu_i^*$ for $i \in \set{0,1}$. Applying \eqref{item:sigma-R(lam,mu)*-sign}, one sees that the value of $\velammu''$ is constant across this family.
	\end{enumerate}
\end{remark}

\begin{remark} \label{remark:(lam,lam)-ve-ve''-factors}
Suppose $n$ is even and $\lambda \vdash n/2$.
	\begin{enumerate}
	\item One sees that $\sigma_{R(\lambda,\lambda)^*} = \sigma_1 \cdot \sigma_2$, where $\sigma_1$ is a permutation of the set $\set{1,\ldots,n/2}$, $\sigma_2$ is a permutation of $\set{n/2+1,\ldots,n}$, and $\sigma_2 = \pi^{n/2} \circ \sigma_1 \circ \pi^{-n/2}$. Similarly, one sees that $\sigma_{R(\lambda,\lambda^*)^*} = \sigma_3 \cdot \sigma_4$, where $\sigma_4$ is conjugate to $\sigma_3^{-1}$. Then $[\velamlam'']^2 = 1 = [\ve_{(\lambda,\lambda^*)}'']^2$.
	
	\item One sees that $\sigma_{R(\lambda,\lambda)^{\natural}}$ and $\sigma_{R(\lambda,\lambda^*)^{\natural}}$ are each the product of $n/2$ disjoint transpositions. Then $\velamlam' = i^{n/2} = \ve_{(\lambda,\lambda^*)}'$.
	\end{enumerate}
\end{remark}

\begin{lemma}
The following families of linear maps, for $(\lambda,\mu) \in \BPn$, satisfy the conditions \eqref{eq:associator} and \eqref{eq:inverse-associator} for the group $G = \calB_n$ and the characters $\ve$, $\ve'$, and $\ve''$, respectively:
	\begin{align}
	\phi_{\ve'}^{(\lambda,\mu)} &: S^{(\lambda,\mu)} \to S^{(\mu,\lambda)} & \text{defined by} \quad c_T &\mapsto c_{T^\natural}, \label{eq:phi-ve'} \\
	\phi_{\ve''}^{(\lambda,\mu)} &: S^{(\lambda,\mu)} \to S^{(\lambda^*,\mu^*)} & \text{defined by} \quad c_T &\mapsto \ve_{(\lambda,\mu)}'' \cdot (-1)^{\ell(T)} \cdot c_{T^*}, \label{eq:phi-ve''} \\
\phi_{\ve}^{(\lambda,\mu)} &: S^{(\lambda,\mu)} \to S^{(\mu^*,\lambda^*)} & \text{defined by} \quad c_T &\mapsto \velammu \cdot (-1)^{\ell(T^\natural)} \cdot c_{T^{\natural*}}. \label{eq:phi-ve}
	\end{align}
\end{lemma}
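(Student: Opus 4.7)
The plan is to verify the two conditions \eqref{eq:associator} and \eqref{eq:inverse-associator} separately for each of the three families. By linearity, \eqref{eq:associator} need only be checked on the basis vectors $c_T$ against the generating set $\{s_1,\ldots,s_{n-1},t_1,\ldots,t_n\}$ of $\calB_n$. Two elementary observations drive every such check: (i) $T^\natural$ has the same boxes as $T$ with the $\pm 1$ labels swapped, so $\res_{T^\natural}(i) = \res_T(i)$ and $\rho_{T^\natural}(i) = -\rho_T(i)$; and (ii) $T^*$ has the transposed boxes, so $\res_{T^*}(i) = -\res_T(i)$ and $\rho_{T^*}(i) = \rho_T(i)$.

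For $\phi_{\ve'}^{(\lambda,\mu)}$, each case of Theorem \ref{thm:type-B-normal-form-action} falls out immediately: the sign $\ve'(t_i) = -1$ cancels the flip in $\rho$, and the matrix of $s_i$ on $\{c_T,c_{s_iT}\}$ coincides with that on $\{c_{T^\natural},c_{s_iT^\natural}\}$, matching $\ve'(s_i) = +1$. For $\phi_{\ve''}^{(\lambda,\mu)}$, the relation $\ve''(t_i) = +1$ is immediate, while $\ve''(s_i) = -1$ is supplied by the sign flip $r_i(T^*) = -r_i(T)$ in the same-row/column case, and by the parity identity $(-1)^{\ell(s_iT)} = -(-1)^{\ell(T)}$ (coming from $\sigma_{s_iT} = s_i\sigma_T$) in the remaining cases. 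Finally, a direct computation gives
\[
\bigl(\phi_{\ve''}^{(\mu,\lambda)} \circ \phi_{\ve'}^{(\lambda,\mu)}\bigr)(c_T) = \ve_{(\mu,\lambda)}'' \cdot (-1)^{\ell(T^\natural)} \cdot c_{T^{\natural *}},
\]
so $\phi_\ve^{(\lambda,\mu)}$ is a nonzero scalar multiple of this composition. Since the composition of associators for $\ve'$ and $\ve''$ is automatically an associator for $\ve = \ve' \cdot \ve''$, and equivariance is preserved under nonzero scaling, \eqref{eq:associator} holds for $\phi_\ve^{(\lambda,\mu)}$ for free.

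The inverse condition \eqref{eq:inverse-associator} is trivial for $\phi_{\ve'}$ from $T^{\natural\natural} = T$. For $\phi_{\ve''}$ and $\phi_\ve$ the composition scalar must collapse to $1$, which rests on two ingredients. First, applying $\flat \in \{\natural, *\}$ to $T = \sigma_T \cdot R(\lambda,\mu)$ and using the fact that $\sigma_T$ commutes with $\flat$ yields $\sigma_{T^\flat} = \sigma_T \cdot \sigma_{R(\lambda,\mu)^\flat}$, and hence the parity identities $(-1)^{\ell(T) + \ell(T^*)} = [\velammu'']^2$ and $(-1)^{\ell(T^\natural) + \ell(T^*)} = [\velammu]^2$. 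Second, Lemma \ref{lemma:sigma-R-relations} shows that passing from $(\lambda,\mu)$ to $(\lambda^*,\mu^*)$ inverts $\sigma_{R(\lambda,\mu)^*}$, while passing to $(\mu^*,\lambda^*)$ inverts $\sigma_{R(\lambda,\mu)^\natural}$ and conjugates the inverse of $\sigma_{R(\lambda,\mu)^*}$; in either case the cycle type is preserved, giving $\ve_{(\lambda^*,\mu^*)}'' = \velammu''$ and $\ve_{(\mu^*,\lambda^*)} = \velammu$. Combining these, the composition scalars reduce to $[\velammu'']^4 = 1$ and $[\velammu]^4 = 1$ respectively.

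The most delicate piece is the bookkeeping of the fourth roots of unity $\ve'$, $\ve''$, and $\ve$ under the involutions $(\lambda,\mu) \mapsto (\lambda^*,\mu^*)$ and $(\lambda,\mu) \mapsto (\mu^*,\lambda^*)$: each factor must be shown invariant, and the parities $(-1)^{\ell(T^\flat)}$ must conspire with the squares $[\ve^\bullet]^2$ to cancel the surviving factors. With the parity identity $\sigma_{T^\flat} = \sigma_T \cdot \sigma_{R(\lambda,\mu)^\flat}$ and the conjugacy relations of Lemma \ref{lemma:sigma-R-relations} in hand, each of the required verifications is a short, mechanical computation.
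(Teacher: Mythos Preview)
Your proof is correct and more self-contained than the paper's. The paper establishes \eqref{eq:associator} by citing results from its companion paper \cite{DK:2025-preprint}, whereas you verify equivariance directly against the generators using Theorem~\ref{thm:type-B-normal-form-action}; your outline of the case analysis is accurate, including the sign flip in the same-row/same-column case and the parity change $(-1)^{\ell(s_iT)}=-(-1)^{\ell(T)}$ in the remaining cases. For \eqref{eq:inverse-associator}, the paper takes a slight shortcut: once \eqref{eq:associator} is known, the composite $\phi_{\ve''}^{(\lambda^*,\mu^*)}\circ\phi_{\ve''}^{(\lambda,\mu)}$ is a $\CB_n$-module endomorphism of a simple module, so by Schur's lemma it suffices to evaluate on the single vector $c_{R(\lambda,\mu)}$, where $\ell(R)=0$ makes the scalar collapse immediately to $[\velammu'']^2\cdot(-1)^{\ell(R^*)}=1$. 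Your computation on a general $c_T$ via the identity $\sigma_{T^\flat}=\sigma_T\cdot\sigma_{R(\lambda,\mu)^\flat}$ reaches the same conclusion and is equally valid; the Schur trick simply spares you from tracking the $(-1)^{\ell(T)}$ factors, which cancel anyway. Your approach has the advantage of not depending on the companion paper, while the paper's is shorter.
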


\begin{proof}
Up to scalar multiples, the maps defined here are equal to the maps of the same names defined in \cite{DK:2025-preprint}. This implies by Lemma 2.4.3, Lemma 2.4.4, and Remark 2.4.5 of \cite{DK:2025-preprint} that each family satisfies \eqref{eq:associator}, and \eqref{eq:phi-ve'} evidently satisfies \eqref{eq:inverse-associator}. To see that the family \eqref{eq:phi-ve''} satisfies \eqref{eq:inverse-associator}, observe by \eqref{eq:associator} that the composite $\phi_{\ve''}^{(\lambda^*,\mu^*)} \circ \phi_{\ve''}^{(\lambda,\mu)}$ is a $\CB_n$-module homomorphism, and hence by Schur's lemma is a scalar multiple of the identity. Letting $R = R(\lambda,\mu)$, one sees that
	\begin{align*}
	(\phi_{\ve''}^{(\lambda^*,\mu^*)} \circ \phi_{\ve''}^{(\lambda,\mu)})(c_R) &= \velammu'' \cdot (-1)^{\ell(R^*)} \cdot \ve_{(\lambda^*,\mu^*)}'' \cdot (-1)^{\ell(R)} \cdot c_R \\
	&= [\velammu'']^2 \cdot (-1)^{\ell(R^*)} \cdot c_R = c_R,
	\end{align*}
and hence $\phi_{\ve''}^{(\lambda^*,\mu^*)} \circ \phi_{\ve''}^{(\lambda,\mu)} = \id_{S^{(\lambda,\mu)}}$. Similarly one checks that $\phi_{\ve}^{(\mu^*,\lambda^*)} \circ \phi_{\ve}^{(\lambda,\mu)} = \id_{S^{(\lambda,\mu)}}$.
\end{proof}

\begin{remark} \label{remark:phi-ve-composition}
For $(\lambda,\mu) \in \BPn$, one has $\phi_{\ve}^{(\lambda,\mu)} = \velammu' \cdot \phi_{\ve''}^{(\mu,\lambda)} \circ \phi_{\ve'}^{(\lambda,\mu)}$.
\end{remark}

\begin{remark} \label{remark:phi-ve''-ve'-commutation}
Using \eqref{eq:sigma-R(lam,mu)nat}, one sees that $\phi_{\ve''}^{(\mu,\lambda)} \circ \phi_{\ve'}^{(\lambda,\mu)} = (-1)^{\abs{\lambda} \cdot (n-1)} \cdot \phi_{\ve'}^{(\lambda^*,\mu^*)} \circ \phi_{\ve''}^{(\lambda,\mu)}$.
\end{remark}

\begin{remark}
For $\lambda \vdash n$ with $\lambda = \lambda^*$, the map $\phi_{\ve''}^{(\lambda,\emptyset)}: S^{(\lambda,\emptyset)} \to S^{(\lambda,\emptyset)}$ can be identified with the associator $\phi_\lambda$ defined by Geetha and Prasad \cite[Eqn 9]{Geetha:2018}.
\end{remark}

\subsection{The even subgroup of the type B Weyl group} \label{subsec:even-subgroup-type-B}

We now consider $\calB_n$ as a super\-group via its sign character $\ve: \calB_n \to \set{\pm 1}$. Making the identification $\calB_n = \Z_2^n \rtimes \fS_n$, one has
	\[
	\calB_{\zero} = (\calB_n)_{\zero} = \ker(\ve) = \big[(\Z_2^n)_{\zero} \rtimes \fA_n \big] \cup \big[(\Z_2^n)_{\one} \rtimes \fS_{\one} \big].
	\]
Here $(\Z_2^n)_{\zero}$ (resp.\ $(\Z_2^n)_{\one}$) is the subset of elements that are a product of an even (resp.\ odd) number of the generators $t_1,\ldots,t_n$, and $\fS_{\one} = (\fS_n)_{\one}$ is the set of odd permutations in $\fS_n$.

Recall the $\CB_n$-module isomorphism $S^{(\lambda,\mu)} \otimes \ve \cong S^{(\mu^*,\lambda^*)}$ from \eqref{eq:B-tensor-with-character}. Let $\sim$ be the equivalence relation on $\BPn$ generated by $(\lambda,\mu) \sim (\mu^*,\lambda^*)$. Write $[\lambda,\mu]$ for the equivalence class of $(\lambda,\mu)$, and let $\BPboxn$ be the set of all equivalence classes under $\sim$. Then $\BPboxn = \Eboxn \cup \Fboxn$, where
	\begin{equation} \label{eq:BP[n]-subsets}
	\begin{split}
	\Eboxn &= \set{ [\lambda,\mu] : (\lambda,\mu) \neq (\mu^*,\lambda^*)} = \set{ [\lambda,\mu] : \mu \neq \lambda^*}, \\ 
	\Fboxn &= \set{ [\lambda,\mu] : (\lambda,\mu) = (\mu^*,\lambda^*)} = \set{ [\lambda,\lambda^*] : \lambda \vdash n/2}.
	\end{split}
	\end{equation}
One has $\Fboxn \neq \emptyset$ only if $n$ is even. Now applying Lemma \ref{lemma:classical-Clifford} for $(G,\kappa) = (\calB_n,\ve)$, one gets:

\begin{lemma} \label{lemma:B0-simples}
Up to isomorphism, each simple $\CBzero$-module arises uniquely via either:
	\begin{enumerate}
	\item If $[\lambda,\mu] \in \Eboxn$, then $S^{[\lambda,\mu]} \coloneq \Res^{\calB_n}_{\Bzero}( S^{(\lambda,\mu)}) \cong \Res^{\calB_n}_{\Bzero}( S^{(\mu^*,\lambda^*)})$ is simple and self-conjugate.
	
	\item If $[\lambda,\lambda^*] \in \Fboxn$, then $S^{[\lambda,\lambda^*]} \coloneq \Res^{\calB_n}_{\Bzero}(S^{(\lambda,\lambda^*)})$ is the direct sum of two simple, conjugate, non-iso\-mor\-phic $\CBzero$-modules $S^{[\lambda,\lambda^*,+]}$ and $S^{[\lambda,\lambda^*,-]}$, equal to the $\pm 1$ eigen\-spaces of $\phi_{\ve}^{(\lambda,\lambda^*)}$.
	\end{enumerate}
\end{lemma}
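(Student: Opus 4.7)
The plan is to apply Lemma \ref{lemma:classical-Clifford} directly, taking $(G,\kappa) = (\calB_n,\ve)$ so that $\Gzero = \Bzero$. The parameter set $\Lambda$ indexing the simple $\CB_n$-modules up to isomorphism is $\BPn$, with $S^\lambda$ in the notation of Section \ref{subsec:supergroups} corresponding to $S^{(\lambda,\mu)}$ here. By the first isomorphism in \eqref{eq:B-tensor-with-character}, tensoring with $\ve$ induces the involution $(\lambda,\mu) \mapsto (\mu^*,\lambda^*)$ on $\BPn$. This is precisely the involution generating the equivalence relation $\sim$, so its orbits are the elements of $\BPboxn$. The fixed points satisfy $(\lambda,\mu) = (\mu^*,\lambda^*)$, equivalently $\mu = \lambda^*$, which (together with $\abs{\lambda} + \abs{\mu} = n$) forces $n$ to be even and $\lambda \vdash n/2$. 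Thus the non-fixed orbits are exactly $\Eboxn$ and the fixed ones exactly $\Fboxn$, matching \eqref{eq:BP[n]-subsets}.

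For $[\lambda,\mu] \in \Eboxn$, we have $(\lambda,\mu) \neq (\mu^*,\lambda^*)$, so Lemma \ref{lemma:classical-Clifford}\eqref{item:type-E-simple} applies directly: $\Res^{\calB_n}_{\Bzero}(S^{(\lambda,\mu)})$ is simple, self-conjugate, and isomorphic to $\Res^{\calB_n}_{\Bzero}(S^{(\mu^*,\lambda^*)})$. We take $S^{[\lambda,\mu]}$ to be this common restriction, which is well-defined on the equivalence class $[\lambda,\mu]$.

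For $[\lambda,\lambda^*] \in \Fboxn$, we are in the self-associate case $(\lambda,\mu) = (\mu^*,\lambda^*)$, and Lemma \ref{lemma:classical-Clifford} requires an associator $\phi_\kappa^{(\lambda,\lambda^*)} : S^{(\lambda,\lambda^*)} \to S^{(\lambda,\lambda^*)}$ that is an involution. Such a map is provided by \eqref{eq:phi-ve}: the already-established identity $\phi_\ve^{(\mu^*,\lambda^*)} \circ \phi_\ve^{(\lambda,\mu)} = \id$ specializes, when $(\lambda,\mu) = (\lambda,\lambda^*)$, to $\phi_\ve^{(\lambda,\lambda^*)} \circ \phi_\ve^{(\lambda,\lambda^*)} = \id$, so $\phi_\ve^{(\lambda,\lambda^*)}$ is indeed an involution. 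Then Lemma \ref{lemma:classical-Clifford}(2) gives the decomposition of $\Res^{\calB_n}_{\Bzero}(S^{(\lambda,\lambda^*)})$ into the $\pm 1$-eigenspaces of $\phi_\ve^{(\lambda,\lambda^*)}$ as a sum of two simple, conjugate, non-isomorphic $\CBzero$-modules $S^{[\lambda,\lambda^*,+]}$ and $S^{[\lambda,\lambda^*,-]}$.

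Since Lemma \ref{lemma:classical-Clifford} asserts that every simple $\CBzero$-module arises uniquely in exactly one of the two listed ways, no further argument is needed for uniqueness or exhaustion. There is no serious obstacle here: all the technical content has already been packaged into Lemma \ref{lemma:classical-Clifford} and into the construction and involution property of the associator $\phi_\ve^{(\lambda,\mu)}$ in \eqref{eq:phi-ve}.
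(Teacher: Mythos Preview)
Your proposal is correct and takes essentially the same approach as the paper, which simply states that the lemma follows by applying Lemma~\ref{lemma:classical-Clifford} for $(G,\kappa) = (\calB_n,\ve)$. Your write-up just fills in the details of that application (identifying the involution via \eqref{eq:B-tensor-with-character} and the associator via \eqref{eq:phi-ve}), which the paper leaves implicit.
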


\begin{lemma} \label{lemma:Bzero-subminimal-dimensions}
Let $n \geq 5$.
\begin{enumerate}
\item The only one-dimensional $\CBzero$-modules are those labeled by $[ [n], \emptyset] $ and $[[1^n],\emptyset]$, i.e., the restrictions of the trivial module and the module afforded by $\ve''$.

\item If $[\lambda,\mu] \in \BPboxn$ and $\dim(S^{[\lambda,\mu]}) \neq 1$, then $\dim(S^{[\lambda,\mu]}) \geq n-1$.

\item If $[\lambda,\lambda^*] \in \BPboxn$, then $\dim(S^{[\lambda,\lambda^*,\pm]}) = \frac{1}{2} \cdot \binom{n}{n/2} \cdot \dim(S^\lambda)^2 \geq \frac{1}{2} \cdot \binom{n}{n/2}$.
\end{enumerate}
\end{lemma}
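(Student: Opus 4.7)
The plan is to reduce each of the three claims to the dimension data for the simple $\CB_n$-modules recorded in Lemma \ref{lemma:Bn-subminimal-dimensions}, using the description of simple $\CBzero$-modules in Lemma \ref{lemma:B0-simples}. Specifically, a Type E simple $\CBzero$-module $S^{[\lambda,\mu]}$ with $\mu \neq \lambda^*$ has dimension $\dim(S^{(\lambda,\mu)})$, while a Type F simple $\CBzero$-module $S^{[\lambda,\lambda^*,\pm]}$ has dimension $\tfrac{1}{2}\dim(S^{(\lambda,\lambda^*)})$.

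For part (3), I would simply cite Lemma \ref{lemma:B0-simples}(2) together with Lemma \ref{lemma:Bn-subminimal-dimensions}(4): the $\CBzero$-module $S^{[\lambda,\lambda^*,\pm]}$ is a half of $S^{(\lambda,\lambda^*)}$, whose dimension is $\binom{n}{n/2}\dim(S^\lambda)^2$, and $\dim(S^\lambda) \geq 1$.

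For part (1), I would check both types of simples. If $S^{[\lambda,\mu]}$ is Type E and one-dimensional, then $S^{(\lambda,\mu)}$ is one-dimensional, so by Lemma \ref{lemma:Bn-subminimal-dimensions}(2), $(\lambda,\mu)$ is one of the four bipartitions $([n],\emptyset)$, $([1^n],\emptyset)$, $(\emptyset,[n])$, $(\emptyset,[1^n])$. The equivalence $(\lambda,\mu) \sim (\mu^*,\lambda^*)$ pairs $([n],\emptyset) \sim (\emptyset,[1^n])$ and $([1^n],\emptyset) \sim (\emptyset,[n])$, so these four bipartitions yield the two $\CBzero$-modules $S^{[[n],\emptyset]}$ and $S^{[[1^n],\emptyset]}$. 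If instead $S^{[\lambda,\lambda^*,\pm]}$ is Type F and one-dimensional, then part (3) forces $\tfrac{1}{2}\binom{n}{n/2} \leq 1$; but $n$ must be even for Type F to occur, and under $n \geq 5$ this gives $n \geq 6$, so $\tfrac{1}{2}\binom{n}{n/2} \geq 10$, a contradiction. Hence no Type F simple of dimension one exists.

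For part (2), suppose $\dim(S^{[\lambda,\mu]}) \neq 1$. In the Type E case the dimension equals $\dim(S^{(\lambda,\mu)})$; either $\dim(S^{(\lambda,\mu)}) \neq 1$ and Lemma \ref{lemma:Bn-subminimal-dimensions}(3) gives $\dim(S^{(\lambda,\mu)}) \geq n-1$, or $\dim(S^{(\lambda,\mu)}) = 1$ but then (as in part (1)) $S^{[\lambda,\mu]}$ would be one-dimensional, contradicting the hypothesis. In the Type F case, part (3) shows the dimension is at least $\tfrac{1}{2}\binom{n}{n/2}$, and a short arithmetic check (using $n \geq 6$ even) gives $\tfrac{1}{2}\binom{n}{n/2} \geq n-1$ by an easy induction on $n$ starting from the base case $\tfrac{1}{2}\binom{6}{3} = 10 \geq 5$. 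The only mild obstacle is this last numerical comparison, which is purely arithmetic and presents no real difficulty.
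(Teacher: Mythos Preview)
Your proposal is correct and follows the same approach as the paper, which simply says ``Apply Lemma \ref{lemma:Bn-subminimal-dimensions}''; you have spelled out the details that the paper leaves implicit. One minor remark: in part (2) the symbol $S^{[\lambda,\mu]}$ denotes the full restriction $\Res^{\calB_n}_{\Bzero}(S^{(\lambda,\mu)})$ even when $[\lambda,\mu]\in \Fboxn$, so in the Type F case its dimension is $\binom{n}{n/2}\dim(S^\lambda)^2$ rather than half that---your argument via $\tfrac{1}{2}\binom{n}{n/2}\geq n-1$ still works a fortiori, but the simpler observation that $\dim(S^{[\lambda,\mu]})=\dim(S^{(\lambda,\mu)})$ in all cases reduces (2) directly to Lemma \ref{lemma:Bn-subminimal-dimensions}(3).
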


\begin{proof}
Apply Lemma \ref{lemma:Bn-subminimal-dimensions}.
\end{proof}

\begin{lemma} \label{lemma:Bzero-restriction}
Restriction from $(\calB_n)_{\zero}$ to $(\calB_{n-1})_{\zero}$ is multiplicity free. Specifically:
	\begin{enumerate}
	\item \label{item:B0-restriction-Eboxn} If $[\lambda,\mu] \in \Eboxn$, then
		\[
		\Res^{(\calB_n)_{\zero}}_{(\calB_{n-1})_{\zero}}(S^{[\lambda,\mu]}) 
		\cong 
		\Big[ \bigoplus_{\substack{(\nu,\tau) \prec (\lambda,\mu) \\ (\nu,\tau) \neq (\tau^*,\nu^*)}} S^{[\nu,\tau]} \Big] 
		\oplus 
		\Big[ \bigoplus_{\substack{(\nu,\tau) \prec (\lambda,\mu) \\ (\nu,\tau) = (\tau^*,\nu^*)}} S^{[\nu,\tau,+]} \oplus S^{[\nu,\tau,-]} \Big].
		\]

	\item \label{item:B0-restriction-Fboxn} If $[\lambda,\lambda^*] \in \Fboxn$, then
		\[
		\Res^{(\calB_n)_{\zero}}_{(\calB_{n-1})_{\zero}}(S^{[\lambda,\lambda^*,\pm]}) 
		\cong 
		\bigoplus_{\nu \prec \lambda} S^{[\nu,\lambda^*]}.
		\]
	\end{enumerate}
\end{lemma}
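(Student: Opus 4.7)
For part \eqref{item:B0-restriction-Eboxn}, the proof is a straightforward two-step restriction. Since $[\lambda,\mu] \in \Eboxn$ means $(\lambda,\mu) \neq (\mu^*,\lambda^*)$, Lemma \ref{lemma:B0-simples} gives $S^{[\lambda,\mu]} = \Res^{\calB_n}_{\Bzero}(S^{(\lambda,\mu)})$. Factoring the further restriction through $\calB_{n-1}$, one applies the branching rule \eqref{eq:restriction-B-simple} and then Lemma \ref{lemma:B0-simples} to each summand $S^{(\nu,\tau)}$ to obtain the claimed decomposition. A routine case analysis (using $[\lambda,\mu] \in E[n]$ to rule out that a predecessor coincides with the $\sim$-partner of another predecessor) confirms that distinct predecessors $(\nu,\tau) \prec (\lambda,\mu)$ give distinct $\sim$-equivalence classes, so the resulting sum is multiplicity-free.

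For part \eqref{item:B0-restriction-Fboxn}, the strategy is to compute $\Res^{\calB_n}_{(\calB_{n-1})_{\zero}}(S^{(\lambda,\lambda^*)})$ in two ways and compare. First enumerate the predecessors of $(\lambda,\lambda^*)$: they consist of the bipartitions $(\nu,\lambda^*)$ with $\nu \prec \lambda$, together with the bipartitions $(\lambda,\tau)$ with $\tau \prec \lambda^*$. Since $\tau \prec \lambda^*$ is equivalent to $\tau^* \prec \lambda$, the latter family can be re-indexed as $(\lambda,\nu^*)$ for $\nu \prec \lambda$, and one has $(\nu,\lambda^*) \sim (\lambda,\nu^*)$. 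Since $\nu \neq \lambda$ (having strictly smaller size), the class $[\nu,\lambda^*]$ lies in $E[n-1]$, and Lemma \ref{lemma:B0-simples} identifies both $\Res^{\calB_{n-1}}_{(\calB_{n-1})_{\zero}}(S^{(\nu,\lambda^*)})$ and $\Res^{\calB_{n-1}}_{(\calB_{n-1})_{\zero}}(S^{(\lambda,\nu^*)})$ with the simple $(\calB_{n-1})_{\zero}$-module $S^{[\nu,\lambda^*]}$. Thus
\[
\Res^{\calB_n}_{(\calB_{n-1})_{\zero}}(S^{(\lambda,\lambda^*)}) \cong \bigoplus_{\nu \prec \lambda} 2 \cdot S^{[\nu,\lambda^*]},
\]
which on the other hand equals $\Res(S^{[\lambda,\lambda^*,+]}) \oplus \Res(S^{[\lambda,\lambda^*,-]})$.

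The main obstacle is showing that these two summands are isomorphic as $(\calB_{n-1})_{\zero}$-modules. Fix any $t \in (\calB_{n-1})_{\one}$; since the sign character of $\calB_n$ restricts to that of $\calB_{n-1}$, one has $t \in (\calB_n)_{\one}$ as well. The action of $t$ on $S^{(\lambda,\lambda^*)}$ swaps the $(\calB_n)_{\zero}$-summands $S^{[\lambda,\lambda^*,\pm]}$, giving an isomorphism $S^{[\lambda,\lambda^*,-]} \cong {}^t S^{[\lambda,\lambda^*,+]}$ of $(\calB_n)_{\zero}$-modules. Restricting further to $(\calB_{n-1})_{\zero}$, where conjugation by $t$ acts through the same automorphism, this becomes an isomorphism $\Res(S^{[\lambda,\lambda^*,-]}) \cong {}^t \Res(S^{[\lambda,\lambda^*,+]})$ of $(\calB_{n-1})_{\zero}$-modules. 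Because every isotypic constituent $S^{[\nu,\lambda^*]}$ of the restriction lies in $E[n-1]$ and is therefore self-conjugate as a $(\calB_{n-1})_{\zero}$-module, the twisting functor ${}^t(-)$ preserves each isotypic component and hence the overall module up to isomorphism. Therefore $\Res(S^{[\lambda,\lambda^*,+]}) \cong \Res(S^{[\lambda,\lambda^*,-]})$, and combined with the doubled sum above, each restriction must equal $\bigoplus_{\nu \prec \lambda} S^{[\nu,\lambda^*]}$, as required.
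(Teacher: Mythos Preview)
Your proof is correct. For part~\eqref{item:B0-restriction-Eboxn} you follow essentially the paper's approach (restrict in two stages through $\calB_{n-1}$ and apply Lemma~\ref{lemma:B0-simples}), with a slightly more explicit check of multiplicity-freeness than the paper gives.

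For part~\eqref{item:B0-restriction-Fboxn} you take a genuinely different route. The paper argues directly with the associator: since the $\CB_{n-1}$-decomposition of $S^{(\lambda,\lambda^*)}$ is canonical (being multiplicity free), one checks that $\phi_\ve^{(\lambda,\lambda^*)}$ interchanges the two $\CB_{n-1}$-summands $S^{(\nu,\lambda^*)}$ and $S^{(\lambda,\nu^*)}$ in each pair, so the $+1$ and $-1$ eigenspaces each meet the pair in a single copy of $S^{[\nu,\lambda^*]}$. Your argument bypasses the associator entirely: you use only the Clifford-theoretic fact that $S^{[\lambda,\lambda^*,+]}$ and $S^{[\lambda,\lambda^*,-]}$ are conjugate by any $t \in (\calB_{n-1})_{\one}$, combined with the self-conjugacy of each constituent $S^{[\nu,\lambda^*]}$ over $(\calB_{n-1})_{\zero}$, to conclude the two restrictions are isomorphic and hence each absorbs half of the doubled sum. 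The paper's approach is more concrete---it pins down exactly where the eigenspaces sit inside $S^{(\lambda,\lambda^*)}$ (information that is reused later, e.g.\ in Remark~\ref{remark:type-D-associators-compatible-restriction})---while yours is shorter and does not require knowing how $\phi_\ve^{(\lambda,\lambda^*)}$ interacts with the $\CB_{n-1}$-decomposition.
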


\begin{proof}
The formula in \eqref{item:B0-restriction-Eboxn} follows immediately from \eqref{eq:restriction-B-simple} and Lemma \ref{lemma:B0-simples}. For \eqref{item:B0-restriction-Fboxn}, note that $\nu \prec \lambda$ if and only if $\nu^* \prec \lambda^*$, and observe that
	\[
	\Res^{\calB_n}_{\calB_{n-1}}(S^{(\lambda,\lambda^*)}) \cong \bigoplus_{\nu \prec \lambda} [S^{(\nu,\lambda^*)} \oplus S^{(\lambda,\nu^*)}].
	\]
This restriction is multiplicity free, so the $\CB_{n-1}$-module decomposition of $S^{(\lambda,\lambda^*)}$ is canonical. For $\nu \prec \lambda$, one sees that $\phi_\ve^{(\lambda,\lambda^*)}(S^{(\nu,\lambda^*)})$ is a $\CB_{n-1}$-submodule of $S^{(\lambda,\lambda^*)}$ isomorphic to $S^{(\nu,\lambda^*)} \otimes \ve$, and hence $\phi_\ve^{(\lambda,\lambda^*)}(S^{(\nu,\lambda^*)}) = S^{(\lambda,\nu^*)}$. Then $\phi_\ve^{(\lambda,\lambda^*)}$ leaves the subspace $S^{(\nu,\lambda^*)} \oplus S^{(\lambda,\nu^*)}$ invariant. This subspace decomposes nontrivially into $+1$ and $-1$ eigenspaces for $\phi_\ve^{(\lambda,\lambda^*)}$, and the eigenspaces are then nonzero $\C(\calB_{n-1})_{\zero}$-modules. Since $S^{(\nu,\lambda^*)} \cong S^{[\nu,\lambda^*]} \cong S^{(\lambda,\nu^*)}$ as $\C(\calB_{n-1})_{\zero}$-modules, it follows that $S^{[\lambda,\lambda^*,+]}$ and $S^{[\lambda,\lambda^*,-]}$ each contain one factor of $S^{[\nu,\lambda^*]}$.
\end{proof}

\subsection{Simple supermodules in Type B} \label{subsec:simple-supermodules-B}

Applying Proposition \ref{prop:simple-supermodules} and \eqref{eq:B-tensor-with-character}, one now gets:

\begin{proposition} \label{prop:B-simple-supermodules}
Up to homogeneous isomorphism, each simple $\CB_n$-supermodule occurs in exactly one of the following ways:
	\begin{enumerate}
	\item \label{item:B-simple-super-self-associate} For each $[\lambda,\mu] \in \Eboxn$, there exists a Type Q simple $\CB_n$-supermodule $W^{[\lambda,\mu]}$ such that $W^{[\lambda,\mu]} = S^{(\lambda,\mu)} \oplus S^{(\mu^*,\lambda^*)}$ as a $\abs{\CB_n}$-module, with
		\[
		\Wzero^{[\lambda,\mu]} = \{u + \phi_{\ve}^{(\lambda,\mu)}(u) : u \in S^{(\lambda,\mu)} \}, \quad
		\Wone^{[\lambda,\mu]} = \{ u - \phi_{\ve}^{(\lambda,\mu)}(u): u \in S^{(\lambda,\mu)} \},
		\]
	and odd involution $J^{(\lambda,\mu)}$ defined for $u \in S^{(\lambda,\mu)}$ by $J^{(\lambda,\mu)}(u \pm \phi_{\ve}^{(\lambda,\mu)}(u)) = u \mp \phi_{\ve}^{(\lambda,\mu)}(u)$.

	\item For each $[\lambda,\lambda^*] \in \Fboxn$, there exists a Type M simple $\CB_n$-supermodule $W^{[\lambda,\lambda^*]}$ such that $W^{[\lambda,\lambda^*]} = S^{(\lambda,\lambda^*)}$ as a $\abs{\CB_n}$-module, with $\Wzero^{[\lambda,\lambda^*]} = S^{[\lambda,\lambda^*,+]}$ and $\Wone^{[\lambda,\lambda^*]} = S^{[\lambda,\lambda^*,-]}$.
	\end{enumerate}
Each simple $\CB_n$-supermodule is uniquely determined, up to an even isomorphism, by its summands as a $\CBzero$-module, and by the superdegrees in which those summands are concentrated.
\end{proposition}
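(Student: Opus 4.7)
The plan is to deduce this proposition by applying the general classification of simple $\CG$-supermodules (Proposition~\ref{prop:simple-supermodules}) to $G = \calB_n$ with $\kappa = \ve$, using all the bookkeeping about bipartitions, tensoring with characters, and explicit associators already assembled in Sections~\ref{subsec:-type-B-simples}--\ref{subsec:even-subgroup-type-B}.

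First, I would identify the involution $\lambda \mapsto \lambda'$ from Proposition~\ref{prop:simple-supermodules} in the type B setting. The simple $\CB_n$-modules are indexed by $\BPn$, and the isomorphism $S^{(\lambda,\mu)} \otimes \ve \cong S^{(\mu^*,\lambda^*)}$ recorded in \eqref{eq:B-tensor-with-character} shows that the involution on $\BPn$ induced by tensoring with $\ve$ is $(\lambda,\mu) \mapsto (\mu^*,\lambda^*)$. Its fixed-point set is exactly $\set{(\lambda,\lambda^*) : \lambda \vdash n/2}$, and its two-element orbits are the classes in $\Eboxn$; thus $\BPboxn = \Eboxn \cup \Fboxn$ parameterizes the simple $\CB_n$-supermodules according to Proposition~\ref{prop:simple-supermodules}, matching the division into Type~Q and Type~M.

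Next, I would invoke the fact that the maps $\phi_\ve^{(\lambda,\mu)}: S^{(\lambda,\mu)} \to S^{(\mu^*,\lambda^*)}$ constructed in \eqref{eq:phi-ve} are genuine associators satisfying \eqref{eq:associator} and \eqref{eq:inverse-associator}, which was verified in Section~\ref{subsec:-type-B-simples}. For $[\lambda,\mu] \in \Eboxn$, Proposition~\ref{prop:simple-supermodules}\eqref{item:self-associate-G-supermodule} then produces a Type~Q simple supermodule with underlying space $S^{(\lambda,\mu)} \oplus S^{(\mu^*,\lambda^*)}$, whose even and odd subspaces are the $\pm 1$ eigenspaces described in the statement, and whose odd involution $J^{(\lambda,\mu)}$ is the map given by the swapping formula. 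For $[\lambda,\lambda^*] \in \Fboxn$, Lemma~\ref{lemma:B0-simples} identifies the $\pm 1$ eigenspaces of $\phi_\ve^{(\lambda,\lambda^*)}$ with the two simple $\CBzero$-summands $S^{[\lambda,\lambda^*,\pm]}$, and Proposition~\ref{prop:simple-supermodules} then assembles the Type~M supermodule $W^{[\lambda,\lambda^*]}$ with these subspaces as its homogeneous components.

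There is essentially no obstacle: once the involution is identified and the associators of \eqref{eq:phi-ve} are quoted, the proposition is just a bookkeeping transcription of Proposition~\ref{prop:simple-supermodules}. The only mild point of care is that for $[\lambda,\lambda^*] \in \Fboxn$ one needs to know that the ambiguity in $\phi_\ve^{(\lambda,\lambda^*)}$ is exactly the sign ambiguity corresponding to parity shift, so that $W^{[\lambda,\lambda^*]}$ is well-defined up to even isomorphism once the associator has been fixed; this is Remark~\ref{remark:even-odd-isomorphic}(5) applied to the present case. The final uniqueness clause follows immediately from the uniqueness statement in Proposition~\ref{prop:simple-supermodules}.
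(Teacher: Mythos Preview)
Your proposal is correct and matches the paper's approach exactly: the paper simply states ``Applying Proposition~\ref{prop:simple-supermodules} and \eqref{eq:B-tensor-with-character}, one now gets'' before the proposition, and you have spelled out precisely those details. Your additional remarks about Lemma~\ref{lemma:B0-simples} and Remark~\ref{remark:even-odd-isomorphic} are the natural elaborations of this one-line derivation.
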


Then applying Corollary \ref{cor:super-artin-wedderburn-CG}, one gets:

\begin{corollary} \label{cor:CBn-superalgebra-structure}
The sum of the simple $\CB_n$-supermodule structure maps induces a superalgebra isomorphism
	\begin{align*}
	\CB_n &\cong \Big[ \bigoplus_{[\lambda,\mu] \in \Eboxn} Q( W^{[\lambda,\mu]} ) \Big] \oplus \Big[ \bigoplus_{[\lambda,\lambda^*] \in \Fboxn} \End( W^{[\lambda,\lambda^*]} ) \Big].
	\end{align*}
In particular, for $[\lambda,\mu] \in \Eboxn$, the map $W^{[\lambda,\mu]}: \CB_n \to \End(W^{[\lambda,\mu]})$ has image in $Q(W^{[\lambda,\mu]})$.
\end{corollary}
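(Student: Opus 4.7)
The plan is entirely deductive: this corollary is just the specialization of the general graded Artin--Wedderburn isomorphism (Corollary~\ref{cor:super-artin-wedderburn-CG}) to the supergroup $G = \calB_n$, once one substitutes the explicit classification of the simple $\CB_n$-supermodules provided by Proposition~\ref{prop:B-simple-supermodules}.

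First, I would invoke Proposition~\ref{prop:B-simple-supermodules} to record that, up to homogeneous isomorphism, a complete set of simple $\CB_n$-supermodules is given by
\[
\Irr_Q(\calB_n) = \set{ W^{[\lambda,\mu]} : [\lambda,\mu] \in \Eboxn } \quad \text{and} \quad \Irr_M(\calB_n) = \set{ W^{[\lambda,\lambda^*]} : [\lambda,\lambda^*] \in \Fboxn },
\]
where the identification of Type Q and Type M with $\Eboxn$ and $\Fboxn$ is read off directly from the two cases of that proposition. (The partition of $\BPboxn$ into $\Eboxn \cup \Fboxn$ was already recorded in \eqref{eq:BP[n]-subsets}.)

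Next, I would substitute these index sets into the general isomorphism
\[
\CG \cong \Big[ \bigoplus_{W \in \Irr_Q(G)} Q(W) \Big] \oplus \Big[ \bigoplus_{W \in \Irr_M(G)} \End(W) \Big]
\]
of Corollary~\ref{cor:super-artin-wedderburn-CG}, applied to $G = \calB_n$, which yields the claimed superalgebra isomorphism. The last assertion, that the structure map $W^{[\lambda,\mu]} : \CB_n \to \End(W^{[\lambda,\mu]})$ has image in $Q(W^{[\lambda,\mu]})$ whenever $[\lambda,\mu] \in \Eboxn$, is then just the ``In particular'' clause of that same corollary.

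Since the proof consists only of matching notation and invoking an already-proved general result, there is no real obstacle to overcome here; the work lies entirely in Proposition~\ref{prop:B-simple-supermodules} and Corollary~\ref{cor:super-artin-wedderburn-CG}, both of which are available.
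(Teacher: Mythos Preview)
Your proposal is correct and matches the paper's own argument exactly: the paper simply states ``Then applying Corollary~\ref{cor:super-artin-wedderburn-CG}, one gets'' this result, which is precisely the specialization you describe using the classification of simple $\CB_n$-supermodules from Proposition~\ref{prop:B-simple-supermodules}.
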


\begin{lemma} \label{lemma:Bn-super-subminimal-dimensions}
Suppose $n \geq 5$.
	\begin{enumerate}
	\item \label{item:Bn-super-subminimal-En} Let $[\lambda,\mu] \in \Eboxn$. If $[\lambda,\mu] = [[n],\emptyset]$ or $[\lambda,\mu] = [[1^n],\emptyset]$, then $\dim(W^{[\lambda,\mu]}) = 2$. Otherwise,
		\[
		\dim(W^{[\lambda,\mu]}) \geq 2n-2.
		\]
	
	\item \label{item:Bn-super-subminimal-Fn} Let $[\lambda,\lambda^*] \in \Fboxn$. Then $\dim(W^{[\lambda,\lambda^*]}) = \binom{n}{n/2} \cdot \dim(S^\lambda)^2 \geq \binom{n}{n/2}$.
	\end{enumerate}
\end{lemma}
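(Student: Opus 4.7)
The plan is to combine the explicit structural descriptions of the simple $\CB_n$-supermodules from Proposition \ref{prop:B-simple-supermodules} with the dimension bounds for simple $\CB_n$-modules already recorded in Lemma \ref{lemma:Bn-subminimal-dimensions}. Since the two-flavor description of simple supermodules reduces each $\dim(W^{[\lambda,\mu]})$ or $\dim(W^{[\lambda,\lambda^*]})$ to the dimension of a simple $\CB_n$-module (or twice such a dimension), the lemma will follow essentially by bookkeeping.

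For part \eqref{item:Bn-super-subminimal-En}, I would begin by noting that Proposition \ref{prop:B-simple-supermodules}\eqref{item:B-simple-super-self-associate} gives $W^{[\lambda,\mu]} = S^{(\lambda,\mu)} \oplus S^{(\mu^*,\lambda^*)}$ as an ungraded $\CB_n$-module. Since $S^{(\mu^*,\lambda^*)} \cong S^{(\lambda,\mu)} \otimes \ve$ by \eqref{eq:B-tensor-with-character} and $\ve$ is one-dimensional, the two summands have equal dimension, so $\dim(W^{[\lambda,\mu]}) = 2 \dim(S^{(\lambda,\mu)})$. Next I would identify which classes in $\Eboxn$ consist of one-dimensional modules: by Lemma \ref{lemma:Bn-subminimal-dimensions}\textup{(2)} the only one-dimensional $\CB_n$-modules are $S^{([n],\emptyset)}$, $S^{(\emptyset,[1^n])}$, $S^{(\emptyset,[n])}$, and $S^{([1^n],\emptyset)}$, and these pair up under the relation $\sim$ into the two classes $[[n],\emptyset]$ and $[[1^n],\emptyset]$. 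In both cases $\dim(W^{[\lambda,\mu]}) = 2$. Outside these two classes, $\dim(S^{(\lambda,\mu)}) \geq n-1$ by Lemma \ref{lemma:Bn-subminimal-dimensions}\textup{(3)}, which gives $\dim(W^{[\lambda,\mu]}) \geq 2n-2$.

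For part \eqref{item:Bn-super-subminimal-Fn}, the argument is even more direct: Proposition \ref{prop:B-simple-supermodules} gives $W^{[\lambda,\lambda^*]} = S^{(\lambda,\lambda^*)}$ as an ungraded $\CB_n$-module, and then Lemma \ref{lemma:Bn-subminimal-dimensions}\textup{(4)} yields $\dim(W^{[\lambda,\lambda^*]}) = \binom{n}{n/2} \cdot \dim(S^\lambda)^2$, with the trivial bound $\dim(S^\lambda) \geq 1$ producing the stated inequality.

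There is no genuine obstacle here; the main content is the verification that the one-dimensional $\CB_n$-modules listed in Lemma \ref{lemma:Bn-subminimal-dimensions}\textup{(2)} are grouped correctly under the equivalence relation defining $\Eboxn$. All other steps are either quotations of earlier results or applications of the formula $\dim(W) = \dim(S^{(\lambda,\mu)}) + \dim(S^{(\mu^*,\lambda^*)})$ for Type Q and $\dim(W) = \dim(S^{(\lambda,\lambda^*)})$ for Type M.
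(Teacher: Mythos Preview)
Your proposal is correct and takes essentially the same approach as the paper, which simply says ``Apply Lemma \ref{lemma:Bn-subminimal-dimensions}.'' You have spelled out in full the bookkeeping that the paper leaves implicit, but the underlying argument---reading off $\dim(W^{[\lambda,\mu]})$ from Proposition \ref{prop:B-simple-supermodules} and then invoking the dimension bounds of Lemma \ref{lemma:Bn-subminimal-dimensions}---is identical.
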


\begin{proof}
Apply Lemma \ref{lemma:Bn-subminimal-dimensions}.
\end{proof}

\begin{proposition} \label{prop:B-to-Bn-1-supermodule-restriction}
Let $n \geq 3$.
	\begin{enumerate}
	\item \label{item:En-restriction-to-Bn-1} If $[\lambda,\mu] \in \Eboxn$, then
		\[
		\Res^{\calB_n}_{\calB_{n-1}}(W^{[\lambda,\mu]}) \cong
		\Big[ \bigoplus_{\substack{(\nu,\tau) \prec (\lambda,\mu) \\ (\nu,\tau) \neq (\tau^*,\nu^*)}} W^{[\nu,\tau]} \Big] \oplus
		\Big[ \bigoplus_{\substack{(\nu,\tau) \prec (\lambda,\mu) \\ (\nu,\tau) = (\tau^*,\nu^*)}} W^{[\nu,\tau]} \oplus \Pi(W^{[\nu,\tau]}) \Big].
		\]
	
	\item \label{item:Fn-restriction-to-Bn-1} If $[\lambda,\lambda^*] \in \Fboxn$, then
		\[
		\Res^{\calB_n}_{\calB_{n-1}}(W^{[\lambda,\lambda^*]}) \cong \bigoplus_{\nu \prec \lambda} W^{[\nu,\lambda^*]}.
		\]
	\end{enumerate}
\end{proposition}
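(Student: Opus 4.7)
The plan is to combine the $\abs{\CB_{n-1}}$-module branching rule \eqref{eq:restriction-B-simple} with the explicit supermodule structures of Proposition \ref{prop:B-simple-supermodules} and the uniqueness criterion of Proposition \ref{prop:simple-supermodules}, which asserts that a $\CB_{n-1}$-supermodule is determined up to even isomorphism by its $\C(\calB_{n-1})_{\zero}$-module summands together with the superdegrees in which those summands sit.

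For part \eqref{item:En-restriction-to-Bn-1}, I would first write
\[
S^{(\lambda,\mu)} = \bigoplus_{(\nu,\tau) \prec (\lambda,\mu)} V_{(\nu,\tau)} \quad \text{and} \quad S^{(\mu^*,\lambda^*)} = \bigoplus_{(\nu,\tau) \prec (\lambda,\mu)} V'_{(\tau^*,\nu^*)}
\]
for the $\abs{\CB_{n-1}}$-module decompositions supplied by \eqref{eq:restriction-B-simple}, parametrizing the predecessors of $(\mu^*,\lambda^*)$ via the bijection $(\nu',\tau') = (\tau^*,\nu^*)$. Because $\phi_\ve^{(\lambda,\mu)}$ is a $\CB_n$-intertwiner from $S^{(\lambda,\mu)}$ to $S^{(\mu^*,\lambda^*)} \cong S^{(\lambda,\mu)} \otimes \ve$, its restriction to $V_{(\nu,\tau)}$ is a $\CB_{n-1}$-intertwiner whose image is $\CB_{n-1}$-isomorphic to $V_{(\nu,\tau)} \otimes (\ve|_{\calB_{n-1}}) \cong S^{(\tau^*,\nu^*)}$ by \eqref{eq:B-tensor-with-character}. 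Schur's lemma together with the multiplicity-freeness of \eqref{eq:restriction-B-simple} then force $\phi_\ve^{(\lambda,\mu)}(V_{(\nu,\tau)}) = V'_{(\tau^*,\nu^*)}$, so the subspace $U_{(\nu,\tau)} := V_{(\nu,\tau)} \oplus V'_{(\tau^*,\nu^*)}$ is $\phi_\ve^{(\lambda,\mu)}$-invariant and inherits a $\CB_{n-1}$-sub-supermodule structure from $W^{[\lambda,\mu]}$, yielding $\Res^{\calB_n}_{\calB_{n-1}}(W^{[\lambda,\mu]}) = \bigoplus_{(\nu,\tau) \prec (\lambda,\mu)} U_{(\nu,\tau)}$.

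Next I would identify each $U_{(\nu,\tau)}$ using the uniqueness criterion. Both homogeneous components of $U_{(\nu,\tau)}$ are $\C(\calB_{n-1})_{\zero}$-isomorphic to the restriction of $V_{(\nu,\tau)} \cong S^{(\nu,\tau)}$. When $(\nu,\tau) \neq (\tau^*,\nu^*)$, this restriction is the simple module $S^{[\nu,\tau]}$ by Lemma \ref{lemma:B0-simples}, matching the $\C(\calB_{n-1})_{\zero}$-summands of $W^{[\nu,\tau]}$ from Proposition \ref{prop:B-simple-supermodules}\eqref{item:B-simple-super-self-associate}, so $U_{(\nu,\tau)} \cong W^{[\nu,\tau]}$ via an even isomorphism. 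In the fixed-point case $(\nu,\tau) = (\tau^*,\nu^*)$ one has $\tau = \nu^*$, and the restriction decomposes as $S^{[\nu,\nu^*,+]} \oplus S^{[\nu,\nu^*,-]}$; among the candidate supermodule decompositions with the correct ungraded summand multi-set, only $W^{[\nu,\nu^*]} \oplus \Pi(W^{[\nu,\nu^*]})$ places each of $S^{[\nu,\nu^*,+]}$ and $S^{[\nu,\nu^*,-]}$ in both even and odd superdegree, so this must be $U_{(\nu,\nu^*)}$ up to even isomorphism.

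Part \eqref{item:Fn-restriction-to-Bn-1} proceeds analogously and more cleanly: from \eqref{eq:restriction-B-simple} I would write $S^{(\lambda,\lambda^*)} = \bigoplus_{\nu \prec \lambda} (V_{(\nu,\lambda^*)} \oplus V_{(\lambda,\nu^*)})$, observe that $\phi_\ve^{(\lambda,\lambda^*)}$ swaps $V_{(\nu,\lambda^*)}$ with $V_{(\lambda,\nu^*)}$ for each $\nu \prec \lambda$, and conclude by the same matching argument that each paired subspace is Type Q and even-isomorphic to $W^{[\nu,\lambda^*]}$ (using that $(\nu,\lambda^*) \neq (\lambda,\nu^*)$ since $\nu \neq \lambda$). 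The only genuine subtlety lies in the fixed-point case of part \eqref{item:En-restriction-to-Bn-1}: the candidates $W^{[\nu,\nu^*]} \oplus \Pi(W^{[\nu,\nu^*]})$ and $W^{[\nu,\nu^*]} \oplus W^{[\nu,\nu^*]}$ have the same $\C(\calB_{n-1})_{\zero}$-summand multi-set once the grading is forgotten, so distinguishing them requires a careful superdegree comparison leveraging that $W^{[\nu,\nu^*]}$ is of Type M and therefore not even-isomorphic to its parity shift.
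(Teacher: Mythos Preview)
Your argument is correct and follows essentially the same route as the paper: pass to the ungraded branching rule \eqref{eq:restriction-B-simple}, use that $\phi_\ve^{(\lambda,\mu)}$ carries each $\CB_{n-1}$-summand $V_{(\nu,\tau)}$ to the summand of type $(\tau^*,\nu^*)$, and then invoke the uniqueness criterion of Proposition \ref{prop:simple-supermodules} to pin down the supermodule structure on each piece.

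The one noteworthy difference is in the fixed-point case $(\nu,\tau)=(\tau^*,\nu^*)$. You resolve it by directly computing that each homogeneous component of $U_{(\nu,\nu^*)}$ is $\C(\calB_{n-1})_{\zero}$-isomorphic to $S^{[\nu,\nu^*,+]}\oplus S^{[\nu,\nu^*,-]}$, which immediately rules out $W^{[\nu,\nu^*]}\oplus W^{[\nu,\nu^*]}$ and $\Pi(W^{[\nu,\nu^*]})\oplus \Pi(W^{[\nu,\nu^*]})$. The paper instead argues more conceptually: since $W^{[\lambda,\mu]}$ is of Type Q it is even-isomorphic to $\Pi(W^{[\lambda,\mu]})$, so its restriction must also be parity-shift invariant, and among the candidates only $W^{[\nu,\nu^*]}\oplus\Pi(W^{[\nu,\nu^*]})$ has this property. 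Your direct computation is perfectly valid and perhaps more transparent; the paper's version has the advantage of generalizing cleanly (it is reused verbatim in the Type D analogue, Proposition \ref{prop:Dn-super-restriction}) without needing to track the explicit $\CBzero$-decomposition.
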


\begin{proof}
The formulas are obtained by considering $W^{[\lambda,\mu]}$ as a $\abs{\CB_{n-1}}$-module using \eqref{eq:restriction-B-simple}, noting that $(\nu,\tau) \prec (\lambda,\mu)$ if and only if $(\tau^*,\nu^*) \prec (\mu^*,\lambda^*)$, and from the fact that the simple $\CB_{n-1}$-super\-modules are determined (up to even isomorphism) by their restrictions to $\C(\calB_{n-1})_{\zero}$ and by the super\-degrees in which the simple $\C(\calB_{n-1})_{\zero}$-module summands are concentrated. First one observes that the formulas in \eqref{item:En-restriction-to-Bn-1} and \eqref{item:Fn-restriction-to-Bn-1} are true at the level of $\abs{\CB_{n-1}}$-modules. This implies by the multiplicity of the composition factors that the restriction formulas in \eqref{item:En-restriction-to-Bn-1} and \eqref{item:Fn-restriction-to-Bn-1} are also true at the level of $\CB_{n-1}$-super\-modules, except perhaps in case \eqref{item:En-restriction-to-Bn-1} if there exists some $(\nu,\tau) \prec (\lambda,\mu)$ such that $(\nu,\tau) = (\tau^*,\nu^*)$.

There exists at most one $(\nu,\tau) \prec (\lambda,\mu)$ such that $(\nu,\tau) = (\tau^*,\nu^*)$. If such a $(\nu,\tau)$ exists, then the multiplicity-two $\abs{\CB_{n-1}}$-composition factor of $S^{(\nu,\tau)}$ in $S^{(\lambda,\mu)} \oplus S^{(\mu^*,\lambda^*)}$ must correspond to supermodule summands $W^{[\nu,\tau]}$ and $\Pi(W^{[\nu,\tau]})$, because the Type Q supermodule $W^{[\lambda,\mu]}$ is even-isomorphic to $\Pi(W^{[\lambda,\mu]})$, but $W^{[\nu,\tau]}$ is not even-isomorphic to itself for $[\nu,\tau] \in \Fboxn$.
\end{proof}

\subsection{The Lie superalgebra of reflections in type B} \label{subsec:Lie-superalgebra-B}

For $n \geq 2$, let $\fb_n \subseteq \Lie(\CB_n)$ be the Lie superalgebra generated by the set \eqref{eq:B-reflections} of all reflections in $\calB_n$. When the value of $n$ is clear from the context, we may write $\fbzero = (\fb_n)_{\zero}$. In this context, \eqref{eq:LieCG} and \eqref{eq:D(LieCG)} take the forms
	\begin{gather}
	\Lie (\CB_n) \cong 
	\Big[ \bigoplus_{[\lambda,\mu] \in \Eboxn} \fq( W^{[\lambda,\mu]} ) \Big] 
	\oplus 
	\Big[ \bigoplus_{[\lambda,\lambda^*] \in \Fboxn} \gl( W^{[\lambda,\lambda^*]} ) \Big], \label{eq:CBn-Artin-Wedderburn} \\
	\fD(\CB_n) \cong 
	\Big[ \bigoplus_{[\lambda,\mu] \in \Eboxn} \sq( W^{[\lambda,\mu]} ) \Big] 
	\oplus 
	\Big[ \bigoplus_{[\lambda,\lambda^*] \in \Fboxn} \fsl( W^{[\lambda,\lambda^*]} ) \Big]. \label{eq:D(CBn)-Artin-Wedderburn}
	\end{gather}

Let $\calX_n = \sum_{j=1}^n X_j = \sum_{j=1}^n \sum_{i=1}^{j-1} [(i,j) + t_it_j(i,j)]$ be the sum in $\CB_n$ of the YJM elements, and let $\calT_n = \sum_{j=1}^n t_j$. The set \eqref{eq:B-reflections} is a union of two conjugacy classes in $\calB_n$. Then
	\begin{equation} \label{eq:bn-easy-inclusion}
	\fb_n \subseteq \fD(\CB_n) + \C \cdot \calX_n + \C \cdot \calT_n,
	\end{equation}
by Lemma \ref{lemma:g-in-D(g)+span}. Our goal in this section is to prove the following theorem:

\begin{theorem} \label{theorem:main-theorem-B}
Let $n \geq 2$. Then $\fb_n = \fD(\CB_n) + \C \cdot \calX_n + \C \cdot \calT_n$.
\end{theorem}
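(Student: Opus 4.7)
The forward inclusion $\fb_n \subseteq \fD(\CB_n) + \C \cdot \calX_n + \C \cdot \calT_n$ is already recorded as \eqref{eq:bn-easy-inclusion}, so the task is the reverse inclusion. The plan is to verify the axiomatic hypotheses \ref{item:S-union-conj-classes}--\ref{item:no-G-simple-dim-2} from Sections \ref{subsec:lie-superalgebras-odd-elements}--\ref{subsec:further-lie-superalgebra-structure} for the supergroup $\calB_n$ with $S$ equal to the set of all reflections \eqref{eq:B-reflections}, and then apply Lemma \ref{lemma:W(gn)} together with the Artin--Wedderburn decomposition \eqref{eq:D(CBn)-Artin-Wedderburn} to conclude.

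Several of the axioms are immediate or follow from results already in hand. Axiom \ref{item:S-union-conj-classes} records that $S$ is the union of two conjugacy classes in $\calB_n$, with class sums $\calX_n$ and $\calT_n$. Axiom \ref{item:no-G-simple-dim-2} is furnished for $n \geq 5$ by Lemma \ref{lemma:Bn-subminimal-dimensions}; base cases of small $n$ can be inspected directly. Axiom \ref{item:inductive-subgroup} is set up with $H = \calB_{n-1}$ and $S_H$ its set of reflections: Lemma \ref{lemma:Bzero-restriction} provides the multiplicity-free branching, Lemma \ref{lemma:Bzero-subminimal-dimensions} rules out simple $\C H_{\zero}$-modules of dimension $2$, and the remaining items for $H$ transfer by the inductive hypothesis on rank. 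For axioms \ref{item:gzero-generates-CGzero} and \ref{item:S'-one-class-gen-G0}, the task is to identify a single $\Bzero$-conjugacy class of order-two elements in $\fbzero$ that generates $\Bzero$. A natural candidate is the class of $t_i(j,k)$ with $i,j,k$ distinct: each such element is an involution and lies in $\fbzero$ as $\tfrac{1}{2}[t_i,(j,k)]$, it forms a single $\Bzero$-conjugacy class (its $\calB_n$-centralizer meets $\calB_{\one}$ nontrivially, so the class does not split), and products of pairs of such elements are readily seen to realize all of $3$-cycles, pairs $t_it_j$, and disjoint double transpositions, so the generated subgroup exhausts $\Bzero$.

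The principal obstacle is axiom \ref{item:D(gzero)-to-sl(V)}: for each simple $\CBzero$-module $V$, the structure map restricts to a surjection $\rho_V: \fD(\fbzero) \twoheadrightarrow \fsl(V)$. The argument proceeds by induction on $n$ in the spirit of the proof of Lemma \ref{lemma:D(gzero)}. By the inductive hypothesis applied to $\calB_{n-1}$, one has $\fD((\fb_{n-1})_{\zero}) \cong \bigoplus_{U \in \Irr((\calB_{n-1})_{\zero})} \fsl(U)$; restricting $V$ to $(\calB_{n-1})_{\zero}$ via the multiplicity-free branching of Lemma \ref{lemma:Bzero-restriction} then produces a semisimple Lie subalgebra of $\im(\rho_V)$. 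Proposition \ref{prop:simples-equivalent-G0-g0} and Lemma \ref{lemma:Gzero-modules-not-dual} prevent distinct simple constituents of $\Res V$ from being conflated or dualized inside $\im(\rho_V)$, and the dimensional rigidity of $\fsl(V)$ then forces the image to fill out $\fsl(V)$. The cases most likely to require delicate treatment are those $V$ whose restriction to $(\calB_{n-1})_{\zero}$ has very few simple summands relative to $\dim(V)$---in particular the constituents $S^{[\lambda,\lambda^*,\pm]}$ of Lemma \ref{lemma:Bzero-restriction}\eqref{item:B0-restriction-Fboxn}; these are handled by exploiting the scalar action of the YJM sum $\calX_n$ described in Remark \ref{remark:YJM-eigenvectors} to separate non-isomorphic pieces, or by seeding the induction with enough small $n$ base cases.

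With all six axioms in place, Lemma \ref{lemma:W(gn)} asserts $W(\fb_n) = \sq(W) + \Span\{W(\calX_n),W(\calT_n)\}$ for each $W \in \Irr_Q(\calB_n)$ and $W(\fb_n) = \fsl(W)$ for each $W \in \Irr_M(\calB_n)$. Summing these images over $\Irr_s(\calB_n)$ and reading off the Artin--Wedderburn decomposition \eqref{eq:D(CBn)-Artin-Wedderburn}, one deduces that $\fD(\CB_n) \subseteq \fb_n + \C \cdot \calX_n + \C \cdot \calT_n$, which combined with \eqref{eq:bn-easy-inclusion} yields the desired equality.
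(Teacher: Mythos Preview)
Your overall strategy---verify \ref{item:S-union-conj-classes}--\ref{item:no-G-simple-dim-2} and invoke Lemma \ref{lemma:W(gn)}---matches the paper's, but your final paragraph contains a genuine gap. Lemma \ref{lemma:W(gn)} computes, for each simple supermodule $W$, the \emph{image} $W(\fb_n)$ under the single structure map $W:\CB_n\to\End(W)$. Knowing each projection $W(\fb_n)$ does not let you ``sum these images'' and conclude that $\fb_n$ contains the direct sum $\fD(\CB_n)=\bigoplus_W \fD(\End(W))$: a sub\-super\-algebra of a direct sum can surject onto every factor without containing the full direct sum (think of graphs of isomorphisms between factors). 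So the passage from ``$W(\fb_n)=\sq(W)$ or $\fsl(W)$ for each $W$'' to ``$\fD(\CB_n)\subseteq\fb_n$'' is not valid as written.

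The paper closes this gap in two moves. First, it applies Lemma \ref{lemma:D(gzero)} (which uses \ref{item:S-union-conj-classes}--\ref{item:D(gzero)-to-sl(V)}) to conclude that $\fD(\fbzero)$ \emph{equals} the full direct sum $\bigoplus_{V\in\Irr(\Bzero)}\fsl(V)$; this is the place where the ``no accidental diagonal'' argument via Proposition \ref{prop:simples-equivalent-G0-g0} and Lemma \ref{lemma:Gzero-modules-not-dual} is actually spent. Second, it views $\fD(\CB_n)_{\one}$ as a $\fD(\fbzero)$-module: under the adjoint action it splits as a direct sum of pairwise \emph{non-isomorphic} simple $\fD(\fbzero)$-modules (one $\fsl(S^{[\lambda,\mu]})$ for each Type Q factor, and a pair $\Hom(S^{[\lambda,\lambda^*,+]},S^{[\lambda,\lambda^*,-]})\oplus\Hom(S^{[\lambda,\lambda^*,-]},S^{[\lambda,\lambda^*,+]})$ for each Type M factor). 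Since $(\fb_n)_{\one}$ is a $\fD(\fbzero)$-submodule, it is a sum of a subset of these simples; Lemma \ref{lemma:W(gn)} then forces every one of them to appear, so $\fD(\CB_n)_{\one}\subseteq\fb_n$. Lemma \ref{lem:generated-by-odd} now gives $\sq(W)\subseteq\fb_n$ and $\fsl(W)\subseteq\fb_n$ for all $W$ of dimension $>2$, and the two remaining $2$-dimensional Type~Q modules $W^{[[n],\emptyset]}$ and $W^{[[1^n],\emptyset]}$ (where $\sq(W)=\C\cdot\id_W$ is not generated by odd elements) are handled by an explicit construction of elements $\Psi,\Phi\in\fb_n$ separating them. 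Your sketch omits both the module-structure argument on the odd part and the treatment of these two small factors.
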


\begin{lemma} \label{lemma:base-cases-B}
If $n \in \set{2,3,4,5}$, then $\fb_n = \fD(\CB_n) + \C \cdot \calX_n + \C \cdot \calT_n$.
\end{lemma}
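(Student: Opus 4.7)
The plan is a direct finite-dimensional linear algebra computation carried out in GAP for each of the four values $n \in \{2,3,4,5\}$. By \eqref{eq:bn-easy-inclusion} one already has the containment $\fb_n \subseteq \fD(\CB_n) + \C \cdot \calX_n + \C \cdot \calT_n$, so it suffices to verify that the two sides have equal complex dimension.

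For the left-hand side, I would realize $\CB_n$ as a finite-dimensional algebra via its multiplication table and equip it with the $\Z_2$-grading determined by $\ve$. The Lie superalgebra $\fb_n$ is then obtained by a standard saturation procedure: starting with $V_0$ equal to the linear span of the reflection set \eqref{eq:B-reflections}, iteratively form $V_{k+1} = V_k + [V_k, V_k]$ using the super-commutator $xy - (-1)^{\ol{x} \cdot \ol{y}}yx$, and stop once $V_{k+1} = V_k$. Since $\CB_n$ is finite-dimensional, this terminates after finitely many steps with $V_k = \fb_n$, and its dimension is read off by row-reducing a spanning set to a basis.

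For the right-hand side, the dimension can be computed in two mutually cross-checking ways. Directly inside $\CB_n$, one takes the span of all super-commutators $[x,y]$ as $x,y$ range over a homogeneous basis of $\CB_n$, adjoins $\calX_n$ and $\calT_n$, and computes the dimension. Combinatorially, one uses the isomorphism \eqref{eq:D(CBn)-Artin-Wedderburn} together with the dimension formula of Lemma \ref{lemma:Bn-subminimal-dimensions} (via $\dim \sq(W^{[\lambda,\mu]}) = 2(\dim S^{(\lambda,\mu)})^2 - 1$ for $[\lambda,\mu] \in \Eboxn$ and $\dim \fsl(W^{[\lambda,\lambda^*]}) = (\dim S^{(\lambda,\lambda^*)})^2 - 1$ for $[\lambda,\lambda^*] \in \Fboxn$), and then adds $2$ for the contribution of $\C \calX_n + \C \calT_n$; the two generators are linearly independent modulo $\fD(\CB_n)$ because their evaluations on the four one-dimensional $\CB_n$-modules listed in Lemma \ref{lemma:Bn-subminimal-dimensions} span a two-dimensional subspace of $\C^4$, a trivial side check.

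The only real obstacle is computational feasibility at $n = 5$, where $\dim \CB_5 = 2^5 \cdot 5! = 3840$; however, all of the operations required are rational sparse linear algebra (the reflection basis has integer entries and the super-bracket preserves $\Q \calB_n$), so they are well within GAP's range. The smaller cases $n \in \{2,3,4\}$ are unproblematic, and recording the matching dimensions in all four cases completes the proof.
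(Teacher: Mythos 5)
Your proposal is correct and is essentially the paper's own argument: the paper disposes of these base cases precisely by a GAP dimension comparison, using the a priori containment \eqref{eq:bn-easy-inclusion} and checking that the two sides have equal dimension. Your write-up simply makes explicit the bracket-saturation and Artin--Wedderburn bookkeeping that such a verification entails.
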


\begin{proof}
By dimension comparison in GAP \cite{GAP4}; see Appendix \ref{app:dim-small-ranks} for more details.
\end{proof}

\begin{lemma} \label{lemma:b0-generates-CB0}
For $n \geq 5$, the set
	\begin{equation} \label{eq:ti(j,k)-CC}
	C_{(21^{n-3},1)} \coloneq \set{ t_i(j,k), \, t_it_jt_k (j,k) : \text{$i,j,k$ distinct}}
	\end{equation}
is a conjugacy class $S'$ in $\Bzero$ such that $S' \subseteq \fbzero$ and $\Bzero = \subgrp{S'}$.
\end{lemma}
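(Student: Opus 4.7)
The plan is to establish the three properties of $S'$ — being a single $\Bzero$-conjugacy class, being contained in $\fbzero$, and generating $\Bzero$ — via direct wreath-product computations. I expect no serious obstacle; the only mildly subtle step is identifying the correct invariant for $\calB_n$-conjugacy to confirm that the two families of elements listed in \eqref{eq:ti(j,k)-CC} are indeed $\calB_n$-conjugate.

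First, the elementary checks $\ve(t_i(j,k)) = (-1)(-1) = +1$ and $\ve(t_it_jt_k(j,k)) = (-1)^4 = +1$ show $S' \subseteq \Bzero$. To see that $S'$ forms a single $\calB_n$-conjugacy class, I would invoke the standard parameterization of conjugacy classes in the wreath product $\Z_2 \wr \fS_n$ by pairs of partitions, which records for each cycle of the underlying permutation whether the product of the accompanying $t$-factors is $+1$ or $-1$. Both $t_i(j,k)$ and $t_it_jt_k(j,k)$ have a single $2$-cycle $(j,k)$ along which the $t$-product is trivial ($1$ and $t_jt_k = +1$ in $\Z_2$, respectively), a single fixed point $i$ carrying $t_i = -1$, and $n-3$ trivially-occupied fixed points; hence all elements of $S'$ share the signed cycle type $(21^{n-3};1)$ and lie in one $\calB_n$-class. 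To rule out splitting on restriction to the index-$2$ subgroup $\Bzero$, it suffices to exhibit an element of $C_{\calB_n}(\tau) \setminus \Bzero$ for some $\tau \in S'$: taking $\tau = t_1(2,3)$, the element $t_1$ commutes with $\tau$ while $\ve(t_1) = -1$, so the class does not split.

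Next, $S' \subseteq \fbzero$ follows from two super anti-commutator identities. The super-bracket of two odd reflections lies in $\fbzero$; since $t_i$ commutes with $(j,k)$ for $i \neq j,k$, one has $\{t_i,(j,k)\} = 2 \cdot t_i(j,k) \in \fbzero$. The element $t_it_j(i,j)$ is itself among the reflections listed in \eqref{eq:B-reflections}, and for $k \ne i,j$ it commutes with $t_k$, whence $\{t_k,t_it_j(i,j)\} = 2 \cdot t_it_jt_k(i,j) \in \fbzero$.

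Finally, to prove $\subgrp{S'} = \Bzero$, I would use the relation $(j,k)\,t_jt_k\,(j,k) = t_jt_k$ to obtain
\[
t_i(j,k) \cdot t_it_jt_k(j,k) \;=\; t_i^2 \, (j,k) \, t_jt_k \, (j,k) \;=\; t_jt_k,
\]
so $t_jt_k \in \subgrp{S'}$ for all distinct $j,k$, and these products generate the even-weight subgroup $(\Z_2^n)_{\zero}$. Likewise, $t_1(2,3) \cdot t_1(2,4) = (2,3)(2,4) = (2,4,3)$, so $\subgrp{S'}$ contains all $3$-cycles and hence $A_n$ for $n \geq 3$. Together these inclusions give $(\Z_2^n)_{\zero} \rtimes A_n \subseteq \subgrp{S'}$, which is an index-$2$ subgroup of $\Bzero$; since $t_1(2,3) \in S'$ lies outside $(\Z_2^n)_{\zero} \rtimes A_n$ (its $t$-weight and permutation length are both odd), one concludes $\subgrp{S'} = \Bzero$.
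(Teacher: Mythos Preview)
Your proof is correct and follows essentially the same approach as the paper's: both verify the conjugacy class via signed cycle type, exhibit an odd centralizing $t$-generator to rule out splitting in $\Bzero$, realize the elements of $S'$ as super brackets of reflections, and generate $\Bzero$ by first producing the elements $t_jt_k$ and the alternating group from products of two elements of $S'$. The only cosmetic differences are your use of $t_1$ (rather than a $t_\ell$ with a fresh index) as the centralizing element, and your use of $3$-cycles rather than products of disjoint transpositions to generate $A_n$.
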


\begin{proof}
The set $S' = C_{(21^{n-3},1)}$ is a conjugacy class in $\calB_n$; the labeling corresponds to that in \cite[\S2]{Okada:1990} or \cite[\S2]{Mishra:2016}. If $i,j,k,\ell$ are distinct, then the element $t_i(j,k) \in S'$ is centralized by the element $t_\ell$, which is in $\calB_n$ but not $\Bzero$. Then by the standard criterion, $S'$ remains a conjugacy class in $\Bzero$ (rather than splitting into a union of two conjugacy classes in $\Bzero$).

If $i,j,k,\ell$ are distinct, then
	\begin{align*}
	t_i (j,k) &= \tfrac{1}{2} \cdot [t_i, (j,k)] \in \fbzero, \\
	t_i t_j t_k (j,k) &= \tfrac{1}{2} \cdot [t_i, t_jt_k (j,k)] \in \fbzero,
	\end{align*}
so $S' \subseteq \fbzero$. The group $\Dzero$ is an index-$2$ subgroup of $\Bzero$, and hence $\Bzero$ is generated by $\Dzero$ together with any single element of the form $t_i(j,k)$, i.e., by any element of $\Bzero$ not in $\Dzero$. The group $\Dzero$ is generated by all elements of the form
	\[
	[t_i(j,k)] \cdot [t_it_jt_k(j,k)] = t_j t_k
	\]
for $i,j,k$ distinct, and by the alternating group $\fA_n$, which for $n \geq 5$ is generated by all elements
	\[
	[t_i (j,k)] \cdot [t_i (\ell,m)] = (j,k)(\ell,m).
	\]
for $i,j,k,\ell,m$ distinct. Then for $n \geq 5$, the group $\Bzero$ is generated by the set $S'$.
\end{proof}

\begin{proposition} \label{prop:n-geq-5-Bn-good}
Let $n \geq 5$. Then \ref{item:S-union-conj-classes}--\ref{item:S'-one-class-gen-G0} and \ref{item:no-G-simple-dim-2} are satisfied for the supergroup $G = \calB_n$, with $S$ the set \eqref{eq:B-reflections} of all reflections in $\calB_n$, and $S'$ the set \eqref{eq:ti(j,k)-CC}. Consequently, the results of Section \ref{subsec:lie-superalgebras-odd-elements} all hold in this context.
\end{proposition}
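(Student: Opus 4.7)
The plan is to verify the four axioms \ref{item:S-union-conj-classes}, \ref{item:gzero-generates-CGzero}, \ref{item:S'-one-class-gen-G0}, and \ref{item:no-G-simple-dim-2} in turn. Most of the substantive work is already packaged into the preceding lemmas of this section, so the proposition amounts to an assembly of known facts together with one short direct calculation.

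First I would check \ref{item:S-union-conj-classes}. The reflection set \eqref{eq:B-reflections} decomposes as the disjoint union of the short class $C_s = \set{t_i : 1 \leq i \leq n}$ and the long class $C_\ell = \set{(i,j),\, t_i t_j (i,j) : 1 \leq i < j \leq n}$. Conjugation by $\fS_n$ acts transitively on $\set{t_1, \ldots, t_n}$, so $C_s$ is a single conjugacy class in $\calB_n$. Conjugation by $t_i$ carries $(i,j)$ to $t_i t_j (i,j)$, so combined with the conjugacy of all transpositions in $\fS_n$, the set $C_\ell$ is likewise a single conjugacy class.

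Next I would dispatch \ref{item:gzero-generates-CGzero} and \ref{item:S'-one-class-gen-G0} by invoking Lemma \ref{lemma:b0-generates-CB0}, which already establishes that $S' = C_{(21^{n-3},1)}$ is a single conjugacy class in $\Bzero$, is contained in $\fbzero$, and generates $\Bzero$ as a group. The only item not explicitly recorded there is that elements of $S'$ are involutions. Using the wreath-product identity $(j,k) \cdot t_j \cdot (j,k) = t_k$ (and the fact that $(j,k)$ commutes with $t_i$ for $i \notin \set{j,k}$), a direct computation yields $[t_i (j,k)]^2 = t_i^2 = 1$ and $[t_i t_j t_k (j,k)]^2 = t_i^2 \cdot t_j t_k \cdot t_k t_j = 1$, verifying \ref{item:S'-one-class-gen-G0}.

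Finally, \ref{item:no-G-simple-dim-2} is immediate from Lemma \ref{lemma:Bn-subminimal-dimensions}(3): for $n \geq 5$, every simple $\CB_n$-module of dimension greater than one has dimension at least $n - 1 \geq 4$, and so no two-dimensional simple $\abs{\CB_n}$-module exists. I do not anticipate any serious obstacle in executing this plan; the bulk of the difficulty is absorbed into Lemma \ref{lemma:b0-generates-CB0}, whose group-theoretic content (identifying the right conjugacy class in $\Bzero$ whose commutators with odd reflections lie in $\fbzero$) is the only genuinely delicate step, and that work has already been carried out.
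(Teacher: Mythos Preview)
Your proof is correct and follows essentially the same approach as the paper, which simply writes ``Apply Lemmas \ref{lemma:b0-generates-CB0} and \ref{lemma:Bn-subminimal-dimensions}.'' You have helpfully made explicit a few details the paper leaves implicit (the decomposition of $S$ into its two conjugacy classes and the involution check for elements of $S'$), but the substance is the same.
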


\begin{proof}
Apply Lemmas \ref{lemma:b0-generates-CB0} and \ref{lemma:Bn-subminimal-dimensions}.
\end{proof}

\begin{lemma} \label{lemma:B-inductive-supergroup-ngeq6}
Let $n \geq 6$, let $G = \calB_n$, and let $S$ be the set \eqref{eq:B-reflections} of all reflections in $\calB_n$. Suppose Theorem \ref{theorem:main-theorem-B} is true for the value $n-1$. Then \ref{item:inductive-subgroup} is satisfied by the subsupergroup $H = \calB_{n-1}$, with $S_H$ equal to the set of all reflections in $\calB_{n-1}$.
\end{lemma}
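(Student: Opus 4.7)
The plan is to verify each of the four bullets of axiom \ref{item:inductive-subgroup} for the subsupergroup $H = \calB_{n-1}$ with $S_H$ the set of reflections in $\calB_{n-1}$. By construction, the Lie subsuperalgebra $\fh \subseteq \Lie(\CH)$ generated by $S_H$ is exactly $\fb_{n-1}$.

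First I would handle the bullets that follow directly from results already established. For \ref{item:S-union-conj-classes} and \ref{item:gzero-generates-CGzero} at $H$, note that $S_H$ is the union of two conjugacy classes of reflections in $\calB_{n-1}$, and since $n-1 \geq 5$, Lemma \ref{lemma:b0-generates-CB0} applied to $\fb_{n-1}$ produces a single conjugacy class $S' \subseteq \fh_\zero$ that generates $\Hzero$. For the equality $\fh_\zero = \fD(\CH)_\zero$, I would invoke Theorem \ref{theorem:main-theorem-B} at rank $n-1$ to write $\fh = \fD(\CB_{n-1}) + \C \cdot \calX_{n-1} + \C \cdot \calT_{n-1}$; since $\ve$ evaluates to $-1$ on every reflection, every term of $\calX_{n-1}$ and $\calT_{n-1}$ lies in $(\calB_{n-1})_{\one}$, so those two class sums are entirely odd and taking even parts yields $\fh_\zero = \fD(\CB_{n-1})_\zero$. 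The Artin--Wedderburn isomorphism \eqref{eq:D(hzero)} then follows from \eqref{eq:D(CBn)-Artin-Wedderburn} at rank $n-1$ via Remark \ref{remark:artin-wedderburn-CGzero}. The third bullet is immediate from Lemma \ref{lemma:Bzero-subminimal-dimensions} applied with $n-1 \geq 5$: every nontrivial simple $\CHzero$-module has dimension at least $n-2 \geq 4$, so none is two-dimensional.

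The fourth bullet is the main substantive point and requires a short case analysis based on Lemmas \ref{lemma:B0-simples} and \ref{lemma:Bzero-restriction}. The key observation is that the only one-dimensional simple $\C(\calB_{n-1})_\zero$-modules are $S^{[[n-1],\emptyset]}$ and $S^{[[1^{n-1}],\emptyset]}$, whose representatives $(\nu,\tau)$ all have one empty part. Given $V \in \Irr(\Gzero)$ with $\dim V > 1$, there are two cases. If $V = S^{[\lambda,\lambda^*,\pm]}$ with $[\lambda,\lambda^*] \in \Fboxn$, then $\lambda \vdash n/2 \geq 3$ forces $\lambda, \lambda^* \neq \emptyset$, so each summand $S^{[\nu,\lambda^*]}$ in Lemma \ref{lemma:Bzero-restriction}\eqref{item:B0-restriction-Fboxn} has a nonempty second part and is therefore not one-dimensional. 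If $V = S^{[\lambda,\mu]}$ with $[\lambda,\mu] \in \Eboxn$, then $(\lambda,\mu)$ is not one of $([n],\emptyset)$, $(\emptyset,[n])$, $([1^n],\emptyset)$, $(\emptyset,[1^n])$. If both $|\lambda| \geq 2$ and $|\mu| \geq 2$, every $(\nu,\tau) \prec (\lambda,\mu)$ in Lemma \ref{lemma:Bzero-restriction}\eqref{item:B0-restriction-Eboxn} already has both parts nonempty. If $|\lambda| = 1$ (resp.\ $|\mu| = 1$), the other partition has size $n-1 \geq 5$ and removing a box from it produces a summand $([1],\tau)$ (resp.\ $(\nu,[1])$) with both parts nonempty. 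Finally, if $\mu = \emptyset$ and $\lambda \notin \set{[n],[1^n]}$ (and the symmetric case $\lambda = \emptyset$), then $\lambda$ admits a removable box whose removal produces some $\nu \notin \set{[n-1],[1^{n-1}]}$, giving a summand $S^{[\nu,\emptyset]}$ of dimension at least $n-2 \geq 4$.

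The potential obstacle is entirely in the last bullet's bookkeeping, but since the one-dimensional classes correspond to bipartitions with one empty part, the case analysis reduces to a few elementary observations about box removals.
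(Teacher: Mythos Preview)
Your proposal is correct and follows essentially the same approach as the paper. The paper's own proof is much terser: it simply notes that \ref{item:S-union-conj-classes} is evident, cites Lemma~\ref{lemma:b0-generates-CB0} for \ref{item:gzero-generates-CGzero}, deduces $\fh_{\zero} = \fD(\CH)_{\zero}$ from the inductive hypothesis exactly as you do, and then dispatches the last two bullets by citing Lemmas~\ref{lemma:Bzero-subminimal-dimensions} and~\ref{lemma:Bzero-restriction} without spelling out the case analysis you carry out for the fourth bullet.
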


\begin{proof}
Condition \ref{item:S-union-conj-classes} evidently holds for $H$, and \ref{item:gzero-generates-CGzero} holds by Lemma \ref{lemma:b0-generates-CB0}. If Theorem \ref{theorem:main-theorem-B} is true for the value $n-1$, then $\fh = \fD(\CH) + \C \cdot \calX_{n-1} + \C \cdot \calT_{n-1}$, and hence $\hzero = \fD(\CH)_{\zero}$. Finally, the last two bullet points in \ref{item:inductive-subgroup} are satisfied by Lemmas \ref{lemma:Bzero-subminimal-dimensions} and \ref{lemma:Bzero-restriction}.
\end{proof}

\begin{proposition} \label{prop:A5-for-B}
Let $n \geq 6$, and suppose Theorem \ref{theorem:main-theorem-B} is true for the value $n-1$. Then \ref{item:D(gzero)-to-sl(V)} holds for the supergroup $G = \calB_n$.
\end{proposition}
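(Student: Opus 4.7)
Fix a simple $\C\Bzero$-module $V$; we aim to show that $\g_V := \rho_V(\fD(\fbzero))$ equals $\fsl(V)$. If $\dim V = 1$ the conclusion is vacuous, so by Lemma \ref{lemma:Bzero-subminimal-dimensions} we may assume $\dim V \geq n - 1 \geq 5$. Setting $H = \calB_{n-1}$, apply Lemma \ref{lemma:Bzero-restriction} to decompose $\Res^{\Bzero}_{\Hzero}(V) = \bigoplus_{j=1}^{r} W_j$ as a multiplicity-free sum of pairwise non-isomorphic simple $\C\Hzero$-modules. By Lemma \ref{lemma:B-inductive-supergroup-ngeq6}, the assumed inductive case of Theorem \ref{theorem:main-theorem-B} yields \ref{item:inductive-subgroup}, and \eqref{eq:D(hzero)} then identifies the image of $\fD(\hzero)$ in $\End(V)$ with the block-diagonally embedded $\mathfrak{k} := \bigoplus_{j} \fsl(W_j) \subseteq \fsl(V)$. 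Since $\hzero \subseteq \fbzero$ (the $\calB_{n-1}$-reflections sit among the $\calB_n$-reflections), one has $\mathfrak{k} \subseteq \g_V$, and Proposition \ref{prop:g0-reductive} ensures that $\g_V$ is semisimple and $\g_V \subseteq \fsl(V)$.

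If $r = 1$ we are done, so assume $r \geq 2$. Decompose $\fsl(V)$ as a $\mathfrak{k}$-module, $\fsl(V) = \mathfrak{k} \oplus T \oplus \bigoplus_{j \neq k} \Hom(W_j, W_k)$, with $T$ a diagonal traceless torus and each $\Hom(W_j, W_k)$ a simple $\mathfrak{k}$-module. For $n \geq 6$, Lemma \ref{lemma:Bzero-subminimal-dimensions} confines each $\dim W_j$ to $\smallset{1} \cup [n-2, \infty)$, and inspection of predecessors shows that at most one $W_j$ in a given branching can be $1$-dimensional; so for every pair $j \neq k$ one of $\dim W_j, \dim W_k$ is at least $4$, and the non-self-duality of the natural representation of $\fsl(d)$ for $d \geq 3$ forces $\Hom(W_j, W_k) \not\cong \Hom(W_k, W_j)$ as $\mathfrak{k}$-modules.

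Next observe that $\g_V$ cannot preserve the decomposition $V = \bigoplus_{j} W_j$: otherwise $\g_V \subseteq \bigoplus_j \End(W_j)$, semisimplicity would force $\g_V = \mathfrak{k}$, and since $Z(\fbzero) \subseteq Z(\C\Bzero)$ (Lemma \ref{lemma:lie-algebra-center}) acts as scalars on $V$, all of $\fbzero = \fD(\fbzero) \oplus Z(\fbzero)$ would preserve the decomposition, making each $W_j$ a $\fbzero$-submodule and hence, via Lemmas \ref{lemma:b0-generates-CB0} and \ref{lemma:G0-g0-submodule-equivalent}, a proper $\C\Bzero$-submodule of $V$, contradicting simplicity. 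Pick therefore some $0 \neq x \in \g_V \cap \Hom(W_j, W_k)$; by $\mathfrak{k}$-simplicity, $\Hom(W_j, W_k) \subseteq \g_V$. The trace form $B(X, Y) = \tr_V(XY)$ on $\fsl(V)$ is non-degenerate and invariant, hence restricts to a non-degenerate invariant form on the semisimple algebra $\g_V$; because $B$ pairs $\Hom(W_j, W_k)$ nontrivially only with $\Hom(W_k, W_j)$, non-degeneracy forces $\g_V \cap \Hom(W_k, W_j) \neq 0$, and $\mathfrak{k}$-simplicity again upgrades this to $\Hom(W_k, W_j) \subseteq \g_V$. Bracketing these two off-diagonal pieces with each other and combining with $\fsl(W_j) \oplus \fsl(W_k)$ produces $\fsl(W_j \oplus W_k) \subseteq \g_V$, embedded as acting on $W_j \oplus W_k$ and trivially on the remaining summands.

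Now iterate with the coarsened decomposition $V = (W_j \oplus W_k) \oplus \bigoplus_{l \neq j, k} W_l$: the merged block has dimension at least $3$, so the non-self-duality hypothesis is preserved, and the same dichotomy either contradicts simplicity of $V$ or produces a further merge. After finitely many iterations every $W_l$ has been absorbed, yielding $\g_V \supseteq \fsl(V)$ and hence equality. The main obstacle—and the step most likely to require separate treatment in other Lie types—is recovering the companion off-diagonal piece $\Hom(W_k, W_j)$ from $\Hom(W_j, W_k)$; the trace-form duality here succeeds precisely because multiplicity-free branching together with the dimensional bounds available for $n \geq 6$ guarantee the two pieces are non-isomorphic $\mathfrak{k}$-modules, a luxury absent from the Type D analysis in Section \ref{subsec:A5-for-D}.
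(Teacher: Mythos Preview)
Your proof is correct, but it takes a genuinely different route from the paper's. The paper establishes the rank inequality $\dim(V) < 2\cdot\rk(\fh^V)$ by computing
\[
2\cdot\rk(\fh^V) - \dim(V) = \sum_i \big[\dim(V_i)-2\big]
\]
and checking case-by-case that this is positive; it then invokes \cite[Lemme~15]{Marin:2007} to conclude that $\g^V$ is simple, and \cite[Lemme~13]{Marin:2007} to upgrade simplicity to $\g^V=\fsl(V)$. Your argument instead builds $\fsl(V)$ directly inside $\g_V$ by a block-merging procedure: from the $\mathfrak{k}$-module decomposition of $\fsl(V)$ and irreducibility of $V$ you locate some off-diagonal $\Hom(W_j,W_k)\subseteq\g_V$, use non-degeneracy of the trace form on the semisimple algebra $\g_V$ (which follows from faithfulness of $V$) to recover the companion block $\Hom(W_k,W_j)$, and bracket to obtain $\fsl(W_j\oplus W_k)$; then you iterate. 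The paper's approach is shorter and exports the structural work to Marin's lemmas, whereas yours is self-contained and makes transparent exactly where the multiplicity-free hypothesis and the ``at most one $1$-dimensional summand'' observation are consumed. Both arguments rest on the same dimension bounds from Lemma~\ref{lemma:Bzero-subminimal-dimensions} and the multiplicity-free branching of Lemma~\ref{lemma:Bzero-restriction}; your closing remark correctly identifies that the trace-form step would need modification in Type~D, where multiplicity-free branching fails.
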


\begin{proof}
Set $G = \calB_n$ and $\g = \fb_n$. Let $V \in \Irr(\Gzero)$, and let $\rho_V: \fD(\gzero) \to \End(V)$ be the restriction to $\fD(\gzero)$ of the module structure map $V: \CGzero \to \End(V)$. Then $\im(\rho_V) \subseteq \fsl(V)$. The map $\rho_V$ is evidently a surjection of $\dim(V) = 1$, since then $\fsl(V) = 0$, so assume that $\dim(V) > 1$.

Let $H = \calB_{n-1}$, and let $\fh \subseteq \Lie(\CH)$ be the Lie subsuperalgebra generated by the set $S_H$ of reflections in $H$. By Lemma \ref{lemma:B-inductive-supergroup-ngeq6}, $H$ and $S_H$ satisfy \ref{item:inductive-subgroup}. Then $\fD(\hzero)$ is semisimple by \eqref{eq:D(hzero)}, and $\fD(\gzero)$ is semisimple by Proposition \ref{prop:n-geq-5-Bn-good}. Let $\g^V = \rho_V(\fD(\gzero))$ and let $\fh^V = \rho_V(\fD(\hzero))$. Then $\fh^V \subseteq \g^V$ are semisimple subalgebras of $\fsl(V)$, and $\g^V$ acts irreducibly on $V$ (again by Proposition \ref{prop:n-geq-5-Bn-good}). The restriction of $V$ to $\CHzero$ is multiplicity free by Lemma \ref{lemma:Bzero-restriction}; say $V = \bigoplus_{i=1}^t V_i$, where the $V_i$ are pairwise non-isomorphic simple $\CHzero$-modules. Then \eqref{eq:D(hzero)} implies that $\fh^V = \bigoplus_{i=1}^t \fsl(V_i)$, and hence each simple ideal $\fsl(V_i)$ of $\fh^V$ admits a composition factor $V_i$ in $V$ of multiplicity one.\footnote{If $\dim(V_i) = 1$, then $\fsl(V_i) = 0$, and we ignore that term for the purpose of this sentence.} Write $\rk(L)$ for the rank of a semisimple Lie algebra $L$. Then $\rk(\fh^V) \leq \rk(\g^V)$. We want to show that $\dim(V) < 2 \cdot \rk(\fh^V)$; this will imply first by \cite[Lemme 15]{Marin:2007} that $\g^V$ is a simple Lie algebra, and then by \cite[Lemme 13]{Marin:2007} that $\g^V \cong \fsl(V)$, finishing the proof.

Since $\rk(\fh^V) = \sum_{i=1}^t \rk(\fsl(V_i))$ and since $\dim(V) = \sum_{i=1}^t \dim(V_i)$, then
	\[
	2 \cdot \rk(\fh^V) - \dim(V) = 2 \cdot \Big( \sum_{i=1}^t \dim(V_i)-1 \Big) - \dim(V) = \sum_{i=1}^t \left[ \dim(V_i)-2 \right].
	\]
Since $n \geq 6$, and hence $n-1 \geq 5$, we get by Lemma \ref{lemma:Bzero-subminimal-dimensions} that the only one-dimensional $\CHzero$-modules are those labeled by $[[n-1],\emptyset]$ and $[[1^{n-1}],\emptyset]$, and all other simple $\CHzero$ are of dimension at least $4$. Thus, if $\dim(V_i) > 1$ for all $V_i$, then $2 \cdot \rk(\fh^V) - \dim(V) > 0$.

If $S^{[[n-1],\emptyset]}$ occurs in $V$, then we see from Lemma \ref{lemma:Bzero-restriction} that either $V \cong S^{[[n-1,1],\emptyset]}$ or $V \cong S^{[[n-1],[1]]}$; the possibility $V \cong S^{[[n],\emptyset]}$ is precluded by the assumption that $\dim(V) > 1$. Similarly, if $S^{[[1^{n-1}],\emptyset]}$ occurs in $V$, then either $V \cong S^{[[2,1^{n-2}],\emptyset]}$ or $V \cong S^{[[1^{n-1}],[1]]}$. For $V \cong S^{[[n-1,1],\emptyset]}$, one has $\Res^{\Gzero}_{\Hzero}(V) \cong S^{[[n-2,1],\emptyset]} \oplus S^{[[n-1],\emptyset]}$ and $\dim(V) = n-1$. Then
	\[
	2 \cdot \rk(\fh_V) - \dim(V) = 2 \cdot \big( [(n-2)-1] + [1-1] \big) - (n-1) = n - 5 > 0. 
	\]
For $V \cong S^{[[n-1],[1]]}$, one has $\Res^{\Gzero}_{\Hzero}(V) \cong S^{[[n-2],[1]]} \oplus S^{[[n-1],\emptyset]}$ and $\dim(V) = n$. Then
	\[
	2 \cdot \rk(\fh_V) - \dim(V) = 2 \cdot \big( [n-1] + [1-1] \big) - n = n - 2 > 0. 
	\]
Similarly, one checks for $V \cong S^{[[2,1^{n-2}],\emptyset]}$ or $V \cong S^{[[1^{n-1}],[1]]}$ that $2 \cdot \rk(\fh^V) - \dim(V) > 0$.
\end{proof}

We can now prove Theorem \ref{theorem:main-theorem-B}.

\begin{proof}[Proof of Theorem \ref{theorem:main-theorem-B}]
The proof is by induction on $n$. The base cases $n \in \set{2,3,4,5}$ are handled by Lemma \ref{lemma:base-cases-B}, so let $n \geq 6$, and suppose the theorem is true for the value $n-1$. By \eqref{eq:bn-easy-inclusion} one has $\fb_n \subseteq \fD(\CB_n) + \C \cdot \calX_n + \C \cdot \calT_n$, and $\calX_n,\calT_n \in \fb_n$, so it remains to show that $\fD(\CB_n) \subseteq \fb_n$. In the rest of the proof we identify $\CB_n$ with its image under the isomorphism \eqref{eq:CBn-Artin-Wedderburn}.

First we will show that $\fD(\CB_n)_{\one} \subseteq \fb_n$. By \eqref{eq:D(CBn)-Artin-Wedderburn}, one has
	\begin{multline*}
	\fD(\CB_n)_{\one} = 
	\Big[ \bigoplus_{[\lambda,\mu] \in \Eboxn} \sq(W^{[\lambda,\mu]})_{\one} \Big] 
	\oplus 
	\Big[ \bigoplus_{[\lambda,\lambda^*] \in \Fboxn} \fsl(W^{[\lambda,\lambda^*]})_{\one} \Big] \\
	\cong 
	\Big[ \bigoplus_{[\lambda,\mu] \in \Eboxn} \fsl(S^{[\lambda,\mu]}) \Big] 
	\oplus 
	\Big[ \bigoplus_{[\lambda,\lambda^*] \in \Fboxn} \Hom(S^{[\lambda,\lambda^*,+]},S^{[\lambda,\lambda^*,-]}) \oplus \Hom(S^{[\lambda,\lambda^*,-]},S^{[\lambda,\lambda^*,+]}) \Big],
	\end{multline*}
and by Lemma \ref{lemma:D(gzero)}, one has
	\begin{align*}
	\fD(\fbzero) &\cong 
	\Big[ \bigoplus_{[\lambda,\mu] \in \Eboxn} \fsl(S^{[\lambda,\mu]}) \Big] 
	\oplus 
	\Big[ \bigoplus_{[\lambda,\lambda^*] \in \Fboxn} \fsl(S^{[\lambda,\lambda^*,+]}) \oplus \fsl(S^{[\lambda,\lambda^*,-]}) \Big].
	\end{align*}
Then $\fD(\CB_n)_{\one}$ is a direct sum of pairwise non-isomorphic simple modules for the semisimple Lie algebra $\fD(\fbzero)$. Since $\fbone$ is a $\fD(\fbzero)$-submodule of $\fD(\CB_n)_{\one} + \C \cdot \calX_n + \C \cdot \calT_n$, it must contain some subset of the summands in $\fD(\CB_n)_{\one}$. Using Lemma \ref{lemma:W(gn)}, we see that each of these summands is contained in the image of the corresponding projection map $W^{[\lambda,\mu]}: \fb_n \to \End(W^{[\lambda,\mu]})$, and hence must have been contained in $\fbone$. Thus $\fD(\CB_n)_{\one} \subseteq \fb_n$.

Now applying Lemmas \ref{lemma:Bn-super-subminimal-dimensions} and \ref{lem:generated-by-odd}, we deduce that
	\begin{enumerate}
	\item \label{item:sl-in-bn} $\fsl(W^{[\lambda,\lambda^*]}) \subseteq \fb_n$ for each $[\lambda,\lambda^*] \in \Fboxn$, and
	\item \label{item:sq-in-bn} $\sq(W^{[\lambda,\mu]}) \subseteq \fb_n$ for all $[\lambda,\mu] \in \Eboxn$, except perhaps for $[\lambda,\mu] = [[n],\emptyset]$ or $[\lambda,\mu] = [[1^n],\emptyset]$, i.e., $\sq(W) \subseteq \fb_n$ for all $W \in \Irr_s(\calB_n)$ such that $\dim(W) > 2$.
	\end{enumerate}
It remains to show that $\sq(W^{[\lambda,\mu]}) \subseteq \fb_n$ for $[\lambda,\mu] = [[n],\emptyset]$ and $[\lambda,\mu] = [[1^n],\emptyset]$. In these cases one has $\sq(W^{[\lambda,\mu]}) = \C \cdot \id_{W^{[\lambda,\mu]}}$. First, if $\tau \in \calB_n$ is any reflection, then $1_{\CB_n} = \tau^2 = \frac{1}{2} \cdot [\tau,\tau] \in \fb$. Under the isomorphism of Corollary \ref{cor:CBn-superalgebra-structure}, one has $1_{\CB_n} = \sum_{[\lambda,\mu] \in \BPboxn} \id_{W^{[\lambda,\mu]}}$. Let
	\begin{equation} \label{eq:Psi-B}
	\Psi = 1_{\CB_n} 
	- \Big( \sum_{\substack{[\lambda,\mu] \in \BPboxn \\ [\lambda,\mu] \neq [[n],\emptyset] \\ [\lambda,\mu] \neq [[1^n],\emptyset]}} \id_{W^{[\lambda,\mu]}} \Big) 
	= 
	1_{\CB_n} 
	- \Big( \sum_{\substack{W \in \Irr_s(\calB_n) \\ \dim(W) > 2}} \id_{W} \Big)
	\end{equation}
Then $\Psi \in \fb$, because it is a linear combination of elements known to be in $\fb$, and $\Psi$ acts as zero on all simple $\CB_n$-supermodules except for $W^{[[n],\emptyset]}$ and $W^{[[1^n],\emptyset]}$, on which it acts as the identity. Next consider the element $t_1 s_2 = \frac{1}{2} \cdot [t_1,s_2] \in \fbzero$. Items \eqref{item:sl-in-bn} and \eqref{item:sq-in-bn} above imply that $W^{[\lambda,\mu]}(t_1s_2) \in \fbzero$ for all $[\lambda,\mu] \notin \set{[[n],\emptyset], [[1^n],\emptyset]}$. Set
	\begin{equation} \label{eq:Phi-B}
	\Phi = t_1s_2 
	- \Big( \sum_{\substack{[\lambda,\mu] \in \BPboxn \\ [\lambda,\mu] \neq [[n],\emptyset] \\ [\lambda,\mu] \neq [[1^n],\emptyset]}} W^{[\lambda,\mu]}(t_1s_2) \Big).
	\end{equation}
Then $\Phi \in \fb$, and $\Phi$ acts as zero on all simple $\CB_n$-supermodules except for $W^{[[n],\emptyset]}$ and $W^{[[1^n],\emptyset]}$. One checks that $t_1s_2$ (hence also $\Phi$) acts on $W^{[[n],\emptyset]}$ as the identity, and acts on $W^{[[1^n],\emptyset]}$ as multiplication by $-1$. Then $\id_{W^{[[n],\emptyset]}} = \frac{1}{2}(\Psi + \Phi) \in \fb_n$, and $\id_{W^{[[1^n],\emptyset]}} = \frac{1}{2}(\Psi - \Phi) \in \fb_n$. Thus, $\sq(W^{[\lambda,\mu]}) \subseteq \fb_n$ for all $[\lambda,\mu] \in \Eboxn$, and we conclude that $\fD(\CB_n) \subseteq \fb_n$.
\end{proof}

\section{The Lie superalgebra of transpositions}\label{sec:type-A}

\subsection{The symmetric group} Recall from Example \ref{example:symmetric-group} that the symmetric group $\fS_n = W(A_{n-1})$ is a supergroup via the sign character $\ve'': \fS_n \to \set{\pm1}$. Its even subgroup is the alternating group $\fA_n$. The simple $\CS_n$-modules are the Specht modules $S^\lambda$ for $\lambda \vdash n$. And for each $\lambda \vdash n$ one has $S^\lambda \otimes \ve'' \cong S^{\lambda^*}$. Recalling that the $\CB_n$-module $S^{(\lambda,\emptyset)}$ is the inflation to $\calB_n$ of $S^\lambda$, and hence $S^{(\lambda,\emptyset)} = S^\lambda$ as a $\CS_n$-module, an explicit choice of associator $\phi_{\ve''}^\lambda : S^\lambda \to S^{\lambda^*}$ satisfying \eqref{eq:associator} and \eqref{eq:inverse-associator} for each $\lambda \vdash n$ can be made by taking $\phi_{\ve''}^\lambda = \phi_{\ve''}^{(\lambda,\emptyset)}$. Let $\calP(n) = \set{ \lambda : \lambda \vdash n}$ and let
	\[
	E_n = \set{ \lambda \in \calP(n): \lambda \neq \lambda^*}, \qquad
	F_n = \set{ \lambda \in \calP(n): \lambda = \lambda^*}.
	\]
Let $\sim$ be the equivalence relation on $\calP(n)$ generated by $\lambda \sim \lambda^*$. Then the simple $\CS_n$-supermodules are labeled by elements of $\calP(n)/\!\!\sim$. Now \eqref{eq:LieCG} and \eqref{eq:D(LieCG)} take the forms
	\begin{gather}
	\Lie (\CS_n) \cong 
	\Big[ \bigoplus_{\lambda \in E_n/\sim} \fq( W^\lambda ) \Big] 
	\oplus 
	\Big[ \bigoplus_{\lambda \in F_n} \gl( W^\lambda ) \Big], \label{eq:CSn-Artin-Wedderburn} \\
	\fD(\CS_n) \cong 
	\Big[ \bigoplus_{\lambda \in E_n/\sim} \sq( W^\lambda ) \Big] 
	\oplus 
	\Big[ \bigoplus_{\lambda \in F_n} \fsl( W^\lambda ) \Big], \label{eq:D(CSn)-Artin-Wedderburn}
	\end{gather}
where the sum over elements of $E_n/\!\!\sim$ means that we take the sum over any complete set of representatives in $E_n$ under the relation $\sim$.

For $n \geq 2$, the set of reflections in $\fS_n$ is
	\[
	S = \set{ (i,j) : 1 \leq i < j \leq n},
	\]
i.e., the set of transpositions in $\fS_n$. Let $\fs = \fs_n$ be the Lie subsuperalgebra of $\Lie(\CS_n)$ generated by the elements of the set $S$. The set $S$ is a single conjugacy class in $\fS_n$. Let
	\[ \textstyle
	T_n = \sum_{1 \leq i < j \leq n} (i,j)
	\]
be the sum in $\CS_n$ of the transpositions. Then
	\begin{equation} \label{eq:sn-easy-inclusion}
	\fs_n \subseteq \fD(\CS_n) + \C \cdot T_n,
	\end{equation}
by Lemma \ref{lemma:g-in-D(g)+span}. Our goal in this section is to establish the following theorem:

\begin{theorem} \label{theorem:main-theorem-Sn}
Let $n \geq 2$. Then $\fs_n = \fD(\CS_n) + \C \cdot T_n$.
\end{theorem}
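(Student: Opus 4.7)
My plan is to follow the same overall strategy as in the proof of Theorem \ref{theorem:main-theorem-B}, adapted to the symmetric group setting. The inclusion $\fs_n \subseteq \fD(\CS_n) + \C \cdot T_n$ is granted by \eqref{eq:sn-easy-inclusion}, and $T_n \in \fs_n$ since the transpositions generate $\fs_n$, so everything reduces to showing $\fD(\CS_n) \subseteq \fs_n$. I proceed by induction on $n$, handling the base cases $n \in \set{2,3,4,5}$ by direct computation in GAP. For the inductive step I take $n \geq 6$ and plan to verify axioms \ref{item:S-union-conj-classes}--\ref{item:no-G-simple-dim-2} for $G = \fS_n$, with $S$ the single conjugacy class of transpositions and inductive subgroup $H = \fS_{n-1}$, then invoke Lemmas \ref{lemma:D(gzero)} and \ref{lemma:W(gn)} exactly as in the type B argument.

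Most of the axioms are easy. Axiom \ref{item:S-union-conj-classes} is immediate. For \ref{item:gzero-generates-CGzero} and \ref{item:S'-one-class-gen-G0}, take $S'$ to be the set of double transpositions $(i,j)(k,\ell)$ with $i,j,k,\ell$ pairwise distinct; each such element equals $\tfrac{1}{2}[(i,j),(k,\ell)] \in \fs_{\zero}$, forms a single order-two conjugacy class in $A_n$ for $n \geq 5$ (since the centralizer of a double transposition in $\fS_n$ is not contained in $A_n$), and generates $A_n$ (every 3-cycle is a product of two double transpositions when $n \geq 5$). For \ref{item:inductive-subgroup} I take $H = \fS_{n-1}$ with $S_H$ the transpositions in $\fS_{n-1}$: the inductive hypothesis gives $\fh = \fD(\CH) + \C \cdot T_{n-1}$, so $\fh_{\zero} = \fD(\CH)_{\zero}$; for $n-1 \geq 5$ no simple $\C A_{n-1}$-module is two-dimensional; and the classical branching rule $\Res^{\fS_n}_{\fS_{n-1}} S^{\lambda} = \bigoplus_{\nu \prec \lambda} S^{\nu}$ ensures any nontrivial $V \in \Irr(A_n)$ has some nontrivial simple $\C A_{n-1}$-summand on restriction. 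Axiom \ref{item:no-G-simple-dim-2} holds for $n \geq 5$, the only two-dimensional simple $\CS_m$-modules being $S^{[2,1]}$ at $m = 3$ and $S^{[2,2]}$ at $m = 4$.

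The crux and main obstacle is axiom \ref{item:D(gzero)-to-sl(V)}: for each simple $\C A_n$-module $V$ with $\dim V > 1$, the map $\rho_V : \fD(\fs_{\zero}) \to \fsl(V)$ must be surjective. The plan parallels Proposition \ref{prop:A5-for-B}. Setting $\fh^V = \rho_V(\fD(\fh_{\zero})) \subseteq \g^V = \rho_V(\fD(\fs_{\zero})) \subseteq \fsl(V)$ (semisimple by Proposition \ref{prop:g0-reductive}), one has $\rk(\fh^V) \leq \rk(\g^V)$, and the goal is to establish the rank inequality $2 \cdot \rk(\fh^V) > \dim V$, whence Lemmas 13 and 15 of \cite{Marin:2007} force $\g^V = \fsl(V)$. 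The main subtlety is the self-conjugate case: when $V = S^{\lambda\pm}$ for $\lambda = \lambda^*$, the restriction $\Res^{A_n}_{A_{n-1}} V$ can contain a given simple $\C A_{n-1}$-module with multiplicity two, namely when both $\nu$ and $\nu^*$ are $\prec \lambda$ with $\nu \neq \nu^*$; for $V = S^{[\lambda]}$ with $\lambda \neq \lambda^*$, a direct combinatorial argument on removable corners shows that $\nu \prec \lambda$ forces $\nu^* \not\prec \lambda$ unless $\nu = \nu^*$, so the restriction is multiplicity-free. A short case analysis for the low-dimensional $V$ (those whose restriction involves a one-dimensional component) completes the verification.

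Once the axioms are in place, Lemma \ref{lemma:D(gzero)} gives $\fD(\fs_{\zero}) \cong \bigoplus_{V \in \Irr(A_n)} \fsl(V)$, and $\fD(\CS_n)_{\one}$ decomposes into a direct sum of pairwise non-isomorphic simple $\fD(\fs_{\zero})$-modules. Since $\fs_{\one}$ is a $\fD(\fs_{\zero})$-submodule of $\fD(\CS_n)_{\one} + \C \cdot T_n$ containing (by Lemma \ref{lemma:W(gn)}) each such simple summand, we get $\fD(\CS_n)_{\one} \subseteq \fs_n$. Lemma \ref{lem:generated-by-odd} then upgrades this to $\fsl(W^{\lambda}) \subseteq \fs_n$ for $\lambda \in F_n$ and $\sq(W^{\lambda}) \subseteq \fs_n$ for every $\lambda \in E_n/\sim$ with $\dim W^{\lambda} > 2$. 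The only remaining case is the unique two-dimensional Type Q supermodule $W^{[n]} = W^{[1^n]}$, for which $\sq(W^{[n]}) = \C \cdot \id_{W^{[n]}}$; since $1_{\CS_n} = \tfrac{1}{2}[\tau,\tau] \in \fs_n$ for any transposition $\tau$, and $\id_W$ lies in $\sq(W)$ or $\fsl(W)$ for any simple supermodule $W$ (using $\dim W_{\zero} = \dim W_{\one}$), the element $\Psi = 1_{\CS_n} - \sum_{\dim W > 2} \id_W$ lies in $\fs_n$ and equals $\id_{W^{[n]}}$, completing the argument.
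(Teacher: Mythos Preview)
Your approach is essentially identical to the paper's, and the final paragraph matches the paper's proof verbatim. The one correction: the ``main subtlety'' you flag in the self-conjugate case does not actually occur. Branching from $A_n$ to $A_{n-1}$ is multiplicity free in \emph{all} cases (the paper simply cites \cite[Theorem~2]{Geetha:2018} and \cite[Remark~3.4.4]{DK:2025} for this). The point you are missing is that when $\lambda = \lambda^*$ and $\nu \prec \lambda$ with $\nu \neq \nu^*$, the associator $\phi_{\ve''}^\lambda$ \emph{interchanges} the summands $S^\nu$ and $S^{\nu^*}$ inside $\Res^{\fS_n}_{\fS_{n-1}} S^\lambda$; hence each eigenspace $S^{\lambda^+}$, $S^{\lambda^-}$ contains exactly one copy of $S^{[\nu]} \cong \Res^{\fS_{n-1}}_{A_{n-1}} S^\nu$, not two. (Your combinatorial claim for $\lambda \neq \lambda^*$ is correct, incidentally: if $\nu \prec \lambda$ and $\nu^* \prec \lambda$ then conjugating the second relation gives $\nu \prec \lambda^*$, and comparing $\lambda - (c,d) = \nu^* = \lambda^* - (b,a)$ forces $\lambda = \lambda^*$.) With multiplicity freeness in hand, the verification of \ref{item:D(gzero)-to-sl(V)} is entirely parallel to Proposition~\ref{prop:A5-for-B}, with no extra case analysis needed beyond the low-dimensional $V$ you already handle.
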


\begin{lemma} \label{lemma:base-cases-Sn}
If $n \in \set{2,3,4,5}$, then $\fs_n = \fD(\CS_n) + \C \cdot T_n$.
\end{lemma}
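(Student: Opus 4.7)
The lemma is a finite verification, one case for each $n \in \set{2,3,4,5}$. By Lemma~\ref{lemma:g-in-D(g)+span} the containment $\fs_n \subseteq \fD(\CS_n) + \C \cdot T_n$ already holds (see \eqref{eq:sn-easy-inclusion}), so it suffices to show that the two sides have the same dimension.

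The plan is to compute both dimensions independently and compare. For the upper bound, the Artin--Wedderburn decomposition \eqref{eq:D(CSn)-Artin-Wedderburn} reduces the computation of $\dim \fD(\CS_n)$ to summing $2\dim(S^\lambda)^2 - 1$ over a set of representatives of $E_n/\!\!\sim$ (coming from the identity $\dim \sq(m) = 2m^2 - 1$) and $\dim(S^\lambda)^2 - 1$ over $\lambda \in F_n$ (coming from $\dim \fsl(m|m) = 2m^2 - 1$ applied to $W^\lambda$ of superdimension $(\dim(S^\lambda)/2 \mid \dim(S^\lambda)/2)$), using the hook-length dimensions of the Specht modules. One then checks that $T_n \notin \fD(\CS_n)$ by tracking its image in the summand $\fq(W^{[n]})$: since transpositions act on the trivial module $S^{[n]}$ as $+1$ and on the sign module $S^{[1^n]}$ as $-1$, a direct calculation shows that $T_n$ acts on $W^{[n]}$ as $\binom{n}{2}$ times the odd involution $J^{[n]}$, so its odd trace on $W^{[n]}$ is $\binom{n}{2} \neq 0$ and hence $T_n$ projects outside $\sq(W^{[n]})$. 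Consequently $\dim(\fD(\CS_n) + \C \cdot T_n) = \dim \fD(\CS_n) + 1$.

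The lower bound $\dim \fs_n$ is computed directly in GAP. One realizes $\CS_n$ as an associative algebra (for instance via its regular representation), records the $\Z_2$-degree of each group element via the sign character $\ve''$, implements the super-commutator $[x,y] = xy - (-1)^{\ol{x} \cdot \ol{y}} yx$, and iteratively enlarges the linear span of the transpositions by bracketing against a current spanning set until the dimension stabilizes. Since $\dim \CS_n \leq 120$ for the four values of $n$ under consideration, closure is reached in very few iterations, and the resulting dimension is compared against the number produced in the previous paragraph; agreement in all four cases yields the lemma.

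No essential obstacle arises: the only care needed is in the parity bookkeeping for the super-bracket, namely that $[\tau, \tau] = 2\tau^2 = 2$ for a transposition $\tau$ rather than $0$, and more generally that brackets of two odd elements are anticommutators. Once that convention is correctly coded, the verification is routine, and together with the dimension formula above it disposes of all four base cases simultaneously.
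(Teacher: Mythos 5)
Your proposal is correct and takes essentially the same route as the paper, whose proof is precisely a dimension comparison carried out in GAP, starting from the a priori inclusion \eqref{eq:sn-easy-inclusion} and the decomposition \eqref{eq:D(CSn)-Artin-Wedderburn}. One small slip: $\dim\fsl(m|m) = 4m^2 - 1$, not $2m^2 - 1$, though your final per-summand count $\dim(S^\lambda)^2 - 1$ for $\lambda \in F_n$ is still correct because $m = \dim(S^\lambda)/2$ there.
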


\begin{proof}
By dimension comparison in GAP \cite{GAP4}; see Appendix \ref{app:dim-small-ranks} for more details.
\end{proof}

\begin{example}[$n=2$]
The set of reflections in $\fS_2$ is $S = \set{ (1,2)}$. One has
	\begin{gather*}
	[(1,2),(1,2)] = (1,2)(1,2) + (1,2)(1,2) = 2 \cdot e, \quad \text{and} \\
	[e,(1,2)] = e \cdot (1,2) - (1,2) \cdot e = (1,2) - (1,2) = 0.
	\end{gather*}
Then $\fs_2 = \Span\{e,(1,2) \}$, with $\szero = \Span\{e \} = \fD(\CS_2)$ and $\sone = \Span\{(1,2) \} = \C \cdot T_2$.
\end{example}

\begin{example}[$n=3$]
The set of reflections in $\fS_3$ is $S = \set{(1,2), (2,3), (1,3)}$. One finds that
	\begin{gather*}
	\szero = \Span\{e,(1,2,3)+(1,3,2)\}, \\
	\sone = \Span\{(1,2),(2,3),(1,3)\}, \\
	\fD(\CS_3) = \Span\{e,(1,2,3)+(1,3,2), (1,2)-(1,3), (1,3)-(2,3) \}, \\
	T_3 = (1,2) + (2,3) + (1,3).
	\end{gather*}
\end{example}


\begin{lemma} \label{lemma:s0-generates-An0}
For $n \geq 5$, the set
	\begin{equation} \label{eq:(i,j)(k,ell)-CC}
	S' \coloneq \set{ (i,j)(k,\ell) : \text{$i,j,k,\ell$ distinct}}
	\end{equation}
is a conjugacy class in $\fA_n$ such that $S' \subseteq \szero$ and $\fA_n = \subgrp{S'}$.
\end{lemma}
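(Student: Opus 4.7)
The plan is to verify the three assertions in turn, following the pattern already used in Lemma \ref{lemma:b0-generates-CB0}. First I would observe that the elements of $S'$ all have cycle type $(2,2,1^{n-4})$, so they form a single conjugacy class in $\fS_n$. The standard criterion says that such a class either remains a conjugacy class in $A_n$ or splits into two, according to whether the $\fS_n$-centralizer of a representative is contained in $A_n$. For $(1,2)(3,4) \in S'$, the transposition $(1,2)$ centralizes this element and lies in $\fS_n \setminus A_n$, so the class does not split, giving the first assertion.

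Second, to show $S' \subseteq \szero$, I would compute the super-bracket of two disjoint transpositions $(i,j), (k,\ell) \in S$. Both are odd, and they commute as elements of $\fS_n$, so
\[
[(i,j),(k,\ell)] = (i,j)(k,\ell) + (k,\ell)(i,j) = 2 \cdot (i,j)(k,\ell).
\]
Hence $(i,j)(k,\ell) = \tfrac{1}{2}[(i,j),(k,\ell)] \in \fs_n$, and since it is an even element, it lies in $\szero$.

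Third, for $A_n = \langle S' \rangle$, the inclusion $\langle S' \rangle \subseteq A_n$ is immediate since each element of $S'$ is even. For the reverse, I would use the standard fact that for $n \geq 3$, $A_n$ is generated by $3$-cycles, and then exhibit each $3$-cycle as a product of two elements of $S'$. Given distinct $a,b,c$, the hypothesis $n \geq 5$ guarantees two further distinct indices $d,e$, and a direct computation shows
\[
[(a,b)(d,e)] \cdot [(a,c)(d,e)] = (a,c,b),
\]
since the $(d,e)$ factors cancel while the $(a,b)(a,c)$ product yields the indicated $3$-cycle. This expresses every $3$-cycle as a product of two elements of $S'$, completing the generation claim.

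The only step with any subtlety is the third one, where one must use $n \geq 5$ to find the auxiliary indices $d,e$; for $n=4$ the set $S'$ is strictly smaller ($A_4$ contains only three double transpositions, generating the Klein four-group, not all of $A_4$). Aside from this, the proof is entirely routine and essentially parallel to Lemma \ref{lemma:b0-generates-CB0}.
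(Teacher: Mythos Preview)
Your proof is correct and follows essentially the same route as the paper: the same standard criterion with the same centralizing odd element, the same super-bracket computation $(i,j)(k,\ell)=\tfrac12[(i,j),(k,\ell)]$, and the same generation claim. The only difference is that the paper simply invokes ``well-known for $n\ge 5$'' for $A_n=\langle S'\rangle$, whereas you spell out the reduction to $3$-cycles via $[(a,b)(d,e)]\cdot[(a,c)(d,e)]=(a,c,b)$; this is additional detail rather than a different approach.
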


\begin{proof}
The set $S'$ is a conjugacy class in $\fS_n$. If $i,j,k,\ell$ are distinct, then $(i,j)(k,\ell)$ is centralized by $(i,j)$, which is in $\fS_n$ but not $\fA_n$. Then by the standard criterion, $S'$ remains a conjugacy class in $\fA_n$. If $i,j,k,\ell$ are distinct, then
	\[
	(i,j)(k,\ell) = \tfrac{1}{2} \cdot [(i,j),(k,\ell)] \in \szero,
	\]
so $S' \subseteq \szero$. Finally, it is well-known for $n \geq 5$ that $\fA_n$ is generated by the set $S'$.
\end{proof}

\begin{proposition} \label{prop:n-geq-5-Sn-good}
Let $n \geq 5$. Then \ref{item:S-union-conj-classes}--\ref{item:S'-one-class-gen-G0} and \ref{item:no-G-simple-dim-2} are satisfied for the supergroup $G = \fS_n$, with $S$ the set of all transpositions in $\fS_n$, and $S'$ the set \eqref{eq:(i,j)(k,ell)-CC}. Consequently, the results of Section \ref{subsec:lie-superalgebras-odd-elements} all hold in this context.
\end{proposition}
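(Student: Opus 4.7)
The plan is to verify the four axioms one at a time, with most of the work already done by the preceding lemmas. The proposition is essentially a checklist, so the proof will be a short verification.

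First, for \ref{item:S-union-conj-classes}, I would simply observe that the set $S$ of all transpositions in $\fS_n$ forms a single conjugacy class in $\fS_n$ (since any two elements of $\fS_n$ with the same cycle type are conjugate), so $S$ is trivially a union of conjugacy classes. Then for \ref{item:gzero-generates-CGzero} and \ref{item:S'-one-class-gen-G0}, I would invoke Lemma \ref{lemma:s0-generates-An0} directly: it provides $S' \subseteq \szero$ which is a single conjugacy class in $A_n = (\fS_n)_{\zero}$ and generates $A_n$ as a group, hence $\CGzero = \C A_n$ as an algebra. The elements $(i,j)(k,\ell)$ with $i,j,k,\ell$ distinct are products of disjoint transpositions, so they have order two, as required by \ref{item:S'-one-class-gen-G0}.

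The only axiom requiring any work is \ref{item:no-G-simple-dim-2}: there are no simple $\CS_n$-modules of dimension $2$. This is the classical fact that for $n \geq 5$, the only simple $\CS_n$-modules of dimension less than $n-1$ are the trivial module $S^{[n]}$ and the sign module $S^{[1^n]}$, both of dimension $1$. I would cite this as a standard consequence of the hook length formula (or the branching rule together with the fact that $A_n$ is simple for $n \geq 5$). Explicitly, $\dim(S^\lambda) = n!/\prod h(\lambda)$, and for $n \geq 5$ one can check by a short case analysis on $\lambda \vdash n$ that $\dim(S^\lambda) \in \set{1}$ or $\dim(S^\lambda) \geq n-1$.

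The final sentence of the proposition is a meta-statement: since \ref{item:S-union-conj-classes}, \ref{item:gzero-generates-CGzero}, \ref{item:S'-one-class-gen-G0}, and \ref{item:no-G-simple-dim-2} all hold, every result proved in Section~\ref{subsec:lie-superalgebras-odd-elements} (namely Lemmas \ref{lemma:g-in-D(g)+span}, \ref{lemma:lie-algebra-center}, \ref{lemma:G0-g0-submodule-equivalent}, \ref{lemma:Gzero-simple-implies-D(g0)-simple}, Proposition \ref{prop:g0-reductive}, Proposition \ref{prop:simples-equivalent-G0-g0}, and Corollary \ref{cor:module-iso-equivalent-G-Gzero}) applies to $(G,\g) = (\fS_n,\fs_n)$. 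There is no genuine obstacle here; the hardest piece is the dimension bound for simple $\CS_n$-modules, but that is classical and can simply be cited.
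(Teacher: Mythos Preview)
Your proof is correct and follows essentially the same approach as the paper's: the paper's proof reads simply ``Apply Lemma \ref{lemma:s0-generates-An0} and \cite[Theorem 2.4.10]{James:1981},'' where the cited theorem from James--Kerber is precisely the classical dimension bound you describe for simple $\CS_n$-modules. Your verification is just a more expanded version of this.
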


\begin{proof}
Apply Lemma \ref{lemma:s0-generates-An0} and \cite[Theorem 2.4.10]{James:1981}.
\end{proof}

\begin{lemma} \label{lemma:Sn-inductive-supergroup-ngeq6}
Let $n \geq 6$, let $G = \fS_n$, and let $S$ be the set of all transpositions in $\fS_n$. Suppose Theorem \ref{theorem:main-theorem-Sn} is true for the value $n-1$. Then \ref{item:inductive-subgroup} is satisfied by the subsupergroup $H = \fS_{n-1}$, with $S_H$ equal to the set of all transpositions in $\fS_{n-1}$.
\end{lemma}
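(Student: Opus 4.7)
The plan is to verify each of the four bullet points of \ref{item:inductive-subgroup} for the pair $(H,S_H) = (\fS_{n-1}, \set{(i,j) : 1 \le i < j \le n-1})$, following the template used for type B in Lemma \ref{lemma:B-inductive-supergroup-ngeq6}.

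The first two bullets are essentially bookkeeping. Condition \ref{item:S-union-conj-classes} for $H$ holds because $S_H$ is a single conjugacy class in $\fS_{n-1}$, and \ref{item:gzero-generates-CGzero} for $\fh = \fs_{n-1}$ follows from Lemma \ref{lemma:s0-generates-An0} applied with $n-1$ in place of $n$ (valid since $n-1 \ge 5$). The identity $\hzero = \fD(\CH)_{\zero}$ follows from the inductive hypothesis $\fs_{n-1} = \fD(\CS_{n-1}) + \C \cdot T_{n-1}$ by passing to even parts: each transposition has odd superdegree, so $T_{n-1} \in (\CH)_{\one}$, and the even part of $\C \cdot T_{n-1}$ is zero.

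For the third bullet---no two-dimensional simple $\CA_{n-1}$-modules---I would invoke the standard classification of the simple modules of the alternating group. Every simple $\CA_m$-module for $m \ge 5$ is either the restriction $\Res^{\fS_m}_{A_m}(S^\nu)$ with $\nu \neq \nu^*$, of dimension $\dim(S^\nu)$, or one of the summands $S^{\nu^{\pm}}$ with $\nu = \nu^*$, of dimension $\dim(S^\nu)/2$. In the first case a non-trivial simple $\CS_m$-module has dimension at least $m-1 \ge 4$. In the second case a short computation with the hook-length formula shows that $\dim(S^\nu) \ge 6$ for any self-conjugate $\nu \vdash m$ with $m \ge 5$, so the split summands have dimension at least $3$.

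For the fourth bullet, let $V$ be a simple $\CA_n$-module with $\dim(V) > 1$; the task is to show that $\Res^{A_n}_{A_{n-1}}(V)$ contains a simple constituent of dimension greater than one. Writing $V$ in standard form---either $\Res^{\fS_n}_{A_n}(S^\lambda)$ for $\lambda \neq \lambda^*$, or $S^{\lambda^{\pm}}$ for $\lambda = \lambda^*$---and using the branching rule $\Res^{\fS_n}_{\fS_{n-1}}(S^\lambda) \cong \bigoplus_{\nu \prec \lambda} S^\nu$, the task reduces to showing that some $\nu \prec \lambda$ yields a $\CA_{n-1}$-constituent of dimension greater than one. If every constituent were one-dimensional, then each $\nu \prec \lambda$ would satisfy $\nu \in \set{[n-1],[1^{n-1}]}$ (with $\dim(S^\nu) = 1$) or $\nu = \nu^*$ with $\dim(S^\nu) = 2$; the latter is ruled out by the preceding bullet. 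For $n \ge 6$, the former forces $\lambda \in \set{[n],[1^n]}$ in the non-self-conjugate case, contradicting $\dim(V) > 1$; and no self-conjugate $\lambda \vdash n$ with $n \ge 3$ admits $[n-1]$ or $[1^{n-1}]$ as a predecessor, since $[n-1]$ is removable only from $[n]$ or $[n-1,1]$, neither of which is self-conjugate. The main obstacle is just the branching bookkeeping; once one notes that $[n-1]$ and $[1^{n-1}]$ cannot be predecessors of a self-conjugate partition for $n \ge 3$, the argument concludes uniformly.
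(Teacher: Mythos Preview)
Your proof is correct and follows essentially the same route as the paper's: the first two bullets are handled identically, and for the last two bullets the paper simply cites \cite[Theorem 2.5.15]{James:1981} and the alternating-group branching rule from \cite{Geetha:2018,DK:2025}, whereas you supply the explicit dimension and predecessor arguments directly. One minor phrasing issue: in the self-conjugate case your sentence ``the task reduces to showing that some $\nu \prec \lambda$ yields a $\CA_{n-1}$-constituent of dimension greater than one'' is not literally the correct reduction (since $S^{\lambda^\pm}$ is only a summand of $\Res^{\fS_n}_{A_n}(S^\lambda)$), but your subsequent argument in fact shows that \emph{every} $\nu \prec \lambda$ yields only constituents of dimension greater than one, which is what is needed.
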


\begin{proof}
Condition \ref{item:S-union-conj-classes} evidently holds for $H$, and \ref{item:gzero-generates-CGzero} holds by Lemma \ref{lemma:s0-generates-An0}. If Theorem \ref{theorem:main-theorem-Sn} is true for the value $n-1$, then $\fh = \fD(\CH) + \C \cdot T_{n-1}$, and hence $\hzero = \fD(\CH)_{\zero}$. Finally, the last two bullet points in \ref{item:inductive-subgroup} are satisfied by \cite[Theorem 2.5.15]{James:1981} and as a consequence of the branching rule for restriction from $\fA_n$ to $\fA_{n-1}$; see \cite[Theorem 2]{Geetha:2018} or \cite[Remark 3.4.4]{DK:2025}.
\end{proof}

\begin{proposition} \label{prop:A5-for-Sn}
Let $n \geq 6$, and suppose Theorem \ref{theorem:main-theorem-Sn} is true for the value $n-1$. Then \ref{item:D(gzero)-to-sl(V)} holds for the supergroup $G = \fS_n$.
\end{proposition}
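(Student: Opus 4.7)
The plan is to follow the template of Proposition~\ref{prop:A5-for-B} with $\calB_{n-1}$ replaced by $\fS_{n-1}$. I will set $G = \fS_n$ and $\g = \fs_n$, fix $V \in \Irr(\Gzero)$ with $\dim V > 1$ (the case $\dim V = 1$ being trivial since then $\fsl(V) = 0$), and let $\rho_V : \fD(\gzero) \to \End(V)$ be the restriction of the structure map for $V$. Invoking the inductive hypothesis via Lemma~\ref{lemma:Sn-inductive-supergroup-ngeq6}, the subsupergroup $H = \fS_{n-1}$ satisfies axiom \ref{item:inductive-subgroup}, so that $\fD(\hzero)$ decomposes as in \eqref{eq:D(hzero)}, while $\fD(\gzero)$ is semisimple by Propositions~\ref{prop:g0-reductive} and \ref{prop:n-geq-5-Sn-good}. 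Setting $\g^V = \rho_V(\fD(\gzero)) \subseteq \fsl(V)$ and $\fh^V = \rho_V(\fD(\hzero)) \subseteq \g^V$, the algebra $\g^V$ acts irreducibly on $V$, and it suffices to establish $\dim V < 2 \cdot \rk(\fh^V)$: by \cite[Lemme~15]{Marin:2007} this will force $\g^V$ to be simple, and then \cite[Lemme~13]{Marin:2007} will yield $\g^V \cong \fsl(V)$.

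The first subtask is to verify that the branching $\Res^{A_n}_{A_{n-1}}(V)$ is multiplicity free. For $V$ arising from $S^\lambda$ with $\lambda \neq \lambda^*$, a multiplicity in some constituent would require distinct predecessors $\mu, \mu^* \prec \lambda$ with $\mu \neq \mu^*$; comparing the partition equation $\mu + (i,j) = \mu^* + (k,\ell)$ with its transpose shows that $(i,j)$ and $(k,\ell)$ must be transposed positions, which in turn forces $\lambda = \lambda^*$ and contradicts the assumption. For $V = S^{\lambda^{\pm}}$ with $\lambda = \lambda^*$, I will appeal to the branching analysis in \cite[Theorem~2]{Geetha:2018}, which shows that each of $S^{\lambda^+}$ and $S^{\lambda^-}$ collects a multiplicity-free sum of simple $\CA_{n-1}$-modules. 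Once multiplicity-freeness is in hand, write $\Res^{A_n}_{A_{n-1}}(V) = \bigoplus_{i=1}^t V_i$ as a direct sum of pairwise non-isomorphic simple $\CA_{n-1}$-modules; then $\fh^V \cong \bigoplus_i \fsl(V_i)$ and $\rk(\fh^V) = \sum_i (\dim V_i - 1)$, giving
\[
2 \cdot \rk(\fh^V) - \dim V = \sum_{i=1}^t \big( \dim V_i - 2 \big).
\]

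The final step is to verify positivity of this sum. Because $n - 1 \geq 5$, by \cite[Theorem~2.5.15]{James:1981} together with Lemma~\ref{lemma:classical-Clifford}, the only one-dimensional simple $\CA_{n-1}$-module is the trivial one, and every other has dimension at least $3$; hence whenever every $V_i$ has dimension $\geq 3$ the sum is bounded below by $t \geq 1$. The only simple $\CA_n$-module $V$ whose restriction to $A_{n-1}$ contains the trivial module is (the restriction of) $S^{[n-1,1]}$, which has dimension $n-1$ and restriction equal to the trivial module plus the $(n-2)$-dimensional restriction of $S^{[n-2,1]}$; for this $V$ one computes
\[
2 \cdot \rk(\fh^V) - \dim V = (1-2) + \big( (n-2) - 2 \big) = n - 5 \geq 1
\]
for $n \geq 6$. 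The hardest part of the plan will be the clean verification of multiplicity-freeness for the self-conjugate branching together with the exhaustive enumeration of exceptional $V$'s containing a one-dimensional $\CA_{n-1}$-summand; beyond that, the argument reduces to a direct dimensional count following the Marin template.
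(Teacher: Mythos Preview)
Your proposal is correct and follows essentially the same approach as the paper, which simply points back to the type-B argument of Proposition~\ref{prop:A5-for-B} and cites the multiplicity-free branching rule from $A_n$ to $A_{n-1}$ via \cite[Theorem~2]{Geetha:2018}. Your sketch of why $\mu,\mu^* \prec \lambda$ with $\mu \neq \mu^*$ forces $\lambda = \lambda^*$ is a bit terse (a clean way: $\mu \prec \lambda$ and $\mu^* \prec \lambda$ give $\mu \prec \lambda$ and $\mu \prec \lambda^*$, so $\mu \subseteq \lambda \cap \lambda^*$, and if $\lambda \neq \lambda^*$ then $|\lambda \cap \lambda^*| \leq |\mu|$ forces $\mu = \lambda \cap \lambda^*$, which is self-conjugate), but the fact is standard and the paper simply cites it.
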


\begin{proof}
The argument is entirely parallel to that given for the proof of Proposition \ref{prop:A5-for-B}, using the multiplicity-free branching rule from $\fA_n$ to $\fA_{n-1}$; see \cite[Theorem 2]{Geetha:2018} or \cite[Remark 3.4.4]{DK:2025}.
\end{proof}

We can now prove Theorem \ref{theorem:main-theorem-Sn}.

\begin{proof}[Proof of Theorem \ref{theorem:main-theorem-Sn}]
The proof is entirely parallel to the proof of Theorem \ref{theorem:main-theorem-B} given at the end of Section \ref{subsec:Lie-superalgebra-B}, up through the definition of the operator $\Psi$ in \eqref{eq:Psi-B}. In this case one has
	\begin{equation} \label{eq:Psi-Sn}
	\Psi = 1_{\CS_n} 
	- \Big( \sum_{\substack{W \in \Irr_s(\fS_n) \\ \dim(W) > 2}} \id_{W} \Big).
	\end{equation}
For $n \geq 6$ there is only one simple $\CS_n$-supermodule $W$ with $\dim(W) \leq 2$, namely $W = W^{[n]}$, and one sees that the operator $\Psi$ defined by \eqref{eq:Psi-Sn} is equal to $\id_{W^{[n]}}$, which spans $\sq(W^{[n]})$.
\end{proof}

\section{Type D representation theory} \label{sec:Type-D-classical}

In this section let $n \geq 4$.

\subsection{Weyl groups of type D}

The group $\calD_n = W(D_n)$ is the kernel of the group homomorphism $\ve'$ defined in \eqref{eq:Bn-linear-characters}; in the terminology of Section \ref{subsec:supergroups}, $\calD_n$ is the even subgroup of $\calB_n$ with respect to the homomorphism $\ve': \calB_n \to \set{\pm 1}$. Each element of $\calD_n$ is uniquely of the form $t_1^{i_1} \cdots t_n^{i_n} \sigma$ with $\sigma \in \fS_n$, $i_j \in \set{0,1}$ for each $j$, and $i_1 + \cdots + i_n$ even. For $1 \leq i \leq n-1$, set $u_i = t_1 t_{i+1}$. Then each element of $\calD_n$ is also uniquely of the form $u_1^{i_1} \cdots u_{n-1}^{i_{n-1}} \sigma$ with $\sigma \in \fS_n$ and $i_j \in \set{0,1}$ for each $j$.

Set $\wt{s}_n = t_n s_{n-1} t_n$. Then for $n \geq 4$, the pair $(\calD_n,\set{s_1,\ldots,s_{n-1},\wt{s}_n})$ is a Coxeter system of type $D_n$. The set of reflections in $\calD_n$ is
	\begin{equation} \label{eq:D-reflections}
	\set{ (i,j), \, t_i t_j (i,j) : 1 \leq i < j \leq n}.
	\end{equation}
The character $\ve'': \calB_n \to \set{\pm 1}$ restricts to the sign character of $\calD_n$.

\subsection{Simple modules for Weyl groups of type D} \label{subsec:-type-D-simples}

Recall from \eqref{eq:B-tensor-with-character} that for $(\lambda,\mu) \in \BPn$, one has $S^{(\lambda,\mu)} \otimes \ve' \cong S^{(\mu,\lambda)}$ as $\CB_n$-modules. Let $\BPsetn$ be the set of all unordered pairs $\set{\lambda,\mu}$ of partitions such that $\abs{\lambda} + \abs{\mu} = n$. Now applying Lemma \ref{lemma:classical-Clifford} for $(G,\kappa) = (\calB_n,\ve')$, one gets:

\begin{lemma} \label{lemma:Dn-simples}
Up to isomorphism, each simple $\CD_n$-module arises uniquely via either:
	\begin{enumerate}
	\item If $\set{\lambda,\mu} \in \BPsetn$ and $\lambda \neq \mu$, then
		\[
		S^{\set{\lambda,\mu}} \coloneq \Res^{\calB_n}_{\calD_n}( S^{(\lambda,\mu)}) \cong \Res^{\calB_n}_{\calD_n}( S^{(\mu,\lambda)})
		\]
	is simple and self-conjugate.
	
	\item If $\set{\lambda,\lambda} \in \BPsetn$, then $S^{\set{\lambda,\lambda}} \coloneq \Res^{\calB_n}_{\calD_n}(S^{(\lambda,\lambda)})$ is the direct sum of two simple, conjugate, non-isomorphic $\CD_n$-modules $S^{\set{\lambda,+}}$ and $S^{\set{\lambda,-}}$, equal to the $\pm 1$ eigen\-spaces of $\phi_{\ve'}^{(\lambda,\lambda)}$.
	\end{enumerate}
\end{lemma}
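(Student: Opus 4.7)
The plan is to invoke Lemma \ref{lemma:classical-Clifford} directly, applied to the supergroup $(G,\kappa) = (\calB_n,\ve')$, for which the even subgroup is $\Gzero = \ker(\ve') = \calD_n$ by definition. To do this, I first need to identify the labeling set $\Lambda$ of simple $\CB_n$-modules together with the involution $\lambda \mapsto \lambda'$ induced by tensoring with the character $\ve'$, and then translate parts (1) and (2) of Lemma \ref{lemma:classical-Clifford} into the notation of the present lemma.

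From Section \ref{subsec:-type-B-simples}, the simple $\CB_n$-modules are parametrized by $\BPn$, and by the middle identity of \eqref{eq:B-tensor-with-character}, namely $S^{(\lambda,\mu)} \otimes \ve' \cong S^{(\mu,\lambda)}$, the induced involution on $\BPn$ sends $(\lambda,\mu)$ to $(\mu,\lambda)$. Its fixed points are exactly the bipartitions with $\lambda = \mu$, which forces $n$ to be even and $\lambda \vdash n/2$; the orbits of size two correspond to pairs with $\lambda \neq \mu$. Thus the orbits of the involution on $\BPn$ are in natural bijection with the unordered pairs in $\BPsetn$, with $\{\lambda,\mu\}$ corresponding to the orbit $\{(\lambda,\mu),(\mu,\lambda)\}$.

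Case (1) of the present lemma is then the content of part (1) of Lemma \ref{lemma:classical-Clifford}: for $\lambda \neq \mu$ the bipartitions $(\lambda,\mu)$ and $(\mu,\lambda)$ are distinct, so restriction yields a simple, self-conjugate $\CD_n$-module, which I relabel as $S^{\{\lambda,\mu\}}$ to reflect that this module depends only on the unordered pair. Case (2) is part (2) of the same lemma: for $\lambda = \mu$ the bipartition is fixed, so $\Res^{\calB_n}_{\calD_n}(S^{(\lambda,\lambda)})$ canonically splits as a direct sum of two simple, non-isomorphic, mutually conjugate $\CD_n$-modules, equal to the $\pm 1$ eigenspaces of any fixed associator $\phi_{\ve'}^{(\lambda,\lambda)} : S^{(\lambda,\lambda)} \to S^{(\lambda,\lambda)}$; the explicit formula $c_T \mapsto c_{T^\natural}$ in \eqref{eq:phi-ve'} provides such an associator (evidently squaring to the identity), justifying the eigenspace decomposition.

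There is no substantive obstacle here; the argument is a direct translation. The only small points to check are that the map \eqref{eq:phi-ve'} with the choice $\mu = \lambda$ indeed satisfies the hypotheses of Lemma \ref{lemma:classical-Clifford}\eqref{item:type-E-simple}(2), which follows from \eqref{eq:associator}--\eqref{eq:inverse-associator}, and that uniqueness on the bipartition side (two ordered bipartitions giving the same unordered pair yield isomorphic restrictions) matches the claimed uniqueness on the $\CD_n$-side, which follows from the fact that $\Res^{\calB_n}_{\calD_n}(S^{(\lambda,\mu)}) \cong \Res^{\calB_n}_{\calD_n}(S^{(\mu,\lambda)})$ precisely because $S^{(\mu,\lambda)} \cong S^{(\lambda,\mu)} \otimes \ve'$ and $\ve'$ restricts trivially to $\calD_n$.
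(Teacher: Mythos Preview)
Your proposal is correct and follows exactly the approach of the paper: the lemma is obtained by applying Lemma~\ref{lemma:classical-Clifford} to $(G,\kappa)=(\calB_n,\ve')$, using the identification $S^{(\lambda,\mu)}\otimes\ve'\cong S^{(\mu,\lambda)}$ from \eqref{eq:B-tensor-with-character} to read off the involution $(\lambda,\mu)\mapsto(\mu,\lambda)$ on $\BPn$. The paper in fact presents the lemma without a separate proof, simply prefacing it with the sentence ``applying Lemma~\ref{lemma:classical-Clifford} for $(G,\kappa)=(\calB_n,\ve')$, one gets,'' so your write-up merely spells out what the paper leaves implicit.
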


\begin{remark} \label{remark:S(lam,pm)-basis}
Suppose $\set{\lambda,\lambda} \in \BPsetn$, and set $\T(\lambda,\lambda)_+ = \{ T \in \T(\lambda,\lambda) : \rho_T(1) = +1 \}$. Then
	\[
	B^{\set{\lambda,\pm}} = \set{ c_T \pm c_{T^\natural}: T \in \T(\lambda,\lambda)_+}
	\]
is a basis for $S^{\set{\lambda,\pm}}$; cf.\ \cite[Remark 3.2.2]{DK:2025-preprint}. For $T \in \T(\lambda,\lambda)$, set $S_T^{\set{\lambda,\pm}} = \Span\{c_T \pm c_{T^\natural}\}$. Then
	\[
	S^{\set{\lambda,\pm}} = \bigoplus_{T \in \T(\lambda,\lambda)_+} S_T^{\set{\lambda,\pm}}.
	\]
\end{remark}

\begin{lemma}[{\cite[Lemma 3.2.4]{DK:2025-preprint}}] \label{lemma:Dn-subminimal-dimensions}
Let $n \geq 5$.
\begin{enumerate}
\item The only one-dimensional $\CD_n$-modules are those labeled by $\set{[n],\emptyset}$ and $\set{[1^n],\emptyset}$, i.e., the trivial $\CD_n$-module and the one-dimensional module afforded by the sign character $\ve''$.

\item If $\set{\lambda,\mu} \in \BPsetn$ and $\dim(S^{\set{\lambda,\mu}}) \neq 1$, then $\dim(S^{\set{\lambda,\mu}}) \geq n-1$.

\item If $\set{\lambda,\lambda} \in \BPsetn$, then $\dim(S^{\set{\lambda,\pm}})  = \frac{1}{2} \cdot \binom{n}{n/2} \cdot \dim(S^\lambda)^2 \geq \frac{1}{2} \cdot \binom{n}{n/2}$.
\end{enumerate}
\end{lemma}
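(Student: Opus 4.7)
The plan is straightforward: every statement in the lemma follows by combining the classification in Lemma \ref{lemma:Dn-simples} of simple $\C\calD_n$-modules (in terms of restriction from $\calB_n$) with the dimension information for simple $\CB_n$-modules recorded in Lemma \ref{lemma:Bn-subminimal-dimensions}. I would split the argument according to the dichotomy $\lambda \neq \mu$ versus $\lambda = \mu$ coming from Lemma \ref{lemma:Dn-simples}.

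For $\set{\lambda,\mu} \in \BPsetn$ with $\lambda \neq \mu$, the module $S^{\set{\lambda,\mu}}$ is obtained by restricting $S^{(\lambda,\mu)}$ from $\calB_n$, so $\dim(S^{\set{\lambda,\mu}}) = \dim(S^{(\lambda,\mu)})$. I would invoke the enumeration of one-dimensional $\CB_n$-modules in Lemma \ref{lemma:Bn-subminimal-dimensions}, namely the modules labeled by $([n],\emptyset)$, $(\emptyset,[1^n])$, $(\emptyset,[n])$, and $([1^n],\emptyset)$, and note that upon passing to unordered pairs these four collapse to the two classes $\set{[n],\emptyset}$ and $\set{[1^n],\emptyset}$, which correspond to the trivial module and the module afforded by $\ve''|_{\calD_n}$. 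The lower bound $\dim(S^{\set{\lambda,\mu}}) \geq n-1$ in the nontrivial case is inherited directly from Lemma \ref{lemma:Bn-subminimal-dimensions}.

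For $\set{\lambda,\lambda} \in \BPsetn$ (which forces $n$ even, so $n \geq 6$ since $n \geq 5$), Lemma \ref{lemma:Dn-simples} gives $\dim(S^{\set{\lambda,\pm}}) = \tfrac{1}{2}\dim(S^{(\lambda,\lambda)})$, and part 4 of Lemma \ref{lemma:Bn-subminimal-dimensions} supplies the explicit formula $\dim(S^{(\lambda,\lambda)}) = \binom{n}{n/2}\dim(S^\lambda)^2$. This yields statement (3) immediately. To close out statement (1) and the bound in statement (2) in this case, I would just observe that $\tfrac{1}{2}\binom{n}{n/2} \geq \tfrac{1}{2}\binom{6}{3} = 10$ for $n \geq 6$, so no split module $S^{\set{\lambda,\pm}}$ can be one-dimensional, and moreover $10 \geq n-1$ holds comfortably for all $n \geq 6$.

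I do not anticipate any real obstacle; the argument is essentially bookkeeping on top of the two preceding lemmas. The only point that warrants a moment of care is ensuring no new one-dimensional modules appear from the splittings in Lemma \ref{lemma:Dn-simples}, but as noted above this is immediate from the elementary estimate $\tfrac{1}{2}\binom{n}{n/2} \geq 10$.
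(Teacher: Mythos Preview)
The paper does not supply a proof of this lemma; it merely cites the companion preprint \cite[Lemma 3.2.4]{DK:2025-preprint}. Your approach---reducing everything to Lemma \ref{lemma:Bn-subminimal-dimensions} via the classification in Lemma \ref{lemma:Dn-simples}---is the natural one and is correct in outline.

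There is one slip: the assertion ``$10 \geq n-1$ holds comfortably for all $n \geq 6$'' is false once $n \geq 12$. Fortunately this step is also unnecessary. Statement (2) concerns the module $S^{\set{\lambda,\mu}}$ as defined in Lemma \ref{lemma:Dn-simples}; in the case $\lambda = \mu$ this is the (reducible) restriction $S^{\set{\lambda,\lambda}} = \Res^{\calB_n}_{\calD_n}(S^{(\lambda,\lambda)})$, not the split pieces $S^{\set{\lambda,\pm}}$. Its dimension is $\binom{n}{n/2}\dim(S^\lambda)^2 \geq \binom{n}{n/2} \geq n > n-1$, so (2) is immediate. The split pieces $S^{\set{\lambda,\pm}}$ enter only in statement (1), and there your bound $\tfrac{1}{2}\binom{n}{n/2} \geq 10 > 1$ already suffices. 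If you did want to bound $\dim(S^{\set{\lambda,\pm}})$ below by $n-1$ (which is not what is asked), the correct inequality is $\tfrac{1}{2}\binom{n}{n/2} \geq n-1$ for all even $n \geq 2$, not the constant bound by $10$.
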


For $\set{\lambda,\mu} \in \calBP\{n\}$, write $\set{\nu,\tau} \prec \set{\lambda,\mu}$ if either $(\nu,\tau) \prec (\lambda,\mu)$ or $(\nu,\tau) \prec (\mu,\lambda)$. Identify $\calD_{n-1}$ with the sub(super)group of $\calD_n$ generated by $\set{s_1,\ldots,s_{n-2},\wt{s}_{n-1}}$, where $\wt{s}_{n-1} = t_{n-1}s_{n-2}t_{n-1}$.

\begin{lemma}[{\cite[Lemma 3.5.2]{DK:2025-preprint}}] \label{lemma:restriction-D-to-Dn-1}
Restriction from $\calD_n$ to $\calD_{n-1}$ is multiplicity free. Specifically:
	\begin{enumerate}
	\item If $\set{\lambda,\mu} \in \BPsetn$ and $\lambda \neq \mu$, then
		\[
		\Res^{\calD_n}_{\calD_{n-1}}(S^{\set{\lambda,\mu}}) 
		\cong 
		\Big[ \bigoplus_{\substack{\set{\nu,\tau} \prec \set{\lambda,\mu} \\ \nu \neq \tau}} S^{\set{\nu,\tau}} \Big] 
		\oplus
		\Big[ \bigoplus_{\set{\nu,\nu} \prec \set{\lambda,\mu}} S^{\set{\nu,+}} \oplus S^{\set{\nu,-}} \Big].
		\]

	\item If $n$ is even and $\lambda \vdash n/2$, then
	\[
	\Res^{\calD_n}_{\calD_{n-1}}(S^{\set{\lambda,\pm}}) \cong \bigoplus_{\nu \prec \lambda} S^{\set{\nu,\lambda}}.
	\]
	\end{enumerate}
\end{lemma}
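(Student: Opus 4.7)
The plan is to transfer the type~B branching rule \eqref{eq:restriction-B-simple} along the chain $\calB_n \supseteq \calB_{n-1} \supseteq \calD_{n-1}$ and then reinterpret each summand as a $\CD_{n-1}$-module via the Clifford-theoretic description in Lemma~\ref{lemma:Dn-simples}. This is the direct type~D analogue of Lemma~\ref{lemma:Bzero-restriction}, and I would proceed in parallel to that argument.

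For part~(1), where $\lambda \neq \mu$, I would use $S^{\{\lambda,\mu\}} = \Res^{\calB_n}_{\calD_n}(S^{(\lambda,\mu)})$ together with \eqref{eq:restriction-B-simple} to write
\[
\Res^{\calD_n}_{\calD_{n-1}}(S^{\{\lambda,\mu\}}) \cong \bigoplus_{(\nu,\tau) \prec (\lambda,\mu)} \Res^{\calB_{n-1}}_{\calD_{n-1}}(S^{(\nu,\tau)}).
\]
Lemma~\ref{lemma:Dn-simples} then identifies each summand as either the simple module $S^{\{\nu,\tau\}}$ (if $\nu \neq \tau$) or the split pair $S^{\{\nu,+\}} \oplus S^{\{\nu,-\}}$ (if $\nu = \tau$). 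A brief combinatorial check, using $\lambda \neq \mu$, shows that each unordered pair $\{\nu,\tau\} \prec \{\lambda,\mu\}$ with $\nu \neq \tau$ is realized by exactly one ordered pair $(\nu,\tau) \prec (\lambda,\mu)$, and likewise for each diagonal pair; the stated decomposition follows and is automatically multiplicity free.

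For part~(2), I would imitate the proof of Lemma~\ref{lemma:Bzero-restriction}\eqref{item:B0-restriction-Fboxn}. Restriction of $S^{(\lambda,\lambda)}$ to $\calB_{n-1}$ gives $\bigoplus_{\nu \prec \lambda}[S^{(\nu,\lambda)} \oplus S^{(\lambda,\nu)}]$. Since $\nu \neq \lambda$, both $S^{(\nu,\lambda)}$ and $S^{(\lambda,\nu)}$ restrict further from $\calB_{n-1}$ to $\calD_{n-1}$ as a common simple module $S^{\{\nu,\lambda\}}$, so each pair $S^{(\nu,\lambda)} \oplus S^{(\lambda,\nu)}$ is a canonically defined $\calB_{n-1}$-submodule of $S^{(\lambda,\lambda)}$. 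The associator $\phi_{\ve'}^{(\lambda,\lambda)}$ sends $S^{(\nu,\lambda)}$ to a $\calB_{n-1}$-submodule isomorphic to $S^{(\nu,\lambda)} \otimes \ve' \cong S^{(\lambda,\nu)}$; by multiplicity freeness this image must coincide with $S^{(\lambda,\nu)}$. Hence $\phi_{\ve'}^{(\lambda,\lambda)}$ stabilizes each pair and interchanges its two summands (up to rescaling), so its $\pm 1$ eigenspaces within the pair are each nonzero and $\CD_{n-1}$-isomorphic to a single copy of $S^{\{\nu,\lambda\}}$. Recalling that $S^{\{\lambda,\pm\}}$ is the $\pm 1$ eigenspace of $\phi_{\ve'}^{(\lambda,\lambda)}$ on $S^{(\lambda,\lambda)}$, summing over $\nu \prec \lambda$ yields the desired formula. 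The principal point requiring care, and the main potential obstacle, is the claim that $\phi_{\ve'}^{(\lambda,\lambda)}$ genuinely swaps the two components of each isotypical pair rather than vanishing on one of them; but this is forced by $\phi_{\ve'}^{(\lambda,\lambda)}$ being a bijection on $S^{(\lambda,\lambda)}$ together with the non-isomorphism $S^{(\nu,\lambda)} \not\cong S^{(\lambda,\nu)}$ as $\calB_{n-1}$-modules.
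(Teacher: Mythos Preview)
Your proposal is correct and follows essentially the same approach as the paper, which simply states that the lemma ``can be deduced in precisely the same manner as Lemma~\ref{lemma:Bzero-restriction}.'' You have faithfully carried out that parallel argument: part~(1) comes from the type~B branching rule \eqref{eq:restriction-B-simple} combined with Lemma~\ref{lemma:Dn-simples}, and part~(2) mirrors the proof of Lemma~\ref{lemma:Bzero-restriction}\eqref{item:B0-restriction-Fboxn}, replacing the associator $\phi_{\ve}^{(\lambda,\lambda^*)}$ with $\phi_{\ve'}^{(\lambda,\lambda)}$ and using multiplicity freeness of the $\calB_{n-1}$-restriction to see that it swaps the two summands of each pair.
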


\begin{proof}
This can be deduced in precisely the same manner as Lemma \ref{lemma:Bzero-restriction}.
\end{proof}

\begin{remark} \label{remark:D-restriction-via-T-spaces}
The explicit $\CB_{n-1}$-module decomposition of $S^{(\lambda,\mu)}$ given in \eqref{eq:restriction-B-simple} restricts to a $\CD_{n-1}$-module decomposition of  $S^{\set{\lambda,\mu}}$. For $\lambda \vdash n/2$, an analogous $\CD_{n-1}$-module decomposition of $S^{\set{\lambda,\pm}}$ is given as follows; cf.\ \cite[Remark 3.5.4]{DK:2025-preprint}. Set $\T\{\nu,\tau \} = \T(\nu,\tau) \cup \T(\tau,\nu)$. Then
	\[
	S^{\set{\lambda,\pm}} = \bigoplus_{\nu \prec \lambda} \Big[ \bigoplus_{\substack{T \in \T(\lambda,\lambda)_+ \\ T/n \in \T\{\nu,\lambda\} }} S_T^{\set{\lambda,\pm}} \Big] \cong \bigoplus_{\nu \prec \lambda} S^{\set{\nu,\lambda}}.
	\]
\end{remark}

By \cite[Remark 3.2.3]{DK:2025-preprint}, one has $\CD_n$-module isomorphisms
	\begin{equation} \label{eq:S(lam,mu)-ve''-D}
	S^{\set{\lambda,\mu}} \otimes \ve'' \cong S^{\set{\lambda^*,\mu^*}} \qquad \text{and} \qquad 
	S^{\set{\lambda,\pm}} \otimes \ve'' \cong \begin{cases}
	S^{\set{\lambda^*,\pm}} & \text{if $n/2$ is even,} \\
	S^{\set{\lambda^*,\mp}} & \text{if $n/2$ is odd.}
	\end{cases}
	\end{equation}

\begin{definition} \label{def:D-associators}
Define linear maps satisfying \eqref{eq:associator} and \eqref{eq:inverse-associator} for $(G,\kappa) = (\calD_n,\ve'')$ as follows:
	\begin{itemize}
	\item If $\set{\lambda,\lambda^*} \in \BPsetn$ and $\lambda \neq \lambda^*$, let $\phi_{\ve''}^{\set{\lambda,\lambda^*}}$ be the restriction of $\phi_{\ve}^{(\lambda,\lambda^*)}: S^{(\lambda,\lambda^*)} \to S^{(\lambda,\lambda^*)}$.
	
	\item For all other $\set{\lambda,\mu} \in \BPsetn$, let $\phi_{\ve''}^{\set{\lambda,\mu}}$ be the restriction of $\phi_{\ve''}^{(\lambda,\mu)}: S^{(\lambda,\mu)} \to S^{(\lambda^*,\mu^*)}$.
	
	\item If $n$ is even and $\lambda \vdash n/2$, let
		\[
		\phi_{\ve''}^{\set{\lambda,\pm}} : \begin{cases}
		S^{\set{\lambda,\pm}} \to S^{\set{\lambda^*,\pm}} & \text{if $n/2$ is even,} \\
		S^{\set{\lambda,\pm}} \to S^{\set{\lambda^*,\mp}} & \text{if $n/2$ is odd.}
		\end{cases}
		\]
	be the restriction of $\phi_{\ve''}^{(\lambda,\lambda)}$ to the summand $S^{\set{\lambda,\pm}}$ of $S^{(\lambda,\lambda)}$.
	\end{itemize}
\end{definition}

\begin{remark} \label{remark:type-D-associators-compatible-restriction} \ 
First observe that if $\set{\lambda,\lambda^*} \in \BPsetn$ and $\lambda \neq \lambda^*$, then $\set{\nu,\tau} \neq \set{\nu^*,\tau^*}$ for all $\set{\nu,\tau} \prec \set{\lambda,\lambda^*}$. Then making the identifications of Remark \ref{remark:D-restriction-via-T-spaces}, and applying Remark \ref{remark:ve''-(lam,mu)-(lam*,mu*)}, one can see the following:
	\begin{enumerate}
	\item If $\set{\nu,\tau} \prec \set{\lambda,\mu} \in \BPsetn$ with $\lambda \neq \mu$, $\set{\lambda,\mu} = \set{\lambda^*,\mu^*}$, and $\set{\nu,\tau} = \set{\nu^*,\tau^*}$, then the map $\phi_{\ve''}^{\set{\lambda,\mu}}$ restricts via Lemma \ref{lemma:restriction-D-to-Dn-1} to $\phi_{\ve''}^{\set{\nu,\tau}}$.
	
	\item Supppose $\set{\lambda,\lambda} \in \BPsetn$ and $\nu \prec \lambda$ with $\lambda = \lambda^*$ and $\nu = \nu^*$.
		\begin{enumerate}
		\item If $n/2$ is even, then the map $\phi_{\ve''}^{\set{\lambda,\pm}}$ restricts via Lemma \ref{lemma:restriction-D-to-Dn-1} to $\phi_{\ve''}^{\set{\nu,\lambda}}$.
		
		\item If $n/2$ is odd, then $\phi_{\ve''}^{\set{\lambda,\pm}}$ restricts to a linear isomorphism from the summand $S^{\set{\nu,\lambda}}$ of $S^{\set{\lambda,\pm}}$ to the summand $S^{\set{\nu,\lambda}}$ of $S^{\set{\lambda,\mp}}$.
		\end{enumerate}
	\end{enumerate}
\end{remark}

\subsection{The even subgroup of the type D Weyl group}

We now consider $\calD_n$ as a supergroup via its sign character $\ve'': \calD_n \to \set{\pm 1}$. Making the identification $\calD_n = (\Z_2^n)_{\zero} \rtimes \fS_n$, one has
	\[
	\calD_{\zero} = (\calD_n)_{\zero} = \ker(\ve'') = (\Z_2^n)_{\zero} \rtimes \fA_n.
	\]
Let $\approx$ be the equivalence relation on $\BPsetn$ generated by $\set{\lambda,\mu} \approx \set{\lambda^*,\mu^*}$. Write $\dbl{\lambda,\mu}$ for the equivalence class of $\set{\lambda,\mu}$, and let $\BPdbln$ be the set of all equivalence classes under $\approx$. Then $\BPdbln = \Edbln \cup \Fdbln$, where
	\begin{equation} \label{eq:BP[[n]]-subsets}
	\begin{split}
	\Edbln &= \set{ \dbl{\lambda,\mu} : \set{\lambda,\mu} \neq \set{\lambda^*,\mu^*}}, \\
	\Fdbln &= \set{ \dbl{\lambda,\mu} : \set{\lambda,\mu} = \set{\lambda^*,\mu^*}}.
	\end{split}
	\end{equation}
Now applying Lemma \ref{lemma:classical-Clifford} for the pair $(G,\kappa) = (\calD_n,\ve'')$, one gets:

\begin{lemma} \label{lemma:D0-simples}
Up to isomorphism, each simple $\CDzero$-module occurs uniquely via either:
	\begin{enumerate}
	\item \label{eq:(lam,mu)-non-dual} If $\dbl{\lambda,\mu} \in \Edbln$ and $\lambda \neq \mu$, then
		\[
		S^{\dbl{\lambda,\mu}} \coloneq \Res^{\calD_n}_{\Dzero}( S^{\set{\lambda,\mu}}) \cong \Res^{\calD_n}_{\Dzero}( S^{\set{\lambda^*,\mu^*}})
		\]
	is simple and self-conjugate.

	\item \label{item:lam-non-dual} Suppose $n$ is even and $\dbl{\lambda,\lambda} \in \Edbln$.
		\begin{enumerate}
		\item \label{item:lam-non-dual-n/2-even} If $n/2$ is even, then the two $\CDzero$-modules
			\begin{align*}
			S^{\dbl{\lambda,+}} &\coloneq \Res^{\calD_n}_{\Dzero}( S^{\set{\lambda,+}}) \cong \Res^{\calD_n}_{\Dzero}( S^{\set{\lambda^*,+}}), \quad \text{and} \\
			S^{\dbl{\lambda,-}} &\coloneq \Res^{\calD_n}_{\Dzero}( S^{\set{\lambda,-}}) \cong \Res^{\calD_n}_{\Dzero}( S^{\set{\lambda^*,-}})
			\end{align*}
			are simple, self-conjugate, and non-isomorphic.

		\item \label{item:lam-non-dual-n/2-odd} If $n/2$ is odd, then the two $\CDzero$-modules
			\begin{align*}
			S^{\dbl{\lambda,+}} &\coloneq \Res^{\calD_n}_{\Dzero}( S^{\set{\lambda,+}}) \cong \Res^{\calD_n}_{\Dzero}( S^{\set{\lambda^*,-}}), \quad \text{and} \\
			S^{\dbl{\lambda,-}} &\coloneq \Res^{\calD_n}_{\Dzero}( S^{\set{\lambda,-}}) \cong \Res^{\calD_n}_{\Dzero}( S^{\set{\lambda^*,+}})
			\end{align*}
			are simple, self-conjugate, and non-isomorphic
		\end{enumerate}

	\item \label{eq:(lam,mu)-dual} If $\dbl{\lambda,\mu} \in \Fdbln$ and $\lambda \neq \mu$, then $S^{\dbl{\lambda,\mu}} \coloneq \Res^{\calD_n}_{\Dzero}(S^{\set{\lambda,\mu}})$ is the direct sum of two simple, conjugate, non-isomorphic $\CDzero$-modules $S^{\dbl{\lambda,\mu,+}}$ and $S^{\dbl{\lambda,\mu,-}}$, equal to the $+1$ and $-1$ eigen\-spaces of the map $\phi_{\ve''}^{\set{\lambda,\mu}}$ specified in Definition \ref{def:D-associators}.
	
	\item \label{eq:lam-dual} Suppose $n$ is even and $\dbl{\lambda,\lambda} \in \Fdbln$.
		\begin{enumerate}
		\item If $n/2$ is even, then $S^{\dbl{\lambda,+}} \coloneq \Res^{\calD_n}_{\Dzero}(S^{\set{\lambda,+}})$ and $S^{\dbl{\lambda,-}} \coloneq \Res^{\calD_n}_{\Dzero}(S^{\set{\lambda,-}})$ are each the direct sum of two simple, conjugate, non-isomorphic $\CDzero$-modules,
		\[
		S^{\dbl{\lambda,\pm}} = S^{\dbl{\lambda,\pm,+}} \oplus S^{\dbl{\lambda,\pm,-}},
		\]
	equal to the $+1$ and $-1$ eigen\-spaces of the maps $\phi_{\ve''}^{\set{\lambda,\pm}}$ specified in Definition \ref{def:D-associators}.

		\item If $n/2$ is odd, then
			\[
			S^{\dbl{\lambda,+}} \coloneq \Res^{\calD_n}_{\Dzero}( S^{\set{\lambda,+}}) \cong \Res^{\calD_n}_{\Dzero}( S^{\set{\lambda,-}}) \eqcolon S^{\dbl{\lambda,-}}
			\]
		is simple and self-conjugate.
		\end{enumerate}
	\end{enumerate}
\end{lemma}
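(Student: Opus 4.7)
The plan is to apply Lemma \ref{lemma:classical-Clifford} directly to the supergroup $(\calD_n, \ve'')$, using the associators $\phi_{\ve''}^{\set{\lambda,\mu}}$ and $\phi_{\ve''}^{\set{\lambda,\pm}}$ of Definition \ref{def:D-associators} in the role of the maps $\phi_\kappa^\lambda$ from that lemma. The involution $V \mapsto V \otimes \ve''$ on isomorphism classes of simple $\CD_n$-modules is described explicitly by \eqref{eq:S(lam,mu)-ve''-D}, and I expect the orbits of this involution to correspond exactly to the items \eqref{eq:(lam,mu)-non-dual}--\eqref{eq:lam-dual} of the lemma.

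First I would tabulate the orbits. For $\set{\lambda,\mu} \in \BPsetn$ with $\lambda \neq \mu$, the module $S^{\set{\lambda,\mu}}$ is $\ve''$-fixed precisely when $\set{\lambda,\mu} = \set{\lambda^*,\mu^*}$, i.e., when $\dbl{\lambda,\mu} \in \Fdbln$. For the modules $S^{\set{\lambda,\pm}}$ (with $n$ even and $\lambda \vdash n/2$), the analysis splits on the parity of $n/2$: when $n/2$ is even, the twist sends $S^{\set{\lambda,\pm}}$ to $S^{\set{\lambda^*,\pm}}$, so it is fixed iff $\lambda = \lambda^*$; when $n/2$ is odd, the twist sends $S^{\set{\lambda,\pm}}$ to $S^{\set{\lambda^*,\mp}}$, so if $\lambda \neq \lambda^*$ the orbits are $\{S^{\set{\lambda,+}}, S^{\set{\lambda^*,-}}\}$ and $\{S^{\set{\lambda,-}}, S^{\set{\lambda^*,+}}\}$, while if $\lambda = \lambda^*$ the two modules $S^{\set{\lambda,+}}$ and $S^{\set{\lambda,-}}$ comprise a single orbit.

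Then I would match each orbit to the appropriate clause of Lemma \ref{lemma:classical-Clifford}. A size-two orbit yields, via clause (1), a single simple self-conjugate $\CDzero$-module, producing part \eqref{eq:(lam,mu)-non-dual} (from orbits with $\set{\lambda,\mu} \neq \set{\lambda^*,\mu^*}$), part \eqref{item:lam-non-dual} (from orbits of $S^{\set{\lambda,\pm}}$ with $\lambda \neq \lambda^*$, in both parity regimes for $n/2$), and the $n/2$-odd subcase of part \eqref{eq:lam-dual} (the orbit $\{S^{\set{\lambda,+}}, S^{\set{\lambda,-}}\}$ for $\lambda = \lambda^*$). A $\ve''$-fixed module yields, via clause (2), two simple conjugate non-isomorphic $\CDzero$-submodules realized as the $\pm 1$ eigenspaces of the relevant associator, producing part \eqref{eq:(lam,mu)-dual} and the $n/2$-even subcase of part \eqref{eq:lam-dual}. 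Non-isomorphism across distinct $\dbl{\lambda,\mu}$ follows from the Clifford bijection between orbits and simple $\CDzero$-modules, and non-isomorphism of $S^{\dbl{\lambda,+}}$ and $S^{\dbl{\lambda,-}}$ within part \eqref{item:lam-non-dual} follows because $S^{\set{\lambda,+}} \not\cong S^{\set{\lambda,-}}$ by Lemma \ref{lemma:Dn-simples}, so those modules lie in distinct $\ve''$-orbits.

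The main obstacle is organizational rather than conceptual: the argument requires tracking two independent dichotomies ($n/2$ even vs.\ odd, and $\lambda = \lambda^*$ vs.\ not) for the $S^{\set{\lambda,\pm}}$ modules, and verifying that the notational conventions for $S^{\dbl{\lambda,\mu,\pm}}$ and $S^{\dbl{\lambda,\pm,\pm}}$ are internally consistent with the eigenspace decompositions coming from Definition \ref{def:D-associators}. One minor subtlety to verify is that in the $n/2$-odd, $\lambda = \lambda^*$ subcase of \eqref{eq:lam-dual}, the asserted isomorphism $\Res^{\calD_n}_{\Dzero}(S^{\set{\lambda,+}}) \cong \Res^{\calD_n}_{\Dzero}(S^{\set{\lambda,-}})$ is realized directly by the $\CD_n$-module isomorphism $S^{\set{\lambda,+}} \otimes \ve'' \cong S^{\set{\lambda,-}}$, since $\ve''$ restricts to the trivial character on $\Dzero$.
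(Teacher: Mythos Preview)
Your proposal is correct and follows exactly the paper's approach: the paper simply states the lemma after the sentence ``Now applying Lemma \ref{lemma:classical-Clifford} for the pair $(G,\kappa) = (\calD_n,\ve'')$, one gets,'' offering no further proof. Your write-up is a faithful unpacking of that application, using \eqref{eq:S(lam,mu)-ve''-D} to tabulate the $\ve''$-orbits and Definition \ref{def:D-associators} to supply the associators, which is precisely what the paper intends.
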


\begin{remark} \label{remark:n/2-odd-abuse}
For $\dbl{\lambda,\lambda} \in \Edbln$ and $n/2$ odd, the isomorphism classes of $S^{\dbl{\lambda,+}}$ and $S^{\dbl{\lambda,-}}$ depend individually on the choice of representative $\set{\lambda,\lambda}$ for the equivalence class $\dbl{\lambda,\lambda}$, but as a \emph{pair} they are independent of the choice.
\end{remark}

\begin{remark} \label{remark:S-dbl(lam,lam)}
Suppose $n$ even and $\lambda \vdash n/2$. Set $S^{\dbl{\lambda,\lambda}} = \Res^{\calD_n}_{\Dzero}(S^{\set{\lambda,\lambda}}) = S^{\dbl{\lambda,+}} \oplus S^{\dbl{\lambda,-}}$.
	\begin{itemize}
	\item If $\lambda \neq \lambda^*$, then $S^{\dbl{\lambda,\lambda}}$ is the sum of two non-isomorphic simple $\CDzero$-modules.

	\item If $\lambda = \lambda^*$ and $n/2$ is even, then $S^{\dbl{\lambda,\lambda}} = S^{\dbl{\lambda,+,+}} \oplus S^{\dbl{\lambda,+,-}} \oplus S^{\dbl{\lambda,-,+}} \oplus S^{\dbl{\lambda,-,-}}$ is the sum of four non-isomorphic simple $\CDzero$-modules.

	\item If $\lambda = \lambda^*$ and $n/2$ is odd, then $S^{\dbl{\lambda,\lambda}}$ is the sum of two isomorphic simple $\CDzero$-modules.
	\end{itemize}
Let $\phi_{\ve'} = \phi_{\ve'}^{(\lambda,\lambda)}$ and $\phi_{\ve''} = \phi_{\ve''}^{(\lambda,\lambda)}$. Then $\phi_{\ve''} \circ \phi_{\ve'} = (-1)^{n/2} \cdot \phi_{\ve'} \circ \phi_{\ve''}$ by Remark \ref{remark:phi-ve''-ve'-commutation}. Let $S^{\dbl{\lambda,\lambda,\pm}}$ be the $\pm1$ eigenspace for $\phi_{\ve''}$. Then it is straightforward to see, as subspaces of $S^{(\lambda,\lambda)}$, that
	\[
	S^{\dbl{\lambda,\lambda,\pm}} = S^{\dbl{\lambda,+,\pm}} \oplus S^{\dbl{\lambda,-,\pm}} \quad \text{if $n/2$ is even.}
	\]
On the other hand, the decompositions $S^{\dbl{\lambda,\lambda,+}} \oplus S^{\dbl{\lambda,\lambda,-}}$ and $S^{\dbl{\lambda,+}} \oplus S^{\dbl{\lambda,-}}$ of $S^{\dbl{\lambda,\lambda}}$ imply that
	\[
	S^{\dbl{\lambda,\lambda,+}} \cong S^{\dbl{\lambda,+}} \cong S^{\dbl{\lambda,-}} \cong S^{\dbl{\lambda,\lambda,-}} \quad \text{as $\CDzero$-modules if $n/2$ is odd.}
	\]
\end{remark}

\begin{lemma} \label{lemma:Dzero-subminimal-dimensions}
Let $n \geq 5$.
\begin{enumerate}
\item The only one-dimensional $\CDzero$-module is the trivial module $S^{\dbl{[n],\emptyset}}$.

\item If $\dbl{\lambda,\mu} \in \BPdbln$ and $\dim(S^{\dbl{\lambda,\mu}}) \neq 1$, then $\dim(S^{\dbl{\lambda,\mu}}) \geq n-1$.

\item If $\dbl{\lambda,\mu} \in \Fdbln$ and $\lambda \neq \mu$, then $\dim(S^{\dbl{\lambda,\mu,\pm}}) = \frac{1}{2} \cdot \dim(S^{\dbl{\lambda,\mu}}) \geq \frac{1}{2} \cdot (n-1)$.

\item If $\dbl{\lambda,\lambda} \in \BPdbln$, then $\dim(S^{\dbl{\lambda,\pm}}) = \frac{1}{2} \cdot \binom{n}{n/2} \cdot \dim(S^\lambda)^2 \geq \frac{1}{2} \cdot \binom{n}{n/2}$.

\item If $\dbl{\lambda,\lambda} \in \Fdbln$ and $n/2$ is even, then
	\[ \textstyle
	\dim(S^{\dbl{\lambda,\pm,+}}) = \dim(S^{\dbl{\lambda,\pm,-}}) = \frac{1}{4} \cdot \binom{n}{n/2} \cdot \dim(S^\lambda)^2 \geq \frac{1}{4} \cdot \binom{n}{n/2}.
	\]
\end{enumerate}
\end{lemma}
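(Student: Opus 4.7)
The plan is to deduce each statement from Lemma \ref{lemma:Dn-subminimal-dimensions}, which records the dimensions of simple $\CD_n$-modules, together with the structural description of simple $\CDzero$-modules in Lemma \ref{lemma:D0-simples}. The key organizing observation is that each simple $\CDzero$-module either coincides as a vector space with the restriction of a simple $\CD_n$-module (cases (i) and (ii) of Lemma \ref{lemma:D0-simples}), or else arises as one of two $\pm 1$ eigenspaces of an associator acting on such a restriction (cases (iii) and (iv)(a)). In the latter situation the two eigenspaces are equidimensional: for any $g \in \calD_n \setminus \Dzero$ the identity \eqref{eq:associator} gives $\phi(g \cdot v) = \ve''(g) \cdot g \cdot \phi(v) = -g \cdot \phi(v)$, so left multiplication by $g$ interchanges the $+1$ and $-1$ eigenspaces bijectively, and each eigenspace therefore has exactly half the dimension of the ambient module.

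For part (1) I would first rule out cases (ii), (iii), and (iv) as sources of a one-dimensional $\CDzero$-module. Any eigenspace arising from case (iii) has dimension at least $\tfrac{1}{2}(n-1) \geq 2$ by Lemma \ref{lemma:Dn-subminimal-dimensions}(3) since $n \geq 5$; cases (ii) and (iv) require $n$ to be even, hence $n \geq 6$, and their dimensions are bounded below by $\tfrac{1}{4}\binom{n}{n/2} \geq 5$. Thus a one-dimensional $\CDzero$-module can only arise from case (i) applied to a one-dimensional $\CD_n$-module. Lemma \ref{lemma:Dn-subminimal-dimensions}(2) identifies the only candidates as $S^{\set{[n],\emptyset}}$ and $S^{\set{[1^n],\emptyset}}$; since $[n]^* = [1^n]$ these two lie in the common equivalence class $\dbl{[n],\emptyset} \in \Edbln$ and collapse to the single $\CDzero$-module $S^{\dbl{[n],\emptyset}}$, which is trivial because $\ve''$ restricts trivially to $\Dzero$.

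For the remaining parts each claimed dimension formula and lower bound is read off from the corresponding statement in Lemma \ref{lemma:Dn-subminimal-dimensions}. In (2), both case (i) and case (iii) give $\dim(S^{\dbl{\lambda,\mu}}) = \dim(S^{\set{\lambda,\mu}})$, so the bound $n-1$ is Lemma \ref{lemma:Dn-subminimal-dimensions}(3). Part (4) is likewise immediate from Lemma \ref{lemma:Dn-subminimal-dimensions}(4) after noting that $\dim(S^{\dbl{\lambda,\pm}}) = \dim(S^{\set{\lambda,\pm}})$ in both cases (ii) and (iv). Finally, parts (3) and (5) follow by applying the eigenspace halving principle above to parts (2) and (4), respectively. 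The argument is essentially bookkeeping, and I do not anticipate a substantial obstacle; the only things requiring care are keeping track of which subcase of Lemma \ref{lemma:D0-simples} applies to each assertion, and invoking the associator identity \eqref{eq:associator} to justify the equality of dimensions of the two $\pm 1$ eigenspaces.
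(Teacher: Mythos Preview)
Your argument is correct and matches the paper's intent: the paper states this lemma without proof, treating it as an immediate consequence of Lemma~\ref{lemma:Dn-subminimal-dimensions} and the classification in Lemma~\ref{lemma:D0-simples}, which is exactly the route you take. Your use of the associator identity~\eqref{eq:associator} to show the $\pm 1$ eigenspaces are equidimensional is the right justification for the halving. One cosmetic issue: your references to the parts of Lemma~\ref{lemma:Dn-subminimal-dimensions} are consistently off by one (that lemma has only three parts; the one-dimensional classification is part~(1), the $n-1$ bound is part~(2), and the $\tfrac{1}{2}\binom{n}{n/2}$ formula is part~(3)), but the substance is unaffected.
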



\begin{remark} \label{remark:Dzero-subminimal-dimensions-n=5}
There are no elements in $\calBP\dbl{5}$ of the form $\dbl{\lambda,\lambda}$, and the only elements of the set $F\dbl{5}$ are $\dbl{[3,1,1],\emptyset}$ and $\dbl{[2,2],[1]}$. Applying Lemma \ref{lemma:Bn-subminimal-dimensions}\eqref{item:dim-S(lam,mu)}, one checks that
	\begin{gather*} \textstyle
	\dim(S^{\dbl{[3,1,1],\emptyset}})  = \binom{5}{5} \cdot \dim(S^{[3,1,1]}) \cdot \dim(S^{\emptyset}) = 1 \cdot \frac{5!}{1 \cdot 2 \cdot 5 \cdot 2 \cdot 1} \cdot 1 = 6, \\
	\textstyle \dim(S^{\dbl{[2,2],[1]}}) = \binom{5}{4} \cdot \dim(S^{[2,2]}) \cdot \dim(S^{[1]}) = 5 \cdot \frac{4!}{1 \cdot 2 \cdot 3 \cdot 2} \cdot 1 = 10.
	\end{gather*}
It follows for $n = 5$ that if $V$ is a simple $\CDzero$-module and $\dim(V) \neq 1$, then $\dim(V) \geq 3$.
\end{remark}

\begin{remark} \label{rem:commutator-subgroups}
The commutator subgroup $[G,G]$ of a finite group $G$ is equal to the intersection of the kernels of the degree-$1$ complex linear characters of $G$. Then Lemmas \ref{lemma:Bn-subminimal-dimensions}, \ref{lemma:Dn-subminimal-dimensions}, \ref{lemma:Bzero-subminimal-dimensions}, and \ref{lemma:Dzero-subminimal-dimensions} imply for $n \geq 5$ that $[\calB_n,\calB_n] = [\calD_n,\calD_n] = [\Bzero,\Bzero] = [\Dzero,\Dzero] = \Dzero$.
\end{remark}

\begin{lemma} \label{lemma:Dzero-restriction} \ 
	\begin{enumerate}
	\item \label{item:(lam,mu)-Eboxn} If $\dbl{\lambda,\mu} \in \Edbln$ and $\lambda \neq \mu$, then
		\[
		\Res^{(\calD_n)_{\zero}}_{(\calD_{n-1})_{\zero}}(S^{\dbl{\lambda,\mu}}) 
		\cong 
		\bigoplus_{\set{\nu,\tau} \prec \set{\lambda,\mu} } S^{\dbl{\nu,\tau}},
		\]
	with the understanding that $S^{\dbl{\nu,\tau}}$ decomposes into a sum of two or more simple modules if either $\set{\nu,\tau} = \set{\nu^*,\tau^*}$ or $\nu = \tau$; cf.\ Lemma \ref{lemma:D0-simples}\eqref{eq:(lam,mu)-dual} and Remark \ref{remark:S-dbl(lam,lam)}.
	
	\item If $n$ is even and $\dbl{\lambda,\lambda} \in \Edbln$, then
		\[
		\Res^{(\calD_n)_{\zero}}_{(\calD_{n-1})_{\zero}}(S^{\dbl{\lambda,\pm}}) 
		\cong 
		\bigoplus_{ \nu \prec \lambda } S^{\dbl{\nu,\lambda}}.
		\]
	
	\item Suppose $\dbl{\lambda,\mu} \in \Fdbln$ and $\lambda \neq \mu$.
		\begin{enumerate}
		\item If $\mu = \lambda^*$, so $\dbl{\lambda,\mu} = \dbl{\lambda,\lambda^*}$ and $\lambda \neq \lambda^*$, then
			\[
			\Res^{(\calD_n)_{\zero}}_{(\calD_{n-1})_{\zero}}(S^{\dbl{\lambda,\lambda^*,\pm}}) 
			\cong 
			\bigoplus_{ \nu \prec \lambda } S^{\dbl{\nu,\lambda^*}}.
			\]
		
		\item If $\mu \neq \lambda^*$, so $\lambda = \lambda^*$ and $\mu = \mu^*$, then
			\begin{align*}
			\Res^{(\calD_n)_{\zero}}_{(\calD_{n-1})_{\zero}}(S^{\dbl{\lambda,\mu,\pm}}) 
			&\cong 
			\Big[ \bigoplus_{\substack{\nu \prec \lambda \\ \res(\lambda/\nu)> 0}} S^{\dbl{\nu,\mu}} \Big] 
			\oplus
			\Big[ \bigoplus_{\substack{\nu \prec \lambda \\ \res(\lambda/\nu) = 0}} S^{\dbl{\nu,\mu,\pm}} \Big] \\
			&\phantom{\cong} \oplus \Big[ \bigoplus_{\substack{\tau \prec \mu \\ \res(\mu/\tau)> 0}} S^{\dbl{\lambda,\tau}} \Big] 
			\oplus
			\Big[ \bigoplus_{\substack{\tau \prec \mu \\ \res(\mu/\tau) = 0}} S^{\dbl{\lambda,\tau,\pm}} \Big],
			\end{align*}
		with the understanding from Remark \ref{remark:S-dbl(lam,lam)} that if $\mu \prec \lambda$, then $S^{\dbl{\mu,\mu,\pm}} = S^{\dbl{\mu,+,\pm}} \oplus S^{\dbl{\mu,-,\pm}}$ if $\abs{\mu}$ is even, and $S^{\dbl{\mu,\mu,\pm}} \cong S^{\dbl{\mu,\pm}}$ if $\abs{\mu}$ is odd (and similarly if $\lambda \prec \mu$).
		\end{enumerate}
	
	\item Suppose $n$ is even and $\dbl{\lambda,\lambda} \in \Fdbln$.
		\begin{enumerate}
		\item If $n/2$ is even, then for $\heartsuit \in \set{+,-}$,
			\[
			\Res^{(\calD_n)_{\zero}}_{(\calD_{n-1})_{\zero}}(S^{\dbl{\lambda,\pm,\heartsuit}}) 
			\cong 
			\Big[ \bigoplus_{\substack{\nu \prec \lambda \\ \res(\lambda/\nu) > 0}} S^{\dbl{\nu,\lambda}} \Big] 
			\oplus 
			\Big[ \bigoplus_{\substack{\nu \prec \lambda \\ \res(\lambda/\nu) = 0}} S^{\dbl{\nu,\lambda,+}} \oplus S^{\dbl{\nu,\lambda,-}} \Big].
			\]

		\item \label{item:(lam,lam)-Fdbln-n/2-odd} If $n/2$ is odd, then
			\[
			\Res^{(\calD_n)_{\zero}}_{(\calD_{n-1})_{\zero}}(S^{\dbl{\lambda,\pm}}) 
			\cong 
			\bigoplus_{\nu \prec \lambda} S^{\dbl{\nu,\lambda}},
			\]
		with the understanding from Lemma \ref{lemma:D0-simples}\eqref{eq:(lam,mu)-dual} that if $\nu = \nu^*$, then $S^{\dbl{\nu,\lambda}}$ is the direct sum of the two simple, non-isomorphic modules $S^{\dbl{\nu,\lambda,+}}$ and $S^{\dbl{\nu,\lambda,-}}$.
		
		\end{enumerate}
	
	\end{enumerate}
\end{lemma}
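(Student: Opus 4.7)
The plan is to reduce each case to the branching rule $\Res^{\calD_n}_{\calD_{n-1}}$ of Lemma \ref{lemma:restriction-D-to-Dn-1} by restricting in stages through the inclusions $\Dzero_{n-1} \hookrightarrow \calD_{n-1}$ and $\Dzero_n \hookrightarrow \calD_n$. For each simple $\CDzero$-module $V$, Lemma \ref{lemma:D0-simples} supplies a simple $\CD_n$-module $U$ such that either $V = \Res^{\calD_n}_{\Dzero}(U)$, or $V$ is a $\pm 1$-eigenspace of the associator $\phi_{\ve''}: U \to U$. After decomposing $\Res^{\calD_n}_{\calD_{n-1}}(U) = \bigoplus_i U_i$ via Lemma \ref{lemma:restriction-D-to-Dn-1}, one restricts each $U_i$ to $\Dzero_{n-1}$ using Lemma \ref{lemma:D0-simples} and, when $V$ is an eigenspace, intersects with each $U_i$.

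Cases (1) and (2) are essentially formal: here $U$ is not self-associate under $\ve''$, so $V = \Res^{\calD_n}_{\Dzero}(U)$ and the required decomposition follows by directly restricting each summand from Lemma \ref{lemma:restriction-D-to-Dn-1} to $\Dzero_{n-1}$. The further decompositions flagged in the statement, when $\nu = \tau$ or $\set{\nu,\tau} = \set{\nu^*,\tau^*}$, are exactly those recorded in Lemma \ref{lemma:D0-simples} and Remark \ref{remark:S-dbl(lam,lam)}.

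Cases (3) and (4) require tracking the action of $\phi_{\ve''}$ on the decomposition $\bigoplus_i U_i$, and the key tool is Remark \ref{remark:type-D-associators-compatible-restriction}. When a summand $U_i$ is indexed by a self-associate bipartition at level $n-1$---equivalently, when the removed box satisfies $\res(\lambda/\nu) = 0$ (and similarly $\res(\mu/\tau) = 0$)---the associator $\phi_{\ve''}$ preserves $U_i$ and restricts to the corresponding level-$(n-1)$ associator, so the $\pm 1$-eigenspace of $\phi_{\ve''}$ on $U$ intersects $U_i$ in the $\pm 1$-eigenspace of the restricted associator, contributing a summand $S^{\dbl{\nu,\tau,\pm}}$. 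When $U_i$ is not self-associate at level $n-1$---equivalently, $\res(\lambda/\nu) > 0$---the associator $\phi_{\ve''}$ pairs $U_i$ with the summand $U_j$ indexed by the conjugate bipartition; the intersection of each $\pm 1$-eigenspace of $\phi_{\ve''}$ with $U_i \oplus U_j$ is a diagonally embedded copy of $U_i \cong U_j$ as $\Dzero_{n-1}$-modules, contributing one summand $S^{\dbl{\nu,\tau}}$. This delivers the explicit formulas in (3a), (3b), (4a), and (4b).

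I expect the main obstacle to be the bookkeeping in case (4a), where $n$ and $n/2$ are both even and each simple $\CDzero$-module arises from the four-fold decomposition of $S^{\dbl{\lambda,\lambda}}$ explained in Remark \ref{remark:S-dbl(lam,lam)}. One must simultaneously track the $\phi_{\ve'}$-eigenvalue (which distinguishes $S^{\set{\lambda,+}}$ from $S^{\set{\lambda,-}}$ at level $n$) and the $\phi_{\ve''}$-eigenvalue (which splits each of these into $\pm$); the commutation relation from Remark \ref{remark:phi-ve''-ve'-commutation}, which here simplifies to $\phi_{\ve''} \circ \phi_{\ve'} = \phi_{\ve'} \circ \phi_{\ve''}$ since $n/2$ is even, is what ensures both eigenspace decompositions are compatible. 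Case (4b) is comparatively painless: Remark \ref{remark:S-dbl(lam,lam)} gives $S^{\dbl{\lambda,+}} \cong S^{\dbl{\lambda,-}}$, so the branching mirrors case (2), with the extra splitting of $S^{\dbl{\nu,\lambda}}$ into $S^{\dbl{\nu,\lambda,\pm}}$ occurring precisely when $\nu = \nu^*$ by Lemma \ref{lemma:D0-simples}\eqref{eq:(lam,mu)-dual}.
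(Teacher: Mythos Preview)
Your approach is essentially the same as the paper's: the paper's proof is the single line ``Apply definitions, Lemma \ref{lemma:restriction-D-to-Dn-1}, and Remark \ref{remark:type-D-associators-compatible-restriction},'' and your proposal is a correct and more detailed unpacking of exactly that strategy---restrict in stages via $\calD_n \to \calD_{n-1}$, then track the associator eigenspaces on each summand using Remark \ref{remark:type-D-associators-compatible-restriction}. Your identification of the residue-zero condition with self-associateness of the summand, and your handling of the paired versus fixed summands, are precisely what the cited remark encodes.
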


\begin{proof}
Apply definitions, Lemma \ref{lemma:restriction-D-to-Dn-1}, and Remark \ref{remark:type-D-associators-compatible-restriction}.
\end{proof}

\begin{remark} \label{remark:D-repeated-factors}
In general, restriction from $(\calD_n)_{\zero}$ to $(\calD_{n-1})_{\zero}$ is not multiplicity free. Repeated composition factors occur in the following situations:
	\begin{enumerate}
	\item \label{item:(lam-mu)-repeated} In case \eqref{item:(lam,mu)-Eboxn} of the lemma, if $\mu \prec \lambda$ with $\abs{\mu}$ odd and $\mu = \mu^*$ (and hence $\lambda \neq \lambda^*$), then the summand $S^{\dbl{\mu,\mu}}$ of $S^{\dbl{\lambda,\mu}}$ is the sum of the isomorphic modules $S^{\dbl{\mu,+}}$ and $S^{\dbl{\mu,-}}$. Similarly, if $\lambda \prec \mu$ with $\abs{\lambda}$ odd and $\lambda = \lambda^*$, then the summand $S^{\dbl{\lambda,\lambda}}$ is the sum of the isomorphic modules $S^{\dbl{\lambda,+}}$ and $S^{\dbl{\lambda,-}}$.
	
	\item In case \eqref{item:(lam,lam)-Fdbln-n/2-odd} of the lemma, if $\nu \prec \lambda$ with $\res(\lambda/\nu)> 0$, then also $\nu^* \prec \lambda$, and the summands $S^{\dbl{\nu,\lambda}}$ and $S^{\dbl{\nu^*,\lambda}}$ of $S^{\dbl{\lambda,\pm}}$ are isomorphic.
	\end{enumerate}
\end{remark}

\subsection{Simple supermodules in Type D} \label{subsec:simple-supermodules-D}

Applying Lemma \ref{prop:simple-supermodules} for the pair $(G,\kappa) = (\calD_n,\ve'')$, and fixing the maps $\phi_{\ve''}^{\set{\lambda,\mu}}$ and $\phi_{\ve''}^{\set{\lambda,\pm}}$ as in Definition \ref{def:D-associators}, one now gets:

\begin{proposition} \label{prop:D-simple-supermodules}
Up to homogeneous isomorphism, each simple $\CD_n$-super\-module occurs in exactly one of the following ways:
	\begin{enumerate}
	\item For each $\dbl{\lambda,\mu} \in \Edbln$ with $\lambda \neq \mu$, there exists a Type Q simple $\CD_n$-supermodule
		\[
		U^{\dbl{\lambda,\mu}} = S^{\set{\lambda,\mu}} \oplus S^{\set{\lambda^*,\mu^*}},
		\]
	with homogeneous subspaces and odd involution $J^{\set{\lambda,\mu}}$ defined as in Lemma \ref{prop:simple-supermodules}(\ref{item:self-associate-G-supermodule}).
	
	\item Suppose $n$ is even and $\dbl{\lambda,\lambda} \in \Edbln$.
		\begin{enumerate}
		\item \label{item:lam-non-dual-n/2-even-super} If $n/2$ is even, then there exist Type Q simple $\CD_n$-supermodules
			\[
			U^{\dbl{\lambda,+}} = S^{\set{\lambda,+}} \oplus S^{\set{\lambda^*,+}} 
			\quad \text{and} \quad 
			U^{\dbl{\lambda,-}} = S^{\set{\lambda,-}} \oplus S^{\set{\lambda^*,-}},
			\]
		with homogeneous subspaces and odd involutions $J^{\set{\lambda,\pm}}$ defined as in Lemma \ref{prop:simple-supermodules}(\ref{item:self-associate-G-supermodule}).
		
		\item \label{item:lam-non-dual-n/2-odd-super} If $n/2$ is odd, then there exist Type Q simple $\CD_n$-supermodules
			\[
			U^{\dbl{\lambda,+}} = S^{\set{\lambda,+}} \oplus S^{\set{\lambda^*,-}} 
			\quad \text{and} \quad 
			U^{\dbl{\lambda,-}} = S^{\set{\lambda,-}} \oplus S^{\set{\lambda^*,+}},
			\]
		with homogeneous subspaces and odd involutions $J^{\set{\lambda,\pm}}$ defined as in Lemma \ref{prop:simple-supermodules}(\ref{item:self-associate-G-supermodule}).
		
		\end{enumerate}

	\item For each $\dbl{\lambda,\mu} \in \Fdbln$ with $\lambda \neq \mu$, there exists a Type M simple $\CD_n$-supermodule
		\[
		U^{\dbl{\lambda,\mu}} = S^{\set{\lambda,\mu}}
		\]
		with $U^{\dbl{\lambda,\mu}}_{\zero} = S^{\dbl{\lambda,\mu,+}}$ and $U^{\dbl{\lambda,\mu}}_{\one} = S^{\dbl{\lambda,\mu,-}}$.
	
	\item Suppose $n$ is even and $\dbl{\lambda,\lambda} \in \Fdbln$.
		\begin{enumerate}
		\item If $n/2$ is even, then there exist Type M simple $\CD_n$-supermodules
			\[
			U^{\dbl{\lambda,+}} = S^{\set{\lambda,+}} 
			\quad \text{and} \quad 
			U^{\dbl{\lambda,-}} = S^{\set{\lambda,-}},
			\]
		with $U^{\dbl{\lambda,\pm}}_{\zero} = S^{\dbl{\lambda,\pm,+}}$ and $U^{\dbl{\lambda,\pm}}_{\one} = S^{\dbl{\lambda,\pm,-}}$.
		
		\item If $n/2$ is odd, then there exists a Type Q simple $\CD_n$-supermodule
			\[
			U^{\dbl{\lambda,\lambda}} = S^{\set{\lambda,+}} \oplus S^{\set{\lambda,-}}
			\]
		with homogeneous subspaces and odd involution $J^{\set{\lambda,+}}$ defined as in Lemma \ref{prop:simple-supermodules}(\ref{item:self-associate-G-supermodule}).
		
		\end{enumerate}
	\end{enumerate}
Each simple $\CD_n$-super\-module is uniquely determined, up to an even isomorphism, by its summands as a $\CDzero$-module, and by the superdegrees in which those summands are concentrated.
\end{proposition}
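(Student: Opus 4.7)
The plan is to apply the general classification Proposition \ref{prop:simple-supermodules} with $(G,\kappa) = (\calD_n,\ve'')$, using the simple $\CD_n$-module classification of Lemma \ref{lemma:Dn-simples}, the $\CDzero$-module classification of Lemma \ref{lemma:D0-simples}, and the twisting isomorphisms \eqref{eq:S(lam,mu)-ve''-D}. The twisting isomorphisms determine the involution $\lambda \mapsto \lambda'$ on the set of isomorphism classes of simple $\CD_n$-modules that is the input to Proposition \ref{prop:simple-supermodules}: explicitly, $\set{\lambda,\mu} \mapsto \set{\lambda^*,\mu^*}$ on labels of the first kind, and on labels of the second kind, $\set{\lambda,\pm} \mapsto \set{\lambda^*,\pm}$ if $n/2$ is even and $\set{\lambda,\pm} \mapsto \set{\lambda^*,\mp}$ if $n/2$ is odd. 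The associators $\phi_{\ve''}^{\set{\lambda,\mu}}$, $\phi_{\ve''}^{\set{\lambda,\pm}}$ of Definition \ref{def:D-associators} then play the role of the maps $\phi_\kappa^\lambda$ in Proposition \ref{prop:simple-supermodules}, and one has already verified that they satisfy \eqref{eq:associator} and \eqref{eq:inverse-associator}.

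The rest of the argument is a matter of matching each case in the proposition to either a size-two orbit or a fixed point of the above involution. A size-two orbit yields a Type Q supermodule (as in Proposition \ref{prop:simple-supermodules}(1)), and a fixed point yields a Type M supermodule (as in Proposition \ref{prop:simple-supermodules}(2)). The partition of $\BPdbln$ into $\Edbln \sqcup \Fdbln$ together with the splitting behavior recorded in Lemma \ref{lemma:D0-simples} lets me enumerate these exhaustively: for $\dbl{\lambda,\mu} \in \Edbln$ with $\lambda \neq \mu$, the pair $\set{\lambda,\mu}, \set{\lambda^*,\mu^*}$ is a two-orbit giving $U^{\dbl{\lambda,\mu}}$; for $\dbl{\lambda,\lambda} \in \Edbln$, the two simple modules $S^{\set{\lambda,\pm}}$ pair with $S^{\set{\lambda^*,\pm}}$ or $S^{\set{\lambda^*,\mp}}$ according to the parity of $n/2$, producing the two Type Q supermodules $U^{\dbl{\lambda,+}}$ and $U^{\dbl{\lambda,-}}$; for $\dbl{\lambda,\mu} \in \Fdbln$ with $\lambda \neq \mu$, the module $S^{\set{\lambda,\mu}}$ is a fixed point and yields the Type M supermodule $U^{\dbl{\lambda,\mu}}$; and for $\dbl{\lambda,\lambda} \in \Fdbln$ with $\lambda = \lambda^*$, the two modules $S^{\set{\lambda,\pm}}$ are either both fixed (if $n/2$ is even, yielding two Type M supermodules $U^{\dbl{\lambda,\pm}}$) or are interchanged (if $n/2$ is odd, yielding a single Type Q supermodule $U^{\dbl{\lambda,\lambda}}$). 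The concrete descriptions of the homogeneous subspaces then read off directly from Proposition \ref{prop:simple-supermodules} and Lemma \ref{lemma:D0-simples}.

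The final uniqueness assertion is the same statement appearing in Proposition \ref{prop:simple-supermodules} and requires no further argument. The one point I would watch is that in the $n/2$ odd case of Lemma \ref{lemma:D0-simples}(\ref{item:lam-non-dual}), the labels ``$+$'' and ``$-$'' on the $\CDzero$-summands individually depend on the chosen representative $\set{\lambda,\lambda}$ of $\dbl{\lambda,\lambda}$ (Remark \ref{remark:n/2-odd-abuse}), but since Proposition \ref{prop:simple-supermodules}(1) specifies the Type Q supermodule only by the \emph{pair} of $\CD_n$-modules that form its underlying ungraded module, this ambiguity is harmless. I expect no real obstacle; the proof is essentially a bookkeeping exercise, and the only mildly delicate point is keeping track of the parity of $n/2$ when determining which pairs of labels are swapped by $\ve''$.
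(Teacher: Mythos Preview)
Your proposal is correct and matches the paper's approach exactly: the paper simply states that one applies Proposition~\ref{prop:simple-supermodules} for $(G,\kappa) = (\calD_n,\ve'')$ with the associators fixed as in Definition~\ref{def:D-associators}, and the case analysis you spell out is precisely the bookkeeping implicit in that application. Your attention to the $n/2$-parity issue and the labeling ambiguity of Remark~\ref{remark:n/2-odd-abuse} is appropriate and matches what the paper records.
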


\begin{remark}
We are abusing notation in case \eqref{item:lam-non-dual-n/2-odd-super} in the same manner discussed in Remark \ref{remark:n/2-odd-abuse}. If $\dbl{\lambda,\lambda} \in \Edbln$, or if $\dbl{\lambda,\lambda} \in \Fdbln$ with $n/2$ even, we may write
	\[
	U^{\dbl{\lambda,\lambda}} = U^{\dbl{\lambda,+}} \oplus U^{\dbl{\lambda,-}}.
	\]
Then $U^{\dbl{\lambda,\lambda}}$ depends only on the equivalence class $\dbl{\lambda,\lambda}$.
\end{remark}

\begin{lemma} \label{Dn-super-subminimal-dimensions}
Let $n \geq 5$.
	\begin{enumerate}
	\item Let $\dbl{\lambda,\mu} \in \Edbln$ with $\lambda \neq \mu$. If $\dbl{\lambda,\mu} = \dbl{[n],\emptyset}$, then $\dim(U^{\dbl{\lambda,\mu}}) = 2$. Otherwise,
		\[
		\dim(U^{\dbl{\lambda,\mu}}) \geq 2n-2.
		\]

	\item If $\dbl{\lambda,\lambda} \in \Edbln$, then $\dim(U^{\dbl{\lambda,\pm}}) = \binom{n}{n/2} \cdot \dim(S^\lambda)^2 \geq \binom{n}{n/2}$.
	
	\item If $\dbl{\lambda,\mu} \in \Fdbln$ and $\lambda \neq \mu$, then $\dim(U^{\dbl{\lambda,\mu}}) \geq n-1$.
	
	\item If $\dbl{\lambda,\lambda} \in \Fdbln$ and $n/2$ is even, then $\dim(U^{\dbl{\lambda,\pm}}) = \frac{1}{2} \cdot \binom{n}{n/2} \cdot \dim(S^\lambda)^2 \geq \frac{1}{2} \cdot \binom{n}{n/2}$.

	\item If $\dbl{\lambda,\lambda} \in \Fdbln$ and $n/2$ is odd, then $\dim(U^{\dbl{\lambda,\lambda}}) = \binom{n}{n/2} \cdot \dim(S^\lambda)^2 \geq \binom{n}{n/2}$.

	\end{enumerate}
\end{lemma}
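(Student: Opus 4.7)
The plan is to read off from Proposition~\ref{prop:D-simple-supermodules} the decomposition of each simple $\CD_n$-supermodule $U$ as an ungraded $\CD_n$-module, and then plug in the dimensions supplied by Lemma~\ref{lemma:Dn-subminimal-dimensions}. The only structural input beyond bookkeeping is the identification (Lemma~\ref{lemma:Dn-subminimal-dimensions}) of the one-dimensional simple $\CD_n$-modules as $S^{\set{[n],\emptyset}}$ (the trivial module) and $S^{\set{[1^n],\emptyset}}$ (the sign module); these two modules together form the single $\approx$-equivalence class $\dbl{[n],\emptyset} \in \Edbln$.

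For item (1), Proposition~\ref{prop:D-simple-supermodules} yields $U^{\dbl{\lambda,\mu}} \cong S^{\set{\lambda,\mu}} \oplus S^{\set{\lambda^*,\mu^*}}$ as ungraded $\CD_n$-modules, and the two summands have equal dimension since $\dim(S^\nu) = \dim(S^{\nu^*})$ for any partition $\nu$. When $\dbl{\lambda,\mu} = \dbl{[n],\emptyset}$, both summands are one-dimensional, giving total dimension $2$. Otherwise, neither $\set{\lambda,\mu}$ nor $\set{\lambda^*,\mu^*}$ is one of the two one-dimensional bipartitions singled out above, so each summand has dimension at least $n-1$, and the total is at least $2n-2$. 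For items (2) and (5), the analogous ungraded decompositions, combined with Lemma~\ref{lemma:Dn-subminimal-dimensions}(3), express $U$ as a direct sum of two simple $\CD_n$-modules, each of dimension $\tfrac{1}{2}\binom{n}{n/2}\dim(S^\lambda)^2$, yielding the stated total of $\binom{n}{n/2}\dim(S^\lambda)^2$.

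For items (3) and (4), the supermodule $U$ coincides as an ungraded $\CD_n$-module with the single simple module $S^{\set{\lambda,\mu}}$ or $S^{\set{\lambda,\pm}}$ respectively, and its dimension is read off directly from Lemma~\ref{lemma:Dn-subminimal-dimensions}. For (3) one needs only to confirm that $\dim(S^{\set{\lambda,\mu}}) \neq 1$, which is automatic because both one-dimensional simple $\CD_n$-modules lie in the class $\dbl{[n],\emptyset} \in \Edbln$, and hence neither lies in $\Fdbln$. I do not expect any genuine obstacle here; the argument is a straightforward unpacking of the preceding structural results, with the only potential source of friction being the bookkeeping distinction between the various parity conditions on $n/2$ in Proposition~\ref{prop:D-simple-supermodules}.
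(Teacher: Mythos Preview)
Your proposal is correct and matches the paper's approach: the paper states this lemma without an explicit proof, relying (as in the parallel type~B case, Lemma~\ref{lemma:Bn-super-subminimal-dimensions}) on the reader to combine the supermodule descriptions of Proposition~\ref{prop:D-simple-supermodules} with the dimension formulas of Lemma~\ref{lemma:Dn-subminimal-dimensions}, exactly as you do.
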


\begin{proposition} \label{prop:Dn-super-restriction}
Let $n \geq 5$.
	\begin{enumerate}
	\item \label{item:(lam,mu)-Edbln-super} Suppose $\dbl{\lambda,\mu} \in \Edbln$ with $\lambda \neq \mu$. Then
		\[
		\Res^{\calD_n}_{\calD_{n-1}}(U^{\dbl{\lambda,\mu}}) \cong 
		\Big[ \bigoplus_{\substack{\set{\nu,\tau} \prec \set{\lambda,\mu} \\ \set{\nu,\tau} \neq \set{\nu^*,\tau^*}}} U^{\dbl{\nu,\tau}} \Big] \oplus \Big[ \bigoplus_{\substack{\set{\nu,\tau} \prec \set{\lambda,\mu} \\ \set{\nu,\tau} = \set{\nu^*,\tau^*}}} U^{\dbl{\nu,\tau}} \oplus \Pi(U^{\dbl{\nu,\tau}}) \Big].
		\]
	
	\item Suppose $\dbl{\lambda,\lambda} \in \Edbln$. Then $\dbl{\nu,\lambda} \in E\dbl{n-1}$ for all $\nu \prec \lambda$, and
		\begin{align*}
		\Res^{\calD_n}_{\calD_{n-1}}(U^{\dbl{\lambda,\pm}}) &\cong \bigoplus_{\nu \prec \lambda} U^{\dbl{\nu,\lambda}}.
		\end{align*}

	\item \label{item:Dn-super-restriction-Fn'} Suppose $\dbl{\lambda,\mu} \in \Fdbln$ with $\lambda \neq \mu$.
		\begin{enumerate}
		\item Suppose $\dbl{\lambda,\mu} = \dbl{\lambda,\lambda^*}$ and $\lambda \neq \lambda^*$. Then $\dbl{\nu,\lambda^*} \in E\dbl{n-1}$ for all $\nu \prec \lambda$, and
			\begin{align*}
			\Res^{\calD_n}_{\calD_{n-1}}(U^{\dbl{\lambda,\lambda^*}}) \cong 
			\bigoplus_{\nu \prec \lambda} U^{\dbl{\nu,\lambda^*}}.
			\end{align*}

		\item \label{item:Dn-super-restriction-Fn'-case-a} Suppose $\lambda = \lambda^*$ and $\mu = \mu^*$. Then
			\[
			\Res^{\calD_n}_{\calD_{n-1}}(U^{\dbl{\lambda,\mu}}) 
			\cong 
			\Big[ \bigoplus_{\substack{\nu \prec \lambda \\ \res(\lambda/\nu) \geq 0}} U^{\dbl{\nu,\mu}} \Big] 
			\oplus 
			\Big[ \bigoplus_{\substack{\tau \prec \mu \\ \res(\mu/\tau) \geq 0 }} U^{\dbl{\lambda,\tau}} \Big].
			\]
		
		\end{enumerate}

	\item \label{item:Dn-super-restriction-Fn''} Let $\dbl{\lambda,\lambda} \in \Fdbln$.
		\begin{enumerate}
		\item Suppose $n/2$ is even. Then
			\[
			\Res^{\calD_n}_{\calD_{n-1}}(U^{\dbl{\lambda,\pm}}) \cong 
			\bigoplus_{\substack{\nu \prec \lambda \\ \cont(\lambda/\nu) \geq 0 }} U^{\dbl{\nu,\lambda}}.
			\]
		
		\item Suppose $n/2$ is odd. Then
			\begin{align*}
			\Res^{\calD_n}_{\calD_{n-1}}(U^{\dbl{\lambda,\lambda}}) &\cong 
			\Big[ \bigoplus_{\substack{\nu \prec \lambda \\ \cont(\lambda/\nu) > 0}} U^{\dbl{\nu,\lambda}} \Big]^{\oplus 2} 
			\oplus \Big[ \bigoplus_{\substack{\nu \prec \lambda \\ \cont(\lambda/\nu) = 0 }} U^{\dbl{\nu,\lambda}} \oplus \Pi(U^{\dbl{\nu,\lambda}}) \Big].
			\end{align*}
		
		\end{enumerate}
	\end{enumerate}
\end{proposition}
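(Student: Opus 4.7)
The plan is to adapt the argument of Proposition \ref{prop:B-to-Bn-1-supermodule-restriction} to type D. For each simple $\CD_n$-supermodule $U$, Proposition \ref{prop:D-simple-supermodules} identifies the underlying $\abs{\CD_n}$-module $\abs{U}$ explicitly, and Lemma \ref{lemma:restriction-D-to-Dn-1} then gives the decomposition of $\Res^{\calD_n}_{\calD_{n-1}}(\abs{U})$ as an $\abs{\CD_{n-1}}$-module. Since each simple $\CD_{n-1}$-supermodule is uniquely determined up to even isomorphism by its restriction to $\C(\calD_{n-1})_{\zero}$ together with the superdegrees in which the resulting simple $\C(\calD_{n-1})_{\zero}$-summands are concentrated, the problem reduces to matching each simple $\abs{\CD_{n-1}}$-summand of $\abs{\Res(U)}$ with the appropriate simple $\CD_{n-1}$-supermodule while tracking its inherited $\Z_2$-grading.

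For cases (1) and (2), the underlying module $\abs{U}$ is a sum of two simple $\abs{\CD_n}$-modules interchanged by the associator $\phi_{\ve''}$. Using the fact that $\set{\nu,\tau} \prec \set{\lambda,\mu}$ if and only if $\set{\nu^*,\tau^*} \prec \set{\lambda^*,\mu^*}$, the simple $\abs{\CD_{n-1}}$-summands of the two halves pair up naturally: pairs $S^{\set{\nu,\tau}} \oplus S^{\set{\nu^*,\tau^*}}$ with $\set{\nu,\tau} \neq \set{\nu^*,\tau^*}$ form a single $\abs{U^{\dbl{\nu,\tau}}}$ of Type Q, while for $\set{\nu,\tau} = \set{\nu^*,\tau^*}$ one obtains two copies of $\abs{U^{\dbl{\nu,\tau}}}$, which is then of Type M. In this second situation the ambient supermodule $U$ is Type Q and hence even-isomorphic to $\Pi(U)$, so the two copies must occur in opposite superdegrees, yielding the summand $U^{\dbl{\nu,\tau}} \oplus \Pi(U^{\dbl{\nu,\tau}})$ exactly as in the final paragraph of the proof of Proposition \ref{prop:B-to-Bn-1-supermodule-restriction}.

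Cases (3) and (4) are more delicate because $U$ is (mostly) of Type M and hence not even-isomorphic to $\Pi(U)$, so one must pin down precisely which superdegree each simple $\C(\calD_{n-1})_{\zero}$-summand occupies. The key tool is Remark \ref{remark:type-D-associators-compatible-restriction}, which describes how the associators $\phi_{\ve''}^{\set{\lambda,\mu}}$ and $\phi_{\ve''}^{\set{\lambda,\pm}}$ restrict to the associators attached to the simple $\abs{\CD_{n-1}}$-summands. Combined with the description in Proposition \ref{prop:D-simple-supermodules} of $U_{\zero}$ and $U_{\one}$ as the $\pm 1$-eigenspaces of these associators, this determines the parity of each summand. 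The content condition $\cont(\lambda/\nu) \geq 0$ emerges because, for a self-transposed partition $\lambda = \lambda^*$, a pair of removable boxes symmetric about the main diagonal (one of positive, one of negative content) yields partitions $\nu$ and $\nu^*$ that are interchanged by the associator, so the pair fuses into a single Type Q supermodule $U^{\dbl{\nu,\mu}}$ indexed by the positive-content box, whereas a removable box of content zero produces $\nu = \nu^*$ and a Type M summand whose eigenspace splitting records the parity data exactly.

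The main obstacle will be case (4b), where $n/2$ is odd, $U^{\dbl{\lambda,\lambda}}$ is Type Q with underlying module $S^{\set{\lambda,+}} \oplus S^{\set{\lambda,-}}$, and the $(\calD_n)_{\zero}$-to-$(\calD_{n-1})_{\zero}$ branching is not multiplicity-free (Remark \ref{remark:D-repeated-factors}). By Lemma \ref{lemma:restriction-D-to-Dn-1}, each underlying simple contributes $\bigoplus_{\nu \prec \lambda} S^{\set{\nu,\lambda}}$, so two copies of $\abs{S^{\set{\nu,\lambda}}}$ appear per $\nu$. When $\cont(\lambda/\nu) > 0$ we have $\nu \neq \nu^*$, so $\dbl{\nu,\lambda} \in E\dbl{n-1}$; the two copies of $S^{\set{\nu,\lambda}}$ pair with the two copies of $S^{\set{\nu^*,\lambda}}$ to give two copies of $\abs{U^{\dbl{\nu,\lambda}}}$, each Type Q and hence even-isomorphic to its parity shift, producing $(U^{\dbl{\nu,\lambda}})^{\oplus 2}$. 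When $\cont(\lambda/\nu) = 0$ we have $\nu = \nu^*$ and $\dbl{\nu,\lambda} \in F\dbl{n-1}$, so $U^{\dbl{\nu,\lambda}}$ is Type M; the two copies of $S^{\set{\nu,\lambda}}$ must sit in opposite superdegrees because $U^{\dbl{\lambda,\lambda}}$ is even-isomorphic to $\Pi(U^{\dbl{\lambda,\lambda}})$, yielding $U^{\dbl{\nu,\lambda}} \oplus \Pi(U^{\dbl{\nu,\lambda}})$ as claimed.
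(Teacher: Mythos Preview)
Your approach is essentially the same as the paper's: apply Lemma \ref{lemma:Dzero-restriction} (or equivalently Lemma \ref{lemma:restriction-D-to-Dn-1} together with Remark \ref{remark:type-D-associators-compatible-restriction}) and argue as in Proposition \ref{prop:B-to-Bn-1-supermodule-restriction}, using that simple $\CD_{n-1}$-supermodules are determined by their $\C(\calD_{n-1})_{\zero}$-summands and superdegrees, and invoking the Type-Q/Type-M parity argument for multiplicity-two factors.

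There is one imprecision worth flagging. In case (1) you assert that when $\set{\nu,\tau} = \set{\nu^*,\tau^*}$ the module $U^{\dbl{\nu,\tau}}$ ``is then of Type M,'' and you use this to run the opposite-superdegree argument. But if $\set{\nu,\tau} = \set{\nu,\nu}$ with $\nu = \nu^*$ and $\abs{\nu}$ odd (which can occur, e.g.\ when $\mu \prec \lambda$ with $\mu = \mu^*$), then by Proposition \ref{prop:D-simple-supermodules}(4b) the supermodule $U^{\dbl{\nu,\nu}}$ is of Type Q, not Type M, and your parity argument does not apply. The paper handles exactly this exception: since $U^{\dbl{\nu,\nu}}$ is Type Q it is even-isomorphic to $\Pi(U^{\dbl{\nu,\nu}})$, so writing the summand as $U^{\dbl{\nu,\nu}} \oplus \Pi(U^{\dbl{\nu,\nu}})$ is ``unnecessary but harmless.'' The stated formula is correct; only the justification needs this patch.
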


\begin{proof}
Apply Lemma \ref{lemma:Dzero-restriction} and reasoning like that used to establish Proposition \ref{prop:B-to-Bn-1-supermodule-restriction}, noting in case \eqref{item:(lam,mu)-Edbln-super} that if $\set{\nu,\nu} \prec \set{\lambda,\mu}$ with $\nu = \nu^*$ and $\abs{\nu}$ odd, then the parity change functor applied to $U^{\dbl{\nu,\nu}}$ is unnecessary but harmless because $U^{\dbl{\nu,\nu}}$ is of Type Q (and hence is isomorphic via an even isomorphism to its parity change module). Similar comments apply to other instances of Type Q simple supermodules.
\end{proof}

\subsection{The Lie superalgebra of reflections in type D} \label{subsec:Lie-superalgebra-D}

Let $\fd_n \subseteq \Lie(\CD_n)$ be the Lie super\-algebra generated by the set \eqref{eq:D-reflections} of all reflections in $\calD_n$. When the value of $n$ is clear from the context, we may write $\fdzero = (\fd_n)_{\zero}$. In this context, \eqref{eq:LieCG} and \eqref{eq:D(LieCG)} take the forms
	\begin{multline} \label{eq:CDn-Artin-Wedderburn}
	\Lie (\CD_n) \cong 
	\Big[ \bigoplus_{\substack{\dbl{\lambda,\mu} \in \Edbln \\ \lambda \neq \mu}} \fq( U^{\dbl{\lambda,\mu}} ) \Big] 
	\oplus 
	\Big[ \bigoplus_{\dbl{\lambda,\lambda} \in \Edbln} \fq( U^{\dbl{\lambda,+}} ) \oplus \fq( U^{\dbl{\lambda,-}} ) \Big] \\
	\oplus 
	\Big[ \bigoplus_{\substack{\dbl{\lambda,\mu} \in \Fdbln \\ \lambda \neq \mu}} \gl( U^{\dbl{\lambda,\mu}} ) \Big]
	\oplus 
	\Big[ \bigoplus_{\substack{\dbl{\lambda,\lambda} \in \Fdbln \\ \text{$n/2$ even}}} \gl( U^{\dbl{\lambda,+}} ) \oplus \gl( U^{\dbl{\lambda,-}} ) \Big] 
	\oplus
	\Big[ \bigoplus_{\substack{\dbl{\lambda,\lambda} \in \Fdbln \\ \text{$n/2$ odd}}} \fq( U^{\dbl{\lambda,\lambda}} ) \Big]
	\end{multline}

	\begin{multline} \label{eq:D(CDn)-Artin-Wedderburn}
	\fD(\CD_n) \cong 
	\Big[ \bigoplus_{\substack{\dbl{\lambda,\mu} \in \Edbln \\ \lambda \neq \mu}} \sq( U^{\dbl{\lambda,\mu}} ) \Big] 
	\oplus 
	\Big[ \bigoplus_{\dbl{\lambda,\lambda} \in \Edbln} \sq( U^{\dbl{\lambda,+}} ) \oplus \sq( U^{\dbl{\lambda,-}} ) \Big] \\
	\oplus 
	\Big[ \bigoplus_{\substack{\dbl{\lambda,\mu} \in \Fdbln \\ \lambda \neq \mu}} \fsl( U^{\dbl{\lambda,\mu}} ) \Big]
	\oplus 
	\Big[ \bigoplus_{\substack{\dbl{\lambda,\lambda} \in \Fdbln \\ \text{$n/2$ even}}} \fsl( U^{\dbl{\lambda,+}} ) \oplus \fsl( U^{\dbl{\lambda,-}} ) \Big] 
	\oplus
	\Big[ \bigoplus_{\substack{\dbl{\lambda,\lambda} \in \Fdbln \\ \text{$n/2$ odd}}} \sq( U^{\dbl{\lambda,\lambda}} ) \Big]
	\end{multline}

Let $\calX_n = \sum_{j=1}^n X_j = \sum_{j=1}^n \sum_{i=1}^{j-1} [(i,j) + t_it_j(i,j)]$ be the sum in $\CD_n$ of the YJM elements. The set \eqref{eq:D-reflections} is a single conjugacy classes in $\calD_n$. Then
	\begin{equation} \label{eq:dn-easy-inclusion}
	\fd_n \subseteq \fD(\CD_n) + \C \cdot \calX_n,
	\end{equation}
by Lemma \ref{lemma:g-in-D(g)+span}. Our goal in this section is to prove the following theorem:

\begin{theorem} \label{theorem:main-theorem-D}
Let $n \geq 4$. Then $\fd_n = \fD(\CD_n) + \C \cdot \calX_n$.
\end{theorem}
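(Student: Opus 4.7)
The proof will proceed by induction on $n$, closely following the template established in the proof of Theorem \ref{theorem:main-theorem-B}. Base cases for small values (roughly $n \in \{4,5,6,7\}$) will be verified by direct dimension comparison using GAP \cite{GAP4}, in analogy with Lemma \ref{lemma:base-cases-B}. The containment $\fd_n \subseteq \fD(\CD_n) + \C \cdot \calX_n$ is already given by \eqref{eq:dn-easy-inclusion}, and $\calX_n \in \fd_n$ is immediate, so the content of the induction step is to show $\fD(\CD_n) \subseteq \fd_n$.

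For the inductive step, the first task is to verify the axiomatic framework of Sections \ref{subsec:lie-superalgebras-odd-elements}--\ref{subsec:further-lie-superalgebra-structure}. Axiom \ref{item:S-union-conj-classes} is immediate since the set \eqref{eq:D-reflections} is a single conjugacy class. For \ref{item:gzero-generates-CGzero} and \ref{item:S'-one-class-gen-G0}, I would take $S' \subseteq \fdzero$ to be the conjugacy class of double transpositions $(i,j)(k,\ell)$ in $\Dzero$: these are obtained as $\frac{1}{2}[(i,j),(k,\ell)] \in \fdzero$, they remain a single $\Dzero$-class for $n \geq 5$ (by the standard centralizer criterion), and together with the elements $t_it_j t_k t_\ell = \frac{1}{2}[t_it_j(i,j), t_kt_\ell(k,\ell)]$-type products one recovers all of $\Dzero$ (invoking Remark \ref{rem:commutator-subgroups} and the standard generation of $A_n$ by double transpositions for $n \geq 5$). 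Axiom \ref{item:no-G-simple-dim-2} follows from Lemma \ref{lemma:Dn-subminimal-dimensions}. Axiom \ref{item:inductive-subgroup} is verified with $H = \calD_{n-1}$ and $S_H$ the reflections in $\calD_{n-1}$: the inductive hypothesis yields $\hzero = \fD(\CH)_{\zero}$, and the remaining bullet points follow from Lemmas \ref{lemma:Dzero-subminimal-dimensions} and \ref{lemma:Dzero-restriction}. Axiom \ref{item:D(gzero)-to-sl(V)} is deferred to Section \ref{subsec:A5-for-D}.

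With all axioms in hand, the argument proceeds exactly as in the proof of Theorem \ref{theorem:main-theorem-B}. Identifying $\CD_n$ with its image under \eqref{eq:CDn-Artin-Wedderburn}, Lemma \ref{lemma:D(gzero)} yields a decomposition of $\fD(\fdzero)$ as a direct sum of $\fsl(V)$ for $V \in \Irr(\Dzero)$, under which $\fD(\CD_n)_{\one}$ becomes a direct sum of pairwise non-isomorphic simple $\fD(\fdzero)$-modules. Since $\fdone$ is a $\fD(\fdzero)$-submodule of $\fD(\CD_n)_{\one} + \C \cdot \calX_n$ and since Lemma \ref{lemma:W(gn)} shows that each simple summand is detected by the projection onto some $W(\fd_n)$, we deduce $\fD(\CD_n)_{\one} \subseteq \fd_n$. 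Then Lemma \ref{lem:generated-by-odd} gives $\sq(U^{\dbl{\lambda,\mu}}) \subseteq \fd_n$ for every Type Q simple of dimension $\geq 6$, and $\fsl(U^{\dbl{\lambda,\mu}}) \subseteq \fd_n$ for every Type M simple of dimension $\geq 4$. By Lemma \ref{Dn-super-subminimal-dimensions} the only exceptional case for $n \geq 5$ is the two-dimensional Type Q module $U^{\dbl{[n],\emptyset}}$, with $\sq(U^{\dbl{[n],\emptyset}}) = \C \cdot \id_{U^{\dbl{[n],\emptyset}}}$. Write $1_{\CD_n} = \tau^2 \in \fd_n$ for any reflection $\tau$ and subtract the identity operators on all other simples (each lying in $\fd_n$ by the preceding paragraph) to conclude $\id_{U^{\dbl{[n],\emptyset}}} \in \fd_n$; here only one $\Phi$-style correction is needed (not two as in type B) since $U^{\dbl{[n],\emptyset}} = U^{\dbl{[1^n],\emptyset}}$ in type D.

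The main obstacle is the verification of axiom \ref{item:D(gzero)-to-sl(V)}. In type B the analogous Proposition \ref{prop:A5-for-B} leveraged the multiplicity-freeness of the branching rule from $\Bzero$ to $(\calB_{n-1})_{\zero}$, which allowed Marin's rank inequality from \cite[Lemme 15]{Marin:2007} to be applied directly. In type D the branching rule in Lemma \ref{lemma:Dzero-restriction} has repeated composition factors (see Remark \ref{remark:D-repeated-factors}), and so the rank argument requires supplementation; as noted in the introduction, the module $U^{\dbl{\mu,\mu}}$ for $\mu = \mu^*$ with $\abs{\mu}$ odd---which is the Type Q case from Lemma \ref{lemma:D0-simples}\eqref{item:lam-non-dual} with $n/2$ odd---is the source of the repeated factor and will require special treatment analogous to \cite[\S5.7]{DK:2025-preprint}, carried out separately in Section \ref{subsec:A5-for-D}.
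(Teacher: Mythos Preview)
Your proposal is correct and follows essentially the same approach as the paper: induction on $n$ with base cases verified in GAP (the paper uses only $n \in \{4,5\}$), verification of axioms \ref{item:S-union-conj-classes}--\ref{item:no-G-simple-dim-2} via the conjugacy class $C_{(2^21^{n-4},\emptyset)}$ in $\Dzero$, deferral of \ref{item:D(gzero)-to-sl(V)} to Section~\ref{subsec:A5-for-D}, and then the argument of Theorem~\ref{theorem:main-theorem-B} culminating in a single $\Psi$-operator (no $\Phi$ needed, exactly as you observe, since $\dbl{[n],\emptyset} = \dbl{[1^n],\emptyset}$). A few minor imprecisions: your bracket formula yields $t_it_jt_kt_\ell(i,j)(k,\ell)$ rather than $t_it_jt_kt_\ell$, and the problematic repeated-factor cases in Remark~\ref{remark:D-repeated-factors} are actually two in number (case~\eqref{item:(lam,mu)-Eboxn} with $\mu \prec \lambda$, $\mu = \mu^*$, $\abs{\mu}$ odd, and case~\eqref{item:(lam,lam)-Fdbln-n/2-odd} with $\dbl{\lambda,\lambda} \in \Fdbln$ and $n/2$ odd), both handled in Section~\ref{subsec:A5-for-D}.
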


\begin{lemma} \label{lemma:base-cases-D}
If $n \in \set{4,5}$, then $\fd_n = \fD(\CD_n) + \C \cdot \calX_n$.
\end{lemma}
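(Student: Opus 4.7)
The plan is to verify both cases by explicit finite-dimensional linear algebra in GAP. The inclusion $\fd_n \subseteq \fD(\CD_n) + \C \cdot \calX_n$ is provided by \eqref{eq:dn-easy-inclusion}, and as observed in Section \ref{subsec:Lie-superalgebra-D}, the set of reflections in $\calD_n$ forms a single conjugacy class whose class sum is exactly $\calX_n$; hence $\calX_n \in \fd_n$ by construction. It therefore suffices to verify the reverse inclusion $\fD(\CD_n) \subseteq \fd_n$, which we will establish via dimension comparison
	\[
	\dim(\fd_n) = \dim(\fD(\CD_n) + \C \cdot \calX_n).
	\]

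First I would realize $\calD_n$ in GAP as a permutation group via its standard signed-permutation representation, record the $\Z_2$-grading on $\CD_n$ afforded by the sign character $\ve'': \calD_n \to \set{\pm 1}$, and implement the graded commutator $[x,y] = xy - (-1)^{\ol{x} \cdot \ol{y}} yx$ as a bilinear operation on the $\abs{\calD_n}$-dimensional vector space $\CD_n$ (so the ambient space has dimension $192$ for $n = 4$ and $1920$ for $n = 5$). A basis for $\fd_n$ can then be computed by an iterative closure: initialize the spanning set with the $2\binom{n}{2}$ reflections from \eqref{eq:D-reflections}, repeatedly bracket accumulated elements, and reduce each new candidate bracket by Gaussian elimination against the current basis until the span stabilizes.

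For the right-hand side, I would compute $\dim(\fD(\CD_n))$ by forming all brackets $[g,h]$ for $g,h \in \calD_n$ and row-reducing. As an independent cross-check, the same number is obtained by summing the block dimensions of the Artin--Wedderburn decomposition \eqref{eq:D(CDn)-Artin-Wedderburn}, using the classification in Proposition \ref{prop:D-simple-supermodules} and the dimension formulas in Lemma \ref{Dn-super-subminimal-dimensions}; these simple supermodule dimensions can in turn be read off from the character table of $\calD_n$ available via GAP's character table library. One then checks whether $\calX_n$ lies in $\fD(\CD_n)$ to determine $\dim(\fD(\CD_n) + \C \cdot \calX_n)$ and compares with $\dim(\fd_n)$.

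The main obstacle is the case $n = 5$, where the ambient space is $1920$-dimensional and a naive iteration would require computing on the order of $10^6$ brackets. This is tempered by always working modulo the current basis (so each candidate bracket is reduced before being adjoined rather than stored in full), and by exploiting the fact that $\fd_n$ is $\calD_n$-conjugation invariant (as it is generated by a union of conjugacy classes), which allows one to propagate a single new bracket to its entire orbit without redoing the closure computation on each orbit element individually.
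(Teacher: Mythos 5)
Your proposal is correct and takes essentially the same approach as the paper, which disposes of the base cases $n\in\set{4,5}$ exactly by a dimension comparison carried out in GAP (the paper records only the one-line statement that this was verified computationally). Your elaboration of the computation --- iterative bracket closure starting from the reflections, comparison against $\dim(\fD(\CD_n)+\C\cdot\calX_n)$, and the observation that $\calX_n\in\fd_n$ since it is the class sum of the generating set --- is a faithful expansion of what the paper's verification entails.
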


\begin{proof}
By dimension comparison in GAP \cite{GAP4}; see Appendix \ref{app:dim-small-ranks} for more details.
\end{proof}

\begin{lemma} \label{lemma:d0-generates-CD0}
For $n \geq 5$, the set
	\begin{equation} \label{eq:(ij)(kl)-CC}
	C_{(2^21^{n-4},\emptyset)} = \set{ (i,j)(k,\ell),\, t_it_j (i,j)(k,\ell), \, t_it_jt_kt_\ell (i,j)(k,\ell) : \text{$i,j,k,\ell$ distinct}}
	\end{equation}
is a conjugacy class $S'$ in $\Dzero$ such that $S' \subseteq \fdzero$ and $\Dzero = \subgrp{S'}$.
\end{lemma}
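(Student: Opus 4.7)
The plan is to follow the three-step strategy of Lemma \ref{lemma:b0-generates-CB0}, adapted to the fact that $\Dzero$ sits as an index-$2$ subgroup of $\calD_n$ (and index-$4$ in $\calB_n$). The three steps are: (i) $S'$ is a single conjugacy class in $\Dzero$; (ii) every element of $S'$ can be written as a super\-commutator of two odd reflections in $\calD_n$; and (iii) $S'$ generates $\Dzero$ as a group.

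For (i), I would first observe that $S'$ is a single conjugacy class in $\calB_n$ and is contained in $\Dzero$, then apply twice the standard criterion that a $G$-conjugacy class does not split upon restriction to an index-$2$ subgroup provided some class element has a $G$-centralizer not contained in that subgroup. The element $(1,2)(3,4) \in S'$ is centralized by $t_5 \in \calB_n \setminus \calD_n$ (which exists because $n \geq 5$), showing $S'$ is a single class in $\calD_n$; and it is also centralized by the transposition $(1,2) \in \calD_n \setminus \Dzero$, showing $S'$ remains a single class in $\Dzero$.

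For (ii), whenever $x$ and $y$ are odd group elements that commute in $\CD_n$, the graded commutator satisfies $[x,y] = 2xy$, so $xy = \tfrac{1}{2}[x,y] \in \fdzero$. Applying this to the commuting, disjoint-support reflection pairs $\{(i,j),(k,\ell)\}$, $\{t_it_j(i,j),(k,\ell)\}$, and $\{t_it_j(i,j), t_kt_\ell(k,\ell)\}$ realizes the three families of elements in $S'$ as elements of $\fdzero$.

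For (iii), I would split into showing $A_n \subseteq \subgrp{S'}$ and $(\Z_2^n)_{\zero} \subseteq \subgrp{S'}$. The identity $(i,j)(k,\ell) \cdot (i,j)(k,m) = (k,m,\ell)$ yields every $3$-cycle in $\fS_n$ (the hypothesis $n \geq 5$ supplies the spare index $m$), and $3$-cycles generate $A_n$. Next, the conjugation identity $(i,j)(k,\ell) \cdot t_it_j(i,j)(k,\ell) = t_it_j$ (which holds because $(i,j)(k,\ell)$ swaps $i \leftrightarrow j$ and squares to the identity) places every $t_it_j$, and hence all of $(\Z_2^n)_{\zero}$, in $\subgrp{S'}$. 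The semidirect decomposition $\Dzero = (\Z_2^n)_{\zero} \rtimes A_n$ then completes the argument. The only point that requires any care is the double non-splitting verification in (i); the remaining steps amount to straightforward group-theoretic bookkeeping with the commuting-reflection identities above.
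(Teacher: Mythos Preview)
Your proposal is correct and follows essentially the same three-step approach as the paper's proof: the paper likewise applies the standard non-splitting criterion twice (using $t_m$ and $(k,\ell)$ as centralizers in place of your $t_5$ and $(1,2)$), realizes each type of element in $S'$ as half a super\-commutator of disjoint odd reflections, and shows $\langle S'\rangle = \Dzero$ by producing the $t_it_j$ and the generators of $A_n$ from products of two elements of $S'$. The only cosmetic difference is that the paper cites directly that $\{(i,j)(k,\ell)\}$ generates $A_n$ for $n \geq 5$, whereas you take the extra step of producing $3$-cycles.
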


\begin{proof}
The set $S' = C_{(2^21^{n-4},\emptyset)}$ is a conjugacy class in $\calB_n$; the labeling corresponds to that in \cite[\S2]{Okada:1990} or \cite[\S2]{Mishra:2016}. If $i,j,k,\ell,m$ are distinct, then the element $t_it_j(i,j)(k,\ell) \in S'$ is centralized by $t_m$, which is an element of $\calB_n$ but not $\calD_n$. Then by the standard criterion, $S'$ remains a conjugacy class in $\calD_n$ (rather than splitting into a union of two conjugacy classes in $\calD_n$). Similarly, $t_it_j(i,j)(k,\ell)$ is centralized by $(k,\ell)$, which is an element of $\calD_n$ but not $\Dzero$, so $S'$ is a conjugacy class in $\Dzero$.

If $i,j,k,\ell$ are distinct, then
	\begin{gather*}
	(i,j)(k,\ell) = \tfrac{1}{2} \cdot [(i,j),(k,\ell)] \in \fdzero, \\
	t_i t_j (i,j)(k,\ell) = \tfrac{1}{2} \cdot [t_it_j(i,j),(k,\ell)] \in \fdzero, \\
	t_it_jt_kt_\ell (i,j)(k,\ell) = \tfrac{1}{2} \cdot [t_it_j(i,j),t_kt_\ell (k,\ell)] \in \fdzero,
	\end{gather*}
so $S' \subseteq \fdzero$. The group $\Dzero$ is generated as group by all elements of the form
	\[
	[t_it_j(i,j)(k,\ell)] \cdot [(i,j)(k,\ell)] = t_i t_j
	\]
for $i,j,k,\ell$ distinct, and by the alternating group $\fA_n$, which for $n \geq 5$ is generated by all elements of the form $(i,j)(k,\ell)$ for $i,j,k,\ell$ distinct. Then for $n \geq 5$, $\Dzero$ is generated by the set $S'$.
\end{proof}

\begin{proposition} \label{prop:n-geq-5-Dn-good}
Let $n \geq 5$. Then \ref{item:S-union-conj-classes}--\ref{item:S'-one-class-gen-G0} and \ref{item:no-G-simple-dim-2} are satisfied for the supergroup $G = \calD_n$, with $S$ the set \eqref{eq:D-reflections} of all reflections in $\calD_n$, and $S'$ the set \eqref{eq:(ij)(kl)-CC}. Consequently, the results of Section \ref{subsec:lie-superalgebras-odd-elements} all hold in this context.
\end{proposition}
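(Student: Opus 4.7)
The proof should be short and direct, exactly parallel to the argument used for Proposition \ref{prop:n-geq-5-Bn-good} in type B: all four axioms have been prepared for by the preceding lemmas, and nothing substantial remains beyond assembling the citations.

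The plan is to verify \ref{item:S-union-conj-classes}, \ref{item:gzero-generates-CGzero}, \ref{item:S'-one-class-gen-G0}, and \ref{item:no-G-simple-dim-2} in turn. First, for \ref{item:S-union-conj-classes}, I would simply observe that the set of reflections \eqref{eq:D-reflections} is a single conjugacy class in $\calD_n$; this is already asserted in the sentence preceding \eqref{eq:dn-easy-inclusion} and can be seen directly by noting that $(i,j)$ and $t_it_j(i,j)$ are conjugate in $\calD_n$ via $t_k t_\ell$ for any $k,\ell \notin \set{i,j}$ (using $n \geq 4$), while any two transpositions $(i,j)$ and $(k,\ell)$ are conjugate via an even permutation.

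Next, for \ref{item:gzero-generates-CGzero} and \ref{item:S'-one-class-gen-G0}, I would directly invoke Lemma \ref{lemma:d0-generates-CD0}, which supplies $S' = C_{(2^21^{n-4},\emptyset)}$ as a single conjugacy class in $\Dzero$ contained in $\fdzero$ and generating $\Dzero$. One only needs to note in addition that the elements of $S'$ are of order two, which is immediate from their presentation as products of commuting involutions in the wreath product $\Z_2 \wr \fS_n$ (the transpositions $(i,j)$ and $(k,\ell)$ are disjoint and commute with any $t_a t_b$ with $a,b \in \set{i,j,k,\ell}$, while each $t_a$ is itself of order two).

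Finally, for \ref{item:no-G-simple-dim-2}, I would cite Lemma \ref{lemma:Dn-subminimal-dimensions}: since $n \geq 5$, every simple $\CD_n$-module either has dimension one (and is labeled by $\set{[n],\emptyset}$ or $\set{[1^n],\emptyset}$) or has dimension at least $n-1 \geq 4$, so no dimension-two simple $\abs{\CD_n}$-module exists. There is no real obstacle here; the only point that required any non-routine work was packaged into Lemma \ref{lemma:d0-generates-CD0}, where one had to check both that $S'$ does not split into two classes in $\Dzero$ (achieved via the standard centralizer criterion, using that $t_it_j(i,j)(k,\ell)$ is centralized by $(k,\ell) \in \calD_n \setminus \Dzero$) and that $S'$ generates all of $\Dzero$. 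With those in hand, the proposition is assembled in one line by citing Lemmas \ref{lemma:d0-generates-CD0} and \ref{lemma:Dn-subminimal-dimensions}, which then unlocks the general results of Section \ref{subsec:lie-superalgebras-odd-elements} for $\calD_n$.
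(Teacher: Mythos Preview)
Your proposal is correct and matches the paper's proof exactly: the paper's argument is the single line ``Apply Lemmas \ref{lemma:d0-generates-CD0} and \ref{lemma:Dn-subminimal-dimensions},'' and you have simply unpacked this into the individual axioms. One small slip in your parenthetical justification of \ref{item:S-union-conj-classes}: conjugating $(i,j)$ by $t_k t_\ell$ with $k,\ell \notin \set{i,j}$ fixes $(i,j)$ rather than producing $t_i t_j (i,j)$; the element you want is $t_i t_k$ (or $t_j t_k$) for some $k \notin \set{i,j}$, which lies in $\calD_n$ and does the job.
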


\begin{proof}
Apply Lemmas \ref{lemma:d0-generates-CD0} and \ref{lemma:Dn-subminimal-dimensions}.
\end{proof}

\begin{lemma} \label{lemma:D-inductive-supergroup-ngeq6}
Let $n \geq 6$, let $G = \calD_n$, and let $S$ be the set \eqref{eq:D-reflections} of all reflections in $\calD_n$. Suppose Theorem \ref{theorem:main-theorem-D} is true for the value $n-1$. Then \ref{item:inductive-subgroup} is satisfied by the subsupergroup $H = \calD_{n-1}$, with $S_H$ equal to the set of all reflections in $\calD_{n-1}$.
\end{lemma}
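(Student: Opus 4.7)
My plan is to verify each of the four bullet points of \ref{item:inductive-subgroup} in turn for $H = \calD_{n-1}$ and $S_H$ the set of reflections in $\calD_{n-1}$, closely following the template of Lemma \ref{lemma:B-inductive-supergroup-ngeq6}. The first two bullets are nearly immediate: condition \ref{item:S-union-conj-classes} holds because $S_H$ forms a single conjugacy class in $\calD_{n-1}$ (type $D$ being simply-laced), and condition \ref{item:gzero-generates-CGzero} follows by applying Lemma \ref{lemma:d0-generates-CD0} at rank $n-1 \geq 5$.

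For the identification $\fh_{\zero} = \fD(\CH)_{\zero}$, I would invoke the inductive hypothesis: Theorem \ref{theorem:main-theorem-D} at rank $n-1$ gives $\fh = \fD(\CH) + \C \cdot \calX_{n-1}$. Since every reflection of $\calD_{n-1}$ is odd in the supergrading and $\calX_{n-1}$ is a sum of such reflections, $\calX_{n-1}$ is itself odd, so the extra term contributes only to $\fh_{\one}$ and we conclude $\fh_{\zero} = \fD(\CH)_{\zero}$. The absence of $2$-dimensional simple $\CHzero$-modules then follows from Lemma \ref{lemma:Dzero-subminimal-dimensions} applied at rank $n-1 \geq 5$, supplemented by the explicit list in Remark \ref{remark:Dzero-subminimal-dimensions-n=5} to handle the edge case $n-1 = 5$: in both situations every non-trivial simple has dimension at least $3$.

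The remaining bullet, that for every simple $\CGzero$-module $V$ with $\dim V > 1$ the restriction $\Res^{\Gzero}_{\Hzero}(V)$ contains some simple summand of dimension greater than $1$, is the most substantive. By Lemma \ref{lemma:Dzero-subminimal-dimensions} the only $1$-dimensional simple $\CHzero$-module is the trivial module, labeled by $\dbl{[n-1],\emptyset}$, so it suffices to rule out the possibility that $\Res^{\Gzero}_{\Hzero}(V)$ is a sum of trivial summands when $\dim V > 1$. I would handle this by running through the classification of simple $\CGzero$-modules in Lemma \ref{lemma:D0-simples} and invoking Lemma \ref{lemma:Dzero-restriction} for each case: a summand of the restriction is trivial only if the corresponding bipartition after box-removal is $\set{[n-1],\emptyset}$ or $\set{[1^{n-1}],\emptyset}$. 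The size constraint $\abs{\lambda} + \abs{\mu} = n$ together with the self-conjugacy conditions defining each branching case then forbid all single-box reductions from $(\lambda,\mu)$ landing in such a bipartition simultaneously, unless the original label already forces $V$ to be trivial; for split summands $S^{\dbl{\nu,\mu,\pm}}$ and the like, one notes in addition that their dimensions are too large to equal $1$.

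The main obstacle is the case-by-case branching analysis required for the last bullet, but each case reduces to an elementary observation about which partitions of the appropriate size admit a single-box reduction to $[n-1]$ or $[1^{n-1}]$ while respecting the self-conjugacy constraint; the remaining bullets are essentially bookkeeping given the inductive hypothesis and the structural results already proved for $\Dzero$.
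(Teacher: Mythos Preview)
Your proposal is correct and follows essentially the same approach as the paper's proof, which simply cites Lemma \ref{lemma:d0-generates-CD0} for \ref{item:S-union-conj-classes}--\ref{item:gzero-generates-CGzero}, the inductive hypothesis for $\hzero = \fD(\CH)_{\zero}$, and Lemmas \ref{lemma:Dzero-subminimal-dimensions} and \ref{lemma:Dzero-restriction} for the last two bullets. Your treatment is in fact slightly more careful than the paper's: you explicitly invoke Remark \ref{remark:Dzero-subminimal-dimensions-n=5} for the edge case $n-1=5$, where the general bound $\dim(S^{\dbl{\lambda,\mu,\pm}}) \geq \tfrac{1}{2}(n-2)$ from Lemma \ref{lemma:Dzero-subminimal-dimensions} would only give $\dim \geq 2$ rather than $\dim \geq 3$.
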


\begin{proof}
Condition \ref{item:S-union-conj-classes} evidently holds for $H$, and \ref{item:gzero-generates-CGzero} holds by Lemma \ref{lemma:d0-generates-CD0}. If Theorem \ref{theorem:main-theorem-D} is true for the value $n-1$, then $\fh = \fD(\CH) + \C \cdot \calX_{n-1}$, and hence $\hzero = \fD(\CH)_{\zero}$. Finally, the last two bullet points in \ref{item:inductive-subgroup} are satisfied by Lemmas \ref{lemma:Dzero-subminimal-dimensions} and \ref{lemma:Dzero-restriction}.
\end{proof}

\begin{proposition} \label{prop:A5-for-D}
Let $n \geq 6$, and suppose Theorem \ref{theorem:main-theorem-D} is true for the value $n-1$. Then \ref{item:D(gzero)-to-sl(V)} holds for the supergroup $G = \calD_n$.
\end{proposition}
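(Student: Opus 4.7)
The plan is to adapt the strategy of Proposition \ref{prop:A5-for-B} to the type D setting. Fix $V \in \Irr(\Dzero)$ with $\dim(V) > 1$, set $\g = \fd_n$, $\fh = \fd_{n-1}$, and let $\rho_V \colon \fD(\gzero) \to \End(V)$ be the restriction of the module structure map. Set $\g^V = \rho_V(\fD(\gzero))$ and $\fh^V = \rho_V(\fD(\hzero))$. By Proposition \ref{prop:n-geq-5-Dn-good}, Lemma \ref{lemma:D-inductive-supergroup-ngeq6}, and \eqref{eq:D(hzero)}, both $\fh^V$ and $\g^V$ are semisimple, $\g^V$ acts irreducibly on $V$, and $\fh^V \subseteq \g^V \subseteq \fsl(V)$. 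As in the type B argument, the aim is to verify the rank inequality $\dim(V) < 2 \cdot \rk(\fh^V)$, after which \cite[Lemmes 13, 15]{Marin:2007} force $\g^V$ to be simple and equal to $\fsl(V)$, which finishes the proof.

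The new feature is that the branching from $\Dzero$ to $(\calD_{n-1})_{\zero}$, described in Lemma \ref{lemma:Dzero-restriction}, is not in general multiplicity free: repeated composition factors occur precisely in the situations catalogued in Remark \ref{remark:D-repeated-factors}. Decompose $\Res^{\Gzero}_{\Hzero}(V) = \bigoplus_j W_j^{\oplus m_j}$ with the $W_j$ pairwise non-isomorphic simple $\CHzero$-modules. Since each map $\fD(\hzero) \to \End(W_j)$ is surjective onto $\fsl(W_j)$ by the inductive hypothesis via \eqref{eq:D(hzero)}, and since the action of $\fD(\hzero)$ on each isotypic component $W_j^{\oplus m_j}$ factors through $\fsl(W_j)$ acting diagonally, one has $\fh^V \cong \bigoplus_j \fsl(W_j)$, and so
	\[
	2 \cdot \rk(\fh^V) - \dim(V) = \sum_j \bigl[ 2(\dim(W_j) - 1) - m_j \dim(W_j) \bigr].
	\]
When all $m_j = 1$ and $\dim(W_j) > 1$ for every $j$, this reduces to $\sum_j (\dim(W_j) - 2)$, and the argument of Proposition \ref{prop:A5-for-B} transfers verbatim to give strict positivity for $n \geq 6$ using Lemma \ref{lemma:Dzero-subminimal-dimensions}. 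The small-case residues (a one-dimensional summand $S^{\dbl{[n-1],\emptyset}}$ appearing on restriction, or one of the low-dimensional anomalous modules from Remark \ref{remark:Dzero-subminimal-dimensions-n=5}) are then dispatched by the same kind of explicit enumeration used at the end of the proof of Proposition \ref{prop:A5-for-B}.

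The main obstacle is to absorb the multiplicities flagged in Remark \ref{remark:D-repeated-factors}. In case (1), when $V \cong S^{\dbl{\lambda,\mu}}$ with $\dbl{\lambda,\mu} \in \Edbln$, $\mu \prec \lambda$, $\abs{\mu}$ odd, and $\mu = \mu^*$, only a single isotypic component of multiplicity two appears, with simple part $S^{\dbl{\mu,\pm}}$; the remaining summands $S^{\dbl{\nu,\mu}}$ for $\nu \prec \lambda$ with $\nu \neq \mu$ are comparatively large for $n \geq 6$ and an explicit dimension bound from Lemma \ref{lemma:Dzero-subminimal-dimensions} recovers the inequality. The genuinely delicate case is Remark \ref{remark:D-repeated-factors}(2), namely $V \cong S^{\dbl{\lambda,\pm}}$ with $\dbl{\lambda,\lambda} \in \Fdbln$ and $n/2$ odd: here every non-self-conjugate predecessor pair $\{\nu,\nu^*\}$ of $\lambda$ yields a matched pair of isomorphic summands $S^{\dbl{\nu,\lambda}} \cong S^{\dbl{\nu^*,\lambda}}$, and the rank count via $H = \calD_{n-1}$ alone may fail. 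This is the module singled out as requiring the special treatment of Section \ref{subsubsec:4b-D}; my expectation is that one must either pair $H$ with a second subsupergroup whose restriction separates the two conjugate partitions in each pair, or work directly with $\g^V$ by combining Lemma \ref{lemma:Gzero-modules-not-dual} with the classification of Lie algebra automorphisms of $\fsl(\C^m)$ from \cite[IX.5]{Jacobson:1979} to rule out any proper intermediate semisimple subalgebra $\fh^V \subsetneq \g^V \subsetneq \fsl(V)$.
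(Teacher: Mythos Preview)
Your outline correctly identifies the multiplicity-free cases and the two exceptional families, but the argument as written has genuine gaps in both non-multiplicity-free cases. In Case~(1) of Remark~\ref{remark:D-repeated-factors}, you assert that the rank inequality together with \cite[Lemmes 13, 15]{Marin:2007} still applies; it does not. Lemme~15 requires that \emph{each} simple ideal of $\fh^V$ have a composition factor of multiplicity one in $V$, and this fails for the diagonal ideal $\fsl(S^{\dbl{\mu,\pm}})$ coming from the isomorphic pair $S^{\dbl{\mu,+}} \cong S^{\dbl{\mu,-}}$. The paper handles this by a separate simplicity argument: decompose $V \cong V_1 \boxtimes \cdots \boxtimes V_t$ over the simple ideals $\g_1 \oplus \cdots \oplus \g_t$ of $\g^V$, choose a single ideal of $\fh^V$ (namely $\fsl(S^{\dbl{\lambda,\tau}})$ for some $\tau \prec \mu$) that \emph{does} have a multiplicity-one factor, observe this forces every $V_i$ to be nontrivial for that ideal, and then derive a dimension contradiction from $\dim(V_i) \geq \binom{2m}{m+1}\dim(S^\lambda)$ if $t \geq 2$. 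Only after simplicity is in hand does the rank inequality finish via Lemme~13.

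Your Case~(2) paragraph is honest about the gap but neither of your proposed fixes is close to what is actually needed. The paper's resolution has two distinct new ingredients. First, for simplicity: after the tensor-decomposition argument reduces to a potential $t=2$ with $\g_2 \cong \fsl(2)$, one observes that the conjugation action of $\Gzero$ on $\g_2$ defines a homomorphism $\Gzero \to PGL(2,\C)$ with finite image; since $\Gzero$ is perfect (Remark~\ref{rem:commutator-subgroups}), its image is perfect, hence trivial or $A_5$, and $A_5$ is excluded because $\Gzero$ has no $3$-dimensional simple module. Second, for identifying $\g^V$: the rank inequality $\dim(V) < 2\cdot\rk(\g^V)$ may fail here, so the paper instead establishes $\dim(V) < 4\cdot\rk(\g^V)$, invokes \cite[Lemme~14]{Marin:2007} to conclude $\g^V$ is one of $\fsl(V)$, $\fso(V)$, $\fsp(V)$, and then rules out the orthogonal and symplectic cases via Lemma~\ref{lemma:Gzero-modules-not-dual}, since their natural modules are self-dual. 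The automorphism classification from \cite[IX.5]{Jacobson:1979} that you mention plays no role in this part; it already appeared upstream in the proof of Lemma~\ref{lemma:D(gzero)}.
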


\begin{proof}
We defer the proof to Section \ref{subsec:A5-for-D}.
\end{proof}

We can now prove Theorem \ref{theorem:main-theorem-D}.

\begin{proof}[Proof of Theorem \ref{theorem:main-theorem-D}]
The proof is entirely parallel to the proof of Theorem \ref{theorem:main-theorem-B} given at the end of Section \ref{subsec:Lie-superalgebra-B} up through the definition of the operator $\Psi$ in \eqref{eq:Psi-B}. In this case one has
	\begin{equation} \label{eq:Psi-D}
	\Psi = 1_{\CD_n} 
	- \Big( \sum_{\substack{W \in \Irr_s(\calD_n) \\ \dim(W) > 2}} \id_{W} \Big).
	\end{equation}
For $n \geq 6$ there is only one simple $\CD_n$-supermodule $W$ with $\dim(W) \leq 2$, namely $W = W^{\dbl{[n],\emptyset}}$, and one sees that the operator $\Psi$ defined by \eqref{eq:Psi-D} is equal to $\id_{W^{\dbl{[n],\emptyset}}}$, which spans $\sq(W^{\dbl{[n],\emptyset}})$.
\end{proof}

\subsection{Proof of Proposition \ref{prop:A5-for-D}} \label{subsec:A5-for-D}

Let $n \geq 6$, and suppose Theorem \ref{theorem:main-theorem-D} is true for the value $n-1$. Set $G = \calD_n$ and $\g = \fd_n$. Then $\fD(\gzero)$ is semisimple by Proposition \ref{prop:n-geq-5-Dn-good}. Let $V \in \Irr(\Gzero)$, let $\rho_V: \fD(\gzero) \to \End(V)$ be the restriction to $\fD(\gzero)$ of the module structure map $V: \CGzero \to \End(V)$, and let $\g^V = \im(\rho_V) \subseteq \fsl(V)$. The map $\rho_V$ is evidently a surjection if $\dim(V) = 1$, since then $\fsl(V) = 0$, so assume that $\dim(V) > 1$. As in the proof of Lemma \ref{lemma:D(gzero)}, we may identify $\g^V$ with the orthogonal complement of $\g_V \coloneq \ker(\rho_V)$ with respect to the Killing form on $\fD(\gzero)$. Then $\fD(\gzero) = \g_V \oplus \g^V$, and $\rho_V: \fD(\gzero) \to \fsl(V)$ identifies with the projection map $\fD(\gzero) \twoheadrightarrow \g^V$.

Let $H = \calD_{n-1}$, and let $\fh \subseteq \Lie(\CH)$ be the Lie subsuperalgebra generated by the set $S_H$ of reflections in $H$. Then $H$ and the subset $S_H$ satisfy \ref{item:inductive-subgroup}, by Lemma \ref{lemma:D-inductive-supergroup-ngeq6}. The Lie algebra $\fD(\hzero)$ is semisimple by \eqref{eq:D(hzero)}, and $\fD(\hzero) \subseteq \fD(\gzero)$ because $\hzero \subseteq \gzero$. Let $\fh^V = \rho_V(\fD(\hzero))$. Then $\fh^V \subseteq \g^V$ are semisimple subalgebras of $\fsl(V)$, and $\g^V$ acts irreducibly on $V$ by Proposition \ref{prop:n-geq-5-Dn-good}.

\subsubsection{Multiplicity free restriction}

If $\Res^G_H(V)$ is multiplicity free, then one can argue in a manner entirely parallel to the proof of Proposition \ref{prop:A5-for-B} to show that $\g^V = \fsl(V)$, applying Lemma \ref{lemma:Dzero-subminimal-dimensions} and Remark \ref{remark:Dzero-subminimal-dimensions-n=5} in lieu of Lemma \ref{lemma:Bzero-subminimal-dimensions}, and applying Lemma \ref{lemma:Dzero-restriction} in lieu of Lemma \ref{lemma:Bzero-restriction}. In particular, the calculation when $\Res_H^G(V)$ contains a trivial factor proceeds in precisely the same manner as in the proof of Proposition \ref{prop:A5-for-B}.

\subsubsection{Non-multiplicity free restriction: Case (\ref{item:(lam,mu)-Eboxn}) of Lemma \ref{lemma:Dzero-restriction}}

Now suppose $\Res_H^G(V)$ is not multiplicity free. By Remark \ref{remark:D-repeated-factors}, this can happen in one of two ways.

First suppose $V = S^{\dbl{\lambda,\mu}}$ for some $\dbl{\lambda,\mu} \in \Edbln$ with $\lambda \neq \mu$, and suppose that $\mu \prec \lambda$ with $\mu = \mu^*$ and $\abs{\mu}$ odd. (The case when $\lambda \prec \mu$ with $\abs{\lambda}$ odd and $\lambda = \lambda^*$ is entirely similar.) Then
	\[
	\Res^G_H(V) = 
	\Big[ \bigoplus_{\substack{\set{\nu,\tau} \prec \set{\lambda,\mu} \\ \set{\nu,\tau} \neq \set{\mu,\mu}}} S^{\dbl{\nu,\tau}} \Big] 
	\oplus 
	\Big[ S^{\dbl{\mu,+}} \oplus S^{\dbl{\mu,-}} \Big].
	\]
One has $S^{\dbl{\mu,+}} \cong S^{\dbl{\mu,-}}$ as $\CH$-modules, and this is the only isomorphism among the simple $\CH$-factors in $\Res^G_H(V)$. Then the sum of the structure maps for the simple $\CH$-modules induces
	\[
	\fh^V \cong 
	\Big[ \bigoplus_{\substack{\set{\nu,\tau} \prec \set{\lambda,\mu} \\ \set{\nu,\tau} \neq \set{\mu,\mu}}} \fsl(S^{\dbl{\nu,\tau}}) \Big] 
	\oplus 
	\Big[ \fsl(S^{\dbl{\mu,\pm}}) \Big],
	\]
where $\fsl(S^{\dbl{\mu,\pm}})$ denotes the diagonally embedded copy of $\fsl(S^{\dbl{\mu,+}}) \cong \fsl(S^{\dbl{\mu,-}})$ in $\End(S^{\dbl{\mu,+}}) \oplus \End(S^{\dbl{\mu,-}})$. There exists $\tau \prec \mu$, and for such $\tau$ the simple ideal $\fsl(S^{\dbl{\lambda,\tau}})$ of $\fh^V$ has a composition factor $S^{\dbl{\lambda,\tau}}$ of multiplicity one in $V$. Say $\abs{\mu} = m$. Then $\abs{\lambda} = m+1$, $n = 2m+1$ (so $m \geq 3$ because $n \geq 6$), and for $\tau \prec \mu$ one has
	\[ \textstyle
	\dim(S^{\dbl{\lambda,\tau}}) = \binom{2m}{m+1} \cdot \dim(S^\lambda) \cdot \dim(S^\tau) \geq \binom{2m}{m+1} \cdot \dim(S^\lambda).
	\]
Since $\fsl(k)$ has no non-trivial modules of dimension less than $k$, it follows that each nontrivial $\fsl(S^{\dbl{\lambda,\tau}})$-module is of dimension at least $\binom{2m}{m+1} \cdot \dim(S^\lambda)$.

Now let $\g^V = \g_1 \oplus \cdots \oplus \g_t$ be the decomposition of the semisimple Lie algebra $\g^V$ into simple ideals. We want to show that $t = 1$. Since $\g^V$ acts irreducibly on $V$, it follows for each $1 \leq i \leq t$ that there exists a simple $\g_i$-module $V_i$ such that $V$ identifies with the external tensor product of modules $V_1 \boxtimes \cdots \boxtimes V_t$. Since $\g^V \subset \End(V)$ acts faithfully on $V$, it follows for each $i$ that $V_i$ cannot be the trivial $\g_i$-module (else $\g_i$ acts trivially on $V$), and hence $\dim(V_i) \geq 2$.

Fix $\tau \prec \mu$, and set $\fh^\tau = \fsl(S^{\dbl{\lambda,\tau}})$. Then each $V_i$ becomes an $\fh^\tau$-module via the sequence of canonical Lie algebra maps $\fh^\tau \hookrightarrow \fh^V \subseteq \g^V \twoheadrightarrow \g_i$. If $V_i$ is trivial as an $\fh^\tau$-module, it would follow that each $\fh^\tau$-composition factor in $V$ would be of multiplicity at least $\dim(V_i) \geq 2$, contradicting the observation that $\fh^\tau$ has a composition factor in $V$ of multiplicity one. Thus each $V_i$ must be a nontrivial $\fh^\tau$-module. Now if $t \geq 2$, one has
	\begin{align*}
	\dim(V) \geq \dim(V_1) \cdot \dim(V_2) &\geq \textstyle \binom{2m}{m+1} \cdot \dim(S^\lambda) \cdot \binom{2m}{m+1} \cdot \dim(S^\lambda) \\
	&\geq \textstyle 2m  \cdot \binom{2m}{m+1} \cdot \dim(S^\lambda) \cdot \dim(S^\lambda) \\
	&> \textstyle \frac{2m+1}{m}  \cdot \binom{2m}{m+1} \cdot \dim(S^\lambda) \cdot \dim(S^\mu) \\
	&= \textstyle \binom{2m+1}{m+1} \cdot \dim(S^\lambda) \cdot \dim(S^\mu) \\
	&= \dim(V),
	\end{align*}
where at the third line we observe that $\dim(S^\mu) \leq \dim(S^\lambda)$ if $\mu \prec \lambda$. Then $\dim(V) > \dim(V)$, a contradiction, so it must be the case that $t=1$ and $\g^V$ is a simple Lie algebra.

Now observe that
	\[
	2 \cdot \rk(\fh^V) - \dim(V) 
	= 
	\Big[ \sum_{\substack{\set{\nu,\tau} \prec \set{\lambda,\mu} \\ \set{\nu,\tau} \neq \set{\mu,\mu}}} \dim(S^{\dbl{\nu,\tau}})-2 \Big] 
	+ 
	\Big[ -2 \Big].
	\]
The assumptions on $\lambda$ and $\mu$ imply that the trivial $\CH$-module does not occur in $\Res_H^G(V)$. The assumptions also imply that $n \geq 7$, and hence imply by Lemma \ref{lemma:Dzero-subminimal-dimensions} that $\dim(S^{\dbl{\nu,\tau}}) \geq n-1 \geq 6$ for each $\set{\nu,\tau} \prec \set{\lambda,\mu}$ with $\set{\nu,\tau} \neq \set{\mu,\mu}$. There exist such $\set{\nu,\tau}$, namely, for $\tau \prec \mu$ one has $\set{\lambda,\tau} \prec \set{\lambda,\mu}$, so $2 \cdot \rk(\fh^V) - \dim(V) > 0$. Then $\g^V \cong \fsl(V)$ by \cite[Lemme 13]{Marin:2007}.

\subsubsection{Non-multiplicity free restriction: Case (\ref{item:(lam,lam)-Fdbln-n/2-odd}) of Lemma \ref{lemma:Dzero-restriction}} \label{subsubsec:4b-D}

Now suppose $n$ is even, $n/2$ is odd, $\dbl{\lambda,\lambda} \in \Fdbln$, and $V = S^{\dbl{\lambda,+}} \cong S^{\dbl{\lambda,-}}$. First we will show that $\g^V$ is a simple Lie algebra; the argument is an adaptation of an argument in the proof of \cite[Lemma 5.7.1]{DK:2025-preprint}. Observe that since $\lambda = \lambda^*$, then $\nu \prec \lambda$ if and only if $\nu^* \prec \lambda$. Then
	\[
	\Res^G_H(S^{\dbl{\lambda,+}}) \cong 
	\Big[ \bigoplus_{\substack{\nu \prec \lambda \\ \res(\lambda/\nu)> 0}} S^{\dbl{\nu,\lambda}} \oplus S^{\dbl{\nu^*,\lambda}} \Big] 
	\oplus 
	\Big[ \bigoplus_{\substack{\nu \prec \lambda \\ \res(\lambda/\nu) = 0}} S^{\dbl{\nu,\lambda,+}} \oplus S^{\dbl{\nu,\lambda,-}} \Big].
	\]
If $\res(\lambda/\nu) > 0$, then $S^{\dbl{\nu,\lambda}} \cong S^{\dbl{\nu^*,\lambda}}$ as $\CH$-modules, and these are the only isomorphisms among the simple $\CH$-module factors in $\Res^G_H(V)$. For $\nu \prec \lambda$, one has
	\[ \textstyle
	\dim(S^{\dbl{\nu,\lambda}}) = \binom{n-1}{n/2} \cdot \dim(S^\nu) \cdot \dim(S^\lambda) \geq \binom{n-1}{n/2} \cdot \dim(S^\lambda).
	\]
If $\nu \prec \lambda$ and $\nu = \nu^*$, then $\nu$ is not either $[n/2-1]$ or $[1^{n/2-1}]$, so $\dim(S^\nu) \geq 2$ and
	\[ \textstyle
	\dim(S^{\dbl{\nu,\lambda,\pm}}) = \frac{1}{2} \cdot \dim(S^{\dbl{\nu,\lambda}}) = \frac{1}{2} \cdot \binom{n-1}{n/2} \cdot \dim(S^\nu) \cdot \dim(S^\lambda) \geq \binom{n-1}{n/2} \cdot \dim(S^\lambda).
	\]
Then $\dim(W) \geq \binom{n-1}{n/2} \cdot \dim(S^\lambda)$ for each simple $\CH$-module factor $W$ in $V$.

The sum of the structure maps for the simple $\CH$-modules induces an isomorphism
	\begin{equation} \label{eq:hV-case-4b}
	\fh^V \cong 
	\Big[ \bigoplus_{\substack{\nu \prec \lambda \\ \res(\lambda/\nu) > 0 }} \fsl\dbl{\nu,\lambda} \Big] 
	\oplus 
	\Big[ \bigoplus_{\substack{\nu \prec \lambda \\ \res(\lambda/\nu) = 0 }}\fsl(S^{\dbl{\nu,\lambda,+}}) \oplus \fsl(S^{\dbl{\nu,\lambda,-}}) \Big],
	\end{equation}
where $\fsl\dbl{\nu,\lambda}$ denotes the diagonal copy of $\fsl(S^{\dbl{\nu,\lambda}}) \cong \fsl(S^{\dbl{\nu^*,\lambda}})$ in $\End(S^{\dbl{\nu,\lambda}}) \oplus \End(S^{\dbl{\nu^*,\lambda}})$. Thus, if there exists $\nu \prec \lambda$ with $\res(\lambda/\nu) = 0$, then the simple ideals $\fsl(S^{\dbl{\nu,\lambda,+}})$ and $\fsl(S^{\dbl{\nu,\lambda,-}})$ of $\fh^V$ have simple factors $S^{\dbl{\nu,\lambda,+}}$ and $S^{\dbl{\nu,\lambda,-}}$, respectively, of multiplicity one in $V$, and each remaining simple ideal $\fsl\dbl{\nu,\lambda}$ has a simple factor $S^{\dbl{\nu,\lambda}} \cong S^{\dbl{\nu^*,\lambda}}$ of multiplicity two in $V$.

Now let $\g^V = \g_1 \oplus \cdots \oplus \g_t$ be the decomposition of the semisimple Lie algebra $\g^V$ into simple ideals. We want to show that $t = 1$. Since $\g^V$ acts irreducibly on $V$, it follows for each $1 \leq i \leq t$ that there exists a simple $\g_i$-module $V_i$ such that $V$ identifies with the external tensor product of modules $V_1 \boxtimes \cdots \boxtimes V_t$. Since $\g^V \subset \End(V)$ acts faithfully on $V$, it follows for each $i$ that $V_i$ cannot be the trivial $\g_i$-module (else $\g_i$ acts trivially on $V$), and hence $\dim(V_i) \geq 2$.

Each $V_i$ is an $\fh^V$-module via the sequence of canonical maps $\fh^V \hookrightarrow \g^V \twoheadrightarrow \g_i$. Since the nontrivial $\fsl(k)$-modules are all of dimension at least $k$, it follows for each $i$ that if $V_i$ is a nontrivial $\fh^V$-module (i.e., if $\g_i$ is nontrivial for some simple ideal of $\fh^V$), then $\dim(V_i) \geq \binom{n-1}{n/2} \cdot \dim(S^\lambda)$. If there are two values of $i$ such that $V_i$ is a nontrivial $\fh^V$-module, say $i=1$ and $i=2$, then
	\begin{align*}
	\dim(V) &\geq \dim(V_1) \cdot \dim(V_2) \\
	&\geq \textstyle \binom{n-1}{n/2} \cdot \binom{n-1}{n/2} \cdot \dim(S^\lambda) \cdot \dim(S^\lambda) \\
	&= \textstyle \binom{n-1}{n/2} \cdot \frac{1}{2} \cdot \binom{n}{n/2} \cdot \dim(S^\lambda) \cdot \dim(S^\lambda) \\
	&= \textstyle \binom{n-1}{n/2} \cdot \dim(V) \\
	&> \dim(V),
	\end{align*}
a contradiction. Thus there is only one value of $i$, say $i=1$, such that $V_i$ is nontrivial for $\fh^V$. And since $\dim(V_i) \geq 2$ for each $i$, and each simple ideal of $\fh^V$ has a composition factor of multiplicity at most $2$ in $V$, it also follows that there is at most one value of $i$ such that $V_i$ is trivial for $\fh^V$, and if such a $V_i$ exists then $\dim(V_i) = 2$. Thus either $t = 1$ (as desired), or $t = 2$ and $V \cong V_1 \boxtimes V_2$ with $V_1$ a nontrivial $\fh_V$-module and $V_2$ a $2$-dimensional trivial $\fh^V$-module. In the latter case one has $\g^V = \g_1 \oplus \g_2$, with $\g_2 \cong \fsl(2)$ because $\dim(V_2) = 2$.

Observe that if $x \in \gzero$, and if $s \in \gzero \cap \Gzero$ with $s^2 = 1$, then
	\begin{equation} \label{eq:conjugation-bracket-difference} \textstyle
	x - \frac{1}{2} \cdot [s,[s,x]] = sxs.
	\end{equation}
This implies that if $I$ is an ideal in $\fD(\gzero)$ (hence an ideal in $\gzero$), then $s I s = I$. Since $\gzero$ contains the set \eqref{eq:(ij)(kl)-CC} of order-two generators for $\Gzero$, it follows that each ideal in $\fD(\gzero)$ is invariant under conjugation by arbitrary elements of $\Gzero$. And since $V$ is a $\CGzero$-module, it follows that the conjugation action on $\fD(\gzero)$ passes to an action on the simple ideals of $\g^V$; recall the comment from the beginning of Section \ref{subsec:A5-for-D} that we may identify $\g^V$ with a subalgebra of $\fD(\gzero)$.

Now suppose $t = 2$ as above. Then the conjugation action of $\Gzero$ on $\g_2 \cong \fsl(2)$ defines a group homomorphism $\phi: \Gzero \to \Aut(\fsl(2)) \cong PGL(2,\C)$ with finite image. Up to isomorphism, the nontrivial finite subgroups of $PGL(2,\C)$ are:
	\begin{itemize}
	\item the cyclic groups $C_m$ of order $m$,
	\item the dihedral groups $D_m$ of order $2m$,
	\item the alternating group $A_4$,
	\item the symmetric group $S_4$, and
	\item the alternating group $A_5$;
	\end{itemize}
see \cite{Beauville:2010}. Further, since $\Gzero$ is perfect by Remark \ref{rem:commutator-subgroups}, the image of $\phi$ must be perfect. This implies that either $\phi$ is the trivial map, or $\im(\phi) \cong A_5$. Suppose the latter. Then any simple $\CA_5$-module lifts, via $\phi$, to a simple $\CGzero$-module. The alternating group $A_5$ has simple modules of dimension $3$ by \cite[Theorem 2.5.15]{James:1981}, but applying Lemma \ref{lemma:Dzero-subminimal-dimensions} and Lemma \ref{lemma:Bn-subminimal-dimensions}\eqref{item:dim-S(lam,mu)}, one sees that if $n \geq 6$ is twice an odd integer, then $\Gzero = (\calD_n)_{\zero}$ has no simple modules of dimension $3$. This is a contradiction, so $\phi$ must be the trivial map. Then each element $x \in \g_2$ is fixed under conjugation by all elements of $\Gzero$. This implies by Lemma \ref{lemma:lie-algebra-center} that $\g_2 \subseteq Z(\g^V)$, which is again a contradiction because $\g_2$ is a nonzero ideal in $\g^V$, but $\g^V$ is a semisimple Lie algebra and hence $Z(\g^V) = 0$. Thus, it must be the case that $t = 1$ and $\g^V$ is simple.

Now we will show that $\g^V = \fsl(V)$. From \eqref{eq:hV-case-4b} one gets
	\[
	\rk(\fh^V) = 
	\sum_{\substack{\nu \prec \lambda \\ \res(\lambda/\nu) > 0 }} \left[ \dim(S^{\dbl{\nu,\lambda}})-1 \right] 
	+ 
	\sum_{\substack{\nu \prec \lambda \\ \res(\lambda/\nu) = 0 }} \left[ \dim(S^{\dbl{\nu,\lambda}}) -2 \right],
	\]
Let $r_\lambda$ be the number of removable boxes in the Young diagram of $\lambda$. Then
	\begin{align*}
	2 \cdot \rk(\fh^V) &= 
	\sum_{\substack{\nu \prec \lambda \\ \nu \neq \nu^* }} \left[ \dim(S^{\dbl{\nu,\lambda}})-1 \right] 
	+ 
	\sum_{\substack{\nu \prec \lambda \\ \nu = \nu^* }} \left[ 2 \cdot \dim(S^{\dbl{\nu,\lambda}}) - 4 \right] \\
	&= \sum_{\nu \prec \lambda} \left[ \dim(S^{\dbl{\nu,\lambda}})-1 \right] + \sum_{\substack{\nu \prec \lambda \\ \nu = \nu^* }} \left[ \dim(S^{\dbl{\nu,\lambda}}) - 3 \right] \\
	&= \Big[ \dim(V) - r_\lambda \Big] + \sum_{\substack{\nu \prec \lambda \\ \nu = \nu^* }} \left[ \dim(S^{\dbl{\nu,\lambda}}) - 3 \right].
	\end{align*}
If $\nu \prec \lambda$ with $\nu = \nu^*$, then $\dim(S^{\dbl{\nu,\lambda}}) \geq 3$ by Lemma \ref{lemma:Dzero-subminimal-dimensions}, and evidently $r_\lambda < \abs{\lambda} = \frac{n}{2}$. One has
	\[ \textstyle
	\dim(V) = \dim(S^{\dbl{\lambda,+}}) = \frac{1}{2} \cdot \binom{n}{n/2} \cdot \dim(S^\lambda) \cdot \dim(S^\lambda) \geq \frac{1}{2} \cdot n \cdot 2 \cdot 2 = 2n,
	\]
and hence $2 \cdot \rk(\fh^V) \geq 2n - r_\lambda > 2n - \frac{n}{2} = \frac{3n}{2}$. This implies that
	\[ \textstyle
	4 \cdot \rk(\g^V) \geq 4 \cdot \rk(\fh^V) > \frac{3n}{2} + \left( \dim(V) - \frac{n}{2} \right) = n + \dim(V) > \dim(V).
	\]
Now either $\dim(V) < 2 \cdot \rk(\g^V)$, or $2n \leq \dim(V) < 4 \cdot \rk(\g^V)$. Suppose the latter. Since
	\[ \textstyle
	\dim(V) = \frac{1}{2} \cdot \binom{n}{n/2} \cdot \dim(S^\lambda) \cdot \dim(S^\lambda) \geq \frac{1}{2} \cdot \binom{n}{n/2} \cdot 2 \cdot 2 = 2 \cdot \binom{n}{n/2} \geq 2 \cdot \binom{6}{3} = 40,
	\]
this would imply by \cite[Lemme 14]{Marin:2007} that either $\g^V \cong \fso(V)$ or $\g^V \cong \fsp(V)$, i.e., that $\g^V$ is a simple Lie algebra of type C or D and $V$ is isomorphic to the natural module for $\g^V$. But the natural modules for $\fso(V)$ and $\fsp(V)$ are self-dual as Lie algebra representations, whereas we know that $V \not\cong V^{*,\Lie}$ by Proposition \ref{prop:n-geq-5-Dn-good}, Lemma \ref{lemma:D-inductive-supergroup-ngeq6}, and Lemma \ref{lemma:Gzero-modules-not-dual}. Thus it must be the case that $\dim(V) < 2 \cdot \rk(\g^V)$, and hence $\g^V = \fsl(V)$ by \cite[Lemme 13]{Marin:2007}.

\appendix

\section{Dimensions for small ranks} \label{app:dim-small-ranks}

Table \ref{tab:upper-bounds} summarizes the upper bounds on the dimensions of the Lie superalgebras $\fb_n$, $\fs_n$, and $\fd_n$ provided by the inclusions \eqref{eq:bn-easy-inclusion}, \eqref{eq:sn-easy-inclusion}, and \eqref{eq:dn-easy-inclusion} and the isomorphisms \eqref{eq:D(CBn)-Artin-Wedderburn}, \eqref{eq:D(CSn)-Artin-Wedderburn}, and \eqref{eq:D(CDn)-Artin-Wedderburn}, respectively.

\begin{table}[htb]
\begin{tabular}{|l|l|l|l|}
\hline
$n$ & $\dim(\fb_n) \leq$ & $\dim(\fs_n) \leq$ & $\dim(\fd_n) \leq $ \\
\hline
2 & 7 & 2 & \\
\hline
3 & 45 & 5 & \\
\hline
4 & 375 & 22 & 185 \\
\hline
5 & 3824 & 117 & 1911 \\
\hline
\end{tabular}

\ 

\caption{Upper bounds on dimensions of $\fs_n$, $\fb_n$, and $\fd_n$.} \label{tab:upper-bounds}
\end{table}

For example, for the group $\fS_5$, one finds that a complete set of representatives for $E_5/\!\!\sim$ is $\{2^21, 21^3, 1^5 \}$, while $F_5 = \{ 31^2 \}$; for the sake of improved readability, we have written partitions here without additional enclosing brackets. Then
	\[
	\fD(\CS_5) = \sq(W^{[2^21]}) \oplus \sq(W^{[21^3]}) \oplus \sq(W^{[1^5]}) \oplus \fsl(W^{[31^2]}),
	\]
so $\dim(\fD(\CS_5)) = \dim(\CS_5) -4 = 116$, and hence $\dim(\fs_5) \leq 116 + 1 = 117$.

For the group $\calB_4$, one finds that
	\begin{gather*}
	E[4] = \set{ [4,\emptyset], [31,\emptyset], [2^2,\emptyset], [21^2,\emptyset], [1^4,\emptyset], [3,1], [21,1], [1^3,1], [2,2]}, \\
	F[4] = \set{ [2,1^2], [1^2,2]},
	\end{gather*}
so $\dim(\fD(\CB_4)) = \dim(\CB_4) - 11 = 384-11 = 373$, and hence $\dim(\fb_4) \leq 373+2 = 375$.

For the group $\calD_4$, one finds that
	\[
	E\dbl{4} = \set{\dbl{ 4, \emptyset}, \dbl{31,\emptyset}, \dbl{3,1}, \dbl{2,2}} 
	\qquad \text{and} \qquad 
	F\dbl{4} = \set{ \dbl{2^2,\emptyset}, \dbl{21,1}, \dbl{2,1^2}}.
	\]
Then, noting that the term $\dbl{2,2} \in E\dbl{4}$ corresponds to two $\sq$ terms, one has
	\begin{gather*}
	\fD(\CD_4) = \sq(W^{\dbl{ 4, \emptyset}}) \oplus \sq(W^{\dbl{31,\emptyset}}) \oplus \sq(W^{\dbl{3,1}}) \oplus \sq(W^{\dbl{2,+}}) \oplus \sq(W^{\dbl{2,-}}) \\
	{} \oplus \fsl(W^{\dbl{2^2,\emptyset}}) \oplus \fsl(W^{\dbl{21,1}}) \oplus \fsl(W^{\dbl{2,1^2}}),
	\end{gather*}
so $\dim(\fD(\CD_4)) = \dim(\CD_4) - 8 = 192 - 8 = 184$, and hence $\dim(\fd_4) \leq 184 +1 = 185$.

The reasoning for the other entries in Table \ref{tab:upper-bounds} is similar.

Using GAP \cite{GAP4}, one can check that the inequalities in Table \ref{tab:upper-bounds} are equalities, thereby verifying Lemmas \ref{theorem:main-theorem-B}, \ref{lemma:base-cases-Sn}, and \ref{lemma:base-cases-D}. The code we used to carry out these calculations is included as an ancillary file with the version of this paper posted on the arXiv (\href{https://arxiv.org/abs/2509.00945}{arXiv:2509.00945}). We ran the code using the Gap.app frontend for MacOS, running version 4.12.1 of GAP. The algorithm works as follows (with notation as in Section \ref{subsec:lie-superalgebras-odd-elements}): Let $V_1$ be the subspace of $\Lie(\CG)$ spanned by the elements of the set $S$, and then for $k \geq 2$, let $V_k$ be the subspace of $\Lie(\CG)$ spanned by $V_{k-1}$ together with all elements of the form $[s,v]$ for $s \in S$ and $v \in V_{k-1}$. Then the $V_k$ are an increasing chain of subspaces of the Lie superalgebra $\g$ with $\bigcup_{k \geq 1} V_k = \g$. The algorithm successively computes (and outputs) the dimensions of the subspaces $V_k$ for $k \geq 2$ until $\dim(V_k) = \dim(V_{k-1})$, at which point the algorithm terminates.

\makeatletter
\renewcommand*{\@biblabel}[1]{\hfill#1.}
\makeatother

\bibliographystyle{eprintamsplain}
\bibliography{lie-superalgebras-reflections}

\end{document}